\newtheorem{thm}{Theorem}[section]
\newtheorem{prop}[thm]{Proposition}
\newtheorem{cor}[thm]{Corollary}
\newtheorem{lem}[thm]{Lemma}
\newtheorem{claim}{Claim}
\theoremstyle{definition}
\newtheorem{define}[thm]{Definition}
\theoremstyle{remark}
\newtheorem{rem}[thm]{Remark}
\newtheorem{example}[thm]{Example}
\newcommand{\ve}[1]{\boldsymbol{\mathbf{#1}}}
\newcommand{\R}{\mathbb{R}}
\newcommand{\Z}{\mathbb{Z}}
\renewcommand{\d}{\partial}
\renewcommand{\subset}{\subseteq}
\renewcommand{\tilde}{\widetilde}
\newcommand{\iso}{\cong}
\DeclareMathOperator{\alg}{{alg}}
\DeclareMathOperator{\free}{{free}}
\DeclareMathOperator{\gr}{{gr}}
\DeclareMathOperator{\id}{{id}}
\DeclareMathOperator{\im}{{im}}
\DeclareMathOperator{\rel}{{rel}}
\DeclareMathOperator{\sign}{{sign}}
\DeclareMathOperator{\tors}{{tors}}
\DeclareMathOperator{\Tors}{{Tors}}
\newcommand{\lk}{\mathrm{lk}}
\newcommand{\bE}{\mathbb{E}}
\newcommand{\bF}{\mathbb{F}}
\newcommand{\bH}{\mathbb{H}}
\newcommand{\bI}{\mathbb{I}}
\newcommand{\bJ}{\mathbb{J}}
\newcommand{\bM}{\mathbb{M}}
\newcommand{\bX}{\mathbb{X}}
\newcommand{\bZ}{\mathbb{Z}}
\newcommand{\cA}{\mathcal{A}}
\newcommand{\cB}{\mathcal{B}}
\newcommand{\cC}{\mathcal{C}}
\newcommand{\cD}{\mathcal{D}}
\newcommand{\cH}{\mathcal{H}}
\newcommand{\cK}{\mathcal{K}}
\newcommand{\cS}{\mathcal{S}}
\newcommand{\cX}{\mathcal{X}}
\newcommand{\cY}{\mathcal{Y}}
\newcommand{\frC}{\mathfrak{C}}
\newcommand{\scC}{\mathscr{C}}
\newcommand{\scE}{\mathscr{E}}
\newcommand{\scF}{\mathscr{F}}
\newcommand{\scJ}{\mathscr{J}}
\newcommand{\scK}{\mathscr{K}}
\newcommand{\scM}{\mathscr{M}}
\newcommand{\scN}{\mathscr{N}}
\newcommand{\scS}{\mathscr{S}}
\newcommand{\scW}{\mathscr{W}}
\newcommand{\cCFL}{\mathcal{C\!F\!L}}
\newcommand{\cCFK}{\mathcal{C\hspace{-.5mm}F\hspace{-.3mm}K}}
\newcommand{\cHFL}{\mathcal{H\hspace{-.5mm}F\hspace{-.3mm}L}}
\newcommand{\cHFK}{\mathcal{H\hspace{-.5mm}F\hspace{-.3mm} K}}
\newcommand{\CF}{\mathit{CF}}
\newcommand{\HF}{\mathit{HF}}
\newcommand{\HFK}{\mathit{HFK}}
\newcommand{\CFL}{\mathit{CFL}}
\newcommand{\HFL}{\mathit{HFL}}
\newcommand{\xs}{\ve{x}}
\newcommand{\ys}{\ve{y}}
\newcommand{\zs}{\ve{z}}
\newcommand{\ws}{\ve{w}}
\renewcommand{\a}{\alpha}
\renewcommand{\b}{\beta}
\newcommand{\veps}{\varepsilon}
\DeclareMathOperator{\Cone}{{Cone}}
\newcommand{\Ss}[1]{\scriptstyle{#1}}
\numberwithin{equation}{section}
\newcommand{\ar}{\mathrm{a.r.}}
\newcommand{\Wh}{\mathrm{Wh}}
\title{Applications of the L-space satellite formula}
\author{Daren Chen}
\address{Department of Mathematics\\California Institute of Technology\\ Pasadena, CA, USA}
\email{darenc@caltech.edu}
\author{Ian Zemke}
\address{Department of Mathematics\\University of Oregon\\  Eugene, OR, USA}
\email{izemke@uoregon.edu}
\author{Hugo Zhou}
\address{Department of Mathematics\\University of Michigan\\  Ann Arbor, MI, USA}
\email{hugozhou@umich.edu}
\begin{document}
	\maketitle
	\begin{abstract}
		We give a formula for the $\tau$-invariant of a satellite knot $P(K,n)$ when $P$ is an L-space satellite operator. Our formula holds for general L-space satellite operators $P$ when the companion $K$ satisfies $\veps(K)=1$. When $\veps(K)$ is $0$ or $-1$, we state a formula which requires some additional assumptions on $P$ or $n$. Our main tool is our algorithm which computes the knot Floer complex of satellite knots constructed using L-space satellite operators, which we developed in a previous paper \cite{CZZ}. Our formula for $\tau$ recovers many existing formulas for the behavior of $\tau$ under satellite operators, including for cables. We apply our formula to questions about the slice genus of satellite knots, showing, e.g., that if $K$ is a knot with $\tau(K)=g_4(K)>0$, then satellites of $K$ by L-space satellite operators have the same property. Another application is a proof that L-space satellite operators satisfy a conjecture of Hedden and Pinz\'on-Caicedo: If $P$ is an L-space satellite operator which acts as a group homomorphism on the smooth concordance group, then $P$ is either the zero operator, the identity operator, or the orientation reversing operator. 
	\end{abstract}
	
	\tableofcontents
	
	\section{Introduction}

	Satellite operations are one of the most basic constructions in knot theory. Suppose that $K\subset S^3$ is a knot, $n\in \Z$ is an integer, and $P\subset S^1\times D^2$ is a knot in a solid torus. Then we can form a new knot, $P(K,n)\subset S^3$ as follows. We identify $S^1\times D^2$ with a tubular neighborhood of $K$ in such a way that  $S^1\times \{\theta\}$ (for $\theta\in \d D^2$) is sent to the $n$-framed longitude of $K$. Then $P(K,n)$ is the image of $P$ under the inclusion $S^1\times D^2\subset S^3$.
	
In this paper, we study satellite operators through the lens of Heegaard Floer theory. \emph{Knot Floer homology} is an invariant of knots due independently to Ozsv\'{a}th and Szab\'{o} \cite{OSKnots} and Rasmussen \cite{RasmussenKnots}. One of the most basic concordance invariants that can be extracted from knot Floer homology is the $\tau$-invariant of Ozsv\'{a}th and Szab\'{o} \cite{OS4ballgenus}. If $K\subset S^3$ is a knot, then
\[
|\tau(K)|\le g_4(K),
\]
where $g_4(K)$ is the slice genus of $K$.

	Many authors have studied how $\tau(K)$ behaves under satellite operators. The behavior of $\tau$ under cabling was determined by Hom \cite{HomTauCables}. For Whitehead doubles, this was understood by Hedden \cite{HeddenWhiteheadDoubles}. Many other authors have studied this question for specific satellite operators. See  \cite{LevineDoubling,HWCabling,Bodish-Satellites,PX-twobridge} for several examples. 
	
	In the present paper, we study the behavior of $\tau$ under a family of satellite operators called \emph{L-space satellite operators}. Our main technical result is a description of the behavior of $\tau$ under L-space satellite operators when Hom's $\veps$-invariant \cite{HomEpsilon} takes value 1 on the companion knot $K$. The cases when $\veps(K)=0$ or $-1$ are more complicated, and we obtain  partial results. This is stated in Theorem~\ref{thm:main-intro}.
	
	Using the aforementioned result about $\tau$, we prove a number of topological applications. In Theorem~\ref{thm:concordance-homomorphism-intro} that we prove that L-space satellite operators only act by homomorphisms on the concordance group when the underlying operator is one of three basic operators: the zero operator, the identity operator, or the orientation reversing operator. We are also able to compute the slice genus of knots of the form $P(K,0)$ when $P$ is an L-space satellite operator and $K$ is a knot with $\tau(K)=g_4(K)>0$. 
	
	\subsection{L-space satellite operators}
	
		If $P\subset S^1\times D^2$ is a satellite operator, we write $L_P=\mu\cup P$ for the 2-component link obtained by viewing the solid torus containing $P$ as the neighborhood of an unknot in $S^3$. We assume that $S^1\times \{\theta\}$ is sent to the 0-framed longitude of the unknot. The component $\mu$ is a meridian of this solid torus.
		
		If $P\subset S^1\times D^2$ is a satellite operator, one defines the \emph{winding number} of $P$ to be the class of the knot $P$ in $H_1(S^1\times D^2)\iso \Z$. For our purposes, it is more convenient to identify this with the linking number $\ell=\lk(P,\mu)$ of the link $L_P$.
		
		In \cite{CZZ}, we studied the following family of satellite operators:
		\begin{define} We say that a satellite operator $P$  is an \emph{L-space satellite operator} if the 2-component link $L_P$ is an L-space link, i.e., $S^3_{\Lambda}(L_P)$ is an L-space for all integral framings $\Lambda\gg (0,0)$. 
				\end{define}

			In this paper, we study the full knot Floer complex of Ozsv\'{a}th and Szab\'{o} \cite{OSKnots}. This can be packaged in several ways, but we study its formulation as a free, finitely generated chain complex $\cCFK(K)$ over the ring $\bF[W,Z]$.

				In \cite{CZZ}, we described a procedure for computing the knot Floer complex of $P(K,n)$ in terms of the knot Floer complex of $K$ when $P$ is an L-space satellite operator.	 Our description is phrased in terms of the bordered theory from \cite{ZemBordered,ZemExact}, which is based on the link surgery formula \cite{MOIntegerSurgery} of Manolescu and Ozsv\'{a}th. Therein, an associative algebra $\cK$ is described which encodes the Heegaard Floer surgery formulas. If $K\subset S^3$ is a knot, a module $\cX_n(K)^{\cK}$ over the algebra $\cK$ is described. To the negative Hopf link, there is a $DA$-bimodule ${}_{\cK} \cH_-^{\cK}$ over the algebra $\cK$. Finally, if $L_P$ is the 2-component link associated to a satellite operator $P$, then there is a $DA$-bimodule ${}_{\cK} \cX(L_P)^{\bF[W,Z]}$.
		 The gluing theorems from \cite{ZemBordered} imply that there is a homotopy equivalence
		\begin{equation}
		\cCFK(P(K,n))^{\bF[W,Z]} \simeq \cX_n(K)^{\cK} \boxtimes {}_{\cK} \cH_-^{\cK} \boxtimes {}_{\cK} \cX(L_P)^{\bF[W,Z]}.
		\label{eq:intro-description-satellite}
		\end{equation}
		
		In \cite{CZZ}, we described a procedure for computing the bimodule ${}_{\cK} \cX(L_P)^{\bF[W,Z]}$ when $P$ is an L-space operator. In this case, the bimodule ${}_{\cK} \cX(L_P)^{\bF[W,Z]}$ is computable from the multi-variable Alexander polynomials of $L_P$, and its sublinks. We implemented an algorithm to compute $\cCFK(P(K,n))^{\bF[W,Z]}$ based on Equation~\eqref{eq:intro-description-satellite} into Python code, which is avaiable here \cite{CZZCode}.

	\subsection{Formulas for $\tau(P(K,n))$}
	
	The main technical result of this paper is a formula for the behavior of $\tau$ under L-space satellite operators, in many cases. Our result is phrased in terms of several familiar Heegaard Floer invariants. The first is Hom's invariant $\veps(K)\in \{-1,0,1\}$ \cite{HomEpsilon}. Additionally, we consider the $H$-function of $L_P$, which we recall presently.	If $L\subset S^3$ is a 2-component link with components $L_1,L_2$, we set
	\[
	\bH(L):= \left(\frac{\lk(L_1,L_2)}{2}+\Z\right)\times \left(\frac{\lk(L_1,L_2)}{2}+\Z\right).
	\]
	The $H$-function of $L$ is a map
	\[
	H_L\colon \bH(L)\to \Z^{\ge 0}.
	\]
	It is defined as
	\[
	H_L(\ve{s})=-\frac{1}{2} \max \{\gr_{\ws}(\xs)| \xs\in \cHFL(L,\ve{s}), \bF[U]\text{-non-torsion}\}.
	\]
	In the above $\cHFL(L)$ denotes the homology of a version of link Floer homology over the four variable polynomial ring $\bF[W_1,Z_1,W_2,Z_2]$. The variable $U$ acts by $U_1=W_1Z_1$ or $U_2=W_2Z_2$. (Both variables $U_1$ and $U_2$ have the same action on homology). 
	
	It is helpful to define another numerical invariant $R_t\in \lk(L_1,L_2)/2+\Z$, ranging over $t\in \lk(L_1,L_2)/2+\Z$, via the formula
	\[
	R_t=\max\left\{r\in \frac{\lk(L_1,L_2)}{2}+\Z \middle|  \begin{array}{l}  H_{L}(t,r+1)=H_L(t,r)\text{ and}\\   H_L(t,r-1)=H_L(t,r)+1\end{array} \right\}. 
	\]

	Our main technical result is the following:
	 \begin{thm}
	 \label{thm:main-intro}  Let $P$ be an L-space satellite operator, and let $L_P=\mu \cup P$ be the associated 2-component L-space link. Orient the component $P$ of $L_P$ so that the linking number $\ell:=\lk(\mu,P)$ is non-negative. Write $g_3(P)$ for the Seifert genus of the pattern knot $P$ (viewed as a knot in $S^3$). We have the following formula for $\tau(P(K,n))$:
	 	
	 	\begin{enumerate}
	 		\item Suppose $\veps(K)=1$. Then
	 			\[\tau(P(K,n)) =
	 		\begin{cases}
	 			R_{\sfrac{\ell}{2}}-\frac{\ell}{2} +\frac{(\ell-1)\ell }{2}n + \ell\tau(K), & \text{if $ n<2\tau(K),$}\\
	 			g_3(P)+\frac{(\ell-1)\ell}{2}n + \ell\tau(K), & \text{if $n\ge 2\tau(K)$. }
	 		\end{cases}       \]	
	 	\item Suppose $\veps(K)=0. $
	 	\begin{enumerate}
	 	\item If $n\ge 0$, then 
	 	\[\tau(P(K,n)) = g_3(P) + \frac{(\ell-1)\ell}{2}n.\]
	 	\item If $n< 0$ and the following condition holds for the $H$-function of $L_P$:
	 	\begin{equation}
	 		R_{\sfrac{\ell}{2}-1}\ge g_3(P)+\frac{\ell}{2}-1,
	 		\label{eq:extra condition when epsilon=0}
	 	\end{equation}
	 	 then
	 		\[\tau(P(K,n)) =\max\left\{R_{\sfrac{\ell}{2}-1}+\tfrac{\ell}{2},R_{\sfrac{\ell}{2}}-\tfrac{\ell}{2}\right\} +\tfrac{(\ell-1)\ell}{2}n\]
	 		\end{enumerate}
	 	\item If $\veps(K)=-1$, and Equation~\eqref{eq:extra condition when epsilon=0} holds,
	 
 	then
	 	\[\tau(P(K,n)) =
	 	\begin{cases}
	 		\max\{R_{\sfrac{\ell}{2}-1} + \frac{\ell}{2},R_{\sfrac{\ell}{2}} - \frac{\ell}{2}\}+\frac{(\ell-1)\ell}{2}n+\ell\tau(K), & \text{if $ n<2\tau(K),$}\\
	 		\max\{R_{\sfrac{\ell}{2}-1} + \frac{\ell}{2}, R_{\sfrac{\ell}{2}+1}- \frac{\ell}{2}\} +\frac{(\ell-1)\ell}{2}n+\ell\tau(K), & \text{if $n=2\tau(K)$. }\\
	 		\min\{R_{\sfrac{\ell}{2}-1} + \frac{\ell}{2}, R_{\sfrac{\ell}{2}+1} + \frac{\ell}{2}\} +\frac{(\ell-1)\ell}{2}n+\ell\tau(K), & \text{if $n=2\tau(K)+1$. }\\
	 		\min\{R_{\sfrac{\ell}{2}-1} +\frac{\ell}{2},g_3(P)+\ell\}+\frac{(\ell-1)\ell}{2}n+\ell\tau(K), & \text{if $n>2\tau(K)+1$. }
	 	\end{cases}       \]	
	 	\end{enumerate}
	 \label{thm: tau}
	 \end{thm}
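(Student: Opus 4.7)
The plan is to apply the homotopy equivalence in Equation~\eqref{eq:intro-description-satellite} and extract $\tau$ from the resulting complex by first reducing to ``standard'' companion complexes determined by Hom's $\veps$-invariant, and then computing the box tensor product explicitly using the L-space structure of $L_P$.

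The first step is to observe that $\tau(P(K,n))$ depends on $\cCFK(K)$ only through its $\veps$-equivalence class together with $\tau(K)$. Hom's classification gives canonical model complexes in each class: when $\veps(K)=1$ one may replace $\cCFK(K)$ by a simple staircase complex whose vertical arrow has length $\tau(K)>0$; when $\veps(K)=0$ one may take the trivial complex of the unknot (so $\tau(K)=0$); and when $\veps(K)=-1$ one takes the staircase whose dominant arrow is horizontal of length $-\tau(K)>0$. Correspondingly, the Manolescu-Ozsv\'{a}th module $\cX_n(K)^{\cK}$ admits an explicit description in each case that is amenable to tensoring. The grading shifts coming from changing the framing $n$ combine with the winding number $\ell$ to produce the $\tfrac{(\ell-1)\ell}{2}n+\ell\tau(K)$ terms that appear in every formula.

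The second step is to exploit the main result of \cite{CZZ}: the bimodule ${}_{\cK}\cX(L_P)^{\bF[W,Z]}$ is entirely determined by the $H$-function $H_{L_P}$, and its structure maps correspond to the jumps of $H_{L_P}$ encoded by the $R_t$ values. Composing this with the Hopf bimodule ${}_{\cK}\cH_-^{\cK}$ and the standardized $\cX_n(K)$, I would compute the box tensor product and read off the minimal Alexander grading supporting $U$-nontorsion homology---this minimal grading is $\tau(P(K,n))$. In the $\veps(K)=1$ case, the vertical arrow of length $\tau(K)$ in $K$'s complex either ``reaches past'' the framing $n$ or it does not, and the dichotomy $n<2\tau(K)$ versus $n\ge 2\tau(K)$ precisely governs which of $R_{\sfrac{\ell}{2}}-\tfrac{\ell}{2}$ or $g_3(P)$ dominates the surgery formula, yielding part (1). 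In the $\veps(K)=0$ case, $K$ contributes trivially to the staircase so $\tau(P(K,n))$ is the output of pure pattern data, with the $n<0$ subcase requiring the hypothesis~\eqref{eq:extra condition when epsilon=0} to ensure the relevant cycle in the tensor product is indeed a minimum.

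The third step is the case $\veps(K)=-1$, which is the most delicate, as the four ranges of $n$ correspond to four different dominant terms after the box product. I would write out the chain-level generators of the tensor product carefully, distinguishing the contributions from the horizontal arrow in $\cCFK(K)$ (which interacts with the $R_{\sfrac{\ell}{2}-1}+\tfrac{\ell}{2}$ term) from contributions of the $R_{\sfrac{\ell}{2}}\pm\tfrac{\ell}{2}$ terms, and track how these two families cross as $n$ crosses $2\tau(K)$ and $2\tau(K)+1$. The main obstacle will be managing this case analysis for $\veps(K)=-1$ and verifying the necessity of the condition~\eqref{eq:extra condition when epsilon=0} in excluding degenerate patterns; once the standard-form reduction and the interpretation of the $R_t$ invariants as encoding the bimodule $\cX(L_P)$ are in place, each formula follows by identifying the lowest Alexander grading of a surviving cycle in the box tensor product.
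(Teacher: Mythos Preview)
Your overall strategy is correct and matches the paper's: pass to the hat flavor so that only $\cCFK(K)^{\hat{R}}$ matters, use the Dai--Hom--Stoffregen--Truong standard complex for $K$, compute the box tensor product with the L-space bimodule ${}_{\cK}\cX(L_P)^{\bF[W,Z]}$ (whose structure maps are governed by the $R_t$), and read off the Alexander grading of the $Z$-tower generator.

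However, there is a genuine gap in your first step. You claim that when $\veps(K)=1$ one may replace $\cCFK(K)$ by ``a simple staircase complex whose vertical arrow has length $\tau(K)$,'' and similarly for $\veps(K)=-1$. This is not valid: the $\hat{R}$-local equivalence class of $\cCFK(K)$ is the full DHST standard complex $C(b_1,\dots,b_m)$, which is \emph{not} determined by $(\veps(K),\tau(K))$ alone --- there are infinitely many distinct standard complexes sharing the same $\veps$ and $\tau$. So you cannot substitute a simpler model complex up front. The paper instead works with the full standard complex $C(b_1,\dots,b_m)$ and proves (this is the main technical content, Proposition~\ref{prop:summand-veps=1} and Lemmas~\ref{lem:direct summand when epsilon=0}, \ref{lem:direct summand when epsilon=-1}) that a certain explicit subspace $\mathfrak{C}(P,K,n)$ of the tensor product is simultaneously a subcomplex and a quotient complex, hence a direct summand carrying the $Z$-tower. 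This summand happens to involve only the endpoint generators $\ys_0,\ys_m$ of the standard complex, which is why the final formula depends only on $\veps(K)$ and $\tau(K)$ --- but that is the output of the argument, not an input.

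A second, smaller point: the role of the hypothesis~\eqref{eq:extra condition when epsilon=0} is not to ensure a cycle is ``a minimum.'' It is needed so that the structure maps $L_\sigma(\xs_0^{\ell/2-1})$ and $L_\tau(\xs_0^{\ell/2+1})$ vanish in $\cX^{\diamond}(L_P)$ (Lemma~\ref{lem:restriction on P}); without this, the candidate summand $\mathfrak{C}''$ or $\mathfrak{C}'''$ fails to be a subcomplex, and the tower generator may leak into the torsion part of the staircases, forcing a more delicate case-by-case analysis.
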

	
	Note that the condition that $\ell \ge 0$ is very mild, since it can be obtained by reversing the orientation of $P\subset L_P$, and this operation preserves $\tau(P(K,n))$ and does not affect $P$ being an L-space satellite operator.
	 
	 We describe a large number of examples in Section~\ref{sec:examples}, illustrating how Theorem~\ref{thm:main-intro} recovers many known formulas for the behavior of $\tau$ under satellite operations. These include the following:
	 \begin{enumerate}\item  Hom's formula for cables \cite{HomTauCables};
	 \item  Chen and Hanselman  formula for satellites by 1-bridge braids \cite{ChenHanselmanSatellites};
	 \item Hedden's formula for the Whitehead pattern \cite{HeddenWhiteheadDoubles};
	 \item Levine's formula for the Mazur pattern \cite{LevineMazur};
	 \item Bodish's formula for a family of generalized Mazur patterns \cite{Bodish-Satellites} (we recover only a subfamily of the patterns that Bodish considers);
	 \item Patwardhan and Xiao's formula for a family of satellite patterns $P$, which have the property that the 2-component link $L_P$ is a 2-bridge L-space link \cite{PX-twobridge}. 
	 \end{enumerate} 
	  The examples (3)--(6) fit into the family of patterns studied by Patwardhan and Xiao. These consist of certain patterns $P$ where $L_P$ is a two-bridge link of the form $L(rq-1,q)$, for $r,q$ positive, odd integers. These are L-space links by \cite{LiuLSpaceLinks}*{Theorem~3.8}, and furthermore we show that they satisfy the extra condition in Equation~\eqref{eq:extra condition when epsilon=0}. In particular, the Whitehead pattern corresponds to the case $q=r=3$, and the Mazur pattern corresponds to $q=3, r=5$.  Patwardhan and Xiao \cite{PX-twobridge} study this family of operators with the extra assumption that the framing $n$ is $0$.

	 The condition in Equation~\eqref{eq:extra condition when epsilon=0} seems somewhat restrictive. Nonetheless, we show that it is satisfied when $P$ has linking number $\ell\in \{0,1\}$ or if the pattern knot $P$ (viewed as a knot in $S^3$) is unknotted. See Remark~\ref{rem:extra assumption}. The condition is satisfied for the $(p,p+1)$ cabling operator (therefore recovering Hom's formula all $(p,pn+ 1)$-cables), but it is not satisfied for general cabling operators of the form $(p,pn+r)$ for $r\neq \pm 1$. Therefore, the statement of Theorem~\ref{thm:main-intro} does not immediately recover Hom's formula for the $\tau$ invariant of the $(p,q)$-cable of a knot $K$ with $\veps(K)< 0$. Nonetheless, we recall in Section~\ref{sec:examples} a simple argument using duality which recovers the formula when $\veps(K)<0$ from the formula when $\veps(K)>0$.  Similar arguments can be used in related settings to extend Theorem~\ref{thm:main-intro} in particular examples.

	 We will also give a topological interpretation of the quantity $R_{\sfrac{\ell}{2}}$ in terms of the Thurston norm of the link complement of $L_P$:
     
	 \begin{prop}\label{prop:R_ell/2-intro} If $P$ is an L-space satellite operator, then the quantity $R_{\sfrac{\ell}{2}}-|\ell|/2$ is equal to the minimum genus of a surface in $S^1\times D^2$ which has boundary equal to $P$ and a disjoint union of $|\ell|$ parallel copies of the longitude $S^1\times \{\theta\}$, for $\theta\in \d D^2$. Throughout the paper, we denote this quantity by $g_3^{\rel}(P)$.  
	 \end{prop}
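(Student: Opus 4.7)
The plan is to express both sides of the claimed equation in terms of the Thurston norm on the link exterior $X:=S^3\setminus\nu(L_P)$ and match them via Ozsv\'{a}th--Szab\'{o}'s detection theorem for link Floer homology.

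First, rephrase $g_3^{\rel}(P)$ as a Thurston norm. Any oriented surface $\Sigma\subset V$ with $\partial\Sigma=P\cup(|\ell|\text{ parallel longitudes of }\mu)$ represents a class $y=[\Sigma]\in H_2(X,\partial X)$. For a connected representative with $|\ell|+1$ boundary components we have $-\chi(\Sigma)=2g(\Sigma)+|\ell|-1$, so the minimum genus matches the Thurston norm via $x(y)=2g_3^{\rel}(P)+|\ell|-1$. The edge case $\ell=0$ reduces to the Seifert genus of $P$ as a knot in $S^3$ and is handled directly from the $H$-function of a split L-space link.

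Next, apply Ozsv\'{a}th--Szab\'{o}'s detection theorem to $h=\PD(y)\in H^1(X)$. A direct computation in the link exterior gives $\langle h,\mu_\mu\rangle=\lk(\partial\Sigma,\mu)=\ell-|\ell|=0$ (the $T_\mu$-boundary consists of $|\ell|$ meridians of $\mu$, oriented to cancel the winding of $P$) and $\langle h,\mu_P\rangle=1$ (the $T_P$-boundary is a single longitude of $P$). Under the standard identification of Alexander gradings with elements of $H_1(X)\otimes\Q$, one has $\langle h,(s_1,s_2)\rangle=s_2$, so the detection formula becomes
\begin{equation*}
2g_3^{\rel}(P)+|\ell|\;=\;2\max\bigl\{s_2 : \widehat{\HFL}(L_P,(s_1,s_2))\neq 0 \text{ for some } s_1\bigr\}.
\end{equation*}

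Finally, use the structure theory of 2-component L-space links to identify the right-hand side with $R_{\ell/2}$. The rank of $\widehat{\HFL}(L_P,(s_1,s_2))$ for an L-space link is given by a four-corner difference formula in the $H$-function, so non-vanishing of $\widehat{\HFL}$ corresponds to simultaneous drops of $H$. Combined with the monotonicity and unit-step properties of $H$, this forces the extremal $s_2$ in the support of $\widehat{\HFL}$ to be attained on the slice $s_1=\ell/2$; on that slice, the largest $s_2$ with $\widehat{\HFL}(L_P,(\ell/2,s_2))\neq 0$ is precisely the transition value $R_{\ell/2}$ where $H(\ell/2,\cdot)$ drops from $1$ to $0$. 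Combining with the displayed equation yields $R_{\ell/2}-|\ell|/2=g_3^{\rel}(P)$.

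The main obstacle is this final step --- locating the extremal $s_2$-support of $\widehat{\HFL}(L_P)$ on the slice $s_1=\ell/2$ and matching it to $R_{\ell/2}$. Without care, one might worry that a neighboring slice $s_1<\ell/2$ contains generators with larger $s_2$; ruling this out relies on the rank formula for $\widehat{\HFL}$ in terms of $H$ (as developed in \cite{CZZ} and earlier works on L-space links) together with the conjugation symmetry of the $H$-function used to compare support across different $s_1$-slices. Degenerate configurations (e.g.\ $\ell=0$ with $H$ identically zero) require a separate convention for $R_0$, and must be checked directly against the $S^3$-Seifert genus of $P$.
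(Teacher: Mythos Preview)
Your approach is correct and matches the paper's: both use Ozsv\'ath--Szab\'o's Thurston norm detection to identify $g_3^{\rel}(P)$ with the maximal $s_2$ in the support of $\widehat{\HFL}(L_P)$, then identify this maximum with $R_{\ell/2}$ via the $H$-function of the L-space link. The step you flag as the main obstacle---that the extremal $s_2$ occurs on the slice $s_1=\ell/2$---is precisely Lemma~\ref{lem:shape of top generators} ($R_t$ is maximized at $t=\ell/2$), and the paper's identification of the $\widehat{\HFL}$-support with $H$-function stabilization (Lemma~\ref{lem:H-function-v-HFL-support}) uses formality and a Koszul model rather than Liu's rank formula, though either suffices.
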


	 This is proven in Proposition~\ref{prop:g3-P=Rll/2}, below. We are unaware of a simple interpretation of $R_t$ for $t\neq \frac{\ell}{2}$ in terms of the Thurston norm of $L_P$.

	 Note that using Proposition~\ref{prop:R_ell/2-intro}, the first part of Theorem~\ref{thm: tau} can be rephrased as claiming that if $\veps(K)=1$ and $P$ is an L-space satellite operator with $\ell \ge0$, then
	 			\[
	 			\tau(P(K,n)) =
	 		\begin{cases}
	 			g_3^{\rel}(P) +\frac{(\ell-1)\ell }{2}n + \ell\tau(K), & \text{if $ n<2\tau(K),$}\\
	 			g_3(P)+\frac{(\ell-1)\ell}{2}n + \ell\tau(K), & \text{if $n\ge 2\tau(K)$. }
	 		\end{cases}       
	 		\]
	 
	 We also study the effect of L-space satellite operators on Hom's $\veps$-invariant. We describe conditions under which we can guarantee that $\veps(P(K,n))>-1$. Note that $\veps(P(K,n))=0$ implies $\tau(P(K,n))=0$, so if $\tau(P(K,n))\neq 0$, and the criteria below are satisfied, then we can conclude $\veps(P(K,n))=1$. When $\tau(P(K,n))=0$, a more detailed cases-by-case analysis is needed. 
    \begin{thm}
    	 \label{thm:epsilon of satellite}
    	 Let $P$ be an L-space satellite operator, and $g_3(P)$ denote the Seifert genus of the pattern knot $P$. Orient the corresponding two-component link $L_P$ such that the linking number $\ell \ge 0$. Then, we have the following inequality of $\veps(P(K,n))$:
    	 \begin{enumerate}
    	 	\item If $\veps(K)=1$, or $\veps(K)=0$ and  $n\ge0$, then 
    	 	\[\veps(P(K,n)) \neq -1.\]
    	 	\item Suppose further that the H-function of the two-component link $L_P$ satisfies 
    	 	\begin{equation}
    	 		R_{\sfrac{\ell}{2}-1} \ge g_3(P) + \frac{\ell}{2}.\label{eq:condition epsilon}
    	 	\end{equation}
    	 	Then, if $\veps(K)=0$ and $ n<0$, or $\veps(K)=-1$, we have
    	 	\[\veps(P(K,n)) \neq -1.\]
    	 \end{enumerate}
    \end{thm}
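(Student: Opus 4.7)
The plan is to mirror the strategy behind Theorem~\ref{thm:main-intro}: work with the model for $\cCFK(P(K,n))$ coming from Equation~\eqref{eq:intro-description-satellite} and the algorithm of \cite{CZZ}, and exhibit a concrete cycle at Alexander grading $\tau(P(K,n))$ whose algebraic behaviour certifies that $\veps(P(K,n)) \neq -1$. The first step is to translate the desired conclusion into a concrete statement about $\cCFK$. Using the standard algebraic characterization, $\veps(J) \neq -1$ is equivalent to the existence, in a reduced model of $\cCFK(J)$, of a cycle at Alexander grading $\tau(J)$ which survives as a non-torsion class in the quotient $\cCFK(J)/W$; equivalently, there is a reduced basis in which the $\tau$-generator is not the target of any incoming $W$-arrow. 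Producing such a generator in the satellite complex will suffice.

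For part (1), with $\veps(K) = 1$, or $\veps(K) = 0$ and $n \geq 0$, the hypothesis $\veps(K) \neq -1$ already supplies a $\tau$-representative cycle in $\cCFK(K)$ with the required non-torsion property. I would track this cycle through the box tensor product with ${}_{\cK}\cH_-^{\cK}$ and ${}_{\cK}\cX(L_P)^{\bF[W,Z]}$, pair it with a ``top'' pattern-side cycle coming from Proposition~\ref{prop:R_ell/2-intro}, and verify that the result is a cycle in $\cCFK(P(K,n))$ at Alexander grading $\tau(P(K,n))$ which is not hit by any $W$-arrow introduced by the two bimodules. This should parallel the case analysis establishing parts~(1) and~(2a) of Theorem~\ref{thm:main-intro}, and no additional hypothesis on $L_P$ is needed because the companion $K$ supplies the non-torsion input directly.

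For part (2), where $\veps(K) = -1$, or $\veps(K) = 0$ with $n < 0$, the companion no longer supplies a useful $\tau$-cycle, so the burden shifts entirely onto the pattern. The strengthened hypothesis \eqref{eq:condition epsilon} is exactly one unit stronger than the analogous condition \eqref{eq:extra condition when epsilon=0} used in Theorem~\ref{thm:main-intro}, and I expect this extra unit of Alexander room to be precisely what guarantees a pattern-side generator sitting above every $W$-differential arising from $\cH_-^{\cK}$ and $\cX_n(K)^{\cK}$ in the box tensor product. The principal obstacle is verifying this last point: one must enumerate the $W$-arrows introduced by the three factors of \eqref{eq:intro-description-satellite} at the relevant Alexander level, use the interpretation of $R_{\sfrac{\ell}{2}-1}$ through the $H$-function and Proposition~\ref{prop:R_ell/2-intro} to bound the Alexander gradings of their sources and targets, and conclude that the candidate cycle lies outside every image. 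The case $\veps(K) = -1$ with $n$ near $2\tau(K)$ looks the most delicate, since there the four-way case split in Theorem~\ref{thm:main-intro}(3) shows the candidate cycle jumps from one Alexander level to another, and one must take care that the strengthened hypothesis is strong enough to handle the transition uniformly.
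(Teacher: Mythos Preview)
Your high-level plan is aligned with the paper's: take the explicit $\tau$-generator $\xs_\tau$ located in Section~\ref{sec: tau computation} (inside one of the summands $\frC$, $\frC'$, $\frC''$, $\frC'''$) and show it remains a cycle when one works over $\hat R=\bF[W,Z]/(U)$ rather than $\bF[Z]$. However, your algebraic characterization of $\veps\neq -1$ is backwards. The condition is that $\xs_\tau$ is a \emph{cycle} over $\hat R$, i.e., that there are no \emph{outgoing} $W$-weighted arrows from $\xs_\tau$; this is what makes the quotient map $Q\colon \cCFK_{\hat R}\to \cCFK_{\bF[Z]}$ hit the tower generator on homology. In the standard complex $C(b_1,\dots,b_m)$, the generator $\ys_0$ receives an \emph{incoming} $W$-arrow exactly when $b_1>0$, i.e., when $\veps(K)=1$, so ``not the target of any incoming $W$-arrow'' characterizes $\veps(K)\in\{0,-1\}$, the opposite of what you want. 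Your phrases ``not hit by any $W$-arrow'' and ``lies outside every image'' repeat this inversion; executing the plan as written would have you checking the wrong differentials throughout.

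Once the direction is corrected, the argument is exactly the paper's. The paper packages the $\hat R$-level computation via a bimodule ${}_{\cK}\cX^{\ddagger}(L_P)_{\hat R}$ (Lemma~\ref{lem:R-hat-modules}) and then checks that the summands $\frC^{\ddagger},(\frC')^{\ddagger},(\frC'')^{\ddagger},(\frC''')^{\ddagger}$ have no $W$-weighted differentials leaving them; for part~(1) this reuses the grading arguments of Lemmas~\ref{lem:subcomplex of X(P(K,n))} and~\ref{lem:direct summand when epsilon=0}. For part~(2), the reason the hypothesis is one unit stronger than~\eqref{eq:extra condition when epsilon=0} is concrete and not about the $n\approx 2\tau(K)$ transition you flag: under~\eqref{eq:extra condition when epsilon=0} one can arrange $\delta_2^1(\sigma,\xs_0^{\sfrac{\ell}{2}-1})$ and $\delta_2^1(\tau,\xs_0^{\sfrac{\ell}{2}+1})$ to be $W$-multiples (hence vanish over $\bF[Z]$), whereas under the stronger~\eqref{eq:condition epsilon} they become $U$-multiples and hence vanish over $\hat R$ as well. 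That single extra unit is exactly what kills the remaining outgoing arrows from $(\frC'')^{\ddagger}$ and $(\frC''')^{\ddagger}$.
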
 	 
   In particular, if the condition in Equation~\eqref{eq:condition epsilon} is satisfied, then for any $n\in \mathbb{Z}$, the map on the smooth concordance group 
   \begin{align*}
   	P: \scC &\longrightarrow \scC\\
   	K &\longmapsto P(K,n)
   \end{align*} is not surjective. This extends the non-surjectivity results in \cite{LevineMazur} and \cite{PX-twobridge}.

	\subsection{Slice genus bounds and L-space satellite operators}
	
	We use our bounds on $\tau$ to investigate the slice genus of satellite knots. We recall that if $K\subset S^3$ is a knot, the \emph{slice genus} of $K$, denoted $g_4(K)$, is the minimum genus of a smoothly embedded, oriented surface in $B^4$ which bounds $K$.
	
	If $P\subset S^1\times D^2$ is a satellite knot, then we write $g_3^{\rel}(P)$ for the minimum genus of a surface in $S^1\times D^2$ which has boundary equal to $P$ and $|\ell|$ copies of a 0-framed longitude of $S^1\times D^2$, where $\ell$ is the winding number of $P$.
	
	We prove the following result, which was suggested to us by Jennifer Hom:
	
\begin{thm}
\label{thm:4-ball-genus-intro} Let $K\subset S^3$ be a knot such that $g_4(K)=\tau(K)>0$ and let $P$ be an L-space satellite operator. Write $\ell$ for the linking number of $L_P$ (recall that this is typically called the winding number of $P$). Then
\[
g_4(P(K,0))=\tau(P(K,0))=g_3^{\rel}(P)+|\ell|g_4(K).
\]
\end{thm}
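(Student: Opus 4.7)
The plan is to prove the formula for $\tau(P(K,0))$ using Theorem~\ref{thm: tau} together with Proposition~\ref{prop:R_ell/2-intro}, and separately to construct an explicit surface in $B^4$ bounding $P(K,0)$ whose genus matches this value of $\tau$. Combined with the general inequality $\tau \le g_4$, this forces all three quantities to agree.

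First I would observe that the hypothesis $g_4(K) = \tau(K) > 0$ forces $\veps(K) = 1$: indeed $\veps(K) = 0$ would give $\tau(K) = 0$, and $\veps(K) = -1$ would give $\tau(K) \le 0$. After reversing the orientation of $P$ inside $L_P$ if necessary---an operation which preserves $\tau(P(K,n))$ and the L-space satellite property, and replaces $|\ell|$ by $\ell$ in the conclusion---I may assume $\ell \ge 0$. Since $0 < 2\tau(K)$, part (1) of Theorem~\ref{thm: tau} applies at $n = 0$ and yields $\tau(P(K,0)) = R_{\sfrac{\ell}{2}} - \ell/2 + \ell\,\tau(K)$; Proposition~\ref{prop:R_ell/2-intro} identifies $R_{\sfrac{\ell}{2}} - \ell/2$ with $g_3^{\rel}(P)$, giving $\tau(P(K,0)) = g_3^{\rel}(P) + \ell\, g_4(K)$.

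Next I would build a surface in $B^4$ bounding $P(K,0)$ of this genus. Let $F \subset B^4$ be a genus $g_4(K)$ surface with $\d F = K$, and let $\Sigma \subset S^1 \times D^2$ be a surface realizing $g_3^{\rel}(P)$, with $\d \Sigma = P \sqcup \lambda_1 \sqcup \cdots \sqcup \lambda_\ell$, where each $\lambda_i = S^1 \times \{q_i\}$ is a $0$-framed longitude. The key topological input is that the normal bundle $\nu F$ admits a trivialization extending the $0$-framing of $\d F$. To see this, cap $F$ off with the core of the $0$-framed $2$-handle in the trace $W = B^4 \cup_K h^2_0$, obtaining a closed surface $\hat F \subset W$; then $[\hat F]^2$ equals the relative self-intersection of $F$ with respect to the $0$-framing, and this vanishes because $\hat F$ is homologous to the core and the core has self-intersection equal to the framing, which is $0$. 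Consequently a tubular neighborhood of $F$ in $B^4$ is diffeomorphic to $F \times D^2$, restricting on the boundary to the $0$-framed neighborhood $S^1 \times D^2$ of $K$ in $S^3$.

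Embedding $\Sigma$ into $S^1 \times D^2 \subset F \times D^2 \subset B^4$ and taking the $\ell$ disjoint parallel copies $F \times \{q_i\}$, whose boundaries in $S^3$ are exactly the longitudes $\lambda_1, \ldots, \lambda_\ell$, I glue along the matching boundary components to form $\Sigma' = \Sigma \cup \bigcup_i (F \times \{q_i\})$, a smoothly embedded surface in $B^4$ with $\d \Sigma' = P(K,0)$. An Euler characteristic count
\[
\chi(\Sigma') = \bigl(1 - 2g_3^{\rel}(P) - \ell\bigr) + \ell\,\bigl(1 - 2g_4(K)\bigr) = 1 - 2\bigl(g_3^{\rel}(P) + \ell\, g_4(K)\bigr)
\]
shows $\Sigma'$ has genus $g_3^{\rel}(P) + \ell\, g_4(K)$, so $g_4(P(K,0)) \le g_3^{\rel}(P) + \ell\, g_4(K)$. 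The reverse inequality $\tau(P(K,0)) \le g_4(P(K,0))$ combined with the formula above then forces equality throughout. The main obstacle is arranging the $\ell$ parallel copies of $F$ to be disjoint inside $B^4$ with boundaries matching the longitudinal components of $\d \Sigma$; this is precisely what the normal bundle triviality above delivers, and it is the essential $4$-manifold topological input in the argument.
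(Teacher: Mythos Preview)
Your proof is correct and follows essentially the same route as the paper: reduce to $\veps(K)=1$ (the paper cites \cite{HomTauCables}*{Corollary~4} for this, which uses the full hypothesis $\tau(K)=g_4(K)>0$ rather than your bare claim that $\veps(K)=-1$ forces $\tau(K)\le 0$), apply Theorem~\ref{thm: tau} together with Proposition~\ref{prop:R_ell/2-intro} to compute $\tau(P(K,0))$, exhibit a slice surface of the same genus by capping the relative Seifert surface with parallel copies of a minimal slice surface for $K$, and sandwich via $\tau\le g_4$. Your careful treatment of the normal-bundle triviality fleshes out what the paper simply calls ``easy.''
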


As an example, note that if $K$ is itself an L-space knot, then $\tau(K)=g_4(K)$.

Our proof of Theorem~\ref{thm:4-ball-genus-intro} uses our formula for $\tau(P(K,0))$ from Theorem~\ref{thm:main-intro}, together with the description of the Thurston norm of the link $L_P$ in terms of the $H$-function of $L_P$, due to Liu \cite{BLiuCFK-LSpace}.

We are also able to prove an additional result for L-space satellites with winding number 0:

\begin{prop}
\label{prop:slice-genus-bounds-winding-number-0-intro}
 Suppose that $K\subset S^3$ is a knot with $g_4(K)=\tau(K)>0$, and $P$ is an L-space satellite operator with winding number 0. Then
\[
g_4(P(K,n))=g_3^{\rel}(P)
\]
for all $n<2\tau(K)$. 
\end{prop}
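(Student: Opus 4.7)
The plan is to show that the lower bound $g_4(P(K,n))\ge g_3^{\rel}(P)$ comes directly from Theorem~\ref{thm:main-intro}, while the upper bound $g_4(P(K,n))\le g_3^{\rel}(P)$ is a general fact about winding number zero satellites that holds for any companion and any framing. The two bounds then match in the range $n<2\tau(K)$.

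To prepare the input for Theorem~\ref{thm:main-intro}, I first note that the hypothesis $g_4(K)=\tau(K)>0$ forces $\veps(K)=1$: indeed, $\veps(K)=0$ is known to imply $\tau(K)=0$, and $\veps(K)=-1$ is incompatible with $\tau(K)>0$. Hence we are in case (1) of Theorem~\ref{thm:main-intro}. Substituting $\ell=0$ and $n<2\tau(K)$, the cable-type corrections $\frac{(\ell-1)\ell}{2}n$ and $\ell\tau(K)$ both vanish, and the formula collapses to
\[
\tau(P(K,n))=R_{0}-\tfrac{0}{2}=R_{0}.
\]
By Proposition~\ref{prop:R_ell/2-intro} with $\ell=0$, $R_{0}=g_3^{\rel}(P)$. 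The Ozsv\'ath--Szab\'o inequality $|\tau|\le g_4$ then yields $g_4(P(K,n))\ge g_3^{\rel}(P)$.

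For the matching upper bound, let $\Sigma\subset S^1\times D^2$ be a properly embedded surface of genus $g_3^{\rel}(P)$ with $\partial\Sigma=P$ (no longitudinal boundary components appear since $\ell=0$). Embedding $S^1\times D^2$ as an $n$-framed tubular neighborhood of $K\subset S^3$, the surface $\Sigma$ becomes a Seifert surface for $P(K,n)\subset S^3$, and pushing its interior radially into $B^4$ produces a properly embedded smooth surface in $B^4$ of genus $g_3^{\rel}(P)$ with boundary $P(K,n)$. Therefore $g_4(P(K,n))\le g_3^{\rel}(P)$, matching the lower bound. The only genuine obstacle is the extraction of the formula $\tau(P(K,n))=R_0$ from Theorem~\ref{thm:main-intro}, which is what restricts the conclusion to the range $n<2\tau(K)$; for $n\ge 2\tau(K)$ the theorem instead delivers $\tau(P(K,n))=g_3(P)\le g_3^{\rel}(P)$, and the upper bound argument no longer pins down $g_4$.
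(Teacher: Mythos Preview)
Your argument follows essentially the same route as the paper's proof: an upper bound from the relative Seifert surface in the solid torus, and a lower bound from $\tau(P(K,n))=R_0=g_3^{\rel}(P)$ via Theorem~\ref{thm:main-intro} and Proposition~\ref{prop:R_ell/2-intro}.

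One correction is needed. Your justification that $\veps(K)=1$ is not quite right: the assertion ``$\veps(K)=-1$ is incompatible with $\tau(K)>0$'' is false in general (there exist knots with $\veps=-1$ and $\tau>0$). What actually forces $\veps(K)=1$ here is the full hypothesis $\tau(K)=g_4(K)>0$, via Hom's result \cite{HomTauCables}*{Corollary~4}; this is how the paper argues it. With that citation in place, your proof is complete and matches the paper's.
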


To illustrate the above result, we consider the positively clasped Whitehead pattern  $P=\Wh_+$. By \cite[Theorem 1.4]{HeddenWhiteheadDoubles}, if $n<2\tau(K)$ then $1=|\tau(\Wh_+(K,n))| \leq g_4(\Wh_+(K,n)) \leq g_3(\Wh_+(K,n))=1,$ so we obtain $g_4(\Wh_+(K,n))=1=g_3^{\rel}(\Wh_+)$ for any $K\subset S^3$.  Therefore Proposition~\ref{prop:slice-genus-bounds-winding-number-0-intro} is a natural generalization of Hedden's result.

In Section \ref{ss:2bridgelinks} we introduce a family of L-space satellite operators $P_{r,q}$ for odd integers $r$ and $q$ such that $r,q>1$ with winding number $\frac{r-q}{2}$ (see Figure \ref{fig:twobridgefamily} for an example). When $r=q$, this gives an infinite family of L-space satellite operators with winding number $0$, where $r=q=3$ recovers the Whitehead double.

\subsection{Patterns with minimal wrapping number}
\label{sec:minimal-wrapping-intro}

Recall that the \emph{wrapping number} of a pattern $P$ is the minimal geometric intersection number between $P$ and a meridianal disk in $S^1\times D^2$. By definition, the wrapping number is at least $|\ell|=|\lk(P,\mu)|$. Our formulas simplify somewhat when the wrapping number is equal to $|\ell|$. In this case, we will say that $P$ has \emph{minimal wrapping number}. 

If $P$ is braided about $\mu$ (for example if $P$ is a cabling operator) then $P$ has minimal wrapping number.

 We prove the following:

\begin{prop}
\label{prop:minimally-wrapped-seifert-genus-intro}
 Suppose that $P$ is an L-space pattern with minimal wrapping number. Then 
 \[
 R_{\sfrac{\ell}{2}}-|\ell|/2=g_3^{\rel}(P)=g_3(P).
 \]
\end{prop}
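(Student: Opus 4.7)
The plan is to combine Proposition~\ref{prop:R_ell/2-intro}, which already gives $R_{\sfrac{\ell}{2}}-|\ell|/2 = g_3^{\rel}(P)$, with a topological argument showing that $g_3^{\rel}(P) = g_3(P)$ under the minimal wrapping hypothesis. Without loss of generality, I assume $\ell \ge 0$.

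First I would check the unconditional inequality $g_3(P) \le g_3^{\rel}(P)$ by capping off. Given a minimal-genus relative Seifert surface $\Sigma \subset W := S^1\times D^2$ bounded by $P$ and $\ell$ longitudes of $W$, each such longitude is a meridian of the complementary solid torus $V = \overline{S^3\setminus W}$ and so bounds a disk in $V$. Capping off the $\ell$ longitudinal boundary components of $\Sigma$ with such disks yields a Seifert surface for $P\subset S^3$ of genus $g(\Sigma)$, establishing the inequality.

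For the reverse inequality, the minimal wrapping hypothesis lets me isotope $P$ into closed $\ell$-braid position about the core of $V$, so that some meridianal disk $D$ of $W$ meets $P$ in exactly $\ell$ points, all of the same sign. The central claim is that there exists a minimal-genus Seifert surface $F$ for $P$ in $S^3$ with $|F\cap\mu| = \ell$. Granting this, the surface $\Sigma := F\cap W$ is a relative Seifert surface for $P$ in $W$ bounded by $P$ and $\ell$ longitudes of $W$, and of genus $g(F) = g_3(P)$; this yields $g_3^{\rel}(P) \le g_3(P)$.

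The hard part is producing such an $F$. Starting from an arbitrary minimal-genus Seifert surface $F_0$ for $P$, I would put it in general position with $D$ and first eliminate the closed curves of $F_0\cap D$ by genus-preserving compressions, noting that minimality of $g(F_0)$ rules out compressions that would force an increase in genus. The remaining intersection consists of arcs whose endpoints lie in $(D\cap P)\cup (\mu \cap F_0)$. An outermost-arc argument in $D$, exploiting both the same-sign property of $D\cap P$ and the minimality of $g(F_0)$, should permit each such arc to be removed by isotoping $F_0$ across a subdisk of $D$ until the final configuration satisfies $|F\cap\mu| = \ell$. The same-sign hypothesis is essential here, since otherwise an outermost arc in $D$ could join two oppositely signed points of $D \cap P$ and obstruct the reduction, allowing extra intersections of $F$ with $\mu$ to persist.
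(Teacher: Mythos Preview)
Your approach is genuinely different from the paper's. The paper obtains both equalities at once by setting $n=0$ in Lemma~\ref{lem:3-genus-n-framed}, whose proof computes $g_3^{\rel}(P,-n)$ via Ozsv\'{a}th and Szab\'{o}'s Thurston norm detection together with the description of the support of $\widehat{\HFL}(L_P)$ for L-space links (Lemma~\ref{lem:H-function-v-HFL-support}); combined with Corollary~\ref{cor:Thurston-norm-N-R} this gives the result. Your route is purely topological and, if made rigorous, would prove $g_3^{\rel}(P)=g_3(P)$ for \emph{any} minimally wrapped pattern, with no L-space hypothesis---a stronger statement than what the paper proves.

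That said, the sketch has real gaps. First, minimal wrapping does not put $P$ in closed braid position; it only provides a single meridional disk $D$ with $|D\cap P|=\ell$ same-sign points (which is all you actually use, so this overstatement is harmless). More seriously, the outermost-arc step is only asserted: arcs of $D\cap F$ may have endpoints on $D\cap P$ as well as on $\mu$, and you must argue carefully which arc types the same-sign hypothesis rules out and why the remaining ones can be eliminated by genus-preserving isotopies rel $P$. Finally, even granting $|F\cap\mu|=\ell$, the conclusion $g(F\cap W)=g(F)$ is not immediate: you also need $F\cap V$ to consist of $\ell$ meridional disks of $V$, not some more complicated surface with the same boundary. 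This last point can be recovered from minimality of $g(F)$---replace $F\cap V$ by $\ell$ disjoint meridional disks and compare Euler characteristics---but it needs to be said. The overall strategy is reasonable; what is missing is the execution of the cut-and-paste argument.
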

Note that Proposition~\ref{prop:minimally-wrapped-seifert-genus-intro} can be used to simplify some of the computations in Theorem~\ref{thm:main-intro}. In particular, if $\veps(K)>0$ and $P$ is a minimally wrapped L-space satellite operator, then the first part of Theorem~\ref{thm:main-intro} now reads
\[
\tau(P(K,n))=g_3(P)+\frac{(\ell-1)\ell}{2} n+\ell \tau(K),
\]
whenever $\ell\ge 0$. This quickly recovers Hom's formula \cite{HomTauCables} for the $\tau$ invariant of a cable of a knot $K$ when $\veps(K)>0$. Using duality, one may derive the formula when $\veps(K)<0$ from the formula for $\veps(K)>0$. See Section~\ref{sec:examples} for more details.

When $P$ has minimal wrapping number, we can also extend Theorem~\ref{thm:4-ball-genus-intro} to handle the case that $n\ge 0$. We will prove the following:

\begin{prop}
\label{prop:minimal-wrapping-number-slice-genus-general-n} Suppose that $P$ is an L-space satellite operator which has minimal wrapping number and let $n\ge 0$. Assume that $\ell=\lk(P,\mu)\ge 0$. If $\tau(K)=g_4(K)>0$, then
\[
g_4(P(K,n))=\tau(P(K,n))=g_3(P)+\frac{\ell(\ell-1)}{2} n+\ell \tau(K).
\]
\end{prop}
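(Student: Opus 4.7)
The plan is to bound $g_4(P(K,n))$ from below by $\tau(P(K,n))$, which is automatic, and from above by the same quantity via an explicit surface construction. Since $\tau(K)=g_4(K)>0$, in particular $\tau(K)>0$, one has $\veps(K)=1$ (by Hom's basic results: $\veps(K)=0$ forces $\tau(K)=0$, while $\veps(K)=-1$ forces $\tau(K)\le 0$), so Theorem~\ref{thm:main-intro}(1) applies. Using Proposition~\ref{prop:minimally-wrapped-seifert-genus-intro} to replace $R_{\sfrac{\ell}{2}}-\tfrac{\ell}{2}$ by $g_3^{\rel}(P)=g_3(P)$, both subcases $n<2\tau(K)$ and $n\ge 2\tau(K)$ of Theorem~\ref{thm:main-intro}(1) collapse to
\[
\tau(P(K,n)) \;=\; g_3(P) + \tfrac{\ell(\ell-1)}{2}\, n + \ell\,\tau(K).
\]

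For the matching upper bound, I construct a properly embedded, oriented slicing surface for $P(K,n)$ in $B^4$ of this genus, assembled from three pieces. First, by Proposition~\ref{prop:minimally-wrapped-seifert-genus-intro} there is a relative Seifert surface $F\subset S^1\times D^2$ of genus $g_3^{\rel}(P)=g_3(P)$ whose boundary consists of $P$ together with $\ell$ parallel copies of the longitude $S^1\times\{\theta\}$. Embedding $S^1\times D^2$ as a tubular neighborhood of $K$ via the $n$-framing and pushing $F$ into the interior of $B^4$, the boundary of $F$ on $\partial B^4=S^3$ is $P(K,n)$ together with $\ell$ parallel $n$-framed longitudes of $K$. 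Second, take a genus $g_4(K)=\tau(K)$ slicing surface $\Sigma\subset B^4$ for $K$ and, using its Seifert framing, form $\ell$ disjoint parallel copies $\Sigma_1,\dots,\Sigma_\ell$ of total genus $\ell\,\tau(K)$, whose boundary on $S^3$ is the $0$-framed $\ell$-cable of $K$. Third, in a collar of $\partial B^4$ near a small $3$-ball containing an arc of $K$, insert a twist cobordism converting the $0$-framed $\ell$-cable to the $n$-framed $\ell$-cable; for $n\ge 0$ this is realized by a connected oriented surface of genus exactly $\tfrac{\ell(\ell-1)}{2}n$. Gluing the three pieces along their matching $\ell$-cable boundary components (and tubing into a single connected component if necessary) produces a slicing surface for $P(K,n)$ of total genus $g_3(P)+\ell\,\tau(K)+\tfrac{\ell(\ell-1)}{2}n$.

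Combining the two inequalities completes the proof. The main obstacle is the third step of the construction: one must realize $n$ full twists on $\ell$ parallel strands as an oriented cobordism in a $4$-dimensional collar of $S^3$ of genus exactly $\tfrac{\ell(\ell-1)}{2}n$, compatible with the Seifert framings on the $\Sigma_i$ and with the framing on $\partial F$. This is a standard construction (a single full twist on $\ell$ strands admits a Seifert surface in $S^3\times I$ of genus $\binom{\ell}{2}$, built from $\ell(\ell-1)$ band attachments), but the gluing must be set up so that framings and orientations match everywhere and the resulting surface is connected.
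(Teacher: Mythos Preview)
Your approach is essentially the same as the paper's: the lower bound comes from the $\tau$-computation (Theorem~\ref{thm:main-intro} combined with Proposition~\ref{prop:minimally-wrapped-seifert-genus-intro}), and the upper bound comes from an explicit surface construction. The paper's one-line proof packages the upper bound differently---it invokes the relative Seifert genus $g_3^{\rel}(P,-n)$ computed in Lemma~\ref{lem:3-genus-n-framed} via the Thurston norm, then caps with slice surfaces---but your twist-cobordism construction is exactly the elementary half of that lemma, so the arguments coincide in content.

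Two points deserve correction. First, your justification that $\veps(K)=1$ is not right: the implication ``$\veps(K)=-1$ forces $\tau(K)\le 0$'' is not a theorem. What you actually need is Hom's result (cited in the paper's proof of Theorem~\ref{thm:4-ball-genus} as \cite{HomTauCables}*{Corollary~4}) that $\tau(K)=g_4(K)>0$ implies $\veps(K)=1$; this genuinely uses the hypothesis $\tau(K)=g_4(K)$, not merely $\tau(K)>0$. Second, the twist cobordism built from $\ell(\ell-1)n$ bands does not have genus $\tfrac{\ell(\ell-1)}{2}n$: as a connected surface with $2\ell$ boundary circles it has genus $1-\ell+\tfrac{\ell(\ell-1)}{2}n$. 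What is true (and what you need) is that $-\chi(C)/2=\tfrac{\ell(\ell-1)}{2}n$, and this is precisely the contribution to the genus of the glued-up surface $S$, since the Euler characteristics add under gluing along circles. Your final total genus is correct.
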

\begin{proof} The statement is an immediate consequence of Theorems~\ref{thm:4-ball-genus-intro} and Proposition~\ref{prop:minimally-wrapped-seifert-genus-intro}.
\end{proof}

	\subsection{L-space satellite operators and homomorphism obstructions}
	
	Using our computation of $\tau$ from Theorem~\ref{thm: tau}, as well as our result from \cite{CZZ} that the $H$-function of $L_P$ determines $\cCFL(L_P)$ when $L_P$ is an L-space link, we show that L-space satellite operators satisfy a conjecture by Hedden and Pinz\'on-Caicedo \cite{Hedden-PinzonCaicedo} about when satellite operators act by group homomorphisms on the concordance group. We let $\scC$ denote the smooth concordance group of knots in $S^3$. This group is generated by oriented knots, modulo smooth concordance. The group operation is connected sum.

 Hedden and Pinz\'on-Caicedo conjecture that the only homomorphisms induced by satellite operations are the following:
	\begin{enumerate}
	\item The zero map.
	\item The identity map.
	\item The orientation reversal map (i.e. the map which sends $K$ to itself with orientation reversed).
	\end{enumerate}

	 	In \cite{Miller_Homomorphism_Obstruction}, \cite{LMPCsatellitehomomorphisms} and \cite{JKMhomomorphisms}, the conjecture is proven for several families of patterns. In \cite{Miller_Homomorphism_Obstruction} and \cite{LMPCsatellitehomomorphisms} the conjecture is proven for patterns satisfying certain technical conditions involving branched covers of the pattern, while in \cite{JKMhomomorphisms} it is proven for all patterns where the winding number is even, but not divisible by 8.
	 	
	 	The signature function $\sigma_K(\omega)$ of Levine-Tristram \cite{Levine_Signature} \cite{Tristram_Signature} also provides a simple homomorphism obstruction. We recall the formula of Levine-Tristram signature of satellite knots in  \cite{LitherlandSignature} that 
	 	\[
	 	\sigma_{P(K,0)}(\omega)=\sigma_{P(U,0)}(\omega)+\sigma_{K}(\omega^{\ell}).
	 	\]
	 	Therefore, if $\sigma_{P(U,0)}(\omega)$ is non-zero on a set of positive measure, then $P$ cannot act by a homomorphism on the concordance group. 
	 
	 Using our results about $\tau(P(K,n))$ for L-space satellite operators, we prove the following:
	 
	 \begin{thm}
	 \label{thm:concordance-homomorphism-intro}
	 	 If $L_P = \mu \cup P$ is an $L$-space link, and the map 
	 	\begin{align*}
	 		P: \scC &\longrightarrow \scC\\
	 		K &\longmapsto P(K,n)
	 	\end{align*}
	 	is a group homomorphism for some $n\ge 0$, then 
	 	$L_P$ is the $2$-component unlink, the positive Hopf link, or the negative Hopf link. In particular, the satellite operator $P$ induces either the trivial map, the identity map, or the orientation-reversing map.
	 	\label{thm:non homomorphism}
	 \end{thm}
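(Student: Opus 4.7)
The plan is to derive strong constraints on the L-space satellite operator $P$ from the hypothesis that it induces a group homomorphism on $\scC$ for some $n\ge 0$, and then do a case analysis on the winding number $\ell=\lk(\mu,P)\ge 0$ to identify $L_P$ with one of the three links in the statement.

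First I would use that $P(U,n)$ is concordant to the unknot, so $\tau(P(U,n))=0$. The $\veps=0$, $n\ge 0$ case of Theorem~\ref{thm: tau} gives
\[
0=g_3(P)+\tfrac{\ell(\ell-1)}{2}n,
\]
forcing $g_3(P)=0$ and either $\ell\in\{0,1\}$ or $n=0$. To constrain the relative genus of the pattern, I would plug in $K=T_{2,3}^{\# m}$ (with $\veps=1$, $\tau=m$) for $m$ large enough that $n<2m$. Part~(1) of Theorem~\ref{thm: tau}, combined with Proposition~\ref{prop:R_ell/2-intro} and $\tfrac{\ell(\ell-1)}{2}n=0$, gives
\[
\tau(P(T_{2,3}^{\# m},n))=g_3^{\rel}(P)+\ell m.
\]
Comparing $m$ with $2m$ via the homomorphism identity $\tau(P(T_{2,3}^{\# 2m},n))=2\tau(P(T_{2,3}^{\# m},n))$ collapses to $g_3^{\rel}(P)=0$.

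I would then split into three cases. When $\ell=0$, the equality $g_3^{\rel}(P)=0$ says that $P$ bounds an embedded disk in $S^1\times D^2$, so by the loop theorem $P$ is isotopic to an unknot contained in a ball disjoint from some meridional disk, and $L_P$ is the $2$-component unlink. When $\ell=1$, $g_3^{\rel}(P)=0$ means $P$ cobounds an embedded annulus with a longitude of $S^1\times D^2$, so $P$ is isotopic to that longitude and $L_P$ is the positive Hopf link (the negative Hopf link arises from having originally chosen the opposite orientation on $P$, which we normalized away). Finally, when $\ell\ge 2$ (so $n=0$), I would derive a contradiction via Hom's $\veps$-invariant. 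The unit-difference property of $H_L$ in each coordinate for an L-space link implies the monotonicity $R_{t-1}\ge R_t$; combined with $R_{\sfrac{\ell}{2}}=\ell/2$ this gives $R_{\sfrac{\ell}{2}-1}\ge \ell/2$, which is hypothesis \eqref{eq:condition epsilon} of Theorem~\ref{thm:epsilon of satellite}. That theorem then forces $\veps(P(-T_{2,3},0))\ne -1$. On the other hand, the homomorphism condition combined with the $\veps=0$ formula applied to the slice knot $T_{2,3}\#(-T_{2,3})$ yields
\[
\tau(P(-T_{2,3},0))=-\tau(P(T_{2,3},0))=-\ell<0,
\]
and since $\tau(K)<0$ forces $\veps(K)=-1$, this contradicts the previous conclusion.

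The main technical step I expect to be delicate is establishing the monotonicity $R_{t-1}\ge R_t$ for an L-space link: this should follow from the unit-difference constraints on $H_L$ in both variables by a direct case-by-case argument, but one must separately handle the situations $H_L(t-1,r)=H_L(t,r)$ and $H_L(t-1,r)=H_L(t,r)+1$ and then iterate until a drop occurs. The other inputs---the loop theorem promoting $g_3^{\rel}(P)=0$ to the isotopy statements used when $\ell\in\{0,1\}$, and the standard fact that $\tau<0$ forces $\veps=-1$ (which follows from mirror symmetry of $\veps$ together with the rule $\tau>0\Rightarrow\veps=1$)---are routine.
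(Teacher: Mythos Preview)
Your argument for the cases $\ell\in\{0,1\}$ is a clean topological alternative to the paper's route (which pins down the $H$-function of $L_P$ and then invokes $\widehat{\HFL}$-detection of the unlink and Hopf links via Liu's work). Using $g_3^{\rel}(P)=0$ to produce an embedded disk or annulus in the solid torus and then promoting this to an isotopy statement is legitimate and arguably more direct than the paper's Floer-theoretic identification.

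The genuine gap is in your treatment of $\ell\ge 2$. The monotonicity you assert, $R_{t-1}\ge R_t$, is false in general for two-component $L$-space links with an unknotted component: by Lemma~\ref{lem:shape of top generators} the sequence $R_t$ is \emph{unimodal} with its maximum at $t=\ell/2$, so for $t\le \ell/2$ one has $R_{t-1}\le R_t$, the reverse of what you claim. (The Mazur pattern in Figure~\ref{fig:example of shape of R_t} already illustrates this: $R_{-1/2}=1/2<3/2=R_{1/2}$.) Consequently, from $R_{\sfrac{\ell}{2}}=\ell/2$ you only get $R_{\sfrac{\ell}{2}-1}\le \ell/2$, not $\ge$, and you cannot verify hypothesis~\eqref{eq:condition epsilon} of Theorem~\ref{thm:epsilon of satellite}. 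In fact, under the constraints you have established one can only prove $R_{\sfrac{\ell}{2}-1}\ge \ell/2-1$ (this is Remark~\ref{rem:extra assumption}(2), since $g_3(P)=0$), which is exactly condition~\eqref{eq:extra condition when epsilon=0} but \emph{not} the stronger~\eqref{eq:condition epsilon}.

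Your approach is salvageable, however, by replacing Theorem~\ref{thm:epsilon of satellite} with the $\veps(K)=-1$ case of Theorem~\ref{thm: tau}, whose hypothesis~\eqref{eq:extra condition when epsilon=0} you do satisfy. Applying that formula to $K=-T_{2,3}$ (so $\veps=-1$, $\tau=-1$, $n=0>2\tau(K)+1=-1$) gives $\tau(P(-T_{2,3},0))=\min\{R_{\sfrac{\ell}{2}-1}+\ell/2,\,\ell\}-\ell$, and for $\ell\ge 2$ both entries of the minimum are at least $\ell-1\ge 1$, contradicting the value $-\ell$ you obtained from the homomorphism identity. The paper proceeds similarly in spirit: it uses the $\veps=-1$ formula on knots with $\tau$ of both signs to force $R_{\sfrac{\ell}{2}-1}=-\ell/2$, which combined with $R_{\sfrac{\ell}{2}-1}\ge \ell/2-1$ yields $\ell\le 1$.
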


\subsection{Further applications}

In Section~\ref{sec:inequality} we prove an additional result, which is independent of our computation of $\tau$. We prove the following general inequality about the concordance invariants of L-space satellites:

\begin{prop}\label{prop:inequality-intro} Suppose that $P$ is an L-space satellite with linking number $\ell\ge 0$ and $n\in \Z$. For any knot $K\subset S^3$, there are inequalities
\begin{enumerate}
\item $\tau(P(K,n))\ge \tau(K_{\ell,\ell n+1})$.
\item $V_k(P(K,n))\ge V_k(K_{\ell,\ell n+1})$ for all $k\in \Z$.
\item $\Upsilon_{P(K,n)}(t)\le \Upsilon_{K_{\ell,\ell n+1}}(t)$ for all $t\in [0,2]$. 
\end{enumerate}
\end{prop}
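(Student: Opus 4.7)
The plan starts from the observation that $K_{\ell, \ell n+1}$ is itself an L-space satellite: it coincides with $C_{\ell, 1}(K, n)$, where $C_{\ell, 1}$ is the $(\ell, 1)$-torus knot pattern, and its associated two-component link $L_{C_{\ell, 1}}$ is the torus link $T(2, 2\ell)$, which is an L-space link. Under the bordered satellite formula \eqref{eq:intro-description-satellite}, both knot Floer complexes appearing in the proposition take the form $\cX_n(K)^{\cK} \boxtimes {}_{\cK}\cH_-^{\cK} \boxtimes {}_{\cK}\cX(L)^{\bF[W,Z]}$, for $L = L_P$ and $L = T(2,2\ell)$ respectively; they differ only in the last tensor factor.

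The goal is then to produce a filtered $DA$-bimodule map between ${}_{\cK}\cX(T(2, 2\ell))^{\bF[W,Z]}$ and ${}_{\cK}\cX(L_P)^{\bF[W,Z]}$, in the direction appropriate for the stated inequalities. Tensoring on the left with $\cX_n(K)^{\cK} \boxtimes {}_{\cK}\cH_-^{\cK}$ would yield a filtered chain map between $\cCFK(K_{\ell, \ell n+1})$ and $\cCFK(P(K, n))$, and all three inequalities would follow by standard principles for how $\tau$, $V_k$, and $\Upsilon$ respond to filtered chain maps compatible with the $W$- and $Z$-adic filtrations.

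To construct the bimodule map, I would use that $L_P$ and $T(2, 2\ell)$ are both L-space links of linking number $\ell$, so the algorithm of \cite{CZZ} expresses each bimodule entirely in terms of the link's $H$-function. The comparison should reduce to a pointwise inequality $H_{T(2, 2\ell)}(s_1, s_2) \le H_{L_P}(s_1, s_2)$ on the common lattice. This inequality in turn should follow from Liu's expression of the $H$-function in terms of the Thurston norm of the link complement \cite{BLiuCFK-LSpace}, combined with the observation that $T(2, 2\ell)$ realizes the minimum Thurston norm among two-component L-space links of fixed linking number $\ell$.

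The main obstacle is the explicit $A_\infty$-construction of the filtered bimodule map. Even granted the $H$-function inequality as a structural input, assembling a chain map compatible with the actions of the algebra $\cK$ will require careful bookkeeping of generators and higher actions drawn from the algorithmic description in \cite{CZZ}. A cleaner alternative would be to exhibit a strong ribbon concordance from $K_{\ell, \ell n+1}$ to $P(K,n)$ in $S^3$ (for instance, from a ribbon concordance between the patterns $C_{\ell,1}$ and $P$ in the solid torus), since the three inequalities would then follow from the corresponding monotonicity theorems for ribbon concordance; however, it is unclear whether such a ribbon concordance exists in the generality required.
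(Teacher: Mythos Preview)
Your strategy is exactly the paper's: build a $DA$-bimodule morphism ${}_{\cK}\cX(L_P)^{\bF[W,Z]}\to{}_{\cK}\cX(T(2,2\ell))^{\bF[W,Z]}$ that becomes a homotopy equivalence after inverting $U$, tensor on the left with $\cX_n(K)^{\cK}\boxtimes{}_{\cK}\cH_-^{\cK}$, and deduce the three inequalities from the resulting local map $\cCFK(P(K,n))\to\cCFK(K_{\ell,\ell n+1})$. Two implementation points are worth contrasting.

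For the $H$-function inequality $H_{L_P}\ge H_{T(2,2\ell)}$, the paper does \emph{not} go through the Thurston norm. It instead uses the closed form
\[
H_{T(2,2\ell)}(s_1,s_2)=\max\bigl\{H_U(s_1-\ell/2),\ -s_1-s_2+H_U(-s_1-\ell/2)\bigr\}
\]
and bounds each branch below $H_{L_P}$ using only the elementary properties of Lemma~\ref{lem: properties of H function}: stabilization plus monotonicity for the first branch, symmetry plus stabilization plus the bounded-gap property for the second. Your Thurston-norm route is not obviously wrong, but Liu's result controls where the $H$-function stabilizes (via the support of $\widehat{\HFL}$), and extracting a pointwise inequality from a Thurston-norm comparison would need further argument; the elementary route is both shorter and self-contained.

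For the $A_\infty$ construction you flag as the main obstacle, the paper resolves it cleanly using the staircase structure. The inequality $H_{L_P}\ge H_{T(2,2\ell)}$ gives grading-preserving inclusions $H_*(\cC_t)\hookrightarrow H_*(\cC_t')$, which lift to chain maps $f_1^1$ since staircases are free resolutions of their homology. The higher $f_j^1$ are then produced by the same homotopy-obstruction scheme used to build ${}_{\cK}\cX(L_P)^{\bF[W,Z]}$ in \cite{CZZ}*{Section~5}: any two grading-preserving local maps between staircases are homotopic (\cite{CZZ}*{Lemma~5.6}), so $f_2^1$ exists, and algebraic-grading considerations (\cite{CZZ}*{Lemma~5.8}) force the length-$\ge 2$ relations to hold automatically. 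No ribbon-concordance argument is needed.
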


In the above, $V_k$ denotes Rasmussen's local $h$-invariant \cite{RasmussenGodaTeragaito}, and $\Upsilon_K(t)$ denotes the concordance invariant of Ozsv\'{a}th, Stipsicz and Szab\'{o} \cite{OSSUpsilon}. 

When $\ell=0$, we observe that the cabled knot $K_{\ell,\ell n+1}$ is always the unknot, so we immediately obtain:

\begin{cor}
\label{cor:linking-number-0-intro}
 If $P$ is a winding number 0 L-space satellite operator, then
\[
\tau(P(K,n))\ge 0,\quad \text{and}\quad 
 \Upsilon_{P(K,n)}(t)\le 0. 
\]
\end{cor}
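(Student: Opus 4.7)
The plan is to reduce the statement directly to Proposition~\ref{prop:inequality-intro} by identifying the auxiliary cable $K_{\ell,\ell n+1}$ in the special case $\ell=0$. First, I would unpack the definition of the $(p,q)$-cable: $K_{p,q}$ is the knot on $\partial\nu(K)$ representing the class $p\lambda+q\mu$, where $\lambda$ is the $0$-framed longitude and $\mu$ is the meridian of $K$. Substituting $\ell=0$ gives $K_{0,\ell n+1}=K_{0,1}$, which is a meridian of $K$. Since any meridian of a knot in $S^3$ bounds an embedded disk in $S^3$, this cable is the unknot $U$, independently of $K$ and $n$.

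Next, I would record the values of the relevant concordance invariants for $U$: we have $\tau(U)=0$ and $\Upsilon_U(t)\equiv 0$ for $t\in[0,2]$, both of which are standard computations from the fact that $\cCFK(U)\simeq \bF[W,Z]$ as a bigraded complex. With these identifications in hand, parts (1) and (3) of Proposition~\ref{prop:inequality-intro} (applied with $\ell=0$) give
\[
\tau(P(K,n))\ \ge\ \tau(K_{0,1})\ =\ \tau(U)\ =\ 0
\]
and
\[
\Upsilon_{P(K,n)}(t)\ \le\ \Upsilon_{K_{0,1}}(t)\ =\ \Upsilon_U(t)\ =\ 0
\]
for every $t\in[0,2]$, which is exactly the content of the corollary.

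There is no substantive obstacle here; the only thing to check carefully is the geometric identification $K_{0,1}=U$, and that the hypothesis of Proposition~\ref{prop:inequality-intro} (namely $\ell\ge 0$) is satisfied trivially when $\ell=0$. Everything else is a direct substitution.
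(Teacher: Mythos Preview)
Your proof is correct and follows essentially the same approach as the paper: identify $K_{0,1}$ as the unknot and then apply Proposition~\ref{prop:inequality-intro} (the paper uses the restatement in Corollary~\ref{cor:inequality}). You simply give a bit more detail on why $K_{0,1}=U$ and on the values $\tau(U)=0$, $\Upsilon_U\equiv 0$.
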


We show Proposition~\ref{prop:inequality-intro}, by showing that if $P$ is an L-space satellite operator of winding number $\ell\ge 0$, then there exists a natural map of bimodules
\[
{}_{\cK} \cX(L_P)^{\bF[W,Z]}\to {}_{\cK} \cX(T(2,2\ell))^{\bF[W,Z]}.
\]
We study the above map to extract the inequalities in Proposition~\ref{prop:inequality-intro}.

	\subsection{Organization}
	In Section \ref{sec:H function}, we recall some background on knot and link Floer homology, in particular the notion of the $H$-function. In Section \ref{sec:surgery}, we review the notion of link surgery modules. In Section \ref{sec: DA bimodule}, we study in detail the link surgery bimodules of the link $L_P$ associated with an L-space satellite operator $P$. In Section \ref{sec: tau computation}, we compute $\tau(P(K,n))$. In Section \ref{sec:epsilon}, we discuss the inequality of the $\veps$-invariant for $P(K,n)$. In Section \ref{sec: Thurston norm}, we examine the relation between our formula with the Thurston norm and slice genus. In Section \ref{sec:examples}, we perform some example computations of $\tau(P(K,n))$ using our formula. In Section \ref{sec: non homomorphism}, we prove the homomorphism obstruction result for L-space satellite operators.  In Section \ref{sec:inequality}, we describe a local inequality of $P(K,n)$. 
	\subsection{Acknowledgements} 
	
	We thank Jennifer Hom and Yi Ni for helpful discussions. DC was partially supported by the Simons Collaboration Grant on New Structures in Low Dimensional Topology. IZ was partially supported by NSF grants 2204375, 2513241 and a Sloan fellowship. HZ was supported by an AMS-Simons travel grant.

\section{Background on knot and link Floer homology}
\label{sec:H function}

\subsection{Knot Floer homology}

Knot Floer homology is a refinement of Heegaard Floer homology for knots, due independently to Ozsv\'{a}th and Szab\'{o} \cite{OSKnots}, and Rasmussen \cite{RasmussenKnots}. Link Floer homology is a generalization of this theory to links, due to Ozsv\'{a}th and Szab\'{o} \cite{OSLinks}.

Given a knot in $S^3$, we will consider a finitely generated, free chain complex $\cCFK(K)$ over the two-variable polynomial ring 
\[
R=\bF[W,Z].
\]
The chain complex $\cCFK(K)$ has a $\Z\times \Z$ Maslov bigrading, denoted $(\gr_{\ws}, \gr_{\zs})$. Here,
\[
(\gr_{\ws},\gr_{\zs})(W)=(-2,0)\quad \text{and} \quad (\gr_{\ws}, \gr_{\zs})(Z)=(0,-2).
\]
The \emph{Alexander grading} is defined to be
\[
A:=\frac{ \gr_{\ws}-\gr_{\zs}}{2}.
\]

If $L\subset S^3$ is an $n$-component link, we consider a version of link Floer homology, denoted $\cCFL(L)$, which takes the form of a finitely generated, free chain complex $\cCFL(L)$ over the ring 
\[
\bF[W_1,Z_1,\dots, W_n,Z_n].
\]
This chain complex has a $\Z\times \Z$-valued Maslov bigrading $(\gr_{\ws}, \gr_{\zs})$ and an $n$-component Alexander grading $A=(A_1,\dots, A_n)$. The Alexander grading takes values in the set
\begin{equation}
	\bH(L)=\prod_{i=1}^n\left( \Z+\frac{\lk(L_i,L\setminus L_i)}{2}\right),
	\label{eq:Alexander-grading-def}
\end{equation}
where $L_1,\dots, L_n$ denote the components of $L$. The Alexander grading satisfies
\[
\sum_{i=1}^{n}A_i = \frac{\gr_{\ws}-\gr_{\zs}}{2}.
\]

The differential has the following grading:
\[
(\gr_{\ws},\gr_{\zs})(\d)=(-1,-1)\quad \text{and} \quad A(\d)=(0,\dots, 0). 
\]

\subsection{Concordance invariants}
\label{sec:concordance invariants}

We now recall the definitions of several knot Floer  concordance invariants which appear in our paper. Firstly, we recall Ozsv\'{a}th and Szab\'{o}'s $\tau(K)$-invariant \cite{OS4ballgenus}. Write $\cCFK_{\bF[Z]}(K)$ for $\cCFK(K)/(W)$, and write $\cHFK_{\bF[Z]}(K)$ for $H_*\left( \cCFK_{\bF[Z]}(K)\right)$. In the literature, this is frequently denoted $\HFK^-(K)$. The localization $Z^{-1} \cHFK_{\bF[Z]}(K)$ can be identified with $\widehat{\HF}(S^3)\otimes \bF[Z,Z^{-1}]$, and therefore we have
\[
\cHFK_{\bF[Z]}(K)\iso \bF[Z]\oplus \Tors,
\] 
where $\Tors$ is an $\bF[Z]$-module which satisfies $Z^{N}\cdot \Tors=0$ for sufficiently large $N$.  The $\tau(K)$ invariant is defined as the Alexander grading of the generator of the tower $\bF[Z]$, or equivalently
\[
\tau(K)=\min \{A(\xs): \xs\in \cHFK_{\bF[Z]}(K), \, \xs \text{ is $Z$-non-torsion}\}. 
\]

We now recall Hom's $\veps$ invariant \cite{HomEpsilon}. Write $\hat{R}=\bF[W,Z]/(U)$, where $U=WZ$. We consider knot Floer complexes over $\hat{R}$, defined by
\[
\cCFK_{\hat{R}}(K):=\cCFK(K)\otimes {}_{\bF[W,Z]} \hat{R}.
\]

\begin{define} 
\label{def:local map}	
We say a chain map $F\colon \cCFK_{\hat{R}}(K_1)\to \cCFK_{\hat{R}}(K_2)$ is an \emph{$\hat{R}$-local map} if
\begin{enumerate}
\item The map $F$ preserves the $\gr_{\ws}$-grading and is homogeneous with respect to the $\gr_{\zs}$-grading.
\item The map $F$ map is an isomorphism on homology after quotienting by $\bF[Z]$-torsion.
\end{enumerate}
\end{define}
(This is called a \emph{local map} in \cite{DHSTmore}*{Definition~3.3}). 

Viewing $\hat{R}$ as $\cCFK_{\hat{R}}(U)$, where $U$ is the unknot, we say that $\veps(K)\in \{0,1\}$ if there is an $\hat{R}$-local map from $\hat{R}$ to $\cCFK_{\hat{R}}(K)$. We say that $\veps(K)\in \{ 0,-1\}$ if there is an $\hat{R}$-local map from  $\cCFK_{\hat{R}}(K)$ to $\hat{R}$. We say that $\veps(K)=0$ if and only if there are $\hat{R}$-local maps in both directions, and we set $\veps(K)$ to be $+1$ or $-1$ otherwise. For the equivalence with Hom's original definition, see \cite{DHSTmore}*{Remark~2.4}. 

\subsection{The $H$-function}

The $H$-function of a link $L\subset S^3$ takes the form of a map
\[
H_L\colon \bH(L)\to \Z^{\ge 0}.
\]
To define $H_L$, we consider a subcomplex
\[
\cCFL(L,\ve{s})\subset \cCFL(L),
\]
for each $\ve{s}\in \bH(L)$. The complex $\cCFL(L,\ve{s})$ is the subspace in Alexander grading $\ve{s}$. We write 
\[
U_i=W_iZ_i, \text{ for } i=1,\dots,n,
\] 
and note that the actions of $U_1,\dots, U_n$ preserve the $n$-component Alexander grading. Since the differential also preserves the Alexander grading, the complex $\cCFL(L,\ve{s})$ is a free, finitely generated chain complex over $\bF[U_1,\dots, U_{n}]$. 

It is not hard to see that for all $i,j$, the variables $U_i$ and $U_j$ induce chain homotopic endomorphisms of $\cCFL(L,\ve{s})$. See, e.g.,  \cite{BLZNonCuspidal}*{Equation~2.8}. 
Therefore, we view the homology group $\cHFL(L,\ve{s})$ as a module over $\bF[U]$, where $U$ acts by any of the $U_i$. By the link surgery formula by Manolescu and Ozsv\'{a}th \cite{MOIntegerSurgery}, we have 
\[\cHFL(L,\ve{s})\cong\bF[U]\oplus \Tors\] as $\bF[U]$-modules, where $\Tors$ is an $\bF[U]$-module which satisfies $U^{N}\cdot \Tors=0$ for sufficiently large $N$. (i.e., the $U$-torsion part.)

We make the following definition (which first appeared in \cite{GorskyNemethiAlgebraicLinks}):
\begin{define} 
	The map $H_L\colon \bH(L)\to \Z$ is defined via the equation
	\[H_L(\ve{s})=-\frac{1}{2} \max \{\gr_{\ws}(\xs): \xs\in \cHFL(L,\ve{s}), \xs \text{ is $U$-non-torsion}\}.\]
	\label{def:H function}
\end{define}
We now recall the definition of an $L$-space link:
\begin{define}
	If $L\subset S^3$ is a link, we say that $L$ is an \emph{L-space link} if $S^3_{\Lambda}(L)$ is an L-space for all sufficiently positive framings $\Lambda$.
\end{define}
The following is a helpful reformulation of the above definition. See, e.g., \cite{LiuLSpaceLinks}*{Proposition~1.11}:
\begin{lem} A link $L\subset S^3$ is an L-space link if and only if $\cHFL(L,\ve{s})\iso \bF[U]$ for all $\ve{s}\in \bH(L)$. Equivalently, $L$ is an L-space link if and only $\cHFL(L)$ is free as an $\bF[U]$-module. 
\end{lem}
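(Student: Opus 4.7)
The plan is to derive both equivalences by invoking the Manolescu--Ozsv\'{a}th link surgery formula, together with its large-surgery specialization, which matches the summands of $\HF^-$ of sufficiently positive integral surgeries on $L$ with the link Floer homology in individual Alexander gradings.

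First I would dispense with the equivalence of the two formulations. The link Floer homology decomposes as $\cHFL(L) = \bigoplus_{\ve{s} \in \bH(L)} \cHFL(L,\ve{s})$, and since $U=U_i$ preserves Alexander gradings, this is a direct sum decomposition as $\bF[U]$-modules. Hence $\cHFL(L)$ is free over $\bF[U]$ if and only if every summand is. Using the structure theorem noted in the excerpt, $\cHFL(L,\ve{s}) \iso \bF[U]\oplus \Tors$, so freeness of a summand is equivalent to vanishing of its torsion part, i.e., to $\cHFL(L,\ve{s}) \iso \bF[U]$.

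Next I would prove the biconditional. For the forward direction, assume $L$ is an L-space link. Then for $\Lambda \gg (0,\dots,0)$, $\HF^-(S^3_\Lambda(L),\mathfrak{u})$ is a free rank-one $\bF[U]$-module for every $\Spin^c$-structure $\mathfrak{u}$. The large surgery theorem -- an immediate consequence of the link surgery formula once $\Lambda$ is so positive that only the ``large-surgery'' vertex of the surgery hypercube contributes -- provides an $\bF[U]$-module identification
\[
\HF^-(S^3_\Lambda(L),\mathfrak{u}) \iso \cHFL(L,\ve{s}(\mathfrak{u})),
\]
where $\ve{s}(\mathfrak{u})$ is the Alexander grading determined by $\mathfrak{u}$. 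As $\Lambda$ varies over sufficiently positive framings, every $\ve{s} \in \bH(L)$ is realized as $\ve{s}(\mathfrak{u})$ for some $\mathfrak{u}$, so $\cHFL(L,\ve{s}) \iso \bF[U]$ for every $\ve{s}$. Conversely, if $\cHFL(L,\ve{s}) \iso \bF[U]$ for every $\ve{s}$, then the same identification immediately yields $\HF^-(S^3_\Lambda(L),\mathfrak{u}) \iso \bF[U]$ for each $\mathfrak{u}$ once $\Lambda$ is sufficiently positive, so $S^3_\Lambda(L)$ is an L-space.

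The main obstacle is setting up the large-surgery identification in the multi-component setting and verifying its two supporting surjectivity properties: that sufficiently positive framings exhaust $\bH(L)$ through the correspondence $\mathfrak{u}\mapsto \ve{s}(\mathfrak{u})$, and that the ``small-surgery'' vertices of the surgery hypercube genuinely drop out once $\Lambda$ is large enough. Both are standard outputs of the Manolescu--Ozsv\'{a}th formalism and are carried out explicitly in \cite{LiuLSpaceLinks}, so for a concise proof I would cite that reference after outlining the argument above.
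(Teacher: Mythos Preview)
The paper does not give its own proof of this lemma; it simply cites \cite{LiuLSpaceLinks}*{Proposition~1.11}. Your proposal correctly outlines the standard argument behind that reference---the large-surgery specialization of the Manolescu--Ozsv\'{a}th link surgery formula identifies $\HF^-(S^3_\Lambda(L),\mathfrak{u})$ with $\cHFL(L,\ve{s}(\mathfrak{u}))$ for $\Lambda\gg 0$, and the correspondence $\mathfrak{u}\mapsto \ve{s}(\mathfrak{u})$ is surjective onto $\bH(L)$ as $\Lambda$ varies---and your treatment of the equivalence of the two formulations via the decomposition $\cHFL(L)=\bigoplus_{\ve{s}}\cHFL(L,\ve{s})$ and the structure result $\cHFL(L,\ve{s})\iso \bF[U]\oplus\Tors$ is also correct. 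Since you end by citing \cite{LiuLSpaceLinks} for the details, your approach is effectively the same as the paper's, just with the underlying argument spelled out.
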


The following result, due to Gorsky and N\'{e}methi, describes how to compute the $H$-function of an L-space link in terms of the multivariable Alexander polynomials of $L$ and its sublinks:

\begin{prop}[\cite{GorskyNemethiAlgebraicLinks}*{Theorem~2.10}] \label{prop:GNH-function}Let $L$ be an oriented L-space link in $S^3$. Then
	\[
	H_L(\ve{s})=\sum_{L'\subset L} (-1)^{|L'|-1} \sum_{\substack{\ve{s}'\in \bH(L') \\ \ve{s}'\ge \pi_{L,L'}(\ve{s}+\ve{1})}} \chi(\HFL^-(L',\ve{s}')).
	\]
\end{prop}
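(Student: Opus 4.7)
The plan is to prove the formula by an inclusion--exclusion argument that exploits two features of L-space links. First, for every sublink $L' \subset L$ and every Alexander grading $\ve{s}' \in \bH(L')$, the link Floer group $\cHFL^-(L', \ve{s}')$ is isomorphic to $\bF[U]$ with no torsion, so $H_L$ records the entirety of the Heegaard Floer data. Second, the Euler characteristics of $\cHFL^-(L', \ve{s}')$ across Alexander multigradings are encoded by the multivariable Alexander polynomial of $L'$, which is classical. These two facts together suggest that $H_L$ should be computable from Alexander polynomial data alone, and the content of the formula is to say exactly how.

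The strategy is to set up the Manolescu--Ozsv\'ath large surgery chain complex for a framing $\Lambda \gg (0,\dots,0)$ on $L$: since $L$ is an L-space link, $S^3_{\Lambda}(L)$ is an L-space, so the $d$-invariant in each $\Spin^c$-structure agrees with the Maslov grading of the top generator, and the surgery formula identifies this $d$-invariant (up to a $\Spin^c$-structure dependent shift) with $-2 H_L(\ve{s})$. The surgery complex itself is a hypercube indexed by sublinks $L' \subset L$, whose vertex at $L'$ contributes the sublink complex $\cCFL^-(L',-)$ truncated at $\ve{s}' \ge \pi_{L,L'}(\ve{s}+\ve{1})$; the shift by $\ve{1}$ and the projection $\pi_{L,L'}$ arise naturally from the truncation bound in the surgery formula together with the Alexander grading convention from Equation~\eqref{eq:Alexander-grading-def}. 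Because each vertex has homology $\bF[U]$ with no torsion, the total Maslov grading of the top $\bF[U]$-tower of the hypercube is computed by a signed sum of Euler characteristics of the vertices, with signs $(-1)^{|L'|-1}$ coming from the alternating structure of the hypercube differential. Carrying this out produces exactly the stated identity; as a sanity check, in the one-component case it reduces to the classical formula $H_K(s) = \sum_{t \ge s+1} \chi(\widehat{\HFK}(K,t))$ for L-space knots.

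The main obstacle is the bookkeeping of Alexander grading shifts: the set $\bH(L')$ depends on $L'$ through the linking numbers of the components of $L'$ with $L \setminus L'$, so the projection $\pi_{L,L'}$ has a nontrivial affine part that must be tracked carefully when forgetting sublinks. A secondary subtlety is verifying that the ``finer'' Maslov grading information beyond the top $\bF[U]$-tower generator cancels out in the hypercube computation; this is exactly where the L-space hypothesis enters, since torsion in $\cHFL^-(L',\ve{s}')$ would otherwise obstruct the collapse of the hypercube homology onto a signed sum of Euler characteristics.
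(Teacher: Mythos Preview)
The paper does not prove this proposition; it is quoted from Gorsky--N\'emethi as a background result. So there is no ``paper's own proof'' to compare against, only the original argument in \cite{GorskyNemethiAlgebraicLinks}.

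Your proposal, however, contains a genuine gap. You write that the large surgery chain complex ``is a hypercube indexed by sublinks $L' \subset L$, whose vertex at $L'$ contributes the sublink complex $\cCFL^-(L',-)$.'' This is not correct. For large framings $\Lambda \gg 0$, the surgery formula identifies $\CF^-(S^3_\Lambda(L),\mathfrak{s})$ with a single complex $A^-(L,\ve{s})$, not a hypercube; and even in the general (non-large) surgery formula, the vertices of the hypercube are destabilizations of $\cCFL^-(L)$ in various directions, not the link Floer complexes of the sublinks $L'$. The sublink complexes simply do not sit at the vertices of any surgery hypercube in the way you describe, so the inclusion--exclusion you sketch has no home. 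Relatedly, the assertion that ``the total Maslov grading of the top $\bF[U]$-tower of the hypercube is computed by a signed sum of Euler characteristics of the vertices'' is left unjustified, and it is not clear what mechanism would make a grading equal to a sum of Euler characteristics.

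The actual argument in Gorsky--N\'emethi (see also \cite{BorodzikGorskyImmersed}) is more direct and does not pass through surgery at all. One first shows, using an iterated cone construction (setting each $Z_i$ to zero) and the fact that $\cHFL(L,\ve{s})\cong\bF[U]$ for L-space links, that $\chi(\HFL^-(L,\ve{s}))$ is expressible as an alternating finite difference of the values $H_L(\ve{s}-\ve{e}_B)$ over subsets $B\subset\{1,\dots,n\}$. Summing this identity over all $\ve{s}'\ge \ve{s}+\ve{1}$ telescopes; the boundary terms at infinity are governed by the stabilization property $H_L(\ve{s})\to H_{L\setminus L_i}(\pi(\ve{s}))$ as $s_i\to\infty$ (Lemma~\ref{lem:properties-H-function}), and it is precisely these boundary contributions, organized by which coordinates are sent to infinity, that produce the sum over sublinks $L'\subset L$ with the projections $\pi_{L,L'}$. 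Your intuition that inclusion--exclusion and the L-space hypothesis are the key ingredients is right, but the combinatorics live entirely inside the $H$-function and its finite differences, not in a surgery hypercube.
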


In the above, $\HFL^-(L')$ denotes the homology of $\cCFL(L')/(Z_i: L_i\subset L')$. Furthermore, $\pi_{L,L'}\colon \bH(L)\to \bH(L')$ is the function which omits each component associated to link components of $L\setminus L'$, and which modifies the $i$-th component (whenever $L_i\subset L'$) via the transformation
\[
s_i\mapsto s_i-\frac{\lk(L_i, L\setminus L')}{2}.
\]
Also, $\ve{1}$ denotes $(1,\dots, 1)$.

By \cite{OSLinks}*{Section~1.1}, $\chi(\HFL^-(L,\ve{s}))$ is the coefficient of the monomial $x_1^{s_1}\cdots x_n^{s_n} $ in a suitable normalization of the multivariable Alexander polynomial. More explicitly, Let ${\Delta}_L(x_1,\dots,x_n)$ be the symmetric multivariable Alexander polynomial of $L$. Let 

\begin{equation}
	\tilde{\Delta}_L(x_1,\dots,x_n) :=
	\begin{cases}
		(	x_1\cdots x_n)^{\frac{1}{2}}\Delta_L(x_1,\dots,x_n) & \text{if $n>1$}\\
		\Delta_L(x)/(1-x^{-1})& \text{if $n=1$}\\
	\end{cases}       
\label{eq:normalized Alex poly}
\end{equation}
denote the normalized multivariable Alexander polynomial, which satisfies the relation 
\begin{equation*}
	\tilde{\Delta}_{L}(x_1,\dots,x_n) = \sum_{\mathbf{s}\in\mathbb{H}(L)}\chi(\HFL^{-}(L,\mathbf{s}))x_1^{s_1}\cdots x_n^{s_n},
	\label{eq:euler_char_Alex}
\end{equation*}
where $\mathbf{s} = (s_1,\dots,s_n)$. For $n=1$, we expand $1/(1-x^{-1})$ as a power series in $x^{-1}$. There is an overall sign ambiguity of $\tilde{\Delta}_L$. For an L-space link $L$, this indeterminacy can be eliminated by assuming the $H$-function takes non-negative value, and differs by at most $1$ when moving by the unit vector in each direction.

In this paper, we focus mostly on 2-component L-space links $L = L_1 \cup L_2$. We will typically write $(t,r) \in \bH(L)$ for the Alexander gradings, where $t$ is the one associated to the component $L_1$, and $r$ is the one associated to the component $L_2$. Additionally, we will be most interested in  two-component $L$-space links of the form $L_P=\mu \cup P$, for a 2-component L-space satellite operator $P$.  We use the convention that $L_1=\mu$ and $L_2=P$.

We now recall several basic properties of the $H$-function $H_L(t,r)$ for a two-component link $L = L_1\cup L_2$.

\begin{lem}
\label{lem:properties-H-function}
Suppose that $L=L_1\cup L_2$ is a two-component link. Write $\ell=\lk(L_1,L_2)$ for the linking number of $L$, and write $H_{L_1}$ and $H_{L_2}$ for the $H$-functions of $L_1$ and $L_2$ respectively. Then the $H$-function $H_L(t,r)$ satisfies the following properties:
	\begin{enumerate}
		\item (Non-negativity) \[H_L(t,r)\ge 0.\]
		\item (Monotonicity) \[H_L(t,r)\ge H_L(t+1,r) ,\,\,\,H_L(t,r)\ge H_L(t,r+1). \]
		\item (Bounded gap) \[H_L(t,r)\le H_L(t+1,r)+1,\,\,\,H_L(t,r)\le H_L(t,r+1)+1. \]
		\item (Stabilization) \[\begin{split} H_{L_1}(t-\ell/2)&=H_L(t,\infty):=\lim_{r\to \infty}H_L(t,r) \\ H_{L_2}(r-\ell/2)&=H_L(\infty,r):=\lim_{t\to \infty}H_L(t,r). \end{split} \]
		\item (Symmetry)
		\[H_L(t,r)+t+r = H_L(-t,-r).\]
	\end{enumerate}
	
	\label{lem: properties of H function}
\end{lem}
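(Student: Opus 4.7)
The plan is to derive each property from basic chain-level features of $\cCFL(L)$ as a bigraded module over $\bF[W_1,Z_1,W_2,Z_2]$, together with the conjugation symmetry of link Floer homology. The key observation is that multiplication by $W_i$ and $Z_i$ gives chain maps between different Alexander-graded pieces with controlled $(\gr_{\ws},\gr_{\zs})$ shifts, and these maps interact predictably with the $U$-non-torsion tower that defines $H_L$.

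For (1) non-negativity, I would use that the $U$-non-torsion part of $\cHFL(L,\ve{s})$ survives under localization at $U$, which identifies $U^{-1}\cHFL(L)$ with $\widehat{\HF}(S^3)\otimes \bF[U^{\pm 1}]$ (by the link surgery formula). Under this identification the tower generator maps into nonpositive $\gr_{\ws}$-grading, forcing $-2H_L(\ve{s})\leq 0$. For (2) monotonicity, multiplication by $Z_i$ gives a map $\cCFL(L,\ve{s})\to \cCFL(L,\ve{s}+e_i)$ which preserves $\gr_{\ws}$; if $\xs$ is a $U$-non-torsion element realizing $\gr_{\ws}(\xs)=-2H_L(\ve{s})$, then $Z_i\xs$ is $U$-non-torsion in Alexander grading $\ve{s}+e_i$ with the same $\gr_{\ws}$-grading (non-torsionality follows because $U=W_iZ_i$ commutes with $Z_i$, and one can argue via the surviving tower in localization). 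This yields $H_L(\ve{s}+e_i)\leq H_L(\ve{s})$. For (3) bounded gap, the analogous argument with $W_i$, which shifts $A_i$ by $-1$ and $\gr_{\ws}$ by $-2$, applied to a tower generator at $\ve{s}+e_i$, produces an element at $\ve{s}$ with $\gr_{\ws}=-2H_L(\ve{s}+e_i)-2$, giving $H_L(\ve{s})\leq H_L(\ve{s}+e_i)+1$.

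For (4) stabilization, I would argue that for $r$ much larger than the support of the Alexander grading in the $L_2$ direction, the Alexander-graded piece $\cCFL(L,(t,r))$ becomes identified with the corresponding piece of $\cCFL(L_1)$. The shift by $\lk(L_1,L_2)/2$ in the identification $H_{L_1}(t-\ell/2)=H_L(t,\infty)$ is forced by the normalization of the multi-Alexander grading in~\eqref{eq:Alexander-grading-def}, where each coordinate is shifted by $\lk(L_i,L\setminus L_i)/2$. This reduction is standard and can be deduced either from a direct analysis of the Heegaard diagram with a small basepoint-stabilization move, or from the surgery formula expressing $\cCFL(L)$ in terms of $\cCFL(L_1)$. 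For (5) symmetry, I would invoke the conjugation symmetry $\cCFL(L,\ve{s})\simeq \cCFL(L,-\ve{s})$ which swaps $\gr_{\ws}$ and $\gr_{\zs}$. At Alexander grading $(t,r)$ the identity $\gr_{\ws}-\gr_{\zs}=2(t+r)$ holds, so a tower generator at $(t,r)$ with $\gr_{\ws}=-2H_L(t,r)$ has $\gr_{\zs}=-2H_L(t,r)-2(t+r)$; after conjugation this produces a tower generator at $(-t,-r)$ with $\gr_{\ws}=-2H_L(t,r)-2(t+r)$, giving $H_L(-t,-r)\geq H_L(t,r)+t+r$, and reversing the argument yields equality.

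The main obstacle I expect is verifying that the multiplication maps by $W_i$ and $Z_i$ genuinely send the tower generator to another $U$-non-torsion element rather than into the torsion summand. For $L$-space links this is automatic because $\cHFL(L,\ve{s})\simeq \bF[U]$ has no torsion, but in the general statement one must argue through the localization $U^{-1}\cHFL(L)$; I expect a clean argument is to first establish (1)--(5) after inverting $U$, where everything reduces to calculations in $\widehat{\HF}(S^3)\otimes \bF[U^{\pm 1}]$, and then transfer back. The stabilization (4) will also require care about the sign/shift conventions coming from~\eqref{eq:Alexander-grading-def}.
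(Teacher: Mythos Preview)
Your proposal is correct and in fact more detailed than the paper's treatment: the paper does not prove Lemma~\ref{lem:properties-H-function} at all, but simply remarks that most statements follow directly from the definition of the $H$-function and refers the reader to \cite{BorodzikGorskyImmersed} for proofs. Your chain-level arguments for (2) and (3) are exactly the standard ones (and your worry about $Z_i\xs$ or $W_i\ys$ falling into torsion is resolved cleanly: if $U^n Z_i\xs=0$ then applying $W_i$ gives $U^{n+1}\xs=0$, a contradiction); your symmetry argument for (5) is also the standard one, and the paper itself alludes to it in Remark~\ref{rem:symmetry} via $\bar H_L(t,r)=H_L(t,r)+t+r$. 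So there is no meaningful divergence to compare---you have simply supplied the routine verifications the paper omits.
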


Most of the statements in Lemma~\ref{lem:properties-H-function} follow directly from the definition of the $H$-function. Proofs of the above results can be found in, e.g., \cite{BorodzikGorskyImmersed}. Similar properties also hold for links with more components with appropriate changes.

\begin{rem}
	The ``bounded gap'' property is equivalent to the monotonicity of a version of the $H$-function which is defined using the grading $\gr_{\zs}$ instead of $\gr_{\ws}$. We denote this function
	\[
	\bar{H}_L(t,r):=	-\frac{1}{2} \max \{\gr_{\zs}(\xs): \xs\in \cHFL(L,t,r), \xs \text{ is $U$-non-torsion}\}
	\]
	and note that the symmetry of knot Floer homology implies that
	\[
	\bar{H}_L(t,r) = H_L(t,r)+t+r.
	\]
	Using this symmetry, we also obtain a stabilization result of the $H$-function as $t,r\to -\infty$ in the following sense:
	\[\bar{H}_{L_1}(t+\ell/2)=\lim_{r\to -\infty}\bar{H}_L(t,r),\,\,\bar{H}_{L_2}(r+\ell/2) = \lim_{t\to -\infty}\bar{H}_L(t,r). \]

	\label{rem:symmetry}
\end{rem}

\begin{example} In Figure~\ref{fig:h-function-examples}, we show the $H$-function of the two component unlink $U_2$, the positive Hopf link $\mathcal{H}_+$ (with linking number $1$), and the negative Hopf link $\mathcal{H}_-$ (with linking number $-1$). These will be important in Section~\ref{sec: non homomorphism} when we prove Theorem~\ref{thm:non homomorphism}. They have the normalized multivariable Alexander polynomials, normalized as in Equation (\ref{eq:normalized Alex poly}).
\[
	\tilde{\Delta}_{U_2}(x_1,x_2)=0,\quad 	  	\tilde{\Delta}_{\mathcal{H}_+}(x_1,x_2)=x_1^{\sfrac{1}{2}}x_2^{\sfrac{1}{2}}, \quad \tilde{\Delta}_{\mathcal{H}_-}(x_1,x_2)=-x_1^{\sfrac{1}{2}}x_2^{\sfrac{1}{2}}.
\]
\end{example}

\begin{figure}[h]
\[ 
\begin{array}{c}
	\begin{array}{|c|cccccccc|}
		\hline
		\hbox{\diagbox{$r$}{$t$}}&\cdots&-2&-1&0&1&2&\cdots&\\
		\hline	\vdots&\ddots&2&1&0&0&0&0&\\
		1&\cdots&2&1 &0&0&0 & 0& \\
		0&\cdots&2&1 &0&0&0 &0&\\
		-1&\cdots&3&2 &1&1&1&1&\\
		-2&\cdots&4&3 &2&2&2& 2&\\
		\vdots&\reflectbox{$\ddots$}&\vdots&\vdots &\vdots&\vdots&\vdots&\ddots &\,\\
		\hline
	\end{array}
	\\
	\overset{\phantom{A}}{\displaystyle{H_{U_2}(t,r)}}
	\end{array}
	\qquad 
	\begin{array}{c}
	\begin{array}{|c|cccccccc|}
			\hline
			\hbox{\diagbox{$r$}{$t$}}&\cdots&-\sfrac{3}{2}&-\sfrac{1}{2}&\sfrac{1}{2}&\sfrac{3}{2}&\sfrac{5}{2}&\cdots&\\
			\hline	\vdots&\ddots&2&1&0&0&0&0&\\
		\sfrac{3}{2} &\cdots&2&1 &0&0&0 & 0& \\
			\sfrac{1}{2} &\cdots&2&1 &0&0&0 &0&\\
			-\sfrac{1}{2} &\cdots&2&1 &1&1&1& 1&\\
			-\sfrac{3}{2} &\cdots &3&2 &2&2&2&2 & \\
			\vdots&\reflectbox{$\ddots$}&\vdots&\vdots&\vdots&\vdots&\vdots&\ddots &\,\\
			\hline
		\end{array}
		\\
		\overset{\phantom{A}}{\displaystyle{H_{\mathcal{H}_+}(t,r)}}
		\end{array}
\]
\vspace{1mm}
\[
\begin{array}{c}
\begin{array}{|c|cccccccc|}
			\hline
			\hbox{\diagbox{$r$}{$t$}}&\cdots&-\sfrac{5}{2}&-\sfrac{3}{2}&-\sfrac{1}{2}&\sfrac{1}{2}&\sfrac{3}{2}&\cdots&\\
			\hline	\vdots&\ddots&2&1&0&0&0&0&\\
			\sfrac{1}{2} &\cdots&2&1 &0&0&0 &0&\\
			-\sfrac{1}{2} &\cdots&3&2 &1&0&0& 0&\\
			-\sfrac{3}{2} & \cdots&4&3&2&1&1&1&\\
			-\sfrac{5}{2} &\cdots &5&4&3&2&2&2 & \\
			\vdots&\reflectbox{$\ddots$}&\vdots&\vdots&\vdots&\vdots&\vdots&\ddots &\,\\
			\hline
		\end{array}
		\\
			\overset{\phantom{A}}{\displaystyle{H_{\mathcal{H}_-}(t,r)}}
		\end{array}
\]
\caption{The $H$-functions of the 2-component unlink $U_2$, the positive Hopf link $\mathcal{H}_+$ and the negative Hopf link $\mathcal{H}_-$.}
\label{fig:h-function-examples}
\end{figure}

\section{Background on link surgery modules}
\label{sec:surgery}

In this section, we recall some background on the knot and link surgery modules described in \cite{ZemBordered,ZemExact}.

\subsection{The surgery algebra}

We now recall the surgery algebra from \cite{ZemBordered}.

\begin{define}
The \textit{surgery algebra} $\cK$ is an associative algebra over the ring of two idempotents $\ve{I}=\ve{I}_0\oplus \ve{I}_1$, where each $\ve{I}_i=\bF=\Z/2$. Furthermore
\[
\ve{I}_0\cdot \cK\cdot \ve{I}_0=\bF[W,Z],\quad \ve{I}_0 \cdot \cK \cdot \ve{I}_1=0
\]
\[
\ve{I}_1\cdot \cK\cdot \ve{I}_0=\bF[U,T,T^{-1}] \otimes \langle \sigma, \tau\rangle \quad \text{and}, \quad \ve{I}_1\cdot \cK\cdot \ve{I}_1=\bF[U,T,T^{-1}].
\]
The algebra is subject to the relations that
\[
\sigma W=UT^{-1} \sigma, \quad \sigma Z=T \sigma,\quad \tau W=T^{-1} \tau,\quad \tau  Z=UT  \tau.
\]
\end{define}

\subsection{Type-$D$, $A$, $DA$ and $AA$-modules}

In this section, we briefly recall the formalism of type-$D$, $A$ and $DA$ modules of Lipshitz, Ozsv\'{a}th and Thurston \cite{LOTBordered}.

Let $\cA$ be an associative algebra over a ring $\ve{k}$. A \emph{right type-$D$ module}, denoted $\cX^{\cA}=(\cX, \delta^1)$, consists of a right $\ve{k}$-module, equipped with a $\ve{k}$-linear map
\[
\delta^1\colon \cX\to \cX\otimes_{\ve{k}} \cA
\]
which satisfies the equation
\[
(\bI_{\cX}\otimes \mu_2) \circ (\delta^1\otimes \bI_{\cA})\circ \delta^1=0,
\]
where $\mu_2$ denotes the multiplication action on $\cA$.

A \emph{left type-$A$ module} over $\cA$, denoted ${}_{\cA} \cX=(\cX, m_{j+1})$, is the same as a left $A_\infty$-module over $\cA$. That is, $\cX$ is a left $\ve{k}$-module and
\[
m_{j+1}\colon \underbrace{\cA\otimes \cdots \otimes \cA}_{j}\otimes \cX\to \cX
\]
is a collection of $\ve{k}$-linear maps, ranging over $j\ge 0$, such that
\[
\sum_{i=0}^n m_{n-i}(a_n,\dots a_{i+1}, m_{i+1}(a_{i},\dots  , a_1, \xs))+\sum_{i=1}^{n-1}m(a_n,\dots,  a_{i+1}a_i,\dots, a_1,\xs)=0.
\]

Finally, if $\cA$ and $\cB$ are associative algebras over $\ve{k}$ and $\ve{j}$, respectively, then a \emph{$DA$-bimodule} ${}_{\cA}\cX^{\cB}$ consists of a $(\ve{k}, \ve{j})$-bimodule $\cX$, equipped with $(\ve{k},\ve{j})$-linear maps
\[
\delta_{j+1}^1\colon \underbrace{\cA\otimes \cdots \otimes \cA}_{j} \otimes \cX\to \cX\otimes \cB.
\]
These maps are required to satisfy the following associativity relation:
\[
\begin{split}
0=&\sum_{i=0}^n \bigg((\bI_{\cX}\otimes \mu_2)\circ (\delta^1_{n-i}\otimes \bI_{\cB})(a_n,\dots a_{i+1},-)\bigg)\bigg(  \delta_{i+1}^1(a_{i},\cdots   a_1, \xs)\bigg)\\+
&\sum_{i=1}^{n-1}\delta_{n}^1(a_n,\dots,  a_{i+1}a_i,\dots, a_1,\xs).
\end{split}
\]

We refer the reader to \cite{LOTBordered} and \cite{LOTBimodules} for more background on the categories of these modules and bimodules. 

If $\cX^\cA$ and ${}_{\cA} \cY$ are type-$D$ and type-$A$ modules, then Lipshitz, Ozsv\'ath and Thurston \cite{LOTBordered}*{Section~2.4} define a model for the derived tensor product of $\cX$ and $\cY$, called the \emph{box tensor product}, denoted
\[
\cX^{\cA}\boxtimes {}_{\cA} \cY.
\]
If $\cA$ is an algebra over $\ve{i}$, then the underlying group of the box tensor product is $\cX\otimes_{\ve{i}} \cY$. The differential is given by
\[
\d^{\boxtimes}(\xs\otimes \ys)=\sum_{k=0}^\infty(\bI_{\cX}\otimes m_{k+1})(\delta^k(\xs)\otimes \ys)
\]
where $\delta^k\colon \cX\to \cX\otimes \cA^{\otimes k}$ is obtained by iterating $\delta^1$ $k$-times.

Finally we discuss $AA$-bimodules. If $\cA$ and $\cB$ are associative algebras over $\ve{i}$ and $\ve{j}$, an \emph{$AA$-bimodule} ${}_{\cA} M_{\cB}$ consists of a $(\ve{i},\ve{j})$-module $M$, equipped with structure maps 
\[
m_{i|1|j}\colon \cA^{\otimes i}\otimes M\otimes \cB^{\otimes j}\to M
\]
which satisfy the following $A_\infty$-associativity relation:
\[
\begin{split}
0=&\sum_{\substack{0\le i\le n\\ 0\le j\le m}} m_{n-i|1|m-j}(a_n,\dots, m_{i|1|j}(a_i,\dots, a_1, \xs,b_1,\dots, b_j),\dots, b_m)\\
+&\sum_{i=1}^{n-1} m_{n-1|1|m}(a_n,\dots, a_{i+1}a_i,\dots, a_1, \xs,b_1,\dots, b_m)\\
+&\sum_{i=1}^{m-1} m_{n|1|m-1}(a_n,\dots, a_1, \xs, b_1,\dots, b_ib_{i+1},\dots, b_m) 
\end{split}.
\]
See \cite{LOTBimodules}*{Defintion~2.2.38} for more on type-$AA$ bimodules.

We will also need a version of the homological perturbation lemma for $AA$-bimodules. We first recall that if $Z$ and $M$ are chain complexes, then we say that a diagram of maps
\[
\begin{tikzcd}
 M \ar[loop left, "H"] \ar[r, "\Pi", shift left] & Z \ar[l, "I",shift left]
\end{tikzcd}
\]
is a \emph{strong deformation retraction} of chain complexes from $M$ onto $Z$ if \[
\d(\Pi)=0, \quad\d(I)=0,\quad \Pi \circ I=\bI_Z,\quad I\circ \Pi=\bI_M+\d(H),
\]
\[
\Pi \circ H=0,\quad H\circ I=0,\quad \text{and} \quad H\circ H=0.
\]
Here, we are writing $\d(f)$ for the morphism differential, i.e. $\d(f)=f\circ \d+\d\circ f$. In particular, $\d(\Pi)=0$ is equivalent to $\Pi$ being a chain map.

We now state the homological perturbation lemma for $AA$-bimodules. 
\begin{lem}\label{lem:HPL-AA} Suppose that ${}_{\cA} M_{\cB}$ is a $AA$-bimodule and that $(Z,d)$ is a chain complex. Suppose that the diagram of maps
\[
\begin{tikzcd}
 M \ar[loop left, "H"] \ar[r, "\Pi", shift left] & Z \ar[l, "I",shift left]
\end{tikzcd}
\]
giving a strong deformation retraction of chain complexes from $M$ onto $Z$. Then $Z$ admits the structure of an $AA$-bimodule, denoted ${}_{\cA} Z_{\cB}$, so that $m_{0|1|0}=d$, and so that ${}_{\cA} Z_{\cB}\simeq {}_{\cA} M_{\cB}$. 
\end{lem}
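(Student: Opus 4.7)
The plan is to transfer the $AA$-bimodule structure from $M$ to $Z$ by an explicit ``staircase'' formula and then verify the $A_\infty$-associativity by the standard telescoping argument from homological perturbation theory.

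First I would define, for each $i,j\ge 0$, structure maps $m^Z_{i|1|j}\colon \cA^{\otimes i}\otimes Z\otimes \cB^{\otimes j}\to Z$ by the formula
\[
m^Z_{i|1|j}(a_i,\ldots,a_1,z,b_1,\ldots,b_j)=\sum \Pi\circ m^M_{p_k|1|q_k}\bigl(\cdots,H\circ m^M_{p_{k-1}|1|q_{k-1}}(\cdots,\ldots,H\circ m^M_{p_1|1|q_1}(\cdots,I(z),\cdots),\ldots),\cdots\bigr),
\]
where the sum ranges over all $k\ge 1$ and all ordered partitions of the left inputs $(a_i,\ldots,a_1)$ and right inputs $(b_1,\ldots,b_j)$ into $k$ consecutive (possibly empty) blocks of sizes $(p_1,\ldots,p_k)$ and $(q_1,\ldots,q_k)$, with the innermost $m^M_{p_1|1|q_1}$-vertex receiving $I(z)$ and the outermost $m^M_{p_k|1|q_k}$-vertex feeding into $\Pi$. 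In particular, for $i=j=0$ only the $k=1$ term contributes, and one obtains $m^Z_{0|1|0}=\Pi\circ m^M_{0|1|0}\circ I = d$, using that $\Pi$ is a chain map from $(M,m^M_{0|1|0})$ to $(Z,d)$ and $\Pi\circ I=\bI_Z$.

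Second, I would verify the $A_\infty$-associativity relation for $(Z,m^Z_{\bullet|1|\bullet})$. The key input is the identity $I\circ \Pi=\bI_M+\d(H)=\bI_M+m^M_{0|1|0}\circ H+H\circ m^M_{0|1|0}$. Expanding $\sum m^Z\circ m^Z$ produces a sum over staircases of $m^M$-vertices linked by $H$, in which exactly one pair of adjacent vertices is linked instead by $\bI_M$. Using the identity above, each such $\bI_M$-link is converted into $I\circ \Pi$ (which reconstitutes a shorter nested $m^Z\circ m^Z$ term) plus two terms in which a differential $m^M_{0|1|0}$ is placed at one of the two adjacent vertices. These extra differential terms cancel, respectively, against the $A_\infty$-relations of ${}_{\cA} M_{\cB}$ applied at each single vertex and against the algebra multiplications $\mu_2$ on $\cA$ and $\cB$ at the respective sides. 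This is the standard telescoping cancellation, yielding the $A_\infty$-associativity for $Z$.

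Finally, to produce the homotopy equivalence ${}_{\cA} Z_{\cB}\simeq {}_{\cA} M_{\cB}$, I would extend $I$ and $\Pi$ to $AA$-bimodule morphisms by analogous staircase formulas (replacing the outermost $\Pi$ or the innermost $I$ by the identity as appropriate), and construct an $AA$-bimodule homotopy from the chain homotopy $H$ in the same fashion. The relations $\Pi\circ I=\bI_Z$, $I\circ \Pi=\bI_M+\d(H)$, $\Pi\circ H=0$, $H\circ I=0$, and $H\circ H=0$ then imply the required identities for the extended morphisms via another instance of the telescoping argument. The main obstacle is purely the combinatorial bookkeeping in these cancellations, but this is a well-established piece of homological algebra, fully analogous to the corresponding statements in \cite{LOTBordered} and \cite{LOTBimodules} for type-$D$, type-$A$ and $DA$ (bi)modules; the same arguments apply in the $AA$-setting with only notational modifications.
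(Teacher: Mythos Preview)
Your approach is correct and is exactly the standard argument the paper invokes; the paper does not spell out a proof but simply cites \cite{OSBorderedKauffmanStates}*{Lemma~2.11} and records the same staircase formula you give. One small clarification: in your sum over partitions you should require $p_\ell+q_\ell\ge 1$ at every vertex (the paper phrases this as ``each $m$-labeled vertex has at least two inputs, i.e.\ $m_{0|1|0}$ is never applied''), since otherwise the sum is infinite; your computation for $i=j=0$ suggests you already intend this, but the phrase ``possibly empty'' should be tightened accordingly.
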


The proof of the above is standard, see, e.g., \cite{OSBorderedKauffmanStates}*{Lemma~2.11}. The structure maps $m^Z_{i|1|j}$ on ${}_{\cA} Z_{\cB}$ have a convenient description. If $i+j>0$, $\ve{a}\in \cA^{\otimes i}$, $\ve{b}\in \cB^{\otimes j}$ and $\xs\in M$, then the structure map is given by a diagram of the following form:
\begin{equation}
m_{i|1|j}^Z(\ve{a},\ve{x},\ve{b})= \begin{tikzcd}[column sep=.3cm, row sep=.2cm]
 \ve{a}\ar[ddr,Rightarrow]& \ve{x} \ar[d]& \ve{b} \ar[ddl,Rightarrow] \\
& I\ar[d] \\
&m\ar[d] \\
&\Pi \ar[dd]\\
&\\
&\,
\end{tikzcd}
+
\begin{tikzcd}[column sep=.3cm, row sep=.2cm] \ve{a} \ar[d, Rightarrow] & \ve{x} \ar[d]& \ve{b}\ar[d, Rightarrow] \\
\Delta\ar[dr,Rightarrow] \ar[dddr,Rightarrow]& I \ar[d]&\Delta \ar[dl,Rightarrow] \ar[dddl,Rightarrow] \\
&m \ar[d] \\
&H\ar[d]\\
&m\ar[d]\\
&\Pi\ar[dd]\\
\\
&\,
\end{tikzcd}+
\begin{tikzcd}[column sep=.3cm, row sep=.2cm] \ve{a}\ar[d,Rightarrow] & \ve{x}\ar[d]& \ve{b} \ar[d,Rightarrow] \\
\Delta \ar[dr,Rightarrow] \ar[dddr,Rightarrow] \ar[dddddr,Rightarrow,bend right=15]& I \ar[d]&\Delta \ar[dl,Rightarrow] \ar[dddl,Rightarrow] \ar[dddddl,Rightarrow,bend left=15] \\
&m\ar[d] \\
&H\ar[d]\\
&m\ar[d]\\
&H\ar[d]\\
&m\ar[d]\\
&\Pi\ar[dd]\\
\\
&\,
\end{tikzcd}
+\cdots
\label{eq:HPL_structure_maps}
\end{equation}
In the above, $\Delta$ is the map which splits $\cA^{\otimes i}$ and $\cB^{\otimes j}$ into its tensor factors. The above sum ranges over configurations where each $m$-labeled vertex has at least two inputs (i.e. $m_{0|1|0}$ is never applied).

\subsection{Modules over the surgery algebra}

We now recall some basic constructions from \cite{ZemBordered}. If $L\subset Y$ is an $n$-component link with Morse framing $\Lambda$, then there is a type-$D$ module
\[
\cX_\Lambda(L)^{\otimes^n \cK}. 
\]
This type-$D$ module encodes the data of link surgery complex for $L$. It is sometimes helpful also to turn some of the type-$D$ actions into type-$A$ actions. This is achieved by taking a suitable tensor product with an algebraically defined module ${}_{\cK\otimes \cK} [\bI^{\Supset}]$, defined in \cite{ZemBordered}*{Section~8.4}.  

Of particular importance for our purpose are the cases that $L$ is a knot or a two-component link in $S^3$. When $K\subset S^3$ is a knot and $n\in \Z$, there is a type-$D$ module
\[
\cX_n(K)^{\cK}.
\]
When $L$ is a two-component link and $\Lambda \in \bZ^2$, there is a type-$DA$ bimodule
\[
{}_{\cK} \cX_{\Lambda}(L)^{\cK}.
\]

In \cite{ZemBordered}*{Theorem~12.1}, the second author proves that the box tensor product of two surgery modules is homotopy equivalent to the surgery complex of the connected sum of the corresponding links. As a special case, if $L=L_1\cup L_2\subset S^3$ is a two-component link with integral framing $\Lambda =(\lambda_1,\lambda_2)$ and $K\subset S^3$ is a knot with framing $n\in \Z$, then
\[
\cX_n(K)^{\cK}\boxtimes {}_{\cK} \cX_{(\lambda_1,\lambda_2)}(L)^{\cK}\simeq \cX_{(\lambda_1+n, \lambda_2)}(K\# L)^\cK. 
\]
Here the connected sum is formed by joining $K$ with $L_1$. 

\subsection{Hat flavored modules}
\label{sec:hat-modules}

In this section, we describe the modules over the hat flavor of the surgery algebra $\hat{\cK}:=\cK/(U)$, focusing on the case of knots in $S^3$.

There is an algebra morphism $q_U\colon \cK\to \cK/(U)$ induced by setting $U=0$, and therefore an induced $DA$-bimodule ${}_{\cK} [q_U]^{\hat{\cK}}$. See \cite{LOTBimodules}*{Definition~2.2.48} for the $DA$-bimodule induced by an algebra morphism. If $K\subset Y$ is a null-homologous knot with integral framing $n$, the hat flavored modules are defined via a box tensor product
\[
\cX_n(Y,K)^{\hat{\cK}}:=\cX_{n}(Y,K)^{\cK}\boxtimes {}_{\cK}[q_U]^{\hat{\cK}}. 
\]

It will be helpful to have some strong results about the structure of $\cX_n(K)^{\hat{\cK}}$ when $K$ is a knot in $S^3$. As a first step, we note the entire complex $\cX_n(K)^{\hat{\cK}}$ is determined by the knot Floer complex $\cCFK(K)^{\hat{R}}$ over $\hat{R}=\bF[W,Z]/(U)$. To see this, we first observe that idempotent $0$ of $\cX_n(K)^{\hat{\cK}}$ is just $\cCFK(K)^{\hat{R}}$ and idempotent $1$ of the module consists of a single generator $\ys'$ with vanishing $\delta^1$. The $\sigma$ weighted terms of the differential $\delta^1$ from idempotent $0$ to idempotent $1$ are induced by the $v$-map in the surgery formula. This map is induced by a homotopy equivalence from $Z^{-1} \cCFK(K)$ to $\bF[T,T^{-1}] \otimes \widehat{\CF}(S^3)\simeq \bF[T,T^{-1}]$. This homotopy can be normalized to preserve the Alexander grading, if we equip $T^i\in \bF[T,T^{-1}]$ with Alexander grading $i$. This homotopy equivalence intertwines the action of $Z$  with $T$ and intertwines $W$ with $0$, by \cite{ZemBordered}*{Lemma~6.7}. Since $\bF[T,T^{-1}]$ admits no grading preserving automorphisms other than the identity map, the map $v$ is therefore uniquely determined up to chain homotopy. Similarly, the $\tau$ weighted terms of $\delta^1$ from idempotent $0$ to idempotent $1$ are determined by a homotopy equivalence $h$ from $W^{-1} \cCFK(K)$ to $\bF[T,T^{-1}]$ which intertwines  $W$ with $T^{-1}$ and intertwines the $Z$ with $0$. The map $h$ shifts the Alexander grading upward by $n$. Such a map is uniquely determined up to chain homotopy. In particular, when $K\subset S^3$, the type-$D$ module $\cX_n(K)^{\hat{\cK}}$ is determined up to homotopy equivalence by $\cCFK(K)^{\hat{R}}$. See \cite{HMSZ_Naturality}*{Section~5.3} for a similar statement.

We now recall the structure theorem of Dai, Hom, Stoffregen and Truong \cite{DHSTmore} for $\cCFK(K)^{\hat{R}}$, when $K\subset S^3$. We recall first their notion of a \emph{standard complex}.  If $m$ is even and $b_1,\dots, b_m$ are non-zero integers, then they define a free chain complex $C(b_1,\dots, b_m)$ over $\hat{R}$. We view this complex as a type-$D$ module over $\hat{R}$. This complex has $m+1$ generators, which we denote 
\[
\ys_0,\dots, \ys_{m}. 
\]
The differential is as follows.
\begin{enumerate}
\item If $i$ is odd and $b_i<0$, then $\d$ contains a component from $\ys_{i-1}$ to $W^{|b_i|} \ys_i$. 
\item If $i$ is odd and $b_i>0$, then $\d$ contains a component from $\ys_i$ to $W^{b_i} \ys_{i-1}$.
\item If $i$ is even and $b_i<0$, then $\d$ contains a component from $\ys_{i-1}$ to $Z^{|b_i|} \ys_{i}$.
\item If $i$ is even and $b_i>0$, then $\d$ contains a component from $\ys_i$ to $Z^{b_i} \ys_{i-1}$.
\end{enumerate}
Schematically, we can depict the standard complex $C(b_1,\dots, b_m)$ as follows:
\[
\begin{tikzcd}[column sep=1.5cm]
\ys_0 \ar[r, leftrightarrow, "W^{|b_1|}"] 
& \ys_1 \ar[r, leftrightarrow, "Z^{|b_2|}"] & \ys_2 \ar[r, leftrightarrow]&\cdots \ar[r, leftrightarrow, "Z^{|b_m|}"]& \ys_m.
\end{tikzcd}
\]
The direction of each arrow is determined by the sign of $b_i$. If $b_i<0$, then the arrow points to the right. If $b_i>0$, then the arrow points to the left. 

The gradings are normalized so that $\gr_{\ws}(\ys_0)=0$ and $\gr_{\zs}(\ys_m)=0$. If $C(b_1,\dots, b_m)$ is a direct summand of $\cCFK(K)^{\hat{R}}$ for $K\subset S^3$, then $\gr_{\zs}(\ys_0)=-2\tau(K)$, and  $\gr_{\ws}(\ys_m)=-2\tau(K)$. Here, $\tau(K)$ denotes the invariant of Ozsv\'{a}th and Szab\'{o} \cite{OS4ballgenus}, as recalled in Section \ref{sec:concordance invariants}.

Dai, Hom, Stoffregen and Truong prove the following structural result for $\cCFK(K)^{\hat{R}}$:

\begin{thm}[\cite{DHSTmore}*{Theorem~6.1, Corollary~6.2}]\label{thm:standard-complex} If $K$ is a knot in $S^3$, then there are integers $b_1,\dots, b_m$ so that
\[
\cCFK(K)^{\hat{R}}\simeq C(b_1,\dots, b_m)\oplus A,
\] 
where $A$ is a free complex over $\hat{R}$ which becomes acyclic after inverting $W$, and after inverting $Z$.
\end{thm}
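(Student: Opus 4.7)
The plan is to pass to a minimal model for $\cCFK(K)^{\hat{R}}$, classify its indecomposable summands as zigzags, and then use the localization data to isolate the standard piece. First, by iteratively cancelling any pair of generators connected by a differential with unit coefficient in $\hat{R}$, replace $\cCFK(K)^{\hat{R}}$ by a homotopy equivalent minimal model $C_{\min}$ in which every matrix entry of $\partial$ is a monomial of positive degree. Since $WZ=0$ in $\hat{R}$, each such monomial is a pure power $W^a$ with $a\geq 1$ or $Z^b$ with $b\geq 1$. Record $C_{\min}$ as a directed graph whose vertices are generators and whose edges carry these monomial labels.

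Next, I would argue that each connected component of this graph is a simple zigzag, i.e.\ a path whose edges alternate between $W$-type and $Z$-type. Two consecutive same-type edges sharing a vertex would violate $\partial^{2}=0$, since there is no relation in $\hat{R}$ that kills $W^{a}W^{c}$ or $Z^{b}Z^{d}$; this rules out both parallel outgoing edges at a single vertex and same-type runs along a path, and the usual minimality argument (removing any pair related by the shortest-length cancelling edge) prevents ``branching''. Cycles are ruled out because every edge strictly decreases either $\gr_{\ws}$ or $\gr_{\zs}$. The resulting indecomposable summands are therefore precisely the standard complexes $C(b_1,\dots,b_m)$ of the theorem, along with zigzags of the ``wrong'' length parity or starting-edge type.

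Finally, I would identify the distinguished summand through the tower localizations. As recalled in Section~\ref{sec:hat-modules}, the maps $v$ and $h$ induce homotopy equivalences
\[
W^{-1}\cCFK(K)^{\hat{R}}\simeq \bF[W,W^{-1}]\qquad\text{and}\qquad Z^{-1}\cCFK(K)^{\hat{R}}\simeq \bF[Z,Z^{-1}]
\]
up to grading shift, both of rank one. Direct inspection shows that a zigzag $C(b_1,\dots,b_m)$ with $m$ even contributes exactly one generator to each localization (one endpoint surviving under $W^{-1}$, the other under $Z^{-1}$), whereas any other zigzag becomes acyclic after inverting at least one of $W$ or $Z$. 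Since each localized homology has rank one, precisely one $C(b_1,\dots,b_m)$ summand must occur, and all other summands assemble into the acyclic complement $A$. The main obstacle is the rigidity argument in step two: ruling out branching vertices and cyclic configurations among minimal indecomposables over $\hat{R}$ requires carefully combining $WZ=0$ with the bigrading, and this is where most of the technical bookkeeping lives.
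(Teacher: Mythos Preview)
First, note that the paper itself does not prove this theorem; it is quoted from \cite{DHSTmore}, so there is no in-paper proof to compare against. Your task is really to reconstruct the Dai--Hom--Stoffregen--Truong argument.

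Your overall strategy is natural, but step two has a genuine gap: the claim that every indecomposable summand of a minimal free complex over $\hat{R}=\bF[W,Z]/(WZ)$ is a simple unbranched zigzag is false. The ``box'' with generators $a,b,c,d$ and $\partial a = Wb+Zc$, $\partial b = Zd$, $\partial c = Wd$,
\[
\begin{tikzcd}
a \ar[r,"W"] \ar[d,"Z"'] & b \ar[d,"Z"] \\
c \ar[r,"W"'] & d
\end{tikzcd}
\]
is minimal and indecomposable, yet branches at $a$ and at $d$; such boxes (and longer variants) occur as summands of $\cCFK(K)^{\hat R}$ for many knots, e.g.\ the figure-eight. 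Your $\partial^2=0$ argument only addresses two \emph{same}-type edges at a vertex, and even there it does not produce a contradiction, only a requirement that some other path cancel the composite; it says nothing about a vertex emitting one $W$-edge and one $Z$-edge. The parenthetical ``minimality prevents branching'' is precisely the hard, unproved assertion.

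In fact your step three, combined with a hypothetically correct step two, would overreach: each even-length zigzag contributes rank $1$ to both localizations, while each odd-length zigzag contributes rank $0$ to one and rank $2$ to the other, so the constraint $\rk W^{-1}H_* = \rk Z^{-1}H_* = 1$ would force exactly one summand and hence $A=0$ for every knot. Since $A\neq 0$ already for the figure-eight, the zigzag classification cannot hold.

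The actual argument in \cite{DHSTmore} does not attempt to classify indecomposables. Instead they put a total order on parameter sequences $(b_1,\dots,b_m)$, prove there is a maximal sequence admitting a local map $C(b_1,\dots,b_m)\to\cCFK(K)^{\hat{R}}$, and then, via a basis-change argument using that maximality, show this standard complex splits off as a direct summand. The complement $A$ is then locally acyclic automatically, since the single standard summand already accounts for the full rank of both localizations.
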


\begin{rem}
	\label{rem:epsilon}
	By \cite{DHSTmore}*{Remark~3.16, Proposition~4.10}, if $\cCFK(K)^{\hat{R}}\simeq C(b_0,...,b_m)\oplus A$, then the $\veps$-invariant of $K$, recalled in Section \ref{sec:concordance invariants}, is given by the following:
	\[\veps(K) = \begin{cases}
		0, &\text{ if } m=0,\\
		\sign(b_0) = -\sign(b_m), &\text{ if } m > 0.\\
	\end{cases}\]
\end{rem}

One easy corollary of Theorem~\ref{thm:standard-complex} is a structural result for the modules $\cX_n(K)^{\hat{\cK}}$ when $K\subset S^3$. Given non-zero integers $b_1,\dots, b_m$, we can define a type-$D$ module $\cX_n(b_1,\dots, b_m)^{\hat{\cK}}$ as follows. In idempotent 0, we use the complex $C(b_1,\dots, b_m)$. In idempotent 1, we use a complex with one generator, $\ys'$, which has Maslov and Alexander grading $(\gr,A)(\ys')=(0,0)$. We add two components to $\delta^1$: a component of $ \ys'\otimes T^{\tau(K)}\sigma$ to $\delta^1(\ys_0)$, and a component of $\ys'\otimes T^{n-\tau(K)} \tau$ to $\delta^1(\ys_m)$. Schematically, the complex $\cX_n(b_1,\dots ,b_m)^{\hat{\cK}}$ has the following form:
\[
\begin{tikzcd}[column sep=1.5cm]
\ys_0 \ar[drr, "T^{\tau(K)} \sigma",swap] \ar[r, leftrightarrow, "W^{|b_1|}"] 
& \ys_1 \ar[r, leftrightarrow, "Z^{|b_2|}"] & \ys_2 \ar[r, leftrightarrow]&\cdots \ar[r, leftrightarrow, "Z^{|b_m|}"]& \ys_m
\ar[dll, "T^{n-\tau(K)} \tau"]
\\
&& \ys'
\end{tikzcd}
\] There are no components of $\delta^1$ from $A$ to $\ys'$, as it becomes acyclic after inverting $W$ or $Z$.

As an immediate consequence of Theorem~\ref{thm:standard-complex}, and the above description of how to compute $\cX_n(K)^{\hat{\cK}}$ in terms of $\cCFK(K)^{\hat{R}}$, we obtain the following description of $\cX_n(K)^{\hat{\cK}}$ for knots in $S^3$:

\begin{thm}\label{thm:standard-cx-over-K} Suppose that $K\subset S^3$ and that $\cCFK(K)^{\hat{R}}$ decomposes as 
\[
\cCFK(K)^{\hat{R}}\simeq C(b_1,\dots, b_m)\oplus A,
\]
as in the statement of Theorem~\ref{thm:standard-complex}. Then
\[
\cX_n(K)^{\hat{\cK}}\simeq \cX_n(b_1,\dots, b_m)^{\hat{\cK}}\oplus A,
\]
where we view $A$ as being supported in idempotent 0.
\end{thm}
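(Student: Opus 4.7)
The plan is to invoke the uniqueness statement recalled in Section~\ref{sec:hat-modules}: for $K\subset S^3$, the type-$D$ module $\cX_n(K)^{\hat{\cK}}$ is determined up to homotopy equivalence by the data of $\cCFK(K)^{\hat R}$ together with the $\sigma$- and $\tau$-weighted arrows from idempotent $0$ to idempotent $1$, which in turn are determined up to chain homotopy by their grading behavior and by the intertwining of the actions of $W$ and $Z$ with those of $T$ on $\bF[T,T^{-1}]$. Given the splitting $\cCFK(K)^{\hat R}\simeq C(b_1,\dots,b_m)\oplus A$ from Theorem~\ref{thm:standard-complex}, the idea is to build an explicit model of $\cX_n(K)^{\hat{\cK}}$ that already exhibits the desired direct sum form, and then appeal to uniqueness.

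Concretely, I would define a type-$D$ module $M^{\hat{\cK}}$ by placing $C(b_1,\dots,b_m)\oplus A$ in idempotent $0$ with its inherited internal $W,Z$-weighted differential, a single generator $\ys'$ in idempotent $1$, and declaring the $\sigma$- and $\tau$-weighted arrows to $\ys'$ to be the standard ones $\ys_0\otimes T^{\tau(K)}\sigma$ and $\ys_m\otimes T^{n-\tau(K)}\tau$ on the $C(b_1,\dots,b_m)$ summand, and \emph{zero} on $A$. By construction $M^{\hat{\cK}}$ is literally $\cX_n(b_1,\dots,b_m)^{\hat{\cK}}\oplus A$, so it only remains to verify that $M^{\hat{\cK}}$ qualifies as a model for $\cX_n(K)^{\hat{\cK}}$, i.e.\ that the $\sigma$- and $\tau$-arrows assemble into maps $v,h\colon \cCFK(K)^{\hat R}\to\bF[T,T^{-1}]$ which, after inverting $Z$ and $W$ respectively, are homotopy equivalences of the appropriate grading type (with $h$ shifting the Alexander grading by $n$).

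This verification reduces to the observation that inverting $Z$ (respectively $W$) kills $A$: by Theorem~\ref{thm:standard-complex}, $Z^{-1}A$ and $W^{-1}A$ are acyclic, so after localization the contributions of $A$ drop out and the maps $v$ and $h$ on $M^{\hat{\cK}}$ agree up to quasi-isomorphism with the corresponding maps on $C(b_1,\dots,b_m)$ alone. The latter are homotopy equivalences onto $\bF[T,T^{-1}]$ with generators $\ys_0$ and $\ys_m$, because the alternating $W/Z$ pattern of the standard complex forces exactly these endpoints to survive localization. The normalizations $\gr_{\ws}(\ys_0)=0$, $\gr_{\zs}(\ys_0)=-2\tau(K)$ and $\gr_{\zs}(\ys_m)=0$, $\gr_{\ws}(\ys_m)=-2\tau(K)$ from the excerpt then give $A(\ys_0)=\tau(K)$ and $A(\ys_m)=-\tau(K)$, so that $\ys_0\mapsto T^{\tau(K)}$ preserves the Alexander grading and $\ys_m\mapsto T^{n-\tau(K)}$ realizes the required shift by $n$.

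I do not anticipate a serious obstacle. The only bookkeeping requiring care is the verification that $\ys_0$ and $\ys_m$ really are the surviving tower generators in $Z^{-1}C(b_1,\dots,b_m)$ and $W^{-1}C(b_1,\dots,b_m)$, which follows from the sign conventions on the $b_i$ spelled out in the four bullet points preceding Theorem~\ref{thm:standard-complex}, together with a short induction along the zig-zag structure of the standard complex.
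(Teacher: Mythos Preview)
Your proposal is correct and follows essentially the same approach as the paper: the paper treats this theorem as an immediate consequence of the discussion in Section~\ref{sec:hat-modules}, which explains that $\cX_n(K)^{\hat{\cK}}$ is determined up to homotopy equivalence by $\cCFK(K)^{\hat{R}}$ together with the (homotopy-unique) $\sigma$- and $\tau$-weighted maps $v,h$, and explicitly notes that ``There are no components of $\delta^1$ from $A$ to $\ys'$, as it becomes acyclic after inverting $W$ or $Z$.'' Your argument is a careful expansion of exactly this reasoning, building the explicit model and checking that the zero maps on $A$ are legitimate choices for $v$ and $h$ after localization.
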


\begin{rem} Sometimes in computations, we will find it easier to replace $\ys'$ with $\ys'\otimes T^{\tau(K)}$, so that $\cX_n(K)^{\hat{\cK}}$ has the following form:
\[
\begin{tikzcd}[column sep=1.5cm]
\ys_0 \ar[drr, "\sigma",swap] \ar[r, leftrightarrow, "W^{|b_1|}"] 
& \ys_1 \ar[r, leftrightarrow, "Z^{|b_2|}"] & \ys_2 \ar[r, leftrightarrow]&\cdots \ar[r, leftrightarrow, "Z^{|b_m|}"]& \ys_m
\ar[dll, "T^{n-2\tau(K)} \tau"]
\\
&& \ys'
\end{tikzcd}
\]
\end{rem}

\section{$DA$ bimodules for L-space satellite operators}
\label{sec: DA bimodule}

If  $P$ is a satellite operator, we write $L_P := \mu \cup P$ for the corresponding two-component link in $S^3$. We typically abuse notation and view $P$ as both a knot in the solid torus, as well as the knot in $S^3$ obtained by embedding the solid torus $S^1\times D^2$ into $S^3$  as a neighborhood of the $0$-framed unknot. We write $\mu$ for the meridian $\{pt\}\times \partial D^2$. 

\begin{define}
We say that $P$ is an \textit{L-space satellite operator} if $L_P\subset S^3$ is a two-component L-space link, i.e., $S^3_{\Lambda}(L_P)$ is an L-space for all integral framings $\Lambda \gg 0$. 
\end{define}

In this section, we review the computation of the bimodules ${}_{\cK} \cX(L_P)^{\bF[W,Z]}$ from \cite{CZZ}. 
 To study the $\tau$ invariant of satellites, we consider a version of this bimodule obtained by setting $W=0$ and changing the type-$D$ action to a type-$A$ action. We denote this type-$AA$ bimodule by
\[
{}_{\cK} \cX(L_P)_{\bF[Z]}.
\]
This module admits a model which has only $m_{1|1|0}$ and $m_{0|1|1}$ non-zero, though is typically not free as an $\bF[Z]$-module. We will write
\[
{}_{\cK} \cX^{\diamond}(L_P)_{\bF[Z]}
\]
for this small model of ${}_{\cK} \cX(L_P)_{\bF[Z]}$. 

Of particular importance will be a quotient $AA$-bimodule of ${}_{\cK} \cX^{\diamond}(L_P)_{\bF[Z]}$, which we denote by
\[
{}_{\cK} \cX^{\free}(L_P)_{\bF[Z]}.
\]
This module is simpler than ${}_{\cK} \cX^{\diamond}(L_P)_{\bF[Z]}$, and is always free as an $\bF[Z]$-module.
In many cases it is sufficient for the purposes of computing $\tau$ of a satellite knot.

\subsection{The bimodule ${}_{\cK} \cX(L_P)^{\bF[W,Z]}$}
\label{sec:bimodule-X-L_P}

In \cite{CZZ}, we described a method for computing the bimodule ${}_{\cK} \cX(L)^{\bF[W,Z]}$ when $L$ is a two-component L-space link. Together with the gluing formulas from \cite{ZemBordered}, this gave a technique for computing the effect of L-space satellite operators on knot Floer homology.

 First, we recall the definition of staircase complexes.

\begin{define}
	A \textit{staircase} is a finitely generated free chain complex over $R=\bF[W,Z]$ with a free $R$-basis $\xs_0,\xs_1,\xs_2,\dots, \xs_{2m-1},\xs_{2m}$, with respect to which the differential takes the following form:
	\[
	\d(\xs_{2i})=0,\quad \text{and} \quad \d(\xs_{2i+1})=\xs_{2i}\otimes W^{\a_{i}}+\xs_{2i+2}\otimes Z^{\b_{i}},
	\]
	for some $\a_{i},\b_{i}>0$. 
\end{define}
The $WZ=0$ reduction of a staircase complex is a special case of the standard complex described in Section~\ref{sec:hat-modules}, which takes the form $C(\alpha_0,-\beta_0,\alpha_1,-\beta_1,\dots ,\alpha_{m-1},-\beta_{m-1})$ in the notation used there.

	We refer to the generator $\xs_{0}$ in a staircase complex as the \textit{top generator}, as it has the largest Alexander grading.
	
\begin{define}\label{def:standard-gradings}	We say that a staircase complex $\cS$ has \emph{standard gradings} if  $\gr_{\ws}(\xs_{0})=0$ and $\gr_{\zs}(\xs_{2m})=0$. This is the case whenever $\cS$ is the knot Floer complex $\cCFK(K)$ of an L-space knot $K$ in $S^3$. 
	\end{define}
	
	There is also another grading on a staircase complex, denoted $\gr_{\alg}$, which has the property that 
	\[
	\gr_{\alg}(\xs_{2i})=0,\qquad \gr_{\alg}(\xs_{2i+1})=1,
	\]
	and $\gr_{\alg}(W)=\gr_{\alg}(Z)=0$. The grading $\gr_{\alg}$ is obtained by viewing the staircase complex as a two-step free resolution of its homology.

When $L_P$ is an L-space link, we can encode the data of ${}_{\cK} \cX(L_P)^{\bF[W,Z]}$ in terms of the diagram of the form shown in Figure~\ref{fig:DA-bimodule-schematic}. Therein, $t\in\Z+\lk(\mu,P)/2$. Each of the complexes $\cC_t$ and $\cS$  are staircase complexes. Each $T^{t} \cS$ denotes a copy of $\cS$. The staircase complexes $\cC_t$ and $\cS$ are determined by the $H$-function of $L$. 

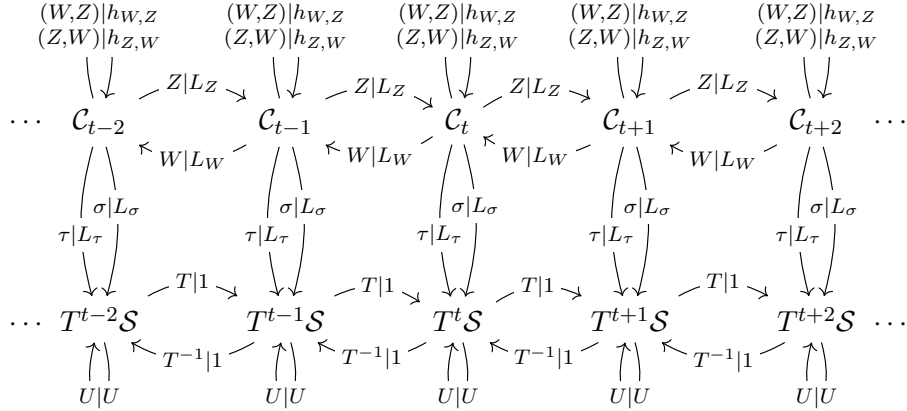
\begin{figure}[h]
\begin{equation*}
	\begin{tikzcd}[labels=description, column sep=1.1cm, row sep=0cm]
		\cdots
		&[-1.2cm]\cC_{t-2}
		\ar[r, bend left, "Z|L_Z"]
		\ar[d,"\sigma|L_\sigma", bend left=15, pos=.4]
		\ar[d, "\tau|L_\tau", bend right=15, pos=.6]
		\ar[loop above, looseness=20, "\substack{(W,Z)|h_{W,Z}\\(Z,W)|h_{Z,W}}"]
		& \cC_{t-1}
		\ar[r, bend left, "Z|L_Z"]
		\ar[l, bend left, "W|L_W"]
		\ar[d,"\sigma|L_\sigma", bend left=15, pos=.4]
		\ar[d, "\tau|L_\tau", bend right=15, pos=.6]
		\ar[loop above, looseness=20, "\substack{(W,Z)|h_{W,Z}\\(Z,W)|h_{Z,W}}"]
		&\cC_{t}
		\ar[r, bend left, "Z|L_Z"]
		\ar[l, bend left, "W|L_W"]
		\ar[d,"\sigma|L_\sigma", bend left=15, pos=.4]
		\ar[d, "\tau|L_\tau", bend right=15, pos=.6]
		\ar[loop above, looseness=20, "\substack{(W,Z)|h_{W,Z}\\(Z,W)|h_{Z,W}}"]
		& \cC_{t+1}
		\ar[r, bend left, "Z|L_Z"]
		\ar[l, bend left, "W|L_W"]
		\ar[d,"\sigma|L_\sigma", bend left=15, pos=.4]
		\ar[d, "\tau|L_\tau", bend right=15, pos=.6]
		\ar[loop above, looseness=20, "\substack{(W,Z)|h_{W,Z}\\(Z,W)|h_{Z,W}}"]
		& \cC_{t+2}
		\ar[l, bend left, "W|L_W"]
		\ar[d,"\sigma|L_\sigma", bend left=15, pos=.4]
		\ar[d, "\tau|L_\tau", bend right=15, pos=.6]
		\ar[loop above, looseness=20, "\substack{(W,Z)|h_{W,Z}\\(Z,W)|h_{Z,W}}"]
		&[-1.2cm] \cdots
		\\[2cm]
		\cdots
		& T^{t-2}\cS
		\ar[r, bend left, "T|1"]
		\ar[loop below,looseness=20, "U|U"]
		& T^{t-1}\cS
		\ar[r, bend left, "T|1"]
		\ar[l, bend left, "T^{-1}|1"]
		\ar[loop below,looseness=20, "U|U"]
		&T^{t}\cS
		\ar[r, bend left, "T|1"]
		\ar[l, bend left, "T^{-1}|1"]
		\ar[loop below,looseness=20, "U|U"]
		&T^{t+1}\cS
		\ar[r, bend left, "T|1"]
		\ar[l, bend left, "T^{-1}|1"]
		\ar[loop below,looseness=20, "U|U"]
		&T^{t+2}\cS
		\ar[l, bend left, "T^{-1}|1"]
		\ar[loop below,looseness=20, "U|U"]
		&\cdots 
	\end{tikzcd}
\end{equation*}
\caption{A schematic encoding the $DA$-bimodule of an L-space link $L$. Additional data (not shown) necessary to encode the $DA$ module are the actions $\delta_3^1(\sigma,W,-)$, $\delta_3^1(\sigma,Z,-)$, $\delta_3^1(\tau,W,-)$ and $\delta_3^1(\tau,Z,-)$.}
\label{fig:DA-bimodule-schematic}
\end{figure}

The diagram in Figure~\ref{fig:DA-bimodule-schematic} displays some of the $\delta_2^1$ and $\delta_3^1$ actions on this bimodule. For example, the arrows labeled $Z|L_Z$ indicate the action of $\delta_2^1(Z,-)$. We will write $L_Z$ for the induced map from $\cC_{t}$ to $\cC_{t+1}$. The arrows labeled $(W,Z)|h_{W,Z}$ indicate the actions $\delta_3^1(W,Z,-)$. Additionally, there will typically be several more $\delta_3^1$ actions which are not shown in the above diagram. These would take the form of extra arrows labeled as $(\sigma,W)|h_{\sigma,W}$, $(\sigma, Z)|h_{\sigma,Z}$, $(\tau,W)| h_{\tau,W}$ and $(\tau,Z)|h_{\tau,Z}$. 

The above data does not explicitly include a description of algebra elements which are not degree 1 monomials, e.g., $\delta_2^1(W Z^2,-)$. In the case of L-space links, the remaining actions on the bimodule ${}_{\cK} \cX(L_P)^R$ are determined (up to overall homotopy equivalence), by the above choices of maps $L_W$, $L_Z$, $L_\sigma$, $L_\tau$, $h_{W,Z}$, $h_{Z,W}$, $h_{\sigma,W}$,  $h_{\sigma, Z}$, $h_{\tau, W},$ and $h_{\tau,Z}$. Furthermore, it is possible to choose the resulting bimodule ${}_{\cK} \cX(L_P)^R$ so that the actions are $U$-equivariant. See \cite{CZZ}*{Section~5.5} for further details. 

The staircase $\cS$ can also be identified with the knot Floer complex $\cCFK(P)$ of the pattern $P$ viewed as a knot in $S^3$. Here, $\cS$ is equipped with standard $(\gr_{\ws},\gr_{\zs})$ bigradings, in the sense of Definition~\ref{def:standard-gradings}.  We denote the generators of $\cS$ by $\xs'_{0},\xs'_1,\dots,\xs'_{2m'}$.

The staircase $\cC_{t}$ is determined by the $H$-function of $L$, in the following way. We consider the restriction of the $H$-function, $H_L(t,-)$. We think of this as a single column of the $H$-function, if we plot the function $H_L(t,r)$ as in Figure~\ref{fig:Whitehead link}. For each $t$, there are finitely many  $r$ such that \[
H_{L}(t,r+1) = H_{L}(t,r),\,\, \text{ and }\,\,H_{L}(t,r-1) = H_{L}(t,r)+1. 
\]
Label these values of $r$ by $r^t_0,r_1^{t},\dots,r^t_{m_t}$. For each $i=0,\dots, m_t$, we add a generator $\xs_{2i}^t$ to $\cC_t$ such that 
\[\gr_{\ws}(\xs^t_{2i}) = -2 H_L(t,r^t_i).\]
The $\gr_{\zs}$-grading on link Floer homology satisfies $\gr_{\zs}=\gr_{\ws}-2A_1-2A_2$, so we define 
\[\gr_{\zs}(\xs^t_{2i}) = \gr_{\ws}(\xs^t_{2i})-2(A_1+A_2)=-2H_L(t,r^t_i)-2t-2r^t_i.
\]
For each $0\le i<m$, we add a generator $\xs_{2i+1}^t$, which satisfies 
\[
\gr_{\ws}(\xs^t_{2i+1}) = \gr_{\ws}(\xs^t_{2i+2})+1,\quad \text{and} \quad  \gr_{\zs}(\xs^t_{2i+1}) = \gr_{\zs}(\xs^t_{2i})+1.
\]

The grading shifts of the actions $L_{a}$ for $a=W,Z,\sigma,\tau$ are given as follows. When viewed as maps with domain $\cC_t$, they have the following gradings:
\begin{equation}
	 \begin{split}
	 	(\gr_{\ws},\gr_{\zs})(L_W) &= (-2,0)\\
	 	(\gr_{\ws},\gr_{\zs})(L_Z)&=(0,-2)\\
	 	(\gr_{\ws},\gr_{\zs})(L_\sigma)&=(0, 2t+\ell)\\
		(\gr_{\ws},\gr_{\zs})(L_\tau)&=(-2t+\ell,0).
\end{split}
 \label{eq:grading shifts of left action}
\end{equation}
A helpful consequence of the above formulas is that
\[
A(L_\sigma(\xs))=A_2(\xs)-\ell/2\quad \text{and} \quad A(L_\tau(\xs))=A_2(\xs)+\ell/2.
\]
In the above, $\ell = \lk(\mu,P)$, and we are writing $A_2(\xs)$ for second component in the $\bH(L_P)$-valued Alexander grading. Here, we give $T^t \cS$ the $(\gr_{\ws},\gr_{\zs})$ grading obtained by identifying it with $\cS\iso \cCFK(P)$. Also $A$ denotes the Alexander grading $\tfrac{1}{2}(\gr_{\ws}-\gr_{\zs})$. 

The maps $L_W$, $L_Z$, $L_\sigma$ and $L_\tau$ are determined up to chain homotopy by the fact that they have the above grading, they are chain maps with respect to $\delta_1^1$, and they are non-zero after taking homology with respect to $\delta_1^1$.

\begin{example}[The positively clasped Whitehead double]	

\begin{figure}[h!]
	\adjustbox{scale=0.8}{
		\begin{tabular}{m{4cm}p{8cm}}
			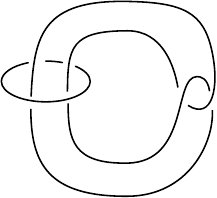
			&$ \begin{array}{|c|cccccccc|}
				\hline
				\hbox{\diagbox{$r$}{$t$}}&\cdots&-2&-1&0&1&2&\cdots&\\
				\hline	\vdots&\ddots&2&1&0&0&0&\reflectbox{$\ddots$}&\\
				1&\cdots&2&1 &\boxed{0}&0&0 & \cdots& \\
				0&\cdots&\boxed{2}&\boxed{1} &1&\boxed{0}&\boxed{0} &\cdots&\\
				-1&\cdots&3&2 &\boxed{1}&1&1& \cdots&\\
				\vdots&\reflectbox{$\ddots$}&\vdots&\vdots &\vdots&\vdots&\vdots&\ddots &\,\\
				\hline
			\end{array}
		$
		\end{tabular}
	}
	\[	
	\adjustbox{scale=0.8}{
	\begin{tikzcd}[labels=description, column sep=1.4cm]
			&&& \xs^0_0
			\ar[dl, bend left, "W|Z",pos=0.7]
			\ar[dr, "Z|Z", bend left]
			\ar[ddd,bend left=20, "\substack{\sigma|Z\\ +\tau|Z}",pos=.7]
			\ar[d,bend left =80, "{(W,Z)|Z}"]
			&&
			\\[.5cm]
			\cdots
			\ar[r,bend left, "Z|U"]
			& \xs^{-2}_0
			\ar[r, bend left, "Z|U"]
			\ar[l, bend left, "W|1"]
			\ar[dd,"\substack{\sigma|U^2\\ +\tau|1}"]
			&
			\xs^{-1}_0
			\ar[ur, bend left, "Z|W"]
			\ar[l, "W|1", bend left]
			\ar[dd, "\substack{\sigma|U \\ +\tau|1 }"]
			&
			\xs^0_1
			\ar[u,gray, "W"]
			\ar[d,gray, "Z"]
			&
			\xs^1_0
			\ar[r, bend left, "Z|1"]
			\ar[dl, "W|Z", bend left]
			\ar[dd, "\substack{\sigma|1 \\ +\tau|U}"]
			&
			\xs^2_0
			\ar[l, bend left, "W|U"]
			\ar[r,bend left, "Z|1"]
			\ar[dd, "\substack{\sigma|1\\+\tau|U^2}"]
			&
			\cdots
			\ar[l,bend left, "W|U"]
			\\[.5cm]
			&&&
			\xs^0_2
			\ar[ul, "W|W", bend left]
			\ar[ur, "Z|W", bend left,crossing over,pos = 0.75]
			\ar[d,bend right, "\substack{\sigma|W\\ +\tau|W}"]
			\ar[u,bend left =70, "{(Z,W)|W}"]
			&&
			\\[1.5cm]
			\cdots
			\ar[r, bend left, "T|1"]
			&T^{-2}\xs'_0
			\ar[r, bend left, "T|1"]
			\ar[l, bend left, "T^{-1}|1"]
			\ar[loop below,looseness=20, "U|U"]
			& T^{-1}\xs'_0
			\ar[r, bend left, "T|1"]
			\ar[l, bend left, "T^{-1}|1"]
			\ar[loop below,looseness=20, "U|U"]
			&T^{0}\xs'_0
			\ar[r, "T|1", bend left]
			\ar[l, "T^{-1}|1", bend left]
			\ar[loop below,looseness=20, "U|U"]
			&T^{1}\xs'_0
			\ar[r, bend left, "T|1"]
			\ar[l, bend left,"T^{-1}|1"]
			\ar[loop below,looseness=20, "U|U"]
			&T^{2}\xs'_0
			\ar[r, bend left, "T|1"]
			\ar[l, bend left,"T^{-1}|1"]
			\ar[loop below,looseness=20, "U|U"]
			&\cdots 
			\ar[l, bend left,"T^{-1}|1"]
	\end{tikzcd}
	}
	\]
	\caption{The $H$-function and $DA$-bimodule of the positively clasped Whitehead link $\Wh_+$. }
	\label{fig:Whitehead link}
\end{figure}
	
The normalized Alexander polynomial $\tilde{\Delta}_{\Wh}(x_1,x_2)$ is 
\[\tilde{\Delta}_{\Wh}(x_1,x_2) = -x_1x_2+x_1+x_2-1, \text{ with linking number  }\ell=0.\]See Figure \ref{fig:Whitehead link} for the $H$-function and the $DA$-bimodule of the positively clasped Whitehead link. Each box represents a generator $\xs^t_{2i}$. 

Each staircase $\cC_t$ for $t\neq 0$ consists of a single generator. For the staircase $\cC_0$, we have 
\begin{align*}
	\gr_{\ws}(\xs^0_0) = -2H(0,1)=0, \,\,\,&\gr_{\zs}(\xs^0_0) = -2H(0,1)-0-2 = -2,\\
	\gr_{\ws}(\xs^0_2) = -2H(0,-1) = -2,
	\,\,\,&\gr_{\zs}(\xs^0_2) = -2H(0,-1) -0 -(-2)=0,\\
	\gr_{\ws}(\xs^0_1) = \gr_{\ws}(\xs^0_2)+1 = -1,\,\,\, &\gr_{\zs}(\xs^0_1) = \gr_{\zs}(\xs^0_0)+1 =-1.
\end{align*}
The pattern $P$ is an unknot, so $\cS$ consists of a single generator $\xs'_0$, with 
\[
\gr_{\ws}(\xs'_0) = \gr_{\zs}(\xs'_0) =0.
\]

The actions $L_a$ for $a= W,Z,\sigma,\tau$ are computed using the fact that they are non-zero on homology, and using the grading shifts formulas in Equation~\eqref{eq:grading shifts of left action}. The arrow labeled $(W,Z)|Z$ from $\xs_{0}^0$ to $\xs_1^0$ represents a non-trivial $\delta^1_3$-action $\delta_{3}^1(W,Z,\xs_{0}^0) = \xs_1^0\otimes Z$, which is a chain homotopy between
\[L_W(L_Z(\xs_{0}^0))=\delta_2^1(W,\delta_2^1(Z,\xs_{0}^0)) = \xs^0_2\otimes Z^2\text{ and }\delta_2^1(WZ, \xs_{0}^0)  = \delta_2^1(U,\xs_{0}^0) = \xs_{0}^0\otimes U,\]
and similarly for the arrow labeled $(Z,W)|W$ from $\xs^0_2$ to $\xs^0_1$. The actions of the form $\delta_2^1(a,-)$ or $\delta_3^1(a,b,-)$ are determined by the above maps. Furthermore, the resulting maps can be chosen to be $U$-equivariant. See  \cite{CZZ}*{Section~5.5} for the details. 
\end{example}

\subsection{On the $H$-function of two-component L-space links}

In this section, we study the $H$-function of two-component L-space links. In particular, we prove some stabilization properties of the $H$-function in the horizontal and vertical directions, refining the corresponding statement in Lemma \ref{lem: properties of H function}. The main result is Lemma \ref{lem:shape of top generators}. We mostly focus on the case where $L$ is of the form $L=L_P$ for an L-space satellite operator $P$. Equivalently, we focus on L-space links $L=L_1\cup L_2$ when $L_1$ is an unknot.

In the horizontal direction, recall the following definition in \cite{CZZ}*{Section~10}:

\begin{define}
	If $L$ is a two-component L-space link with linking number $\ell$, we define the \textit{width} of $L$, denoted by $N_L$,  to be 
	 \[
	N_L := \min\left\{t_0\in \Z+\ell/2 \middle|  \begin{array}{l}  L_{\sigma}:\cC_{t} \to \cS, \text{ and } L_{\tau}:\cC_{-t} \to \cS \text{ are } \\    \text{homotopy equivalence for all } t\ge t_0\end{array} \right\}. 
	\]
	Sometimes we write $N$ instead of $N_L$ if there is no danger of confusion. For most of the paper, we will focus on the two-component link $L= L_P$ associated to the satellite operation with pattern $P$, and we will call $N_{L_P}$ the \textit{width} of the pattern $P$. 	
	\label{def:width N}
\end{define}

We recall that the bimodule ${}_{\cK} \cX(L)^{\bF[W,Z]}$ is constructed using the $H$-function of $L$. Using the symmetry of the $H$-function in Lemma \ref{lem:properties-H-function} and the stabilization behavior of the $H$-function as $t\to \infty$, we observe that the quantity $N_L$ satisfies the following equations:
\begin{equation}
	\label{eq:width}
	\begin{split}
		 N_L =& \min\left\{t_0 \in \mathbb{Z}+\ell/2 \middle| \begin{array}{l}  H_L(t,r)=H_L(t+1,r), \text{ for}  \\   \text{all } t\ge t_0 \textrm{ and } r\in \Z+\ell/2\end{array}\right\} \\
		 =-& \max\left\{t_0 \in \mathbb{Z}+\ell/2 \middle| \begin{array}{l}  H_L(t-1,r)=H_L(t,r)+1,  \\   \text{for all } t\le t_0 \textrm{ and } r\in \Z+\ell/2\end{array}\right\}.
	\end{split}
\end{equation}

 The following lemma gives a bound on $N_L$.
 \begin{lem} Suppose $L=L_1\cup L_2$ is a two-component L-space link with linking number $\ell$, and width $N_L$. Then,
 	\[
 -N_L+g_3(L_1)\le \frac{\ell}{2}\le N_L-g_3(L_1).
 	\] 
 	In particular, if $L_1$ is an unknot, then
 	\[
 	-N_L\le \frac{\ell}{2}\le N_L.
 	\]
 	\label{lem:bound on N}
 \end{lem}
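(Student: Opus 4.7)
The plan is to use the two equivalent characterizations of $N_L$ given in equation~\eqref{eq:width} in terms of the $H$-function, combined with the stabilization property $\lim_{r\to\infty} H_L(t,r) = H_{L_1}(t - \ell/2)$ from Lemma~\ref{lem:properties-H-function}(4). The first step is to observe that $L_1$, being a sublink of the L-space link $L$, is itself an L-space knot. For L-space knots, it is standard that $H_{L_1}(s) = 0$ if and only if $s \ge g_3(L_1)$. This is the single non-trivial ingredient beyond the formal manipulation of $H$-functions, and it is what makes the bound tight.

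For the upper bound $\ell/2 \le N_L - g_3(L_1)$, I would apply the first formula in equation~\eqref{eq:width}: $H_L(t,r) = H_L(t+1,r)$ for all $t \ge N_L$ and all $r \in \Z + \ell/2$. Taking the limit as $r \to \infty$ and invoking stabilization, this yields $H_{L_1}(s) = H_{L_1}(s+1)$ for all $s \ge N_L - \ell/2$, where $s = t-\ell/2$. Since $H_{L_1}$ is non-negative and tends to $0$ as $s \to \infty$, this forces $H_{L_1}(s) = 0$ on $[N_L - \ell/2, \infty)$, and the L-space knot characterization of $g_3(L_1)$ gives $N_L - \ell/2 \ge g_3(L_1)$.

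For the lower bound $-N_L + g_3(L_1) \le \ell/2$, I would instead apply the second formula in equation~\eqref{eq:width}, namely $H_L(t-1,r) = H_L(t,r) + 1$ for all $t \le -N_L$ and all $r$. Taking $r\to\infty$ yields $H_{L_1}(s-1) = H_{L_1}(s) + 1$ for all $s \le -N_L - \ell/2$. I would then convert this ``slope $-1$'' condition at the left tail into the earlier type of stabilization condition at the right tail by applying the symmetry $H_{L_1}(-s) = H_{L_1}(s) + s$ (the one-component case of Lemma~\ref{lem:properties-H-function}(5)); this shows that the difference condition on $(-\infty, -N_L - \ell/2]$ is equivalent to $H_{L_1}$ vanishing on $[N_L + \ell/2, \infty)$. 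The same characterization of $g_3(L_1)$ then yields $N_L + \ell/2 \ge g_3(L_1)$, i.e., $g_3(L_1) - N_L \le \ell/2$. When $L_1$ is an unknot, $g_3(L_1) = 0$ and both inequalities collapse to $|\ell/2| \le N_L$.

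The only real obstacle is verifying the input about L-space knots (that $L_1$ is an L-space knot, and that its $H$-function vanishes precisely on $[g_3(L_1), \infty)$); everything else is a direct passage to the limit in $r$, combined with the symmetry of Lemma~\ref{lem:properties-H-function}(5).
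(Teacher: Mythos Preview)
Your proposal is correct and takes essentially the same approach as the paper: both combine the characterization of $N_L$ in equation~\eqref{eq:width} with the stabilization $H_L(t,\infty)=H_{L_1}(t-\ell/2)$ and the behavior of the $H$-function of an L-space knot near $\pm g_3(L_1)$. The only cosmetic difference is direction: the paper exhibits specific witness points (namely $t=g_3(L_1)+\ell/2$ and $t=-g_3(L_1)+\ell/2$) where the constant/slope-$(-1)$ conditions fail, forcing $N_L\ge g_3(L_1)+\ell/2$ and $-N_L\le -g_3(L_1)+\ell/2$ directly, whereas you pass to the limit first and read off where $H_{L_1}$ vanishes; these are contrapositives of one another.
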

 \begin{proof} 
 	This follows from the stabilization behavior \[H_{L_1}(t-\ell/2)=H_L(t,\infty)\]
 	in Lemma \ref{lem: properties of H function}, together with the following property of the $H$-function of an L-space knot $K$: If $g=g_3(K)$ is the Seifert genus of $K$, then 
 	\[H_K(g-1)=  H_K(g)+1=1,\quad H_K(-g+1) = H_K(-g)=g.\] 
 It follows that 
 	\[
 	\begin{split}
 		 H_L(g+\ell/2-1,\infty) &= H_L(g+\ell/2,\infty)+1,\\
 		 H_L(-g+\ell/2+1,\infty) &= H_L(-g+\ell/2,\infty).
 	\end{split}
 	\]
 	Therefore, from  Equation~\eqref{eq:width}, we have
 	\[N_L \ge g+\ell/2, \text{  and  }  -N_L \le -g+\ell/2.\]
 \end{proof}

\begin{rem}
When $L= L_1\cup L_2$ is a two-component L-space link such that $L_1$ is an unknot, which is the case for $L_P = \mu \cup P$, it follows from the relation between the $H$-function and the normalized Alexander polynomial $\tilde{\Delta}_L$ in Proposition \ref{prop:GNH-function} that:
\begin{enumerate}
	\item If $\tilde{\Delta}_{L}(x_1,x_2)=0,$ then $N_L=0$.
	\item If $\tilde{\Delta}_L (x_1,x_2)\neq 0$, then $N_L$ is the highest power of $x_1$ appearing in $\tilde{\Delta}_L$.
\end{enumerate}
	 \label{rem:highest-power}
\end{rem}

In the vertical direction, it is helpful to make the following definition involving the $H$-function:

\begin{define}
	If $L$ is a two-component L-space link with linking number $\ell$ and $H$-function $H_L(t,r)$, we set
	\[
	R_t := \max\left\{r\in \Z+\ell/2 \middle|  \begin{array}{l}  H_{L}(t,r+1)=H_L(t,r)\text{ and}\\   H_L(t,r-1)=H_L(t,r)+1\end{array} \right\}. 
	\]
\end{define}

Note that $R_t$ is equal to the $A_2$-grading of the top generator $\xs^t_0$ in the staircase $\cC_t$ that we used in our construction of ${}_{\cK} \cX(L)^{\bF[W,Z]}$ in Section~\ref{sec:bimodule-X-L_P}. We denoted this earlier by $ r^t_0 $. It follows easily from the definition of $r^t_0 $ that 
\[H_L(t,r) =H_L(t,\infty)  = H_{L_1}\left(t-\ell/2\right), \text{  for all $t\in \Z +\ell/2$ and $r\ge R_t$.}\]

We now study how the numbers $R_t$ vary as $t$ changes, when $L_P=\mu\cup P$ is a two-component L-space link such that $\mu$ is an unknot:

\begin{lem}Let $P$ be an L-space pattern and let $L_P=\mu\cup P$ be the associated two-component link in $S^3$. Write $\ell = \lk(\mu,P)$ for the linking number, $g_3(P)$ for the Seifert genus of $P$ viewed as a knot in $S^3$, and $N = N_{L_P}$ for the width of $L_P$. Then the numbers $R_t$ satisfy the following:
	\begin{enumerate}
		\item When $t\le-N $, $R_t = g_3(P)-\frac{\ell}{2}$.
		\item\label{claim-2-monotonicity} When $t\le \frac{\ell}{2}, $ $R_{t-1} \le R_t$.
		\item\label{claim-3-monotonicity} When $t\ge \frac{\ell}{2}$, $R_t \ge R_{t+1}$.
		\item When $t\ge N$, $R_t = g_3(P)+\frac{\ell}{2}$.
	\end{enumerate}
	In words, $R_t$ is constant for $t\le-N$, then non-decreasing until $t=\frac{\ell}{2}$, then non-increasing, and finally becomes constant for $t\ge N$. 
	
	In particular, the maximum of $R_t$ is achieved when $t=\frac{\ell}{2}$, and 
	\[
	R_t \ge g_3(P)-\frac{\ell}{2} \text{ for } t\le \frac{\ell}{2},
	\qquad 
	 R_t \ge g_3(P)+\frac{\ell}{2} \text{ for }t\ge \frac{\ell}{2}.\]
	\label{lem:shape of top generators}
\end{lem}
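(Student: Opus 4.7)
The plan is to recast $R_t$ in a form that directly exhibits its dependence on the stable value of $H_L$, and then derive each of the four parts from the structural properties collected in Lemma~\ref{lem: properties of H function}. The main obstacle, I expect, is noticing the right reformulation. Since $H_L(t,\cdot)$ is non-increasing in $r$ with each step of size $0$ or $-1$ (monotonicity plus bounded gap), the pair of conditions defining $R_t$ simply selects the right endpoint of the rightmost strict drop of $H_L(t,\cdot)$; past this drop $H_L(t,\cdot)$ has already reached its stable value, which by Lemma~\ref{lem: properties of H function}(4) and the fact that $\mu$ is an unknot equals $H_{\mu}(t-\tfrac{\ell}{2}) = \max(0,\tfrac{\ell}{2}-t)$. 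This yields the equivalent description
\[
R_t \;=\; \min\bigl\{\, r \in \tfrac{\ell}{2}+\Z : H_L(t,r) = \max\bigl(0,\tfrac{\ell}{2}-t\bigr)\,\bigr\}.
\]
Once this is in hand, each part reduces to a short manipulation.

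For part~(\ref{claim-3-monotonicity}), when $t \ge \tfrac{\ell}{2}$ both relevant stable values vanish, so $R_t$ and $R_{t+1}$ are the smallest $r$ at which $H_L(t,\cdot)$ and $H_L(t+1,\cdot)$ first hit $0$. Monotonicity in $t$ gives $\{r:H_L(t,r)=0\}\subseteq\{r:H_L(t+1,r)=0\}$, hence $R_{t+1} \le R_t$. For part~(\ref{claim-2-monotonicity}), when $t \le \tfrac{\ell}{2}$, set $r^\star := R_t$. The bounded-gap inequality in the $t$-direction forces $H_L(t-1,r^\star)\le H_L(t,r^\star)+1 = \tfrac{\ell}{2}-(t-1)$, while monotonicity in $r$ combined with the stable value gives $H_L(t-1,r^\star)\ge H_L(t-1,\infty) = \tfrac{\ell}{2}-(t-1)$. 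Equality puts $r^\star$ in the set defining $R_{t-1}$, yielding $R_{t-1}\le r^\star = R_t$.

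For parts~(1) and~(4), I would combine the $t$-direction stabilization from Equation~\eqref{eq:width} with Lemma~\ref{lem:bound on N} (which supplies $N \ge \tfrac{\ell}{2}$ since $\mu$ is unknotted). For $t \ge N$ stabilization gives $H_L(t,r)=H_P(r-\tfrac{\ell}{2})$, and part~(4) follows immediately from the standard fact that for an L-space knot $P$ of genus $g_3(P)$ the equation $H_P(s)=0$ holds precisely when $s\ge g_3(P)$. For part~(1), I would apply the symmetry $H_L(t,r)=H_L(-t,-r)-t-r$ from Lemma~\ref{lem: properties of H function}(5) together with stabilization at $-t \ge N$ to rewrite the equation $H_L(t,r)=\tfrac{\ell}{2}-t$ as $H_P(s)=-s$ in the variable $s:=-r-\tfrac{\ell}{2}$. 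The analogous symmetry of the $H$-function of the knot $P$ then shows $H_P(s)=-s$ exactly when $s \le -g_3(P)$, so the required minimum $r$ is attained at $s=-g_3(P)$, giving $R_t = g_3(P)-\tfrac{\ell}{2}$.

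The ``in particular'' assertions then follow by direct combination. The monotonicity in parts~(\ref{claim-2-monotonicity}) and~(\ref{claim-3-monotonicity}), together with the endpoint values from parts~(1) and~(4), shows that $R_t$ is weakly non-decreasing on $(-\infty,\tfrac{\ell}{2}]$ starting from $g_3(P)-\tfrac{\ell}{2}$ for $t \le -N$, and weakly non-increasing on $[\tfrac{\ell}{2},\infty)$ ending at $g_3(P)+\tfrac{\ell}{2}$ for $t \ge N$. In particular the maximum is attained at $t=\tfrac{\ell}{2}$, and the stated lower bounds on each half are immediate.
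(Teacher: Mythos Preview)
Your proof is correct. The reformulation
\[
R_t \;=\; \min\bigl\{\, r \in \tfrac{\ell}{2}+\Z : H_L(t,r) = \max\bigl(0,\tfrac{\ell}{2}-t\bigr)\,\bigr\}
\]
is valid (this is exactly the last drop of the non-increasing step function $H_L(t,\cdot)$, landing at its stable value $H_\mu(t-\tfrac{\ell}{2})$), and each of your four deductions from it checks out. In particular, your argument for part~(\ref{claim-2-monotonicity}) using the bounded gap in the $t$-direction to pin $H_L(t-1,R_t)$ between $\tfrac{\ell}{2}-(t-1)$ from below and above is clean and correct; likewise your handling of part~(1) via the symmetry $H_L(t,r)=H_L(-t,-r)-t-r$ and the knot symmetry $H_P(s)=-s \iff H_P(-s)=0 \iff -s\ge g_3(P)$ is fine.

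The paper's proof uses the same underlying ingredients (monotonicity, bounded gap, stabilization, and the key fact $H_L(t,R_t)=H_\mu(t-\tfrac{\ell}{2})$) but organizes them differently: it argues parts~(\ref{claim-2-monotonicity}) and~(\ref{claim-3-monotonicity}) by contradiction, supposing $R_{t_0-1}>R_{t_0}$ (resp.\ $R_{t_0}<R_{t_0+1}$) and deriving a gap of at least $2$ or a violation of monotonicity. Your reformulation of $R_t$ as a minimum over a sublevel set turns these into direct one-line inclusions, which is a genuine simplification. The paper is terser on parts~(1) and~(4), simply citing the characterization of the width $N$ in Equation~\eqref{eq:width}; your version makes the role of the L-space knot $H$-function $H_P$ more explicit but is equivalent in content.
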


See Figure \ref{fig:example of shape of R_t} for an illustration of the shape of $R_t$ in the case of the Mazur pattern, where $\ell =1$ and $g_3(P)=0$. We add a box to the position corresponding to the top generator $\xs^t_0$ in each staircase $\cC_t$.

\begin{figure}[h!]
	\begin{tabular}{m{4cm}p{8cm}}
		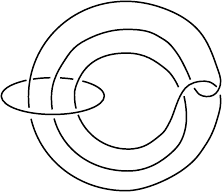
		&$ \begin{array}{|c|ccccccccc|}
			\hline
			\hbox{\diagbox{$r$}{$t$}}&\cdots&\sfrac{-5}{2}&\sfrac{-3}{2}&\sfrac{-1}{2}&\sfrac{1}{2}&\sfrac{3}{2}&\sfrac{5}{2}&\cdots&\\
			\hline	\vdots&\ddots&3&2&1&0&0&0&\reflectbox{$\ddots$}&\\
			\sfrac{3}{2}&\cdots&3&2&1 &\boxed{0}&0&0 & \cdots& \\
			\sfrac{1}{2}&\cdots&3&2&\boxed{1} &1&\boxed{0}&\boxed{0} &\cdots&\\
			\sfrac{-1}{2}&\cdots&\boxed{3}&\boxed{2}&2 &1&1&1& \cdots&\\
			\sfrac{-3}{2}&\cdots&4&3&2 &2&2&2& \cdots&\\
			\vdots&\reflectbox{$\ddots$}&5&4&3 &3&3&3&\ddots &\,\\
			\hline
		\end{array}
		$
	\end{tabular}
	\caption{The $H$-function and the Alexander gradings of the top generator $\xs^t_{0}$ in each $\cC_t$ for $t\in \bZ +\frac{1}{2}$ . }
	\label{fig:example of shape of R_t}
\end{figure}

\begin{proof}

	 The statement for $t\le -N$ and $N\le t$ follows from the the description of $N$ in terms of $H$-function in Equation \eqref{eq:width}. Therefore it suffices to prove the second and third claims, which concern monotonicity of $R_t$.

	 We first point out that since $\mu$ is the unknot, we have	
	 \begin{equation}H_L(t,R_t) =H_L(t,\infty)= H_{\mu}(t-\ell/2) = \begin{cases}
	 	\frac{\ell}{2}-t & \text{if $t< \frac{\ell}{2},$}\\
	 	0 & \text{if $t\ge \frac{\ell}{2}$.}
	 \end{cases}
	 \label{eq:stabilization-H-Rt}
	  \end{equation}

	 We now prove Claim~\eqref{claim-2-monotonicity}.   Suppose, for contradiction, that $R_{t_0-1} > R_{t_0}$ for some $t_0\le \frac{\ell}{2}$. Equation~\eqref{eq:stabilization-H-Rt} implies that if $t\le \frac{\ell}{2}$,  then
	 \begin{equation}
	 H_L(t,R_t) = \frac{\ell}{2}-t\quad \text{and} \quad H_L(t-1,R_{t-1})=\frac{\ell}{2}-t+1.\label{eq:stabilization-H-Rt-case-1}
	 \end{equation}
	 In particular, by the monotonicity of $H_L(t,r)$ in $r$ and by Equation~\eqref{eq:stabilization-H-Rt-case-1}, we have 
     \[H_L(t_0-1,R_{t_0}) \ge H_L(t_0-1,R_{t_0-1}-1) = H_{L}(t_0-1,R_{t_0-1})+1.\]
     This implies 
     \[H_L(t_0-1,R_{t_0}) - H_L(t_0,R_{t_0})\ge H_{L}(t_0-1,R_{t_0-1})+1-H_L(t_0,R_{t_0})= 2,\]
     which contradicts the bounded gap property of $H_L$. See the left figure in Figure~\ref{fig:proof of monotonicity of R_t} for an illustration.

	We now prove Claim~\eqref{claim-3-monotonicity}. When $t\ge \frac{\ell}{2}$, we have 
	 \[
	 H_L(t,R_t) =H_L(t,\infty)=0, \text{ and } H_L(t+1,R_{t+1}) = H_L(t,\infty)=0.
	 \] 
	 Suppose, for contradiction, that $R_{t_0} < R_{t_0+1}$ for some $t_0\ge \frac{\ell}{2}$, then we have
	 \[H_{L}(t_0+1,R_{t_0})\ge  H_{L}(t_0+1,R_{t_0+1}-1)=H_{L}(t_0+1,R_{t_0+1})+1= 1, \]
	  by the monotonicity of $H_L(t,r)$ in $r$ and the definition of $R_{t_0+1}$. However, this would imply $H_{L}(t_0+1,R_{t_0})>H_{L}(t_0,R_{t_0})$ since 
	  \[
	  H_{L}(t_0+1,R_{t_0}) \ge H_{L}(t_0+1,R_{t_0+1})+1=1> H_L(t_0,R_{t_0})=0,
	  \] 
	  contradicting to the monotonicity of $H_L(t,r)$ in $t$. See the right figure in Figure \ref{fig:proof of monotonicity of R_t} for an illustration. This completes the proof.	 \end{proof}
	  \begin{figure}[h!]
	 		$	\begin{array}{|c|cc|}
	 			\hline
	 			\hbox{\diagbox{$r$}{$t$}}&t_0-1&t_0 \\
	 			\hline	R_{t_0-1} & k & \vdots\\
	 			\vdots   &\vdots & \vdots \\
	 			R_{t_0} & \ge k+1 & k-1\\
	 			\hline
	 		\end{array} 
	 		\qquad \qquad 
	 		\begin{array}{|c|cc|}
	 			\hline
	 			\hbox{\diagbox{$r$}{$t$}}&t_0&t_0+1 \\
	 			\hline	R_{t_0+1} & \vdots &0\\
	 			\vdots   &\vdots & \vdots \\
	 			R_{t_0} &  0 &  \ge 1\\
	 			\hline
	 		\end{array}$
	 	\caption{The left figure depicts the contradiction when $t\le \frac{\ell}{2}$, where $k = \frac{\ell}{2}-(t-1)$. The right figure depicts the contradiction when $t\ge \frac{\ell}{2}$.}
	 	\label{fig:proof of monotonicity of R_t}
	 \end{figure}

\subsection{The  bimodule ${}_{\cK} \cX^{\diamond}(L_P)_{\bF[Z]}$}

\label{sec:diamond-module}
In this section, we study the type-$AA$ bimodule
\[
{}_{\cK} \cX(L_P)_{\bF[Z]}:={}_{\cK} \cX(L_P)^{\bF[Z]}\boxtimes {}_{\bF[Z]} \bF[Z]_{\bF[Z]}.
\]
 This bimodule will typically have $m_{0|1|0}$, $m_{1|1|0}$, $m_{2|1|0}$ and $m_{0|1|1}$ non-zero. In this section, we prove that this bimodule is homotopy equivalent to a type-$AA$ bimodule ${}_{\cK} \cX^{\diamond}(L_P)_{\bF[Z]}$ which has only $m_{1|1|0}$ and $m_{0|1|1}$ non-trivial, and we describe this bimodule.

Suppose that $\cC$ is a (positive) staircase complex, viewed as a type-$D$ structure over $\bF[W,Z]$. Write $\xs_{0},\dots, \xs_{m}$ for the generators of $\cC$. Write $\delta_1^1(\xs_{2i+1})=\xs_{2i}\otimes W^{\a_i}+\xs_{2i+2}\otimes Z^{\b_i}$. When we set $W=0$, the effect is to replace $\cC$ with the type-$D$ structure shown below:
\[
\begin{tikzcd}
\xs_0 & \xs_1\ar[r, "Z^{\a_1}"] &\xs_2& \cdots & \xs_{2m-1} \ar[r, "Z^{\a_{m}}"] & \xs_{2m}.
\end{tikzcd}
\] 
If we tensor the above complex (viewed as a type-$D$ module over $\bF[Z]$) with ${}_{\bF[Z]} \bF[Z]_{\bF[Z]}$, then $\cC\boxtimes \bF[Z]$ is quasi-isomorphic to the type-$A$ module
\begin{equation}
\cC^{\diamond}:=\bF[Z] \oplus \bF[Z]/Z^{\a_1}\oplus \cdots \oplus \bF[Z]/Z^{\a_{m}}, \label{eq:C_t-min-def}
\end{equation}
 where we view the above as having vanishing differential and the obvious action of $\bF[Z]$. 
 
 We observe that there is a strong deformation retraction of chain complexes over $\bF$
 \begin{equation}
\begin{tikzcd}
 \cC\boxtimes \bF[Z]\ar[r, "\Pi", shift left] \ar[loop left, "H"] & \cC^{\diamond} \ar[l, "I",shift left]
 \end{tikzcd}
 \label{eq:HPL-diamond}
 \end{equation}
 The map $\Pi$ is projection of modules, the map $I$ is the inclusion of $\bF$-vector spaces. The map $H$ vanishes on $\xs_0Z^i$ as well as $\xs_{2j+1} Z^i$ for all $i$ and $j$. Furthermore, 
 \[
 H(\xs_{2i} Z^j)=\begin{cases}\xs_{2i-1} Z^{j-1} & \text{ if } j\ge \a_j\\ 0 & \text{ otherwise}.\end{cases}
 \]
 
 We apply the above construction to the modules ${}_{\cK} \cX(L_P)^{\bF[W,Z]}$. These bimodules are built from staircases $\cC_t$ for $t\in \ell/2+\Z$ (in idempotent 0) and a staircase $\cS$ (in idempotent 1). We define a right $\bF[Z]$ module
 \[
 \cX^\diamond(L_P)_{\bF[Z]}
 \]
 by replacing each copy of $\cC_t$ and $\cS$ with $\cC_t^{\diamond}$ and $\cS^{\diamond}$, respectively. The strong deformation retraction in Equation~\eqref{eq:HPL-diamond} can be applied to each copy of $\cC_t$ and $\cS$ separately, to give a strong deformation retraction of chain complexes over $\ve{I}$ between $\cX(L_P)^{\bF[Z]}\boxtimes {}_{\bF[X]}\bF[Z]$ and $\cX^{\diamond}(L_P)$. Applying the homological perturbation lemma for type-$AA$ bimodules, Lemma~\ref{lem:HPL-AA}, equips $\cX^{\diamond}(L_P)$ with an $AA$-bimodule structure over $(\cK,\bF[Z])$ which is homotopy equivalent to ${}_{\cK} \cX(L_P)_{\bF[Z]}$ and which has vanishing $m_{0|1|0}$.

 \begin{lem}
 \label{lem:X-min-non-trivial-actions} The only non-vanishing actions $m_{i|1|j}$ on ${}_{\cK} \cX^{\diamond}(L_P)_{\bF[Z]}$ are $m_{1|1|0}$ and $m_{0|1|1}$. Furthermore, the right action of $\bF[Z]$ sends $\cC_t^{\diamond}$ to itself, and coincides with the obvious $\bF[Z]$-module structure from its description in Equation~\eqref{eq:C_t-min-def}. Similarly, the right $\bF[Z]$-module structure sends $\cS^{\diamond}\otimes T^i$ to itself, and coincides with the natural analogous action.
 \end{lem}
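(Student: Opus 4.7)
The plan is to invoke the homological perturbation lemma (Lemma~\ref{lem:HPL-AA}) applied to the strong deformation retraction of Equation~\eqref{eq:HPL-diamond}, taken separately on each staircase $\cC_t$ and each copy $\cS\otimes T^t$ and assembled into a chain-level retraction of ${}_{\cK}\cX(L_P)_{\bF[Z]}$ onto $\cX^{\diamond}(L_P)$. This produces an $AA$-bimodule structure on $\cX^{\diamond}(L_P)$ homotopy equivalent to ${}_{\cK}\cX(L_P)_{\bF[Z]}$ and automatically satisfying $m_{0|1|0}^Z=0$, with higher structure maps $m_{i|1|j}^Z$ given by the tree sums of Equation~\eqref{eq:HPL_structure_maps}. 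A tree contributing to $m_{i|1|j}^Z$ has $k\ge 1$ $m$-vertices, $k-1$ interspersed $H$-vertices, and distributes the $i+j$ algebra inputs across the $m$-vertices, each of which consumes at least one input since $m_{0|1|0}$ is excluded from the sum.

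The core ingredient is a grading argument using the algebraic grading $\gr_{\alg}$ on each staircase, normalised so that $\gr_{\alg}(\xs_{2i})=0$ and $\gr_{\alg}(\xs_{2i-1})=1$ and extended $Z$-linearly. I would first verify directly from the description in Section~\ref{sec:diamond-module} that the inclusion $I$ and the projection $\Pi$ preserve $\gr_{\alg}$, that $H$ has $\gr_{\alg}$-degree $+1$, and that $\cC_t^{\diamond}$ and $\cS^{\diamond}\otimes T^t$ are concentrated in $\gr_{\alg}=0$, so $\Pi$ annihilates every element of positive $\gr_{\alg}$. Next, by the comparison theorem for free resolutions I would pick representatives of the chain maps $L_W,L_Z,L_\sigma,L_\tau$ that preserve $\gr_{\alg}$ and representatives of the higher homotopies $h_{a,b}$ that have $\gr_{\alg}$-degree $+1$. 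Consequently, on ${}_{\cK}\cX(L_P)_{\bF[Z]}$ the action $m_{1|1|0}$ preserves $\gr_{\alg}$, the right multiplication $m_{0|1|1}$ preserves $\gr_{\alg}$, and $m_{2|1|0}$ has $\gr_{\alg}$-degree $+1$.

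A tree with $k$ $m$-vertices of which $p$ are of type $m_{2|1|0}$ then produces an output of $\gr_{\alg}$-degree $(k-1)+p$. For this output to survive $\Pi$ we must have $k=1$ and $p=0$, so the only surviving trees are the single-vertex trees $\Pi\circ m_{1|1|0}(a,-)\circ I$ and $\Pi\circ m_{0|1|1}(-,b)\circ I$. The former realises $m_{1|1|0}^Z$, which is in general non-zero. The latter yields $m_{0|1|1}^Z(x,b)=x\cdot b$ on each summand $\cC_t^{\diamond}$ or $\cS^{\diamond}\otimes T^t$, where the right multiplication matches the obvious $\bF[Z]$-module structure from Equation~\eqref{eq:C_t-min-def}, because $\Pi$, $I$ and the original $\bF[Z]$-multiplication each act staircase-by-staircase. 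The main obstacle is ensuring that the $L_a$ and $h_{a,b}$ maps can be chosen with these prescribed $\gr_{\alg}$-degrees simultaneously while still fitting into a valid $DA$-bimodule; once this degree-homogeneous model is fixed, everything reduces to the short grading count above.
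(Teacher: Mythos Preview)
Your proposal is correct and follows essentially the same approach as the paper's proof: both rely on the algebraic grading $\gr_{\alg}$ supported in $\{0,1\}$, observe that $I$, $\Pi$ preserve it while $H$ raises it by $1$, that $m_{1|1|0}$ and $m_{0|1|1}$ preserve it while $m_{2|1|0}$ raises it by $1$, and conclude that only single-vertex trees with no $H$ and no $m_{2|1|0}$ survive projection to $\cX^{\diamond}(L_P)$. Your concern about choosing $L_a$ and $h_{a,b}$ with the prescribed $\gr_{\alg}$-degrees is not really an obstacle here: the construction of ${}_{\cK}\cX(L_P)^{\bF[W,Z]}$ in \cite{CZZ}*{Section~5} already builds these maps as chain maps (degree $0$) and chain homotopies (degree $+1$) between free resolutions, so the required $\gr_{\alg}$-homogeneity is automatic from the outset.
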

 \begin{proof} We note that ${}_{\cK}\cX(L_P)^{\bF[Z]}$ admits an algebraic grading supported in $\{0,1\}$. We also view $\bF[Z]$ as being supported in algebraic grading 0. The generators $\xs_{2i}^t$ are in algebraic grading 0, and the generators $\xs_{2i+1}^t$ are in algebraic grading $1$. We view $\cX^{\diamond}(L_P)$ as having an algebraic grading supported in grading 0.  The maps $\Pi$ and $I$ preserve the algebraic grading, while $H$ increases it by 1. The action of $\delta_2^1$ of ${}_{\cK} \cX(L_P)^{\bF[Z]}$ preserves the algebraic grading, whereas the action $\delta_3^1$ increases it by 1. Since $\cX^{\diamond}(L_P)$ is supported in algebraic grading 0, by examining the structure maps constructed by the homological perturbation lemma (see Equation~\eqref{eq:HPL_structure_maps}) it follows immediately that the only actions on ${}_{\cK} \cX^{\diamond}(L_P)_{\bF[Z]}$ which are non-trivial are those which involve no applications of $H$ or $m_{2|1|0}$, i.e. we include via $I$, then apply an action $m_{1|1|0}$, $m_{0|1|1}$ of ${}_{\cK}\cX(L_P)_{\bF[Z]}$, and then we apply $\Pi$, as claimed. The second claim about the action of $\bF[Z]$ is immediate from this description.
 \end{proof}

\begin{rem}
\label{rem:X-min-uniquely-determined} We recall that the bimodules ${}_{\cK} \cX(L_P)^{\bF[W,Z]}$ were only well-defined up to homotopy equivalence in \cite{CZZ}*{Proposition~5.10}. It follows  that the bimodules ${}_{\cK} \cX^{\diamond}(L_P)_{\bF[Z]}$ are well-defined up to homotopy equivalence. Noting, however, that $m_{0|1|0}$ vanishes on this module, and that there is at most one generator in a given Maslov-Alexander grading, it follows that the module structure ${}_{\cK} \cX^{\diamond}(L_P)_{\bF[Z]}$ is completely specified (not just up to homotopy equivalence).
\end{rem}

\subsection{The bimodules ${}_{\cK} \cX^{\free}(L_P)_{\mathbb{F}[Z]}$ and ${}_{\cK} \cX^{\tors}(L_P)_{\mathbb{F}[Z]}$}

We observe that the underlying $\bF[Z]$-module of ${}_{\cK} \cX^{\diamond}(L_P)_{\bF[Z]}$ splits canonically as
\[
\cX^{\diamond}(L_P)_{\bF[Z]}\iso \cX^{\free}(L_P)_{\bF[Z]} \oplus \cX^{\tors}(L_P)_{\bF[Z]}.
\]
Here $\cX^{\free}(L_P)$ is the $\bF$-vector space spanned by $\xs_0^{t}Z^i$ and $\xs_0'Z^i$, for $i\ge 0$. The module $\cX^{\tors}(L_P)$ is the $\bF$-vector space spanned by $\xs_{2i+1}^t Z^j$ and $\xs_{2i+1}' Z^j$ for $j\ge 0$. Note that this splitting is canonical, since $\xs_0^t Z^i$ and $\xs_0' Z^i$ do not share the same Alexander gradings and idempotents as any other generators of $\cX^{\diamond}(L_P)$. 

Of course, we can also view $\cX^{\free}(L_P)$ as the quotient of $\cX^{\diamond}(L_P)$ by the subspace $\cX^{\tors}(L_P)$.

We now consider the interaction of the left $\cK$-action with respect to this splitting:

\begin{lem}
\label{lem:X-tors-submodule}
 The subspace $\cX^{\tors}(L_P)\subset \cX^{\diamond}(L_P)$ is a submodule with respect to the bimodule structure over $(\cK,\bF[Z])$.
\end{lem}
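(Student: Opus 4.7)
My plan is to identify $\cX^{\tors}(L_P)$ with the $\bF[Z]$-torsion submodule of $\cX^{\diamond}(L_P)$ and then deduce invariance under the $\cK$-action from strict $\bF[Z]$-linearity. I will first verify the identification. By construction, each $\cC_t^{\diamond}$ and each copy of $\cS^{\diamond}$ is a direct sum $\bF[Z]\oplus\bigoplus_i \bF[Z]/Z^{\a_i}$, where the free $\bF[Z]$-factor is generated by the top generator ($\xs_0^t$ or $\xs_0'$) and the torsion factors are generated by the remaining generators listed in the definition of $\cX^{\tors}$. Consequently the decomposition $\cX^{\diamond}=\cX^{\free}\oplus\cX^{\tors}$ coincides with the canonical splitting of $\cX^{\diamond}$ into its $\bF[Z]$-free and $\bF[Z]$-torsion parts.

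It then suffices to show that every structure map on ${}_{\cK}\cX^{\diamond}(L_P)_{\bF[Z]}$ preserves $\cX^{\tors}$. By Lemma~\ref{lem:X-min-non-trivial-actions}, only $m_{1|1|0}$ and $m_{0|1|1}$ are nonzero, and the right action $m_{0|1|1}$ is the obvious $\bF[Z]$-module structure (again by that lemma), which trivially preserves the torsion submodule. For $m_{1|1|0}$, I will use the $A_{\infty}$-associativity relation with $n=m=1$. Since $m_{0|1|0}$ and $m_{1|1|1}$ both vanish by Lemma~\ref{lem:X-min-non-trivial-actions}, the relation collapses (in characteristic $2$) to
\[
m_{1|1|0}\bigl(a, m_{0|1|1}(\xs, z)\bigr) = m_{0|1|1}\bigl(m_{1|1|0}(a, \xs), z\bigr)
\]
for all $a\in\cK$, $\xs\in\cX^{\diamond}(L_P)$, and $z\in\bF[Z]$, which is exactly the statement that $m_{1|1|0}(a,-)$ is strictly $\bF[Z]$-linear. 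Hence, if $Z^N\xs=0$ then $Z^N\,m_{1|1|0}(a,\xs)=m_{1|1|0}(a, Z^N\xs)=0$, so $m_{1|1|0}(a,-)$ sends $\cX^{\tors}$ into itself.

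The argument has no serious obstacle: given Lemma~\ref{lem:X-min-non-trivial-actions}, everything reduces to a one-line application of the $A_{\infty}$-relation plus the trivial observation that $\bF[Z]$-linear maps preserve $\bF[Z]$-torsion. The only conceptual point worth emphasizing is the identification of $\cX^{\tors}$ with the canonical $\bF[Z]$-torsion submodule of $\cX^{\diamond}(L_P)$, which is what lets the $\bF[Z]$-linearity argument apply cleanly.
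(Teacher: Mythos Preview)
Your proposal is correct and follows essentially the same approach as the paper: both use Lemma~\ref{lem:X-min-non-trivial-actions} to reduce to checking that $m_{1|1|0}(a,-)$ is $\bF[Z]$-linear, and then observe that $\bF[Z]$-linear maps preserve torsion. You are slightly more explicit in deriving the $\bF[Z]$-linearity from the $A_\infty$-relation (the paper simply asserts that $m_{1|1|0}(a,-)$ commutes with $Z$), and you make explicit the identification of $\cX^{\tors}$ with the canonical torsion submodule, but these are expository differences rather than a different route.
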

\begin{proof}
 Note that clearly $m_{0|1|1}$ preserves $\cX^{\tors}(L_P)$, so it suffices to check that the left $\cK$-action preserves $\cX^{\tors}(L_P)$. Recall that the only actions on $\cX^{\diamond}(L_P)$ which are non-trivial are $m_{1|1|0}$ and $m_{0|1|1}$ by Lemma~\ref{lem:X-min-non-trivial-actions}. Therefore $m_{1|1|0}(a,-)$ commutes with the action of $Z$ for all $a\in \cK$. Since there are no non-zero maps from a torsion $\bF[Z]$-module to a free $\bF[Z]$-module, it follows that $m_{1|1|0}(a,-)$ must send $\cX^{\tors}(L_P)$ to itself, proving the claim.
\end{proof}

\begin{define}
	 Suppose that $P$ is an L-space satellite operator. Let ${}_{\cK}\cX^{\tors}(L_P)_{\bF[Z]}$ be the submodule of ${}_{\cK} \cX^{\diamond}(L_P)_{\bF[Z]}$ spanned by all of the $\bF[Z]$-torsion elements of $\cX^{\diamond}(L_P)$.  Define the bimodule ${}_{\cK}\cX^{\free}(L_P)_{\bF[Z]}$ as the quotient
	 \[
	  \cX^{\free}(L_P):= \cX^{\diamond}(L_P)/\cX^{\tors}(L_P)
	  \]
	  with its induced quotient module structure over $(\cK,\bF[Z])$.
\end{define}

 We will see that in many cases, to compute $\tau(P(K,n))$, it is sufficient to consider ${}_{\cK} \cX^{\free}(L_P)_{\bF[Z]}$ instead of the full module ${}_{\cK} \cX^{\diamond}(L_P)_{\bF[Z]}$.

\begin{rem}\label{rem:X-free-uniquely-determined} In a similar manner to Remark~\ref{rem:X-min-uniquely-determined}, the structure on ${}_{\cK} \cX^{\free}(L_P)_{\bF[Z]}$ is uniquely determined (not just up to homotopy equivalence).
\end{rem}

In the above, we are viewing $\cX^{\tors}(L_P)$ as a $(\cK,\bF[Z])$-submodule of $\cX^{\diamond}(L_P)$. It is helpful to recall that the ordinary notion of a submodule extends to the setting of type-$AA$ bimodules in a simple manner. We recall the following: 

\begin{define}
    \label{def:subbimodule} Let $\cA$ and $\cB$ be two associative algebras over rings $\ve{k}$ and $\ve{j}$, respectively. 	Let ${}_{\cA}M_{\cB}$ be a $AA$-bimodule over $\cA$ and $\cB$.  A $(\ve{k},\ve{j})$-submodule $N$ of $M$ is a \textit{sub $AA$-bimodule} of ${}_{\cA}M_{\cB}$ if 
	\[m_{i|1|j}\left(\cA^{\otimes i}\otimes N\otimes \cB^{\otimes j}\right) \subseteq N,\]
	for all $i,j\ge 0$.  
	
	If $N$ is a sub $AA$-bimodule of $M$, then the structure maps $m_{i|1|j}$ naturally descend to the $(\ve{k},\ve{j})$-module quotient $M/N$: 
		\[
		m_{i|1|j}\left(\cA^{\otimes i}\otimes (M/N)\otimes \cB^{\otimes j}\right) \to M/N.\]
		Furthermore, these maps give an $AA$-bimodule structure on the quotient $M/N$. We call the induced $AA$-bimodule a \textit{quotient $AA$-bimodule}.
\end{define}

Note that the bimodule ${}_{\cK} \cX^{\tors}(L_P)_{\bF[Z]}\subset {}_{\cK} \cX^{\diamond}(L_P)_{\bF[Z]}$ satisfies the above definition of a sub $AA$-bimodule, since it is a submodule by Lemma~\ref{lem:X-tors-submodule}, and $\cX^{\diamond}(L_P)$ has vanishing higher actions by Lemma~\ref{lem:X-min-non-trivial-actions}.

\begin{rem}
	By the definition of the box tensor product, if ${}_{\cA}N_{\cB}$ is a sub $AA$-bimodule of ${}_{\cA}M_{\cB}$, then for any $DA$-bimodule $ {}_{\cA'}L^{\cA}$, the box tensor product $ {}_{\cA'}L^{\cA}\boxtimes{}_{\cA}N_{\cB}$ is also a sub $AA$-bimodule of $ {}_{\cA'}L^{\cA}\boxtimes{}_{\cA}M_{\cB}$. A similar statement holds for quotient $AA$-bimodules. Also, similar remarks hold for tensoring on the left with a type-$D$ module $J^{\cA}$ instead of a $DA$-bimodule $ {}_{\cA'}L^{\cA}$.
	\label{rem:sub/quotient under box tensor}
\end{rem}

We now describe the bimodule structure on ${}_{\cK} \cX^{\free}(L_P)_{\mathbb{F}[Z]}$ more explicitly:

\begin{lem}
	\label{lem:quotient bimodule in Lp}
	Suppose $P$ is an L-space satellite operator, and let $L_P$ be the associated two-component link. Let $\ell = \lk(\mu,P)$ be the linking number of $L_P$, and $g=g_3(P)$ be the Seifert genus of the pattern $P$ viewed as a knot in $S^3$.  Then the left action $L_a:= m_{1|1|0}(a,-) \colon \cX^{\free}(L_P) \to \cX^{\free}(L_P)$ of $a\in \{W,Z,\sigma,\tau\}$ on the quotient bimodule ${}_{\cK} \cX^{\free}(L_P)_{\mathbb{F}[Z]}$ are given by the following formulas for each $t\in \Z+\frac{\ell}{2}$:	
	\begin{align*}
		 L_Z(\xs^t_{0}) = &
		 	\begin{cases}
		 		0 & \text{if $t< \frac{\ell}{2}$,}\\
		 		\xs^{t+1}_0  Z^{j_t} & \text{if $t\ge \frac{\ell}{2},$ where $j_t=R_t-R_{t+1}$. }
		 	\end{cases}        \\
	 L_W(\xs^t_{0}) =
	  &	\begin{cases}
	 		\xs^{t-1}_0  Z^{k_t} & \text{if $t\leq \frac{\ell}{2},$ where $k_t=R_t-R_{t-1}$, }\\
	 		0 & \text{if $t> \frac{\ell}{2}$.}
	 	\end{cases}       \\
 	L_{\sigma}(\xs^t_{0}) =
 	&\begin{cases}
 		0 & \text{if $t< \frac{\ell}{2}$,}\\
 		\xs'_0  Z^{j'_t} & \text{if $t\ge \frac{\ell}{2},$ where $j'_t=R_t-\frac{\ell}{2}-g_3(P)$. }
 	\end{cases}       \\
L_{\tau }(\xs^t_{0}) =
&	\begin{cases}
\xs'_0  Z^{k'_t} & \text{if $t\leq \frac{\ell}{2},$ where $k'_t=R_t+\frac{\ell}{2}-g_3(P)$, }\\
0 & \text{if $t> \frac{\ell}{2}$.}
\end{cases}  
	\end{align*}
Additionally,
\[
L_{T^{\pm 1}}(T^t\xs'_0) = T^{t\pm 1}\xs'_0 ,\quad \text{and} \quad\L_{U}(\xs'_0)=0.\]
\end{lem}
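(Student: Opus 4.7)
My plan is to read off each of the four left actions on ${}_{\cK}\cX^{\free}(L_P)_{\bF[Z]}$ using bigrading constraints, relying on Lemma~\ref{lem:X-min-non-trivial-actions} and Remark~\ref{rem:X-free-uniquely-determined} to reduce the problem to determining the image of each generator. Since the only non-vanishing structure maps on $\cX^{\free}$ are $m_{1|1|0}$ and $m_{0|1|1}$, and each $\xs_0^t$ (resp.\ $T^t\xs'_0$) freely generates an $\bF[Z]$-summand, it suffices to compute $L_a(\xs_0^t)$ for $a\in\{W,Z,\sigma,\tau\}$ and extend $\bF[Z]$-linearly. The right $\bF[Z]$-action, the $T^{\pm 1}$ shifts on idempotent-$1$ generators, and the identity $L_U(\xs'_0)=0$ are immediate from the construction of $\cS^{\free}$: the $U$-action on $\cS$ is by $WZ$, which is killed by the $W=0$ reduction used to build $\cX^{\diamond}$.

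For each $a\in\{W,Z,\sigma,\tau\}$, the grading shifts in Equation~\eqref{eq:grading shifts of left action} constrain the image of $\xs_0^t$ to a specific idempotent summand: $\cC_{t\pm 1}^{\free}$ for $L_W$, $L_Z$, and $T^t\cS^{\free}$ for $L_\sigma$, $L_\tau$. Within each target the only surviving $\bF[Z]$-generator is the top generator, so the image is forced to a $Z$-power multiple of $\xs_0^{t\pm 1}$ or $\xs'_0$. The exponent of $Z$ is then pinned down by matching Alexander gradings; for example, using that $L_Z$ preserves $A_2$ and the right $Z$-action shifts $A_2$ by $+1$, one gets $R_{t+1}+j_t=R_t$, so $j_t=R_t-R_{t+1}$. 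By Lemma~\ref{lem:shape of top generators}, this is non-negative for $t\ge \ell/2$. The analogous computations for $L_W$, $L_\sigma$, $L_\tau$ yield $k_t=R_t-R_{t-1}$, $j'_t=R_t-\ell/2-g_3(P)$, and $k'_t=R_t+\ell/2-g_3(P)$, each non-negative on the claimed range by Lemma~\ref{lem:shape of top generators} and the fact that $\xs'_0\in \cS\simeq \cCFK(P)$ lies in Alexander grading $g_3(P)$.

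The most delicate step will be establishing the \emph{vanishing} half of each case (e.g.\ that $L_Z(\xs_0^t)=0$ in $\cX^{\free}$ when $t<\ell/2$), rather than merely observing that the formal exponent becomes negative. The argument uses the $\gr_{\ws}$-grading, which the right $Z$-action leaves invariant: matching $\gr_{\ws}$ on $L_Z(\xs_0^t)=\xs_0^{t+1}Z^{j_t}$ forces $H_L(t,R_t)=H_L(t+1,R_{t+1})$. By the stabilization in Lemma~\ref{lem: properties of H function}, $H_L(t,R_t)=H_L(t,\infty)=\max(0,\ell/2-t)$, so this equality fails precisely for $t<\ell/2$. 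At the chain level any non-zero contribution would need a positive $W$-exponent to absorb the $\gr_{\ws}$-deficit, but those terms are annihilated by the $W=0$ reduction underlying $\cX^{\diamond}$. Analogous reasoning handles $L_W$, $L_\sigma$, $L_\tau$, using in addition that $\gr_{\ws}(\xs'_0)=0$. Finally, non-vanishing on the stated regime follows from the non-triviality-on-homology property of the chain-level $L_a$ maps recalled in Section~\ref{sec:bimodule-X-L_P}: since the grading analysis already fixes the $W$-exponent of $L_a(\xs_0^t)$ at $0$ on this regime, the non-trivial image on the top generator survives the $W=0$ quotient intact and descends to the stated formula.
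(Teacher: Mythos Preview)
Your proposal is correct and follows essentially the same route as the paper: both reduce to grading computations via Equation~\eqref{eq:grading shifts of left action}, use Lemma~\ref{lem:shape of top generators} to verify non-negativity of the $Z$-exponents on the claimed ranges, and rely on Remark~\ref{rem:X-free-uniquely-determined} to justify computing in a single convenient model. The only presentational difference is that the paper frames the non-vanishing step as constructing a model of the full bimodule where $\delta_2^1(a,\xs_0^t)=\xs_0^{*}\otimes Z^{j}$ on the nose, while you argue directly that the $\gr_{\ws}$-match forces the $W$-exponent to zero; these are equivalent, since $\xs_0^{*}$ is the unique generator of the target staircase with maximal $\gr_{\ws}$, so in the good regime no other term can appear in that bigrading.
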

Schematically,  ${}_{\cK} \cX^{\free}(L_P)_{\mathbb{F}[Z]}$ takes the following form, where we only label the non-zero actions of $L_{a}$ for $a=W,Z,\sigma,\tau, T,T^{-1}$, and each generator denotes a free $\bF[Z]$-module generator:
\begin{equation}  
	\begin{tikzcd}[ column sep=1.1cm, row sep=0cm]
		\cdots
		&[-1.2cm]\xs^{\sfrac{\ell}{2}-2}_0
		\ar[d, "L_{\tau}"]
		& \xs^{\sfrac{\ell}{2}-1}_0
		\ar[l,  "L_W",shorten = -1.5mm,swap]
		\ar[d, "L_\tau"]
		&\xs^{\sfrac{\ell}{2}}_0
		\ar[r, "L_Z"]
		\ar[l,  "L_W",shorten = -1.5mm,swap]
		\ar[d,"L_{\sigma}", bend left]
		\ar[d, "L_\tau", bend right,swap]
		& \xs^{\sfrac{\ell}{2}+1}_0
		\ar[r,  "L_Z"]
		\ar[d,"L_{\sigma}"]
		& \xs^{\sfrac{\ell}{2}+2}_0
		\ar[d,"L_{\sigma}"]
		&[-1.2cm] \cdots
		\\[2cm]
		\cdots
		&T^{\sfrac{\ell}{2}-2}\xs'_0
		\ar[r, bend left, "L_T"]
		&T^{\sfrac{\ell}{2}-1}\xs'_0
		\ar[r, bend left, "L_{T}"]
		\ar[l, bend left, "L_{T^{-1}}"]
		&T^{\sfrac{\ell}{2}}\xs'_0
		\ar[r, bend left, "L_{T}"]
		\ar[l, bend left, "L_{T^{-1}}"]
		&T^{\sfrac{\ell}{2}+1}\xs'_0
		\ar[r, bend left, "L_T"]
		\ar[l, bend left, "L_{T^{-1}}"]
		&T^{\sfrac{\ell}{2}+2}\xs'_0
		\ar[l, bend left, "L_{T^{-1}}"]
		&\cdots 
	\end{tikzcd}
	\label{eq:X(L_P)-complex}
\end{equation}

\begin{proof}
In our construction of the modules ${}_{\cK} \cX(L_P)^{\bF[W,Z]}$, if $\xs\in\cC_t\subset \cX(L_P)$ is a staircase generator in idempotent 0 and in algebraic grading 0, we can construct a model of the bimodule  ${}_{\cK}\cX(L_P)^{\bF[W,Z]}$ where $L_Z(\xs)=\delta_2^1(Z,\xs)$ is equal to $\ys\otimes W^{i} Z^j$ for any choice of staircase generator $\ys\in\cC_{t+1}$ which has 
\[
(\gr_{\ws},\gr_{\zs})(\ys)-(2i,2j)=(\gr_{\ws},\gr_{\zs})(\xs)-(2,0).
\]
 Similarly, we can pick $L_W(\xs)$, $L_\sigma(\xs)$ and $L_\tau(\xs)$ to be any staircase generators which have the appropriate gradings. This follows immediately from the description in \cite{CZZ}*{Sections~5.2, 5.3}.
	 
	 Recall from Remarks~\ref{rem:X-min-uniquely-determined} and \ref{rem:X-free-uniquely-determined} that although the bimodule ${}_{\cK} \cX(L_P)^{\bF[W,Z]}$ is only determined up to homotopy equivalence, the bimodule ${}_{\cK} \cX^{\free}(L_P)_{\bF[Z]}$ is completely determined. Therefore we only have to show the stated formulas for some choice of structure map on ${}_{\cK} \cX(L_P)^{\bF[W,Z]}$. In the rest of this proof, we will write $\delta_2^1(a,-)$ for the structure maps on the $DA$-bimodule ${}_{\cK} \cX(L_P)^{\bF[W,Z]}$, and $L_a$ for the induced maps on ${}_{\cK} \cX^{\free}(L_P)_{\bF[Z]}$. 
	
We consider first the claim about $ L_\sigma(\xs_0^t)$. We break the proof into two cases, depending on whether $t<\ell/2$ or $t\ge \ell/2$.

 Consider first $t<\ell/2$. We write $\delta_2^1(\sigma,\xs_0^t)=\xs_i'\otimes W^{p_t}Z^{q_t}$ for some $i$. If $i>0$, there is nothing to show. If $i=0$, then since $\gr_{\ws}(\delta_2^1(\sigma,-))=0$ by Equation~\eqref{eq:grading shifts of left action}, we must have
\[
\gr_{\ws}(\xs_0^t)=\gr_{\ws}(\xs_0')-2p_t.
\]
We observe $\gr_{\ws}(\xs_0')=0$ since $\cS$ is s staircase complex with standard gradings in the sense of Definition~\ref{def:standard-gradings}. Furthermore, 
\[
\gr_{\ws}(\xs_0^t)=-2H_{L_P}(t,R_t)=-2H_{L_P}(t,\infty)=-2H_\mu(t-\ell/2)
\] by Lemma~\ref{lem: properties of H function} and the definition of $R_t$. We recall that
\[
H_{\mu}(t-\ell/2)=\begin{cases} 0 & \text{ if } t\ge \ell/2\\
\ell/2-t & \text{ if } t<\ell/2.
\end{cases}
\]
In particular, $ H_\mu(t-\ell/2)>0$ if $t<\ell/2$, and therefore $p_t>0$ if $t<\ell/2$, so the left action $ \delta_2^1(\sigma,\xs_0^t)=\xs_i'\otimes W^{p_t}Z^{q_t}$ will make no contribution after setting $W=0$.

Next, we consider $t\ge \ell/2$. As before, we write $ \delta_2^1(\sigma,\xs_0^t)=\xs_i'\otimes W^{p_t}Z^{q_t}$. We claim that we can take $i=0$ and $p_t=0$. To see this, note that it suffices to show that
\[
\gr_{\ws}( \delta_2^1(\sigma,\xs_0^t))= \gr_{\ws}(\xs_0')\quad \text{and} \quad \gr_{\zs}( \delta_2^1(\sigma,\xs_0^t))\le \gr_{\zs}(\xs_0'),
\]
because then a suitable product of $\xs_0'$ by powers of $Z$ will have the correct grading, and can be chosen to be $ \delta_2^1(\sigma,\xs_0^t)$ in our construction of the module ${}_{\cK} \cX(L_P)^{\bF[W,Z]}$. The grading shift of $ \delta_2^1(\sigma,-)$ is recalled in Equation~\eqref{eq:grading shifts of left action}. We have
\[
\gr_{\ws}( \delta_2^1(\sigma,\xs_0^t))=-2H_{L_P}(t,R_t)=-2H_\mu(t-\ell/2)\quad \text{and} \quad \gr_{\ws}(\xs_0')=0.
\]  
Additionally,
\[
\gr_{\zs}( \delta_2^1(\sigma,\xs_0^t))=-2H_{L_P}(t,R_t)-2R_t+\ell=-2H_\mu(t-\ell/2)-2R_t+\ell\quad \text{and }\quad \gr_{\zs}(\xs_0')=-2g.
\]
We note that $\gr_{\ws}( \delta_2^1(\sigma,\xs_0^t))= \gr_{\ws}(\xs_0')$ because $\gr_{\ws}(\xs_0')=0$ and $\gr_{\ws}( \delta_2^1(\sigma,\xs_0^t))=-2H_{\mu}(t-\ell/2)=0$. On the other hand, the inequality $\gr_{\zs}( \delta_2^1(\sigma,\xs_0^t))\le \gr_{\zs}(\xs_0')$ is equivalent to
\begin{equation}
0\le H_{\mu}(t-\ell/2)+R_t-\ell/2-g. \label{eq:Z-power-positive}
\end{equation}
Since $H_\mu(t-\ell/2)=0$ for $t\ge \ell/2$, Equation~\eqref{eq:Z-power-positive} follows from the cases of $\ell/2\le t\le N$ and $N\le t$ in the statement of Lemma~\ref{lem:shape of top generators}. Together, these imply that $R_t\ge g+\ell/2$ for $t\ge \ell/2$, which yields the formula in Equation~\eqref{eq:Z-power-positive}.

 Next, we easily compute using the above grading formulas in Equation~\eqref{eq:grading shifts of left action} that $q_t=R_t+\ell/2-g$ for $t\ge \ell/2$.

Next, we address the claim for $L_\tau$. As before, we break the claim into two cases, depending on whether $t\le \ell/2$ or $t>\ell/2$.

 We consider the case that $t>\ell/2$ first. Suppose $ \delta_2^1(\tau,\xs_0^t)=\xs_i'\otimes W^{p_t}Z^{q_t}$. If $i>0$, then there is nothing to show, so suppose $i=0$. Using Equation~\eqref{eq:grading shifts of left action} we compute that
 \[
-2p_t= \gr_{\ws}(\delta_2^1(\tau,\xs_0^t))-\gr_{\ws}(\xs_0')=-2H_\mu(t-\ell/2)-2t+\ell. 
 \]
 If $t>\ell/2$, then the right hand of the above equation is $-2t+\ell$, which is negative, so $p_t>0$. Therefore $\delta_2^1(\tau,\xs_0^t)$ becomes zero in $\cX^{\free}$ since we have set $W=0$. 
 
 Next, we consider the case that $t\le \ell/2$. In this case, we claim that we can take $ \delta_2^1(\tau,\xs_0^t)=\xs_0'\otimes Z^{q_t}$ for some $q_t\ge 0$. For this to be the case, we need
 \[
 \gr_{\ws}(\delta_2^1(\tau,\xs_0^t))= \gr_{\ws}(\xs_0')\quad \text{and} \quad \gr_{\zs}( \delta_2^1(\tau,\xs_0^t))\le \gr_{\zs}(\xs_0').
 \]
 The equation involving the $\gr_{\ws}$-grading is a straightforward computation using Equation~\eqref{eq:grading shifts of left action}, while the inequality involving the $\gr_{\zs}$-grading is demonstrated as follows.
We compute that 
\[
\gr_{\zs}( \delta_2^1(\tau,\xs_0^t))=\gr_{\zs}(\xs_0^t)=-2H_{L_P}(t,R_t)-2t-2R_t=-2H_{\mu}(t-\ell/2)-2t-2R_t.
\]
Since $\gr_{\zs}(\xs_0')=-2g$, it suffices therefore to show 
\[
-2H_\mu(t-\ell/2)-2t-2R_t\le -2g.
\] 
Using the value $H_{\mu}(t-\ell/2)=-(t-\ell/2)$ for $t\le \ell/2$, the above inequality is equivalent to
\[
0\le R_t+\ell/2-g,
\] 
for $t\le \ell/2$. This follows from the first two cases of Lemma~\ref{lem:shape of top generators}. The formula for $q_t$ is easily derived using the grading formulas from Equation~\eqref{eq:grading shifts of left action}.

The statements about $L_W$ and $L_Z$ follow from a similar line of reasoning, and we leave the details to the reader.
\end{proof}

\section{Computation of $\tau(P(K,n))$}
\label{sec: tau computation}

In this section, we derive our formulas for $\tau(P(K,n))$ as stated in Theorem~\ref{thm:main-intro}. Our main tool is the complex
\[
\bX(P,K,n)_{\bF[Z]}:= \cX_n(S^3,K)^{\cK}\boxtimes {}_{\cK} \cH_-^{\cK}\boxtimes {}_{\cK} \cX(L_P)_{\bF[Z]},
\]
which is homotopy equivalent to $\cCFK_{\bF[Z]}(K)$ as a type-$A$ module over $\bF[Z]$. After recalling some basic properties of this complex from \cite{CZZ}, we study this complex to compute $\tau(P(K,n))$ when $\veps(K)=1$ and $P$ is an L-space pattern. We also compute $\tau(P(K,n))$ when $\veps(K)\in \{-1,0\}$ under certain extra assumptions on the pattern $P$.

Throughout this section, we orient $P$ so that $\ell =\lk(P,\mu)\ge 0$. This does not affect the value of $\tau(P(K,n))$, since this affects neither the value of $\tau(P(K,n))$ nor $P$ being an L-space pattern.

\subsection{The quotient complex $\bX^{\free}(P,K,n)_{\bF[Z]}$}
\label{sec:maps in X top}

By Lemma~\ref{lem:X-tors-submodule}, ${}_{\cK} \cX^{\tors}(L_P)_{\mathbb{F}[Z]}$ is a submodule of ${}_{\cK} \cX^{\diamond}(L_P)_{\bF[Z]}$, and therefore the quotient ${}_{\cK} \cX^{\free}(L_P)_{\bF[Z]}$ is naturally a quotient bimodule. By Remark~\ref{rem:sub/quotient under box tensor}, this implies that 
\[
\bX^{\free}(P,K,n)_{\bF[Z]}:=\cX_n(S^3,K)^{\cK}\boxtimes {}_{\cK} \cH_-^{\cK} \boxtimes {}_{\cK} \cX^{\free}(L_P)_{\bF[Z]}\] is a quotient complex of 
\[
\bX^{\diamond}(P,K,n)_{\bF[Z]}=\cX_n(S^3,K)^{\cK}\boxtimes {}_{\cK} \cH_-^{\cK} \boxtimes {}_{\cK} \cX^{\diamond}(L_P)_{\bF[Z]}
\]
over $\bF[Z]$. In this section, we describe $\bX^{\free}(P,K,n)_{\bF[Z]}$ in more detail.

We begin by recalling from \cite{CZZ}*{Section~9} that the complex $\bX(P,K,n)^{\bF[W,Z]}$ decomposed as a 2-dimensional hypercube of chain complexes, taking the following form:
\[
\bX(P,K,n)^{\bF[W,Z]}=\begin{tikzcd}[column sep=1.5cm, row sep=1.5cm]
 \bE \ar[r, "\Phi^{\mu}+\Phi^{-\mu}"] \ar[d, "\Phi^{K}+\Phi^{-K}",swap] \ar[dr, "\Phi^{\pm K,\pm \mu}", labels=description] & \bF\ar[d, "\Phi^{K}+\Phi^{-K}"]\\
\bJ \ar[r, swap,"\Phi^{\mu}+\Phi^{-\mu}"]& \bM
\end{tikzcd}
\]
The complex $\bE$ decomposes as
\[
\bE=\bigoplus_{(s,t)\in \bH(P)} E_{s,t},
\]
where 
\[
\bH(P)=\left(\bZ+\frac{1}{2}\right)\times \left(\bZ+\frac{\lk(P,\mu)+1}{2}\right).
\]
Also $\bF$, $\bJ$, $\bM$ similarly decompose as direct sums over $\bH(P)$ of $F_{s,t}$, $J_{s,t}$ and $M_{s,t}$. We refer the reader to \cite{CZZ}*{Section~9} for a detailed description of $\bX(P,K,n)^{\bF[W,Z]}$, though we will describe some basic details shortly.

The complex $\bX^{\diamond}(P,K,n)_{\bF[Z]}$ can be described as a 2-dimensional hypercube of the form
\begin{equation}
\bX^{\diamond}(P,K,n)_{\bF[Z]}=\begin{tikzcd}[column sep=1.1cm, row sep=1.1cm]
 \bE^{\diamond} \ar[r, "\Phi^{\mu}+\Phi^{-\mu}"] \ar[d, "\Phi^{K}+\Phi^{-K}",swap] & \bF^{\diamond}\ar[d, "\Phi^{K}+\Phi^{-K}"]\\
\bJ^{\diamond} \ar[r, swap,"\Phi^{\mu}+\Phi^{-\mu}"]& \bM^{\diamond}.
\end{tikzcd}
\label{eq:X-P-K-n-top-cube}
\end{equation}
The complex $\bX^{\free}(P,K,n)$ can similarly be described as a 2-dimensional hypercube of $AA$-bimodules. We will show in Lemma~\ref{lem:no-diagonal-differential} that no diagonal differential appears in either $\bX^{\diamond}$ or $\bX^{\free}$ (i.e. there is no differential from $\bE^{\diamond}$ to $\bM^{\diamond}$).

In \cite{CZZ}*{Proposition~7.9}, we show that the bimodule ${}_{\cK} \cH_-^{\cK}\boxtimes {}_{\cK} \cX(L_P)^{\bF[W,Z]}$ has a similar description as a hypercube of $DA$-bimodules. It takes the form
\begin{equation}
{}_{\cK} \cH_-^{\cK} \boxtimes {}_{\cK} \cX(L_P)^{\bF[W,Z]}=
\begin{tikzcd}[column sep=1.6cm, row sep=1.6cm, labels=description] \scE_{*,*}\ar[d, "f^K+f^{-K}"]\ar[r,"f^\mu+f^{-\mu}"] \ar[dr,dashed, "f^{\pm K,\pm \mu}"] & \scF_{*,*}\ar[d,"f^K+f^{-K}"]\\
\scJ_{*,*}\ar[r, "f^\mu+f^{-\mu}"]& \scM_{*,*}
\label{eq:bimodule-X-P-n-cube}
\end{tikzcd}
\end{equation}
Here, $\scE_{*,*}$, $\scF_{*,*}$, $\scJ_{*,*}$ and $\scM_{*,*}$ are supported in (left) idempotents $0$, $0$, $1$ and $1$, respectively. The bimodules $\scE_{*,*}$, $\scF_{*,*}$, $\scJ_{*,*}$, $\scM_{*,*}$ are built from the complexes and maps appearing in the bimodule ${}_{\cK} \cX(L_P)^{\bF[W,Z]}$. The subscripts denote  the Alexander gradings of the two components of the Hopf link in the connected sum $H\# L_P$. In particular, we view each $\scE_{s,t}$ (and so forth) as being concentrated in Alexander gradings $(s,t)$, where $s$ corresponds to the component of $H$ in $H\# L_P$ which is not linked with $P\subset L_P$, and $t$ corresponds to the component of $H$ which is linked with $P$.

For each $(s,t)\in \bH(P)$, $\scE_{s,t}$, $\scF_{s,t}$, $\scJ_{s,t}$ and $\scM_{s,t}$ are finitely generated type-$D$ modules over $\bF[W,Z]$. Each is equal to one of the complexes $\cC_{t'}$ or $\cS$ (up to grading shifts).

By performing the construction described in Section~\ref{sec:diamond-module} (i.e. setting $W=0$ and then taking homology with respect to $\delta_1^1$), we can define complexes $\scE^{\diamond}_{*,t}, \scF^{\diamond}_{*,t}, \scJ^{\diamond}_{*,t}, \scM^{\diamond}_{*,t}$, as well as quotient complexes  $\scE^{\free}_{*,t}, \scF^{\free}_{*,t}, \scJ^{\free}_{*,t}, \scM^{\free}_{*,t}$. The versions labeled free can be viewed as the span over $\bF[Z]$ of the top generators $\xs^{t\pm 1/2}_0$ in place of $\cC_{t\pm 1/2} $ and the span of $\xs'_0$ in place of $\cS$. We use the same shifts in $\gr_{\ws}$- and $\gr_{\zs}$-gradings as in these complexes.

We can identify $\scE_{s,t}^{\free}$ as the free $\bF[Z]$-module
\begin{equation*}
	\scE^{\free}_{s,t}=\begin{cases} \langle\xs^{t-\frac{1}{2}}_{0}\rangle[Z][2s+1,0] & \text{ if } s<0,\\
		\langle\xs^{t+\frac{1}{2}}_{0}\rangle[Z][0,-2s+1] & \text{ if } s>0.
	\end{cases}
	\label{eq:cEst-defs}
\end{equation*}
Here, $[x,y]$ means an upward shift by $x$ in $\gr_{\ws}$-grading, and an upward shift by $y$ in the $\gr_{\zs}$-grading. Also $\langle \xs \rangle[Z]$ means the free $\bF[Z]$-module with generator $\xs$. Denote the unique generator of $\scE^{\free}_{s,t}$ by $\ve{e}^{\free}_{s,t}$. 

For a fixed $t$, we decompose 
\[
\cCFK(K)^{\bF[W,Z]}\boxtimes {}_{\bF[W,Z]} \scE^{\free}_{*,t} =\bigoplus_{s\in \Z+\frac{1}{2}} E^{\free}_{s,t},
\]
where $E^{\free}_{s,t}$ is the free chain complex over $\bF[Z]$ where the first coordinate of the Alexander grading equals $s$. 

Similarly, we may write
\[
\scF^{\free}_{s,t}=\langle\xs'_{0} \rangle[Z][\min(2s+1,0),\min(0,-2s-1)]
\]
for all $s,t$. Denote the unique generator of $\scF^{\free}_{s,t} $ by $\ve{f}^{\free}_{s,t}$. Then, $F^{\free}_{s,t}$ is the direct summand chain complex with the first Alexander grading equal to $s$ of the tensor product
 \[
\cCFK(K)^{\bF[W,Z]}\boxtimes_{\bF[W,Z]}\scF^{\free}_{*,t}{}.
\]

We also have 
\[
\scJ^{\free}_{s,t} = \langle \xs^{t+\frac{1}{2}}_0\rangle[Z]\quad \text{and} \quad \scM^{\free}_{s,t} = \langle \xs'_0\rangle[Z].\] We denote the unique generators of $\scM^{\free}_{s,t}$ and $\scJ^{\free}_{s,t}$ by $\ve{j}^{\free}_{s,t}$ and $\ve{m}^{\free}_{s,t}$. The complexes $J^{\free}_{s,t} $ and $ M^{\free}_{s,t}$ are formed in the similar manner as $E^{\free}_{s,t}$ and $ F^{\free}_{s,t}$, with $\cCFK(K)^{\bF[W,Z]}$ replaced by the free type-$D$ module over $\bF[U,T,T^{-1}]$ with a single generator $\ys' $. Therefore,  we have 
\[
J^{\free}_{s,t} =\langle\ys'|\ve{j}^{\free}_{s,t}\rangle[Z]= \langle\ys'|\xs^{t+\frac{1}{2}}_{0}\rangle[Z], \quad \text{and} \quad  M^{\free}_{s,t} =\langle\ys'|\ve{m}^{\free}_{s,t}\rangle[Z]= \langle\ys'|\xs'_{0}\rangle[Z].
\]

The structure maps $\Phi^{\pm K}$ and $\Phi^{\pm \mu}$ appearing in Equation~\eqref{eq:X-P-K-n-top-cube} are obtained by tensoring the identity on $\cX_n(K)^{\cK}$ with the maps $f^{\pm K}$ and $f^{\pm \mu}$ (respectively) from Equation~\eqref{eq:bimodule-X-P-n-cube}, i.e.,
\[
\Phi^K=\bI_{\cX_n}\boxtimes f^K,
\]
and similarly for the other maps. The maps $f^{\pm K}$ and $f^{\pm \mu}$ are shown in Figures~\ref{fig:f-pm K} and ~\ref{fig:maps-f-pm-mu}, respectively. These are obtained from the maps in \cite{CZZ}*{Section~9} by setting $W=0$ and then conjugating with the homotopy equivalence ${}_{\cK}\cX(L_P)_{\bF[Z]}\simeq {}_{\cK} \cX^{\diamond}(L_P)_{\bF[Z]}$ from Lemma~\ref{lem:X-min-non-trivial-actions}. In effect, this amounts to taking the description from \cite{CZZ}*{Section~9}, setting $W=0$ on each type-$D$ output, and then deleting any structure maps which increase the algebraic grading.  In Figures~\ref{fig:f-pm K} and ~\ref{fig:maps-f-pm-mu}, we summarize the structure maps on $\scE_{*,t}^{\diamond}$, $\scF_{*,t}^{\diamond},$ $\scJ_{*,t}^{\diamond}$ and $\scM_{*,t}^{\diamond}$, as well as display the morphisms $f^{\pm K}$ and $f^{\pm \mu}$. The description on the free versions ($\scE^{\free}_{*,t}$ and so forth)  is schematically the same.

Explicit descriptions of the structure maps $m_{1|1|0}$ on ${}_{\cK} \cX^{\free}(L_P)_{\mathbb{F}[Z]}$ are described in Lemma~\ref{lem:quotient bimodule in Lp}.

\begin{figure}[h]
\[
	 \begin{tikzcd}[labels=description,row sep=1.4cm] \scE^{\diamond}_{*,t}\ar[d, "f^K"]
	 	\\  \scJ^{\diamond}_{*,t}
	 \end{tikzcd}
	 \hspace{-.2cm}
	 =
	 \hspace{-.3cm}
	 \begin{tikzcd}[ {column sep=2cm,between origins}, row sep=.4cm]
	 	\cdots
	 	&[-1.2cm] \scE^{\diamond}_{-\frac{5}{2},t}
	 	& \scE^{\diamond}_{-\frac{3}{2},t}
	 	\ar[l, "W|1"]
	 	&\scE^{\diamond}_{-\frac{1}{2},t}
	 	\ar[r,bend left,  "Z|L_Z"]
	 	\ar[l,  "W|1"]
	 	\ar[d,"\sigma|L_Z"]
	 	&[.2cm] \scE^{\diamond}_{\frac{1}{2},t}
	 	\ar[r,  "Z|1"]
	 	\ar[l,bend left, "W|L_W", pos=0.55]
	 	\ar[d,"\sigma|1"]
	 	& \scE^{\diamond}_{\frac{3}{2},t}
	 	\ar[r,  "Z|1"]
	 	\ar[d,"\sigma|1"]
	 	& \scE^{\diamond}_{\frac{5}{2},t}
	 	\ar[d,"\sigma|1"]
	 	&[-1.2cm] \cdots
	 	\\[1cm]
	 	\cdots
	 	& \scJ^{\diamond}_{-\frac{5}{2},t}
	 	\ar[r,bend left, "T|1"]
	 	&\scJ^{\diamond}_{-\frac{3}{2},t}
	 	\ar[r,bend left,  "T|1"]
	 	\ar[l,bend left, "T^{-1}|1"]
	 	&\scJ^{\diamond}_{-\frac{1}{2},t}
	 	\ar[r,bend left, "T|1"]
	 	\ar[l,bend left,  "T^{-1}|1"]
	 	&\scJ^{\diamond}_{\frac{1}{2},t}
	 	\ar[r,bend left, "T|1"]
	 	\ar[l,bend left,  "T^{-1}|1"]
	 	&\scJ^{\diamond}_{\frac{3}{2},t}
	 	\ar[r,bend left,  "T|1"]
	 	\ar[l,bend left,  "T^{-1}|1"]
	 	&\scJ^{\diamond}_{\frac{5}{2},t}
	 	\ar[l,bend left, "T^{-1}|1"]
	 	&\cdots 
	 \end{tikzcd}
\]
\[
\begin{tikzcd}[labels=description,row sep=1.4cm] \scE^{\diamond}_{*,t}\ar[d, "f^{-K}"]
	\\  \scJ^{\diamond}_{*,t-1}
\end{tikzcd}
\hspace{-.2cm}
=
\hspace{-.3cm}
\begin{tikzcd}[ {column sep=2cm,between origins}, row sep=.4cm]
	\cdots
	&[-1.2cm] \scE^{\diamond}_{-\frac{5}{2},t}
	\ar[d,"\tau |1"]
	& \scE^{\diamond}_{-\frac{3}{2},t}
	\ar[l, "W|1"]
	\ar[d,"\tau |1"]
	&\scE^{\diamond}_{-\frac{1}{2},t}
	\ar[r,bend left,  "Z|L_Z"]
	\ar[l,  "W|1"]
	\ar[d,"\tau |1"]
	&[.2cm] \scE^{\diamond}_{\frac{1}{2},t}
	\ar[r,  "Z|1"]
	\ar[l,bend left, "W|L_W",pos=0.55]
	\ar[d,"\tau |L_W"]
	& \scE^{\diamond}_{\frac{3}{2},t}
	\ar[r,  "Z|1"]
	& \scE^{\diamond}_{\frac{5}{2},t}
	&[-1.2cm] \cdots
	\\[1cm]
	\cdots
	& \scJ^{\diamond}_{-\frac{5}{2},t}
	\ar[r,bend left, "T|1"]
	&\scJ^{\diamond}_{-\frac{3}{2},t}
	\ar[r,bend left,  "T|1"]
	\ar[l,bend left, "T^{-1}|1"]
	&\scJ^{\diamond}_{-\frac{1}{2},t}
	\ar[r,bend left, "T|1"]
	\ar[l,bend left,  "T^{-1}|1"]
	&\scJ^{\diamond}_{\frac{1}{2},t}
	\ar[r,bend left, "T|1"]
	\ar[l,bend left,  "T^{-1}|1"]
	&\scJ^{\diamond}_{\frac{3}{2},t}
	\ar[r,bend left,  "T|1"]
	\ar[l,bend left,  "T^{-1}|1"]
	&\scJ^{\diamond}_{\frac{5}{2},t}
	\ar[l,bend left, "T^{-1}|1"]
	&\cdots 
\end{tikzcd}
\]
\[
\begin{tikzcd}[ row sep=1.8cm] \scF^{\diamond}_{*,t} \ar[d, "f^{K}"]
	\\ 
	\scM^{\diamond}_{*,t}
\end{tikzcd}
\hspace{-.2cm}
=
\hspace{-.3cm}
\begin{tikzcd}[ column sep=1.2cm, row sep=0cm]
	\cdots
	&[-1.4cm] \scF^{\diamond}_{-\frac{5}{2},t}
	& \scF^{\diamond}_{-\frac{3}{2},t}
	\ar[l, "W|1"]
	&\scF^{\diamond}_{-\frac{1}{2},t}
	\ar[r, "Z|1"]
	\ar[l, "W|1"]
	\ar[d, "\sigma|1"]
	&\scF^{\diamond}_{\frac{1}{2},t}
	\ar[r, "Z|1"]
	\ar[d, "\sigma|1"]
	&\scF^{\diamond}_{\frac{3}{2},t}
	\ar[d, "\sigma|1"]
	&[-1.4cm]\cdots
	\\[1.5cm]
	\cdots&[-1.3cm] \scM^{\diamond}_{-\frac{5}{2},t}
	\ar[r, bend left, "T|1"]
	&\scM^{\diamond}_{-\frac{3}{2},t}
	\ar[r, bend left, "T|1"]
	\ar[l, bend left, "T^{-1}|1"]
	&\scM^{\diamond}_{-\frac{1}{2},t}
	\ar[r, bend left, "T|1"]
	\ar[l, bend left, "T^{-1}|1"]
	&\scM^{\diamond}_{\frac{1}{2},t}
	\ar[r, bend left, "T|1"]
	\ar[l, bend left, "T^{-1}|1"]
	&\scM^{\diamond}_{\frac{3}{2},t}
	\ar[l, bend left, "T^{-1}|1"]
	&[-1.3cm]\cdots 
\end{tikzcd}
\]
\[
\begin{tikzcd}[ row sep=1.5cm] \scF^{\diamond}_{*,t} 
	\ar[d, "f^{-K}"]
	\\ 
	\scM^{\diamond}_{*,t-1}
\end{tikzcd}
\hspace{-.2cm}
=
\hspace{-.3cm}
\begin{tikzcd}[ column sep=1.2cm, row sep=0cm]
	\cdots
	&[-1.4cm] \scF^{\diamond}_{-\frac{5}{2},t}
	\ar[d, "\tau|1"]
	&\scF^{\diamond}_{-\frac{3}{2},t}
	\ar[l,  "W|1"]
	\ar[d, "\tau|1"]
	&\scF^{\diamond}_{-\frac{1}{2},t}
	\ar[r,  "Z|1"]
	\ar[l,  "W|1"]
	\ar[d, "\tau|1"]
	&\scF^{\diamond}_{\frac{1}{2},t}
	\ar[r, "Z|1"]
	&\scF^{\diamond}_{\frac{3}{2},t}
	&[-1.4cm]\cdots
	\\[1.5cm]
	\cdots&[-1.3cm]\scM^{\diamond}_{-\frac{5}{2},t}
	\ar[r, bend left,"T|1"]
	& \scM^{\diamond}_{-\frac{3}{2},t}
	\ar[r, bend left, "T|1"]
	\ar[l, bend left, "T^{-1}|1"]
	&\scM^{\diamond}_{-\frac{1}{2},t}
	\ar[r, bend left,"T|1"]
	\ar[l, bend left, "T^{-1}|1"]
	&\scM^{\diamond}_{\frac{1}{2},t}
	\ar[r, bend left, "T|1"]
	\ar[l, bend left, "T^{-1}|1"]
	&\scM^{\diamond}_{\frac{3}{2},t}
	\ar[l, bend left, "T^{-1}|1"]
	&[-1.3cm]\cdots 
\end{tikzcd}.
\]
\caption{The complexes $\scE_{*,t}^{\diamond}$, $\scF_{*,t}^{\diamond}$, $\scJ_{*,t}^{\diamond}$, $\scM_{*,t}^{\diamond}$ and the maps $f^{\pm K}$.}
\label{fig:f-pm K}
\end{figure}

\begin{figure}[h]
\[
	 \begin{tikzcd}[labels=description, row sep=2cm] \scE^{\diamond}_{*,t} 
	 	\ar[d, "f^{\mu}"]
	 	\\ 
	 	\scF^{\diamond}_{*,t}
	 \end{tikzcd}
	 \hspace{-.3cm}
	 =
	 \hspace{-.3cm}\begin{tikzcd}[column sep=.8cm, row sep=0cm]
	 	\cdots
	 	&[-1cm]\scE^{\diamond}_{-\frac{5}{2},t}
	 	\ar[d,"L_\sigma",  pos=.4]
	 	& \scE^{\diamond}_{-\frac{3}{2},t}
	 	\ar[l ,"W|1"]
	 	\ar[d,"L_\sigma",  pos=.4]
	 	&\scE^{\diamond}_{-\frac{1}{2},t}
	 	\ar[r, bend left,out=35,in=145, "Z|L_Z"]
	 	\ar[l, "W|1"]
	 	\ar[d,"L_\sigma",  pos=.4]
	 	&[.6cm] \scE^{\diamond}_{\frac{1}{2},t}
	 	\ar[r, "Z|1"]
	 	\ar[l, bend left=15,out=35,in=145,pos=.51, "W|L_W"]
	 	\ar[d,"L_\sigma", pos=.4]
	 	& \scE^{\diamond}_{\frac{3}{2},t}
	 	\ar[r, "Z|1"]
	 	\ar[d,"L_\sigma",  pos=.4]
	 	& \scE^{\diamond}_{\frac{5}{2},t}
	 	\ar[d,"L_\sigma", pos=.4]
	 	&[-1cm] \cdots
	 	\\[1.2cm]
	 	\cdots
	 	&[.4cm]
	 	\scF^{\diamond}_{-\frac{5}{2},t}
	 	&\scF^{\diamond}_{-\frac{3}{2},t}
	 	\ar[l,  "W|1"]
	 	&\scF^{\diamond}_{-\frac{1}{2},t}
	 	\ar[r,  "Z|1"]
	 	\ar[l,  "W|1"]
	 	&\scF^{\diamond}_{\frac{1}{2},t}
	 	\ar[r,  "Z|1"]
	 	&\scF^{\diamond}_{\frac{3}{2},t}
	 	\ar[r,  "Z|1"]
	 	&\scF^{\diamond}_{\frac{5}{2},t}
	 	&\cdots 
	 \end{tikzcd}\]
\[
	 \begin{tikzcd}[labels=description, row sep=2.1cm] 
	 	\scE^{\diamond}_{*,t} 
	 	\ar[d, "f^{-\mu}"]
	 	\\ 
	 	\scF^{\diamond}_{*,t}
	 \end{tikzcd}
	 \hspace{-.2cm}
	 =
	 \hspace{-.3cm}\begin{tikzcd}[ column sep={2cm,between origins}, row sep=.3cm]
	 	\cdots
	 	&[-1cm]\scE^{\diamond}_{-\frac{5}{2},t}
	 	& \scE^{\diamond}_{-\frac{3}{2},t}
	 	\ar[l,  "W|1"]
	 	\ar[dl, "L_\tau"]
	 	&\scE^{\diamond}_{-\frac{1}{2},t}
	 	\ar[r, bend left, "Z|L_Z"]
	 	\ar[l,  "W|1"]
	 	\ar[dl, "L_\tau"]
	 	&\scE^{\diamond}_{\frac{1}{2},t}
	 	\ar[r, "Z|1"]
	 	\ar[l, bend left,pos=.6, "W|L_W"]
	 	\ar[dl, "L_\tau"]
	 	&  \scE^{\diamond}_{\frac{3}{2},t}
	 	\ar[r,  "Z|1"]
	 	\ar[dl, "L_\tau"]
	 	&  \scE^{\diamond}_{\frac{5}{2},t}
	 	\ar[dl, "L_\tau"]
	 	&[-1cm] \cdots
	 	\\[1.2cm]
	 	\cdots
	 	&[.4cm] \scF^{\diamond}_{-\frac{5}{2},t}
	 	& \scF^{\diamond}_{-\frac{3}{2},t}
	 	\ar[l,  "W|1"]
	 	&\scF^{\diamond}_{-\frac{1}{2},t}
	 	\ar[r,  "Z|1"]
	 	\ar[l,  "W|1"]
	 	&\scF^{\diamond}_{\frac{1}{2},t}
	 	\ar[r,  "Z|1"]
	 	&\scF^{\diamond}_{\frac{3}{2},t}
	 	\ar[r,  "Z|1"]
	 	&\scF^{\diamond}_{\frac{5}{2},t}
	 	&\cdots 
	 \end{tikzcd}
\]
\[
\begin{tikzcd}[labels=description, row sep=2.1cm] \scJ^{\diamond}_{*,t} 
	\ar[d, "f^{\mu}"]
	\\ 
	\scM^{\diamond}_{*,t}
\end{tikzcd}
\hspace{-.2cm}
=
\hspace{-.3cm}
\begin{tikzcd}[column sep=1cm, row sep=0cm]
	\cdots
	&[-1cm] \scJ^{\diamond}_{ -\frac{5}{2},t}
	\ar[r,bend left, "T|1"]
	\ar[d, "L_\sigma"]
	& \scJ^{\diamond}_{-\frac{3}{2},t}
	\ar[r,bend left, "T|1"]
	\ar[l, bend left, "T^{-1}|1"]
	\ar[d, "L_\sigma"]
	&\scJ^{\diamond}_{-\frac{1}{2},t}
	\ar[r, bend left, "T|1"]
	\ar[l, bend left, "T^{-1}|1"]
	\ar[d, "L_\sigma"]
	&\scJ^{\diamond}_{\frac{1}{2},t}
	\ar[r, bend left, "T|1"]
	\ar[l, bend left, "T^{-1}|1"]
	\ar[d, "L_\sigma"]
	&\scJ^{\diamond}_{\frac{3}{2},t}
	\ar[l, bend left, "T^{-1}|1"]
	\ar[d, "L_\sigma"]
	&[-1cm]\cdots
	\\[1.5cm]
	\cdots&[-1.3cm] \scM^{\diamond}_{-\frac{5}{2},t}
	\ar[r, bend left, "T|1"]
	& \scM^{\diamond}_{-\frac{3}{2},t}
	\ar[r, bend left, "T|1"]
	\ar[l, bend left, "T^{-1}|1"]
	&\scM^{\diamond}_{-\frac{1}{2},t}
	\ar[r, bend left, "T|1"]
	\ar[l, bend left, "T^{-1}|1"]
	&\scM^{\diamond}_{\frac{1}{2},t}
	\ar[r, bend left, "T|1"]
	\ar[l, bend left, "T^{-1}|1"]
	&\scM^{\diamond}_{\frac{3}{2},t}
	\ar[l, bend left, "T^{-1}|1"]
	&[-1.3cm]\cdots 
\end{tikzcd}
\]
\[
\begin{tikzcd}[labels=description, row sep=2.1cm] \scJ^{\diamond}_{*,t} 
	\ar[d, "f^{-\mu}"]
	\\ 
	\scM^{\diamond}_{*,t}
\end{tikzcd}
\hspace{-.2cm}
=
\hspace{-.3cm}
\begin{tikzcd}[ column sep=1cm, row sep=0cm]
	\cdots
	&[-1cm] \scJ^{\diamond}_{ -\frac{5}{2},t}
	\ar[r,bend left, "T|1"]
	& \scJ^{\diamond}_{-\frac{3}{2},t}
	\ar[r,bend left, "T|1"]
	\ar[l, bend left, "T^{-1}|1",pos=0.5]
	\ar[dl, "L_\tau",pos=0.3,start anchor={[yshift=-1ex]},end anchor={[yshift=1ex]}]
	&\scJ^{\diamond}_{-\frac{1}{2},t}
	\ar[r, bend left, "T|1"]
	\ar[l, bend left, "T^{-1}|1"]
	\ar[dl, "L_\tau",pos=0.3,start anchor={[yshift=-1ex]},end anchor={[yshift=1ex]}]
	&\scJ^{\diamond}_{\frac{1}{2},t}
	\ar[r, bend left, "T|1"]
	\ar[l, bend left, "T^{-1}|1"]
	\ar[dl, "L_\tau",pos=0.3,start anchor={[yshift=-1ex]},end anchor={[yshift=1ex]}]
	&\scJ^{\diamond}_{\frac{3}{2},t}
	\ar[l, bend left, "T^{-1}|1"]
	\ar[dl, "L_\tau",pos=0.3,start anchor={[yshift=-1ex]},end anchor={[yshift=1ex]}]
	&[-1cm]\cdots
	\\[1.5cm]
	\cdots&[-1.3cm] \scM^{\diamond}_{-\frac{5}{2},t}
	\ar[r, bend left, "T|1"]
	& \scM^{\diamond}_{-\frac{3}{2},t}
	\ar[r, bend left,"T|1"]
	\ar[l, bend left,  "T^{-1}|1"]
	&\scM^{\diamond}_{-\frac{1}{2},t}
	\ar[r, bend left, "T|1"]
	\ar[l, bend left, "T^{-1}|1"]
	&\scM^{\diamond}_{\frac{1}{2},t}
	\ar[r, bend left, "T|1"]
	\ar[l, bend left, "T^{-1}|1"]
	&\scM^{\diamond}_{\frac{3}{2},t}
	\ar[l, bend left,"T^{-1}|1"]
	&[-1.3cm]\cdots 
\end{tikzcd}
\]
\caption{The complexes $\scE_{*,t}^{\diamond}$, $\scF_{*,t}^{\diamond}$, $\scJ_{*,t}^{\diamond}$, $\scM_{*,t}^{\diamond}$ and the maps $f^{\pm \mu}$.}
\label{fig:maps-f-pm-mu}
\end{figure}

Using the description of $L_{\sigma}$ and $L_{\tau}$ in Lemma~\ref{lem:quotient bimodule in Lp}, we see that  $f^{\mu}: \scE^{\free}_{*,t} \to \scF^{\free}_{*,t}$ is trivial for $t<\frac{\ell}{2}-1$, and $f^{-\mu}: \scE^{\free}_{*,t} \to \scF^{\free}_{*,t}$ is trivial for $t>\frac{\ell}{2}+1$. The behavior is most interesting when $t=(\ell-1)/2$ and $t=(\ell+1)/2$.

\begin{lem}
\item
\begin{enumerate}
\item The module ${}_{\cK} \cH_-^{\cK}\boxtimes {}_{\cK} \cX^{\diamond}(L_P)_{\bF[Z]}$ has only $m_{0|1|0}$, $m_{1|1|0}$ and $m_{0|1|1}$ non-trivial. The same holds for ${}_{\cK} \cH_-^{\cK}\boxtimes {}_{\cK} \cX^{\free}(L_P)_{\bF[Z]}$.
\item The hypercube descriptions ${}_{\cK} \cH_-^{\cK}\boxtimes {}_{\cK} \cX^{\diamond}(L_P)_{\bF[Z]}$ and ${}_{\cK} \cH_-^{\cK}\boxtimes {}_{\cK} \cX^{\free}(L_P)_{\bF[Z]}$ (shown in  Equation~\eqref{eq:bimodule-X-P-n-cube}) have no diagonal differentials. 
\item The hypercube description of $\bX^{\diamond}(P,K,n)_{\bF[Z]}$ and $\bX^{\free}(P,K,n)$ (as in Equation~\eqref{eq:X-P-K-n-top-cube}) have no diagonal differentials.
\end{enumerate}
\label{lem:no-diagonal-differential}
\end{lem}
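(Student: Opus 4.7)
The plan is to adapt the algebraic grading argument used in the proof of Lemma~\ref{lem:X-min-non-trivial-actions} to the tensor products appearing here. Recall that ${}_{\cK}\cX(L_P)^{\bF[W,Z]}$ admits an algebraic grading in $\{0,1\}$: even-indexed staircase generators $\xs_{2i}^t$, $\xs'_{2i}$ sit in grading $0$ and odd-indexed generators in grading $1$, while the algebra elements of $\cK$ and $\bF[W,Z]$ are in grading $0$. The relevant grading shifts are: $\delta_2^1$ preserves algebraic grading, $\delta_3^1$ raises it by $1$, the HPL maps $I$ and $\Pi$ preserve it, and $H$ raises it by $1$. In these conventions, ${}_{\cK}\cX^{\diamond}(L_P)_{\bF[Z]}$ is concentrated in grading $0$.

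For Part (1), I would extend the algebraic grading to ${}_{\cK}\cH_-^{\cK}\boxtimes {}_{\cK}\cX(L_P)_{\bF[Z]}$ by placing $\cH_-$ in grading $0$. This is consistent because the $DA$-structure maps of $\cH_-$ output algebra elements of $\cK$, which all sit in grading $0$, and because $\cH_-$ has only $\delta_1^1$ and $\delta_2^1$ nontrivial by construction. Applying the $AA$-homological perturbation lemma (extended over $\cH_-$ by the identity) yields ${}_{\cK}\cH_-^{\cK}\boxtimes {}_{\cK}\cX^{\diamond}(L_P)_{\bF[Z]}$, concentrated in grading $0$. By Equation~\eqref{eq:HPL_structure_maps}, each induced structure map $m^Z_{i|1|j}$ with $i+j\ge 1$ is a sum of trees whose total grading shift must vanish. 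Under the conventions above, every operation available in such a tree has non-negative shift: the only nonzero $m$-vertices contribute shift $0$ (namely $m_{1|1|0}, m_{0|1|1}$) or $+1$ (only $m_{2|1|0}$, coming from $\delta_3^1$ of $\cX$), and each $H$-edge contributes $+1$. Hence only single-$m$-vertex trees with $m\in\{m_{1|1|0}, m_{0|1|1}\}$ survive, proving that the only nontrivial actions are $m_{0|1|0}, m_{1|1|0}, m_{0|1|1}$. Since ${}_{\cK}\cX^{\free}(L_P)_{\bF[Z]}$ is a quotient bimodule of ${}_{\cK}\cX^{\diamond}(L_P)_{\bF[Z]}$ by Lemma~\ref{lem:X-tors-submodule}, this conclusion is inherited by ${}_{\cK}\cH_-^{\cK}\boxtimes {}_{\cK}\cX^{\free}(L_P)_{\bF[Z]}$.

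For Part (2), the diagonal $f^{\pm K,\pm \mu}$ in the hypercube of Equation~\eqref{eq:bimodule-X-P-n-cube} arises precisely from $\delta_3^1$-type contributions combining an algebra element associated to the $K$-basepoint of the Hopf link (one of $\sigma,\tau$) with one associated to the $\mu$-basepoint (one of $W,Z$); such contributions raise algebraic grading by $1$ and so vanish on the $\diamond$ and $\free$ versions, whose source and target both lie in grading $0$. For Part (3), observe that $\cX_n(K)^{\cK}$ is a type-$D$ module over $\cK$ that can be placed in algebraic grading $0$; tensoring with it on the left can only assemble the already-vanishing diagonal of Part (2) into the diagonal of $\bX^{\diamond}(P,K,n)_{\bF[Z]}$ or $\bX^{\free}(P,K,n)_{\bF[Z]}$, which is therefore also zero. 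The main obstacle is the careful bookkeeping of algebraic-grading shifts of all induced actions after tensoring with $\cH_-$, which in turn requires knowing that $\cH_-$ has only $\delta_1^1, \delta_2^1$ nontrivial; given this input, the grading argument applies essentially verbatim.
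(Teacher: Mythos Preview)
Your argument for Part~(1) reaches the right conclusion but via an unnecessary detour. The paper simply reads off the box-tensor formula: since ${}_{\cK}\cX^\diamond(L_P)_{\bF[Z]}$ has only $m_{1|1|0}$ and $m_{0|1|1}$ nontrivial (Lemma~\ref{lem:X-min-non-trivial-actions}), any $m_{i|1|j}^{\boxtimes}$ on the tensor product must feed at most one algebra element into $\cX^\diamond$; since ${}_\cK\cH_-^\cK$ has only $\delta_1^1$ and $\delta_2^1$, a single iteration of its $\delta$ consumes at most one left input. This immediately forces $(i,j)\in\{(0,0),(1,0),(0,1)\}$, without re-invoking HPL or the algebraic grading.

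Your Part~(2), however, has a genuine gap. You assert that the diagonal ``arises precisely from $\delta_3^1$-type contributions'' and then kill it by algebraic grading. But once Part~(1) is established, the tensor product lives entirely in algebraic grading~$0$ and its surviving structure maps all preserve that grading; so the algebraic grading alone cannot rule out a diagonal component of $m_{1|1|0}^\boxtimes(\sigma,-)$ or $m_{1|1|0}^\boxtimes(\tau,-)$ coming from a single $\delta_2^1$ of $\cH_-$. What is actually needed --- and what the paper supplies --- is a structural fact about $\cH_-$ itself: a diagonal contribution must take $\sigma$ or $\tau$ as a left input (the $K$-direction changes the left idempotent) \emph{and} output $\sigma$ or $\tau$ to $\cX^\diamond$ (the $\mu$-direction applies $L_\sigma$ or $L_\tau$; see Figure~\ref{fig:maps-f-pm-mu} --- not $W,Z$ as you wrote). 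By inspection of $\cH_-$, no single $\delta_j^1$ does both; any such path requires at least a two-fold iteration of the structure maps of $\cH_-$, hence at least two outputs, which then pairs trivially with $\cX^\diamond$ by Part~(1). Your Part~(3) inherits this gap.
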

\begin{proof}The first claim follows from the fact that ${}_{\cK} \cH_-^{\cK}$ has only $\delta_1^1$ and $\delta_2^1$ non-trivial, and ${}_{\cK} \cX^{\diamond}(L_P)_{\bF[Z]}$ has only $m_{1|1|0}$ and $m_{0|1|1}$ non-trivial by Lemma~\ref{lem:X-min-non-trivial-actions}. 

The second claim follows from the first, as follows. Each component of the structure map of the tensor product is obtained by composing the structure maps of ${}_{\cK} \cH_-^{\cK}$. The $\cK$ outputs are then input into the structure map of ${}_{\cK} \cX^{\diamond}(L_P)_{\bF[Z]}$. For a component of the box tensor product differential to contribute to the diagonal map, we would need a $\sigma$ or $\tau$ multiple to be input into the structure maps of $\cH_-$, and then for the composition of structure maps from $\cH_-$ to have a $\sigma$ or $\tau$ output. By inspection, any such composition involves at least the 2-fold composition of the structure maps of $\cH_-$. However this pairs trivially with the structure maps of $\cX^{\diamond}$ and $\cX^{\free}$, since only $m_{1|1|0}$ and $m_{0|1|1}$ are non-trivial on these modules.

The third claim follows from similar logic as the second.
\end{proof}

\subsection{A direct summand of $ \bX^{\diamond}(P,K,n)_{\bF[Z]}$ when $\veps(K)=1$}
\label{sec:direct summand when epsilon =1}

We now consider the complexes $\bX^{\diamond}(P,K,n)_{\bF[Z]}$ when $P$ is an L-space satellite operator and $K$ is a knot in $S^3$ with $\veps(K)=1$. We will describe a direct summand of $\bX^{\diamond}(P,K,n)_{\bF[Z]}$ contained in $\bX^{\free}(P,K,n)_{\bF[Z]}$, which we call $\mathfrak{C}(P,K,n)$, and which supports the infinite $Z$-tower in the homology. Therefore, it is enough to study this direct summand to compute the $\tau$-invariant of $P(K,n)$ in this case. 

To construct and analyze this summand, we will use the hat flavored surgery modules $\cX_n(K)^{\hat{\cK}}$ and the structural result of these modules from Theorem~\ref{thm:standard-cx-over-K}. As a first step, recall that our module ${}_{\cK} \cH_-^{\cK}$ has $U$-equivariant structure maps, and that the action of any element of $\cK$ which is a multiple of $U$ on ${}_{\cK} \cX^{\diamond}(L_P)_{\bF[Z]}$. Since the action of any $U$ multiple vanishes on ${}_{\cK} \cX^{\diamond}(L_P)_{\bF[Z]}$, we may naturally view it as inducing an $AA$-bimodule ${}_{\hat{\cK}} \cX^{\diamond}(L_P)_{\bF[Z]}$, whose actions take the same formula. We observe
\[
\begin{split}
&{}_{\cK} \cH_-^{\cK}\boxtimes {}_{\cK} \cX^{\diamond}(L_P)_{\bF[Z]} \\
\iso&{}_{\cK} \cH_-^{\cK}\boxtimes {}_{\cK}[q_U]^{\hat{\cK}}\boxtimes {}_{\hat{\cK}} \cX^{\diamond}(L_P)_{\bF[Z]}\\
\iso &{}_{\cK}[q_U]^{\hat{\cK}} \boxtimes {}_{\hat{\cK}} \cH_-^{\hat{\cK}}\boxtimes {}_{\hat{\cK}} \cX^{\diamond}(L_P)_{\bF[Z]}
\end{split}
\]
where ${}_{\hat{\cK}} \cH_-^{\hat{\cK}}$ (and so forth) are the modules obtained by setting $U$ to be zero in the inputs and outputs of ${}_{\cK} \cH_-^{\cK}$.  Here, $q_U\colon \cK\to \cK/(U)=\hat{\cK}$ denotes the algebra morphism induced by setting $U=0$, and ${}_{\cK} [q_U]^{\hat{\cK}}$ denotes the corresponding $DA$-bimodule (see \cite{LOTBimodules}*{Definition~2.2.48}). 

In particular, in order to compute $\tau(P(K,n))$, it suffices to work with the hat flavored surgery modules $\cX_n(K)^{\hat{\cK}}$ instead of the full surgery modules $\cX_n(K)^{\cK}$. We will use the description of $\cX_n(K)^{\hat{\cK}}$ in terms of standard complexes, essentially due to Dai, Hom, Stoffregen and Truong, stated above as Theorem~\ref{thm:standard-cx-over-K}. According to this theorem, we write
\begin{equation}
\cX_n(K)^{\hat{\cK}}=\cX_n(b_1,\dots, b_m)^{\hat{\cK}}\oplus A.\label{eq:standard-complex-X_K}
\end{equation}
For the purposes of computing $\tau(P(K,n))$, the summand $A$ plays no role. Therefore, for our purposes it suffices to consider the tensor product
\[
\cX_n(b_1,\dots, b_m)^{\hat{\cK}}\boxtimes {}_{\hat{\cK}} \cH_-^{\hat{\cK}}\boxtimes {}_{\hat{\cK}} \cX^{\diamond}(L_P)_{\bF[Z]}.
\]

We recall from Section~\ref{sec:hat-modules} that $\cX_n(b_1,\dots, b_m)^{\hat{\cK}}$ is the complex
\begin{equation}
	 \begin{tikzcd}[column sep=1.5cm]
	 	\ys_0 \ar[drr, "T^{\tau(K)}\sigma",swap] \ar[r, leftrightarrow, "W^{|b_1|}"] 
	 	& \ys_1 \ar[r, leftrightarrow, "Z^{|b_2|}"] & \ys_2 \ar[r, leftrightarrow]&\cdots \ar[r, leftrightarrow, "Z^{|b_m|}"]& \ys_m
	 	\ar[dll, "T^{n-\tau(K)} \tau"]
	 	\\
	 	&& \ys'
	 \end{tikzcd}.
 \label{eq:standard surgery complex}
\end{equation}

In the above, if $b_i$ is positive, there is a differential from $\ys_{i}$ to $\ys_{i-1}$ (i.e. an arrow to the left). If $b_i$ is negative, there is a differential from $\ys_{i-1}$ to $\ys_i$. Note that Hom's invariant $\veps(K)$ is the sign of $b_1$ if $m>0$. Furthermore, $\veps(K)=0$ if and only if $m=0$.

\begin{define}
	Define \[
	\mathfrak{C}(P,K,n)
	\] to be the free $\bF[Z]$-submodule of $\bX^{\diamond}(P,K,n)_{\bF[Z]}$ spanned by the following generators:
	\begin{enumerate}
		\item  $\ys_0| \xs^{\sfrac{\ell}{2}}_{0}=\ys_0|\ve{e}^{\free}_{s-\tau(K),\frac{\ell-1}{2}}\in \langle \ys_0\rangle\otimes \scE^{\free}_{s-\tau(K),\frac{\ell-1}{2}}\subset E_{s,\frac{\ell-1}{2}}^{\free}$ for $s>\tau(K)$;
		\item $\ys_0| \xs'_0=\ys_0|\ve{f}^{\free}_{s-\tau(K),\frac{\ell-1}{2}}\in \langle \ys_0\rangle\otimes \scF^{\free}_{s-\tau(K),\frac{\ell-1}{2}}\subset F_{s,\frac{\ell-1}{2}}^{\free}$ for $s >-1+\tau(K)$;
		\item $\ys'| \xs_{0}^{\sfrac{\ell}{2}} =\ys'|\ve{j}^{\free}_{s,\frac{\ell-1}{2}}\in \langle \ys'\rangle \otimes \scJ^{\free}_{s,\frac{\ell-1}{2}}\subset J_{s,\frac{\ell-1}{2}}^{\free}$ for all $s\in\Z+1/2$;
		\item  $\ys'| \xs'_0=\ys'|\ve{m}^{\free}_{s,\frac{\ell-1}{2}}\in \langle \ys'\rangle \otimes\scM^{\free}_{s,\frac{\ell-1}{2}} \subset M_{s,\frac{\ell-1}{2}}^{\free}$ for all $s\in\Z+1/2$;
		\item $\ys_m| \xs^{\sfrac{\ell}{2}}_0=\ys_m| \ve{e}^{\free}_{s+\tau(K),\frac{\ell+1}{2}}\in \langle \ys_m\rangle \otimes \scE^{\free}_{s+\tau(K),\frac{\ell+1}{2}} \subset E_{s,\frac{\ell+1}{2}}^{\free} $ for $s< -\tau(K)$;
		\item $\ys_m| \xs'_0 =\ys_m| \ve{f}^{\free}_{s+\tau(K),\frac{\ell+1}{2}}\in\langle \ys_m\rangle \otimes \scF^{\free}_{s+\tau(K),\frac{\ell+1}{2}} \subset F_{s,\frac{\ell+1}{2}}^{\free}$ for $ s<-\tau(K) $.
	\end{enumerate}
\label{def:direct summand when epsilon=1}
\end{define}

Note that $\mathfrak{C}(P,K,n)$ is contained in $\bX^{\free}(P,K,n)$. We view $\mathfrak{C}(P,K,n)$ as a free chain complex over $\bF[Z]$ by using all of the components of the differential on $\bX^{\free}(P,K,n)$ between the generators of $\frC(P,K,n)$. These are described in Section~\ref{sec:maps in X top}.

See Figures \ref{fig:direct summand n less than 2tau} and \ref{fig:direct summand n greater than 2tau} for  schematics  of  $\mathfrak{C}(P,K,n)$ when $n<2\tau(K)$ and $n\ge 2\tau(K)$, respectively. Note that there is a qualitative difference between these two cases, which explains the difference in the formulas of $\tau(P(K,n))$ in Theorem~\ref{thm:main-intro} depending on whether $n<2\tau(K)$ or $n \ge 2 \tau(K)$. 

All the vertical maps are identity maps on $\xs_{0}^{\sfrac{\ell}{2}}$ or $\xs'_0$. All the leftward pointing arrows labeled by $L_{\tau}$ are given by the formula  
\[
L_{\tau}(\xs^{\sfrac{\ell}{2}}_0) = \xs'_0\cdot Z^{ R_{\sfrac{\ell}{2}}+\frac{\ell}{2}-g_3(P)}.
\]
All of the rightward arrows labeled by $L_{\sigma}$ are given by 
\[
L_{\sigma}(\xs^{\sfrac{\ell}{2}}_0) = \xs'_0\cdot Z^{ R_{\sfrac{\ell}{2}}-\frac{\ell}{2}-g_3(P)}.
\]
These formulas are derived in Lemma~\ref{lem:quotient bimodule in Lp}.

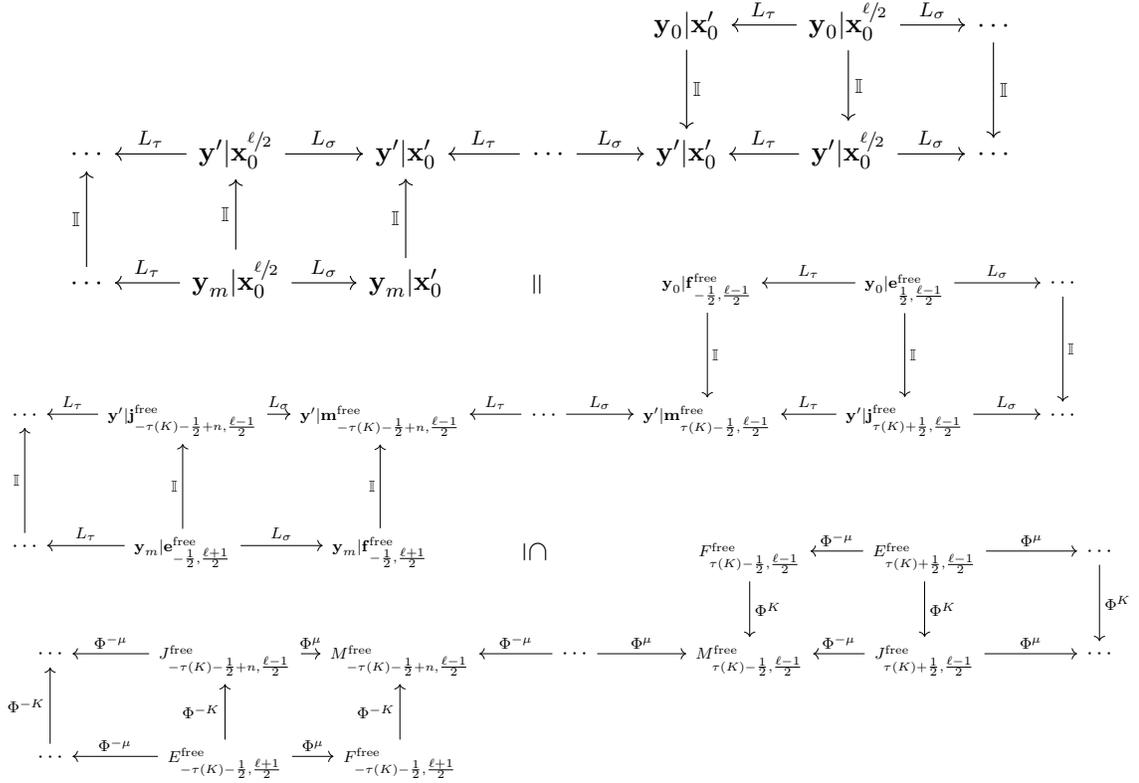
\begin{figure}[!h]
		\adjustbox{scale=0.95}{
			\begin{tikzcd}[row sep=10mm]
				&\phantom{E}& \phantom{F}&\phantom{\cdots}& \ys_0|\xs_0' \arrow[d,"\bI"] & \ys_0| \xs_0^{\sfrac{\ell}{2}}\arrow[l,swap,"L_{\tau}"]\arrow[r,"L_{\sigma}"] \arrow[d,"\bI"] &\cdots\arrow[d,"\bI"]\\
				\cdots &\ys'|\xs_0^{\sfrac{\ell}{2}}\arrow[l,swap,"L_{\tau}"]\arrow[r,"L_{\sigma}"]& \ys'|\xs'_0 & \cdots\arrow[l,swap,"L_{\tau}"] \arrow[r,"L_{\sigma}"]& \ys'|\xs_0' & \ys'|\xs_0^{\sfrac{\ell}{2}}  \arrow[l,swap,"L_{\tau}"] \arrow[r,"L_{\sigma}"] & \cdots\\
				\cdots \arrow[u,"\bI"] & \ys_m|\xs_0^{\sfrac{\ell}{2}} \arrow[l,swap,"L_{\tau}"] \arrow[r,"L_{\sigma}"] \arrow[u,"\bI"]& \ys_m|\xs_0'\arrow[u,"\bI"] & & & & 
			\end{tikzcd}
		}
	\\	
			\vspace{-.5cm}
	\rotatebox{270}{$=$}
			\vspace{-.5cm}
		\\
	\adjustbox{scale=0.75}{
					\begin{tikzcd}[column sep={3.5cm,between origins},row sep=1.5cm]
						&[-.7cm]\phantom{E}& \phantom{F}
						&[-.6cm]\phantom{\cdots}
						&[-.6cm] \Ss{\ys_0|\ve{f}_{-\frac{1}{2},\frac{\ell-1}{2}}^{\free}} \arrow[d,"\bI"] & 
						\Ss{\ys_0| \ve{e}_{\frac{1}{2},\frac{\ell-1}{2}}^{\free}}\arrow[l,swap,"L_{\tau}"]\arrow[r,"L_{\sigma}"] \arrow[d,"\bI"] &[-.7cm]\cdots\arrow[d,"\bI"]\\
						\cdots &
						\Ss{\ys'| \ve{j}_{-\tau(K)-\frac{1}{2}+n,\frac{\ell-1}{2}}^{\free}}
						\arrow[l,swap,"L_{\tau}"]
						\arrow[r,"L_{\sigma}"]
						&
						 \Ss{\ys'|\ve{m}_{-\tau(K)-\frac{1}{2}+n,\frac{\ell-1}{2}}^{\free}} & \cdots\arrow[l,swap,"L_{\tau}"] \arrow[r,"L_{\sigma}"]& \Ss{\ys'|\ve{m}_{\tau(K)-\frac{1}{2},\frac{\ell-1}{2}}^{\free}}
						 &
						  \Ss{\ys'|\ve{j}_{\tau(K)+\frac{1}{2},\frac{\ell-1}{2}}^{\free}}  \arrow[l,swap,"L_{\tau}"] \arrow[r,"L_{\sigma}"] & \cdots
						  \\
						\cdots \arrow[u,"\bI"] & \Ss{\ys_m|\ve{e}_{-\frac{1}{2},\frac{\ell+1}{2}}^{\free}} \arrow[l,swap,"L_{\tau}"] \arrow[r,"L_{\sigma}"] \arrow[u,"\bI"]& \Ss{\ys_m|\ve{f}_{-\frac{1}{2},\frac{\ell+1}{2}}^{\free}}
						\arrow[u,"\bI"] & & & & 
					\end{tikzcd}
				}
		\\
		\vspace{-.5cm}
					 \rotatebox{270}{$\subset$} 
		\vspace{-.5cm}
		\\
		  \adjustbox{scale=0.75}{
				\begin{tikzcd}[column sep={3.1cm,between origins},row sep=10mm]
					&\phantom{E}& \phantom{F}&\phantom{\cdots}&
					 \Ss{F^{\free}_{\tau(K)-\frac{1}{2},\frac{\ell-1}{2}} }
					 	\arrow[d,"\Phi^{K}"] 
					 &
					 \Ss{E^{\free}_{\tau(K)+\frac{1}{2},\frac{\ell-1}{2}}}
					 	\arrow[l,"\Phi^{-\mu}",swap]
					 	\arrow[r,"\Phi^\mu"]
					 	\arrow[d,"\Phi^K"] 
					 	&\cdots\arrow[d,"\Phi^K"]
					 	\\
					\cdots &
					\Ss{J^{\free}_{-\tau(K)-\frac{1}{2}+n,\frac{\ell-1}{2}}}\arrow[l,swap,"\Phi^{-\mu}"]\arrow[r,"\Phi^\mu"]& 
					\Ss{M^{\free}_{-\tau(K)-\frac{1}{2}+n,\frac{\ell-1}{2}}} & \cdots\arrow[l,"\Phi^{-\mu}",swap] \arrow[r,"\Phi^\mu"]& \Ss{M^{\free}_{\tau(K)-\frac{1}{2},\frac{\ell-1}{2}}} & \Ss{J^{\free}_{\tau(K)+\frac{1}{2},\frac{\ell-1}{2}}} \arrow[l,"\Phi^{-\mu}",swap] \arrow[r,"\Phi^\mu"] & \cdots\\
					\cdots \arrow[u,"\Phi^{-K}"] & \Ss{E^{\free}_{-\tau(K)-\frac{1}{2},\frac{\ell+1}{2}}} \arrow[l,"\Phi^{-\mu}",swap] \arrow[r,"\Phi^\mu"] \arrow[u,"\Phi^{-K}"]& \Ss{F^{\free}_{-\tau(K)-\frac{1}{2},\frac{\ell+1}{2}}}\arrow[u,"\Phi^{-K}"] & & & & 
				\end{tikzcd}
			}
		
	\caption{A schematic drawing of $\mathfrak{C}(P,K,n)_{\bF[Z]}$ when $n<2\tau(K)$. On the top diagram, we describe the generators in the definition of $\frC(P,K,n)$. On the bottom diagram, we describe the corresponding complexes $E_{s,t}^{\free}$, $F_{s,t}^{\free}$, $J_{s,t}^{\free}$ and $M_{s,t}^{\free}$ that these generators live in.}
	\label{fig:direct summand n less than 2tau}
\end{figure}

\begin{figure}[!h]
	\adjustbox{scale=0.9}{
		\begin{tikzcd}[
		row sep=10mm]
	&\phantom{E}
	& \ys_0|\xs'_0
		\arrow[d,"\bI"]
	&\cdots
		\arrow[l,swap,"L_{\tau}"]
		\arrow[r,"L_{\sigma}"]
		 \arrow[d,"\bI"]
	& \ys_0|\xs'_0 
		\arrow[d,"\bI"]
	&\ys_0|\xs^{\sfrac{\ell}{2}}_0
		\arrow[l,swap,"L_{\tau}"]
		\arrow[r,"L_{\sigma}"]
		\arrow[d,"\bI"]
	&\cdots
		\arrow[d,"\bI"]\\
	\cdots 
	&
	\ys'|\xs^{\sfrac{\ell}{2}}_0
		\arrow[l,swap,"L_{\tau}"]
		\arrow[r,"L_{\sigma}"]
	& \ys'|\xs'_0 
	& \cdots
		\arrow[l,swap,"L_{\tau}"]
		\arrow[r,"L_{\sigma}"]
	& 
	\ys'|\xs'_0 
	& \ys'|\xs^{\sfrac{\ell}{2}}_0 
		\arrow[l,swap,"L_{\tau}"]
		\arrow[r,"L_{\sigma}"]
		&
	\cdots\\
	\cdots 
		\arrow[u,"\bI"] 
		&
	\ys_m|\xs^{\sfrac{\ell}{2}}_0
		\arrow[l,swap,"L_{\tau}"]
		\arrow[r,"L_{\sigma}"]
		\arrow[u,"\bI"]& 
	 \ys_m|\xs'_0
	  	\arrow[u,"\bI"] 
	  &
	 \cdots
	 	\arrow[l,swap,"L_{\tau}"]
	 	\arrow[r,"L_{\sigma}"]
	 	\arrow[u,"\bI"]&
	 \ys_m|\xs'_0
	 	 \arrow[u,"\bI"] & & 
				\end{tikzcd}
		}
	\\
	\rotatebox{270}{$\subset$}  \adjustbox{scale=0.75}{
		\begin{tikzcd}[column sep={3cm,between origins},row sep=10mm]
			&\phantom{E}& \Ss{F^{\free}_{\tau(K)-\frac{1}{2},\frac{\ell-1}{2}}}\arrow[d,"\Phi^{K}"]&\cdots\arrow[l,swap,"\Phi^{-\mu}"]\arrow[r,"\Phi^{\mu}"] \arrow[d,"\Phi^{K}"]& \Ss{F^{\free}_{-\tau(K)-\frac{1}{2}+n,\frac{\ell-1}{2}}}\arrow[d,"\Phi^{K}"] &\Ss{E^{\free}_{-\tau(K)+\frac{1}{2}+n,\frac{\ell-1}{2}}}\arrow[l,swap,"\Phi^{-\mu}"]\arrow[r,"\Phi^{\mu}"] \arrow[d,"\Phi^{K}"] &\cdots\arrow[d,"\Phi^{K}"]\\
		\cdots &\Ss{J^{\free}_{\tau(K)-\frac{1}{2},\frac{\ell-1}{2}}}\arrow[l,swap,"\Phi^{-\mu}"]\arrow[r,"\Phi^{\mu}"]& \Ss{M^{\free}_{\tau(K)-\frac{1}{2},\frac{\ell-1}{2}} }& \cdots\arrow[l,swap,"\Phi^{-\mu}"] \arrow[r,"\Phi^{\mu}"]&\Ss{M^{\free}_{-\tau(K)-\frac{1}{2}+n,\frac{\ell-1}{2}}}& \Ss{J^{\free}_{-\tau(K)+\frac{1}{2}+n,\frac{\ell-1}{2}}} \arrow[l,swap,"\Phi^{-\mu}"] \arrow[r,"\Phi^{\mu}"] & \cdots\\
		\cdots \arrow[u,"\Phi^{-K}"] & \Ss{E^{\free}_{\tau(K)-\frac{1}{2}-n,\frac{\ell}{2}}} \arrow[l,swap,"\Phi^{-\mu}"] \arrow[r,"\Phi^{\mu}"] \arrow[u,"\Phi^{-K}"]& \Ss{F^{\free}_{\tau(K)-\frac{1}{2}-n,\frac{\ell}{2}}}\arrow[u,"\Phi^{-K}"] & \cdots \arrow[l,swap,"\Phi^{-\mu}"] \arrow[r,"\Phi^{\mu}"] \arrow[u,"\Phi^{-K}"]& \Ss{F^{\free}_{-\tau(K)-\frac{1}{2},\frac{\ell}{2}}} \arrow[u,"\Phi^{-K}"] & & 
		\end{tikzcd}
	}
	\caption{A schematic drawing of $\mathfrak{C}(P,K,n)_{\bF[Z]}$ when $n\ge2\tau(K)$, which is obtained from the previous diagram by sliding the bottom row relatively to the right.}
	\label{fig:direct summand n greater than 2tau}
\end{figure}
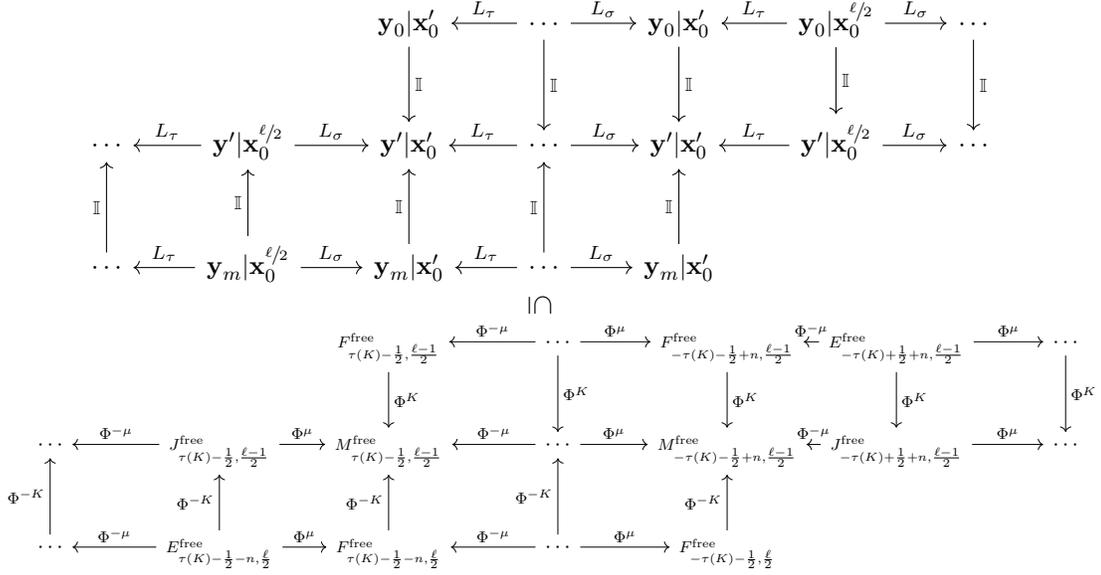

\begin{prop}\label{prop:summand-veps=1} When $\veps(K)=1$, the complex $\mathfrak{C}(P,K,n)_{\bF[Z]}$ is a direct summand of $\bX^{\diamond}(P,K,n)_{\bF[Z]}$.
\end{prop}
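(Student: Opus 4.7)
My plan is to make two algebraic reductions, verify closure of $\mathfrak{C}(P,K,n)$ under the differential by a case check on the six types of generators in Definition~\ref{def:direct summand when epsilon=1}, and then exhibit an explicit complementary subcomplex.

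First I would apply Theorem~\ref{thm:standard-cx-over-K} to decompose $\cX_n(K)^{\hat{\cK}}\simeq \cX_n(b_1,\dots,b_m)^{\hat{\cK}}\oplus A$; since every generator of $\mathfrak{C}(P,K,n)$ involves only $\ys_0,\ys_m,\ys'$ from the first summand, it suffices to treat the tensor product based on $\cX_n(b_1,\dots,b_m)^{\hat{\cK}}$. Moreover, by Lemma~\ref{lem:X-min-non-trivial-actions} the bimodule $\cX^{\diamond}(L_P)$ is annihilated by $U$, so I can pass from $\cK$ to $\hat{\cK}$ and use the standard-complex description of Section~\ref{sec:hat-modules}. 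The crucial consequence of $\veps(K)=1$ is then Remark~\ref{rem:epsilon}: since $b_1>0$ and $b_m<0$, the only outgoing $\delta^1$-components from $\ys_0$ and $\ys_m$ are the surgery arrows $\ys'\otimes T^{\tau(K)}\sigma$ and $\ys'\otimes T^{n-\tau(K)}\tau$, and $\delta^1(\ys')=0$.

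Using these facts together with the hypercube description of $\cH_-\boxtimes\cX^{\diamond}(L_P)$ from Section~\ref{sec:maps in X top} and the explicit formulas for $L_W,L_Z,L_\sigma,L_\tau$ in Lemma~\ref{lem:quotient bimodule in Lp}, I would verify case by case that $\d^{\boxtimes}$ preserves $\mathfrak{C}(P,K,n)$. For example, the surgery arrow in $\delta^1(\ys_0)$ sends $\ys_0|\xs_0^{\ell/2}$ to $\ys'|\xs_0^{\ell/2}$ in item~3, with the $T^{\tau(K)}$-shift precisely matching the Alexander cutoff $s>\tau(K)$ in item~1; the horizontal hypercube map $\Phi^\mu$ acts via $L_\sigma$, sending $\xs_0^{\ell/2}$ to $\xs_0'\cdot Z^{j'_{\ell/2}}$ and producing an element of item~2; the generators $\ys'|\xs$ have no outgoing $\delta^1$-contribution and so only pick up the horizontal hypercube maps $\Phi^{\pm\mu}$ into item~4. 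The remaining cases are symmetric.

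To finish, I would take $\mathfrak{C}'$ to be the $\bF[Z]$-span of all generators not listed in Definition~\ref{def:direct summand when epsilon=1} (those involving an intermediate $\ys_i$ with $1\le i\le m-1$, a pattern index $t\ne\ell/2$, a torsion generator, or an out-of-range Alexander grading) and verify $\d^{\boxtimes}(\mathfrak{C}')\subset \mathfrak{C}'$. The key ingredients are Lemma~\ref{lem:X-tors-submodule}, which shows the torsion submodule is preserved and hence cannot leak into the free part of $\mathfrak{C}$; Lemma~\ref{lem:quotient bimodule in Lp}, which shows $L_W$ and $L_Z$ strictly shift the pattern index $t$ and vanish outside their natural side of $t=\ell/2$, so no $W$- or $Z$-action produces $\xs_0^{\ell/2}$ from a different top generator; and the precise alignment between the surgery grading shifts $T^{\pm\tau(K)}$ and the cutoffs $s>\tau(K)$, $s<-\tau(K)$. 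The hardest part will be the combinatorial bookkeeping across the four-corner hypercube, in particular tracking how the internal staircase arrows $\ys_i\to W^{b_1}\ys_0$ and $\ys_{m-1}\to Z^{|b_m|}\ys_m$ propagate through $\cH_-\boxtimes \cX^{\diamond}(L_P)$ and verifying that the resulting terms, together with the higher $\delta^k$-contributions, do remain in $\mathfrak{C}'$---possibly after a mild change of basis.
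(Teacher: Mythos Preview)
Your overall strategy matches the paper's. The paper proves the proposition by combining two lemmas: Lemma~\ref{lem:quotient complex of X(P,K,n)} shows $\mathfrak{C}(P,K,n)$ is a quotient complex of $\bX^{\diamond}(P,K,n)$ (equivalently, your complement $\mathfrak{C}'$ is a subcomplex), and Lemma~\ref{lem:subcomplex of X(P(K,n))} shows $\mathfrak{C}(P,K,n)$ is itself a subcomplex. Your reductions to $\cX_n(b_1,\dots,b_m)^{\hat\cK}$ and to $\hat\cK$ are exactly what the paper does. For the quotient-complex direction the paper exploits a useful asymmetry you do not mention: since $\bX^{\free}$ is already a quotient of $\bX^{\diamond}$ (Lemma~\ref{lem:X-tors-submodule}), one can verify that $\mathfrak{C}$ is a quotient entirely within $\bX^{\free}$, using only the formulas of Lemma~\ref{lem:quotient bimodule in Lp} and never touching torsion generators. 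This already shows your complement $\mathfrak{C}'$ is closed under $\d^{\boxtimes}$ in the natural basis, so no ``mild change of basis'' is needed.

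There is, however, a genuine gap in your subcomplex check. You invoke Lemma~\ref{lem:quotient bimodule in Lp} to assert that $\Phi^{\mu}$ sends $\xs_0^{\ell/2}$ to $\xs_0'\cdot Z^{j'_{\ell/2}}$, but that lemma computes $L_\sigma,L_\tau$ only on the \emph{quotient} bimodule $\cX^{\free}(L_P)$. In $\cX^{\diamond}(L_P)$ the target $\cS^{\diamond}$ has additional torsion generators, and nothing you cite rules out components of $L_\sigma(\xs_0^{\ell/2})$ landing there; Lemma~\ref{lem:X-tors-submodule} controls only where torsion elements go, not where free generators go. If such components were present they would escape $\mathfrak{C}$ and the subcomplex claim would fail. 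The paper closes this in Lemma~\ref{lem:subcomplex of X(P(K,n))} with a $\gr_{\ws}$-grading argument: $L_\sigma$ preserves $\gr_{\ws}$, the $\xs_0'$-component of $L_\sigma(\xs_0^{\ell/2})$ has $\gr_{\ws}=0$, and every other generator of $\cS^{\diamond}$ has strictly smaller $\gr_{\ws}$ (and $Z$ does not change $\gr_{\ws}$), so no other component can occur. An analogous argument handles $L_\tau$. You need this step before your case check is complete.
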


We break the proof of Proposition~\ref{prop:summand-veps=1} into two lemmas. In  Lemma~\ref{lem:quotient complex of X(P,K,n)}, we prove that when $\veps(K)=1$, $\mathfrak{C}(P,K,n)$ is a quotient complex. In Lemma~\ref{lem:subcomplex of X(P(K,n))}, we prove that when $\veps(K)=1$, $\mathfrak{C}(P,K,n)$ is a subcomplex. Combining these yields Proposition~\ref{prop:summand-veps=1}.

\begin{lem}
If $K\subset S^3$ is a knot with $\veps(K)=1$, then $\mathfrak{C}(P,K,n)_{\bF[Z]}$ is a quotient complex of $\bX^{\diamond}(P,K,n)_{\bF[Z]}$.
	\label{lem:quotient complex of X(P,K,n)}
\end{lem}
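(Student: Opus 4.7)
The plan is to identify $\frC(P,K,n)_{\bF[Z]}$ as the quotient $\bX^{\diamond}(P,K,n)_{\bF[Z]}/\cN$ for a suitable subcomplex $\cN$, and to verify $\d(\cN)\subseteq \cN$ by exhaustive case analysis. Since the $U$-action on $\cX^{\diamond}(L_P)_{\bF[Z]}$ vanishes (Lemma~\ref{lem:X-min-non-trivial-actions}), I would first pass through the hat-flavored module $\cX_n(K)^{\hat{\cK}}$ and apply Theorem~\ref{thm:standard-cx-over-K} to write
\[
\cX_n(K)^{\hat{\cK}}\simeq \cX_n(b_1,\dots,b_m)^{\hat{\cK}}\oplus A.
\]
By Remark~\ref{rem:epsilon}, the hypothesis $\veps(K)=1$ forces $b_1>0$ and $b_m<0$, which means that in the staircase $C(b_1,\dots,b_m)$ the generators $\ys_0$ and $\ys_m$ have only incoming arrows, no outgoing ones. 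The $A$-summand has no $\sigma$- or $\tau$-weighted $\delta^1$-components, so $A\boxtimes(\cH_-\boxtimes\cX^{\diamond}(L_P))$ decouples from the standard complex part of $\bX^{\diamond}$ and contributes directly to $\cN$.

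Within the standard complex part, I would define $\cN$ as the $\bF[Z]$-span of all generators not listed in Definition~\ref{def:direct summand when epsilon=1}, together with $\bX^{\tors}(P,K,n)$ (a subcomplex by Lemma~\ref{lem:X-tors-submodule} and Remark~\ref{rem:sub/quotient under box tensor}) and the $A$-part just described. The crucial structural observation is that in $\cX_n(b_1,\dots,b_m)^{\hat{\cK}}$,
\[
\delta^1(\ys_0) = \ys'\otimes T^{\tau(K)}\sigma,\qquad \delta^1(\ys_m) = \ys'\otimes T^{n-\tau(K)}\tau,\qquad \delta^1(\ys') = 0.
\]
Consequently, checking $\d(\cN)\subseteq \cN$ reduces to three cases: (i) internal differentials within each $\ys_i|\cdot$ or $\ys'|\cdot$ row, arising from the structure maps of $\cH_-\boxtimes\cX^{\diamond}(L_P)$, for which Lemma~\ref{lem:quotient bimodule in Lp} together with the arrow schematics of Figures~\ref{fig:f-pm K}--\ref{fig:maps-f-pm-mu} make the restriction to $\frC$ transparent; (ii) the $\sigma$ and $\tau$ edges from $\ys_0|\cdot$ and $\ys_m|\cdot$ to $\ys'|\cdot$, whose $\frC$-to-$\frC$ components are exactly those drawn in Figures~\ref{fig:direct summand n less than 2tau}--\ref{fig:direct summand n greater than 2tau}; and (iii) the lateral arrows $\ys_1\to W^{b_1}\ys_0$ and $\ys_{m-1}\to (W^{|b_m|}\text{ or }Z^{|b_m|})\ys_m$.

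The main obstacle is case (iii). For the $\ys_1\to\ys_0$ arrow I would trace the iterated left $W$-action on $\scE^{\free}_{*,(\ell-1)/2}$ and $\scF^{\free}_{*,(\ell-1)/2}$ using Lemma~\ref{lem:quotient bimodule in Lp} and Figures~\ref{fig:f-pm K}--\ref{fig:maps-f-pm-mu}: on $\scE^{\free}$ the action $W$ shifts the $s$-index by $-1$ for $s\leq -1/2$, carries an $L_W$-twist across the $s=\pm 1/2$ boundary, and vanishes for $s\geq 3/2$. Iterating shows that $W^{b_1}\cdot\ve{e}^{\free}_{s',(\ell-1)/2}$ is either zero or lands at index $s''\leq -1/2$, hence outside the range $s''>0$ required for $\ys_0|\ve{e}^{\free}_{s'',(\ell-1)/2}$ to belong to the generator type (1) of $\frC$; generator type (2), with threshold $s''>-1$, is handled analogously for $\ve{f}^{\free}$. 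Higher-order corrections of the form $\delta^k(\ys_1)=\ys'\otimes T^{\tau(K)}\sigma\otimes W^{b_1}\otimes\cdots$ route to the $\ys'|\cdot$ row, and using the higher $m_{k|1|0}$-structure of $\cH_-\boxtimes\cX^{\diamond}(L_P)$ together with the grading shifts in Equation~\eqref{eq:grading shifts of left action}, these lands either outside the slice $t=(\ell-1)/2$ or outside the appropriate $s$-range, so they too remain in $\cN$. The $\ys_{m-1}\to\ys_m$ case is symmetric, with the parity of $m$ dictating whether $W^{|b_m|}$ or $Z^{|b_m|}$ acts; the $Z$-case is controlled by the $L_Z$-formulas of Lemma~\ref{lem:quotient bimodule in Lp}. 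The most delicate bookkeeping arises at the $s=\pm 1/2$ boundary of $\scE^{\free}$, where the underlying staircase generator changes and the $L_W$/$L_Z$-twists interact with the shape of $R_t$ provided by Lemma~\ref{lem:shape of top generators}.
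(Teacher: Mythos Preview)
Your framework---defining $\cN$ as the complement of $\frC$ and verifying $\d(\cN)\subseteq\cN$---is exactly the paper's approach (phrased dually as ``no differentials point into $\frC$ from outside''). However, your case (ii) has a genuine gap. You only assert that the $\frC$-to-$\frC$ components of $\Phi^{\pm K}$ match the figures, but the content of the lemma is that the $\cN$-to-$\frC$ components \emph{vanish}, and this you do not check. Concretely: the generator $\ys_0|\ve{e}^{\free}_{-1/2,(\ell-1)/2}=\ys_0|\xs^{\sfrac{\ell}{2}-1}_0$ lies in $\cN$ (it sits at $s=\tau(K)-\tfrac12$, not $>\tau(K)$), yet $\Phi^K$ could a priori send it into $J^{\free}_{*,(\ell-1)/2}$, where \emph{every} free generator belongs to $\frC$. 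The paper resolves this by reading off from Figure~\ref{fig:f-pm K} that $f^K$ on $\scE^{\free}_{-1/2,t}$ acts by $\sigma|L_Z$, giving $\Phi^K(\ys_0|\xs^{\sfrac{\ell}{2}-1}_0)=\ys'|L_Z(\xs^{\sfrac{\ell}{2}-1}_0)$, which vanishes by Lemma~\ref{lem:quotient bimodule in Lp}. For $s-\tau(K)\leq -\tfrac32$ one instead uses that $f^K$ vanishes on $\scE^{\free}_{s',t}$ in that range. The symmetric checks for $\Phi^{-K}$ on $\ys_m|\cdot$ at $t=(\ell+1)/2$ are likewise required. Your proposal omits all of this.

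Your case (iii) reaches the right conclusion but is needlessly elaborate and reflects a misunderstanding of the structure. First, no iteration of $L_W$ is needed: if $\ys_0|\ve{e}^{\free}_{s'',(\ell-1)/2}\in\frC$ then $s''\geq\tfrac12$, so the source generator $\ys_1|\ve{e}^{\free}_{s''+b_1,(\ell-1)/2}$ has index $\geq\tfrac32$, where $L_W$ already vanishes on $\scE^{\free}$ by Figure~\ref{fig:f-pm K}. Second, your ``higher-order corrections'' via $\delta^k(\ys_1)$ for $k\geq 2$ are irrelevant: Lemma~\ref{lem:no-diagonal-differential} shows that ${}_{\cK}\cH_-\boxtimes\cX^{\diamond}(L_P)$ has $m_{k|1|0}=0$ for $k\geq 2$, so only $\delta^1$ of $\cX_n(K)$ contributes to the box-tensor differential. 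Finally, $m$ is always even in a standard complex and $b_m<0$ when $\veps(K)=1$, so the arrow at $\ys_m$ is always $Z^{|b_m|}$-weighted; there is no parity ambiguity.
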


\begin{proof}
	Since $\bX^{\free}(P,K,n)_{\bF[Z]}$ is a quotient complex of  $\bX^{\diamond}(P,K,n)_{\bF[Z]}$ by Lemma~\ref{lem:X-tors-submodule}, and $ \mathfrak{C}(P,K,n)$ is a quotient module of $\bX^{\free}(P,K,n)$, it suffices to show that $\mathfrak{C}(P,K,n)$ is a quotient complex of $\bX^{\free}(P,K,n)_{\bF[Z]}$, i.e., there are no non-trivial differentials $m_{0|1|0}$ pointing from other generators in $\bX^{\free}(P,K,n)_{\bF[Z]}$ to $\mathfrak{C}(P,K,n)$.
	
	Throughout the proof, we will write
		\[
	C(b_1,\dots, b_m)
	\]
for the standard complex summand of $\cCFK(K)^{\hat{R}}\simeq C(b_1,\dots, b_m)\oplus A$ for the companion knot $K$, where $b_1,\dots, b_m$ are as in Equation~\eqref{eq:standard-complex-X_K}. By slight abuse notation, we will assume
\[
\cCFK(K)^{\hat{R}}=C(b_1,\dots, b_m).
\]

	By Lemma~\ref{lem:no-diagonal-differential}, there are  no length $2$ differentials in $\bX^{\free}(P,K,n)_{\bF[Z]}$. Therefore, it is enough to consider length $0$ and length 1 differentials. The length 0 differentials are the internal differentials of each $E^{\free}_{s,t}, F^{\free}_{s,t}, J^{\free}_{s,t}, M^{\free}_{s,t}$. The length $1$ differentials are the maps $\Phi^{\pm K}$ and $\Phi^{\pm \mu}$ discussed in Section \ref{sec:maps in X top}.
	
	We  consider the length $0$ differentials first. The claim is obvious for $J^{\free}_{s,\frac{\ell-1}{2}}$ and $M^{\free}_{s,\frac{\ell-1}{2}}$, since the only generator in $J^{\free}_{s,\frac{\ell-1}{2}}$ is $\ys'|\xs^{\sfrac{\ell}{2}}_0 $, and similarly for $M^{\free}_{s,\frac{\ell-1}{2}}$, and there are no internal differentials to consider.
	
	For $E^{\free}_{s,\frac{\ell-1}{2}}$ with $s>\tau(K)$, by the definition of the standard complex, the only differential in $C(b_1,\dots ,b_m)$  pointing to $\ys_0$ is 
	\[
	\begin{tikzcd} \ys_0 & \ys_1 \ar[l, "W^{b_1}",swap] \end{tikzcd}.
	\] 
	Note $b_1$ is positive when $\veps(K)=1$. Note also that the Alexander grading of $\ys_0$ is $A(\ys_0) = \tau(K)$. From the description of $\scE^{\free}_{*,\frac{\ell-1}{2}}$ in Figure~\ref{fig:f-pm K}, we see that the arrow $\ys_0\xleftarrow{W^{b_1}}\ys_1$ becomes $0$ in $E^{\free}_{s,\frac{\ell-1}{2}}$ for $s>\tau(K)$. 
	 Therefore, there is no arrow pointing to $\ys_0|\xs^{\sfrac{\ell}{2}}_0$ in $E^{\free}_{s,\frac{\ell-1}{2}}$ for $s>\tau(K)$. The same observation applies to $F^{\free}_{s,\frac{\ell-1}{2}}$ for $s>-1+\tau(K)$ by looking at the expression of $\scF^{\free}_{*,\frac{\ell-1}{2}}$ in Figure~\ref{fig:f-pm K}.  
	
	The corresponding statement about $E^{\free}_{s,\frac{\ell+1}{2}} $ and $F^{\free}_{s,\frac{\ell+1}{2}} $ for $s<-\tau(K)$ can be proven by in a similar manner.

	For the length $1$ differentials (i.e. $\Phi^{\pm K}$ and $\Phi^{\pm \mu}$), we begin by considering the subspace of $\mathfrak{C}(P,K,n)_{\bF[Z]}$ in the row consisting of $E^{\free}_{*,\frac{\ell-1}{2}}$ and $F^{\free}_{*,\frac{\ell-1}{2}}$. There are no vertical arrows $\Phi^{\pm K}$ pointing into this row, so we only need to consider the horizontal arrows $\Phi^{\pm \mu}$. The maps $\Phi^{\pm \mu}$ are described in Section~\ref{sec:maps in X top}. More explicitly, the map 
	\[
	\Phi^{\mu}\colon  E^{\free}_{s,\frac{\ell-1}{2}} \to F^{\free}_{s,\frac{\ell-1}{2}}
	\] 
	is given by 
	\[
	\Phi^\mu=\bI \boxtimes f^{\mu}: C(b_1,\dots, b_m) \boxtimes \scE^{\free}_{*,\frac{\ell-1}{2}} \to C(b_1,\dots, b_m) \boxtimes \scF^{\free}_{*,\frac{\ell-1}{2}},
	\]
	where $f^{\mu}$ is described in Figure~\ref{fig:maps-f-pm-mu}. Note that $f^{\mu}$ only has one term, $(f^\mu)_1^1$, and this acts by $L_{\sigma}$. Therefore the only possible rightward horizontal arrow that can map to $\ys_0|\xs'_0 =\ys_0|\ve{f}^{\free}_{s-\tau(K),\frac{\ell-1}{2}}$ in $F^{\free}_{s,\frac{\ell-1}{2}}$ is \[
	 \bI\otimes L_{\sigma}: \ys_0|\ve{e}^{\free}_{s-\tau(K),\frac{\ell-1}{2}} \to \ys_0|\ve{f}^{\free}_{s-\tau(K),\frac{\ell-1}{2}}. \]
	 
	For $s>\tau(K),$ we have already included such arrows in the definition of $\mathfrak{C}(P,K,n)_{\bF[Z]}$. For $s = \tau(K)-\frac{1}{2}$, the corresponding map is 
	 \[\bI\otimes L_{\sigma}: \ys_0|\ve{e}^{\free}_{-\frac{1}{2},\frac{\ell-1}{2}} = \ys_{0}|\xs^{\sfrac{\ell}{2}-1}_0 \to \ys_0|\ve{f}^{\free}_{-\frac{1}{2},\frac{\ell-1}{2}} = \ys_{0}|\xs'_0, \]
	 but $L_{\sigma}(\xs_{0}^{\sfrac{\ell}{2}-1})=0$ in 
	${}_{\hat{\cK}} \cX^{\free}(L_P)_{\bF[Z]}$ by Lemma~\ref{lem:quotient bimodule in Lp}.

	For the leftward horizontal arrows, the argument is similar, and this time we have included all the possible non-trivial maps \[\bI\otimes L_{\tau }: \ys_0| \ve{e}^{\free}_{s-\tau(K),\frac{\ell-1}{2}} \to  \ys_0| \ve{f}^{\free}_{s-\tau(K)-1,\frac{\ell-1}{2}}\] for $s>\tau(K)$ in  $\mathfrak{C}(P,K,n)^{\bF[Z]}$.

	Again, the statement about the part of $\mathfrak{C}(P,K,n)_{\bF[Z]}$ in the row consisting of  $E^{\free}_{*,\frac{\ell+1}{2}}$ and $F^{\free}_{*,\frac{\ell+1}{2}}$  could be obtained similarly.
	
	Finally, let's consider length-$1$ maps pointing to the row consisting of $J^{\free}_{*,\frac{\ell-1}{2}}$ and $M^{\free}_{*,\frac{\ell-1}{2}}$ in $\bX^{\free}(P,K,n)$. There is nothing to prove for the horizontal maps $\Phi^{\pm \mu}$, as the whole row of $\bX^{\free}(P,K,n)^{\bF[Z]}$ consisting of $J^{\free}_{*,\frac{\ell-1}{2}}$ and $M^{\free}_{*,\frac{\ell-1}{2}}$ is contained in $\mathfrak{C}(P,K,n)_{\bF[Z]}$.
	
	The vertical maps $\Phi^K: E^{\free}_{s,\frac{\ell-1}{2}} \to J^{\free}_{s,\frac{\ell-1}{2}}$ are given by $\bI\boxtimes f^K$, where $f^K$ is described in Figure~\ref{fig:f-pm K}. If a generator $ \ys_k| \ve{e}^{\free}_{s-A(\ys_k),\frac{\ell-1}{2}}$ has a non-trivial arrow of $\Phi^K$ pointing to $\ys'| \ve{j}^{\free}_{s,\frac{\ell-1}{2}}$, then $k=0$, since $\ys_0$ is the only generator of $C(b_1,\dots, b_m)\subset \cX_n(b_1,\dots ,b_m)^{\hat{\cK}}$ with a non-trivial $\delta^1$ whose coefficient is a multiple of $\sigma$.

	Therefore, it is enough to consider $\Phi^K(\ys_0 | \ve{e}^{^{\free}}_{s-\tau(K),\frac{\ell-1}{2}})$ for $s\in \bZ+\frac{1}{2}$. 
	
	When $s-\tau(K)>0$, the non-trivial arrows are already contained in $\frC(P,K,n)$.
 
	When $s-\tau(K)<-1$, the map $\Phi^K$ sends $\ys_{0}|\ve{e}^{\free}_{s-\tau(K),\frac{\ell-1}{2}} $ to zero because $f^K$ vanishes on $\scE_{s-\tau(K), \frac{\ell-1}{2}}^{\free}$ by its description in Figure~\ref{fig:f-pm K}.
	
	When $s-\tau(K)=-\tfrac{1}{2}$, 
	we have 
	\[
	\Phi^K(\ys_0 | \ve{e}^{^{\free}}_{-\frac{1}{2},\frac{\ell-1}{2}}) = \Phi^K(\ys_0 | \xs^{\sfrac{\ell}{2}-1}_0)= \ys_0| L_Z(\xs^{\sfrac{\ell}{2}-1}_0)=0,
	\]
	by Lemma~\ref{lem:quotient bimodule in Lp}.

	The proof for $\Phi^{K}:F^{\free}_{s,\frac{\ell-1}{2}} \to M^{\free}_{s,\frac{\ell-1}{2}}$ is similar. The only difference is this time we allow $s$ to be $\tau(K)-\frac{1}{2}$, but that non-trivial arrow is already included in $\mathfrak{C}(P,K,n)_{\bF[Z]}$ as well.
	
	The proof for $\Phi^{-K}: E^{\free}_{s,\frac{\ell+1}{2}} \to J^{\free}_{s+n,\frac{\ell-1}{2}}$ and $\Phi^{-K}: F^{\free}_{s,\frac{\ell+1}{2}} \to M^{\free}_{s+n,\frac{\ell-1}{2}}$ is similar, so we leave the details to the reader.
\end{proof}

\begin{lem}
If $K\subset S^3$ is a knot with $\veps(K)=1$, then $ \mathfrak{C}(P,K,n)_{\bF[Z]}$ is a subcomplex of $\bX^{\diamond}(P,K,n)_{\bF[Z]}$.
	\label{lem:subcomplex of X(P(K,n))}
\end{lem}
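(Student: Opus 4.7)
The plan is to verify that the differential $\partial$ of $\bX^{\diamond}(P,K,n)_{\bF[Z]}$ sends every generator of $\mathfrak{C}(P,K,n)_{\bF[Z]}$ back into $\mathfrak{C}(P,K,n)_{\bF[Z]}$, echoing the argument of Lemma~\ref{lem:quotient complex of X(P,K,n)} but now checking \emph{outgoing} rather than \emph{incoming} arrows at each generator. By Lemma~\ref{lem:no-diagonal-differential}, the differentials in the hypercube split into length-$0$ internal differentials within each corner $E^{\diamond}_{s,t}, F^{\diamond}_{s,t}, J^{\diamond}_{s,t}, M^{\diamond}_{s,t}$ together with horizontal length-$1$ arrows $\Phi^{\pm\mu}$ and vertical length-$1$ arrows $\Phi^{\pm K}$. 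I would treat each of the six generator families in Definition~\ref{def:direct summand when epsilon=1} in turn, and in each case check all three types of differentials.

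For the length-$0$ analysis, I would invoke $\veps(K)=1$ via Remark~\ref{rem:epsilon}: since $b_1>0$ and $b_m<0$, inside the standard complex summand $C(b_1,\dots,b_m)$ of $\cCFK(K)^{\hat{R}}$ the only arrow incident to $\ys_0$ is the incoming $\ys_1\xrightarrow{W^{b_1}}\ys_0$ (so $\ys_0$ has no outgoing component in $C$), and similarly $\ys_m$ has no outgoing component in $C$. Together with $\delta^1(\ys')=0$ in $\cX_n(K)^{\hat{\cK}}$, the horizontal arrows of $\scE^{\diamond}_{*,\frac{\ell-1}{2}}, \scF^{\diamond}_{*,\frac{\ell-1}{2}}, \scJ^{\diamond}_{*,\frac{\ell-1}{2}}, \scM^{\diamond}_{*,\frac{\ell-1}{2}}$ in Figure~\ref{fig:f-pm K} become the only possible length-$0$ contributions; using Lemma~\ref{lem:quotient bimodule in Lp} I would show that these either vanish (because the relevant $L_W, L_Z$-action kills $\xs^{\sfrac{\ell}{2}}_0$ at the boundary, or because a factor of $W$ appears and $W=0$ in the free quotient) or land on generators already included in $\mathfrak{C}$.

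Next, I would check the length-$1$ arrows. According to Figures~\ref{fig:f-pm K}--\ref{fig:maps-f-pm-mu} the horizontal maps act via $\bI\boxtimes L_\sigma$ and $\bI\boxtimes L_\tau$, and Lemma~\ref{lem:quotient bimodule in Lp} gives
\[
L_\sigma(\xs^{\sfrac{\ell}{2}}_0)=\xs'_0\cdot Z^{R_{\sfrac{\ell}{2}}-\sfrac{\ell}{2}-g_3(P)},\qquad L_\tau(\xs^{\sfrac{\ell}{2}}_0)=\xs'_0\cdot Z^{R_{\sfrac{\ell}{2}}+\sfrac{\ell}{2}-g_3(P)},
\]
so $\Phi^{\pm\mu}$ sends each $\ys_i|\xs^{\sfrac{\ell}{2}}_0$ to a multiple of $\ys_i|\xs'_0$. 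These targets are in $\mathfrak{C}$ because the $\xs'_0$ ranges declared in Definition~\ref{def:direct summand when epsilon=1} are one wider than the $\xs^{\sfrac{\ell}{2}}_0$ ranges. The vertical maps $\Phi^{\pm K}$ arise from the outputs $\delta^1(\ys_0)\ni\ys'\otimes T^{\tau(K)}\sigma$ and $\delta^1(\ys_m)\ni\ys'\otimes T^{n-\tau(K)}\tau$, and they pair with $L_\sigma, L_\tau$ on the right to produce $\ys'|\xs'_0$ or $\ys'|\xs^{\sfrac{\ell}{2}}_0$ terms; since $\mathfrak{C}$ contains these generators for every $s\in\Z+\frac{1}{2}$, these targets are always in $\mathfrak{C}$.

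The hard part will be the bookkeeping at the boundary $s$-values where the inclusion ranges of $\mathfrak{C}$ transition between the rows $t=\frac{\ell-1}{2}$ and $t=\frac{\ell+1}{2}$, in particular near $s=\pm\tau(K)\pm\frac{1}{2}$. The cutoffs in Definition~\ref{def:direct summand when epsilon=1} are tuned precisely so that the vanishings $L_\sigma(\xs^t_0)=0$ for $t<\frac{\ell}{2}$ and $L_\tau(\xs^t_0)=0$ for $t>\frac{\ell}{2}$ from Lemma~\ref{lem:quotient bimodule in Lp} kick in at just the right boundary generators; the detailed case analysis there is routine but lengthy, and parallels the boundary analysis in Lemma~\ref{lem:quotient complex of X(P,K,n)}.
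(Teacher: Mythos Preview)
Your outline has the right skeleton (check length-$0$, horizontal length-$1$, vertical length-$1$ arrows in turn), but there is a genuine gap in the horizontal length-$1$ analysis.

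You invoke Lemma~\ref{lem:quotient bimodule in Lp} to assert
\[
L_\sigma(\xs^{\sfrac{\ell}{2}}_0)=\xs'_0\cdot Z^{R_{\sfrac{\ell}{2}}-\sfrac{\ell}{2}-g_3(P)},\qquad L_\tau(\xs^{\sfrac{\ell}{2}}_0)=\xs'_0\cdot Z^{R_{\sfrac{\ell}{2}}+\sfrac{\ell}{2}-g_3(P)}
\]
as equalities in $\cS^{\diamond}$. But Lemma~\ref{lem:quotient bimodule in Lp} only computes these actions in the \emph{quotient} bimodule ${}_{\cK}\cX^{\free}(L_P)_{\bF[Z]}$, not in ${}_{\cK}\cX^{\diamond}(L_P)_{\bF[Z]}$. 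A priori $L_\sigma(\xs_0^{\sfrac{\ell}{2}})\in\cS^{\diamond}$ could have additional components supported on the torsion generators $\xs_{2i}'$ for $i>0$, and such components would carry $\Phi^{\pm\mu}$ out of $\mathfrak{C}(P,K,n)$. This is precisely the asymmetry between the two lemmas: for the quotient-complex direction it suffices to work in $\bX^{\free}$ because $\bX^{\free}$ is a quotient of $\bX^{\diamond}$, but for the subcomplex direction one must rule out differentials from $\mathfrak{C}$ into $\bX^{\tors}$, and Lemma~\ref{lem:X-tors-submodule} gives no help here (it shows $\cX^{\tors}$ is preserved by the left action, not that $\cX^{\free}$ is).

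The paper closes this gap with a grading argument: the map $L_\sigma$ is homogeneous in $\gr_{\ws}$, and since the $\xs_0'$-component of $L_\sigma(\xs_0^{\sfrac{\ell}{2}})$ is nonzero (by Lemma~\ref{lem:quotient bimodule in Lp}) while every other generator $\xs_{2i}'$ of $\cS^{\diamond}$ with $i>0$ has strictly smaller $\gr_{\ws}$-grading than $\xs_0'$, no torsion component can appear. The same reasoning handles $L_\tau$. You would need to supply this step (or an equivalent one) to make your argument complete. As a minor aside, the vertical maps $f^{\pm K}$ in Figure~\ref{fig:f-pm K} are labeled $\sigma|1$, $\sigma|L_Z$, $\tau|1$, $\tau|L_W$, not $\sigma|L_\sigma$ or $\tau|L_\tau$; they act by the identity (or $L_W,L_Z$) on the staircase factor, which is why they automatically preserve top generators and raise no torsion issue.
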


\begin{proof}
Unlike in Lemma~\ref{lem:quotient complex of X(P,K,n)}, it is not sufficient to consider only $\bX^{\free}(P,K,n)^{\bF[Z]}$, because $\bX^{\free}(P,K,n)$ is not in general a subcomplex of $\bX^{\diamond}(P,K,n)$.

We begin by considering the subspace of $\mathfrak{C}(P,K,n)$ in the row of $J^{\diamond}_{*,\frac{\ell-1}{2}}$ and $M^{\diamond}_{*,\frac{\ell-1}{2}}$ first. There are no length-$2$ arrows or vertical length-$1$ arrows leaving the $J^{\diamond}_{*,*}$ and $M^{\diamond}_{*,*}$ rows, so we only need to consider the maps $\Phi^{\pm \mu}$ (i.e. the maps in a given row).

For the length-$0$ maps (the internal differentials of $J^{\diamond}_{s,t}$ and $M^{\diamond}_{s,t}$), recall that $J^{\diamond}_{s,\frac{\ell-1}{2}}$ is a copy of $\cC^{\diamond}_{\sfrac{\ell}{2}}$, and $M^{\diamond}_{s,\frac{\ell-1}{2}}$ is a copy of $\cS^{\diamond}$, so there is no differential leaving the top generators $\xs^{\sfrac{\ell}{2}}_0$ and $\xs'_0$ in each of the staircases.

The horizontal length-$1$ maps $\Phi^{\mu}:J_{s,\frac{\ell-1}{2}} \to M_{s,\frac{\ell-1}{2}}$ and $\Phi^{-\mu}:J_{s,\frac{\ell-1}{2}} \to M_{s-1,\frac{\ell-1}{2}}$,  are given by \[
L_{\sigma}: \cC_{\sfrac{\ell}{2}}^{\diamond} \to \cS^{\diamond} \text{ and }L_{\tau}:\cC_{\sfrac{\ell}{2}}^{\diamond} \to \cS^{\diamond}
\]
respectively.
 It follows from Lemma \ref{lem:quotient bimodule in Lp} that the $\xs_0'$ coefficients of
\[
L_\sigma(\xs_0^{\sfrac{\ell}{2}}),L_\tau(\xs_0^{\sfrac{\ell}{2}})\in \cS^{\diamond}
\]
are both non-zero. The map $L_{\sigma}$ intertwines the $\gr_{\ws}$-grading on $\cC_{\sfrac{\ell}{2}}^{\diamond}$ with the $\gr_{\ws}$-grading on $\cS^{\diamond}$. The map $L_\tau$ has a more complicated interaction with the gradings, however it is homogeneously graded with respect to $\gr_{\ws}$ and $\gr_{\zs}$ in each fixed Alexander grading. See Equation~\eqref{eq:grading shifts of left action}, and \cite{CZZ}*{Section~5.4} for more details. In particular, the other components of $L_{\sigma}(\xs_0^{\sfrac{\ell}{2}})$ (resp. $L_{\tau}(\xs_0^{\sfrac{\ell}{2}})$) must have the same $\gr_{\ws}$-grading as the $\xs_0'$ component of $L_{\sigma}(\xs_0^{\sfrac{\ell}{2}})$ (resp. $L_{\tau}(\xs_0^{\sfrac{\ell}{2}})$). However, all the other generators of $\cS$ with algebraic grading 0, i.e., $\xs_{2i}'$ for $i>0$, have $\gr_{\ws}$-grading strictly smaller than $\xs_0'$. Therefore, these generators cannot appear as components of $L_{\sigma}(\xs_0^{\sfrac{\ell}{2}})$ or $L_{\tau}(\xs_0^{\sfrac{\ell}{2}})$.
This shows that the length-$1$ maps in $\bX^{\diamond}(P,K,n)_{\bF[Z]}$ in the $J^{\diamond}_{*,\frac{\ell-1}{2}}$ and $M^{\diamond}_{*,\frac{\ell-1}{2}}$ will send the top generators to the top generators, as we need. Therefore, there are no differentials leaving the part of $\mathfrak{C}(P,K,n)$ in the row of $J^{\diamond}_{*,\frac{\ell-1}{2}}$ and $M^{\diamond}_{*,\frac{\ell-1}{2}}$.

Consider the part of $\mathfrak{C}(P,K,n)$ in the row consisting of $E^{\diamond}_{*,\frac{\ell-1}{2}}$ and $F^{\diamond}_{*,\frac{\ell-1}{2}}$. Note that the corresponding statement about the subspace in the row of $E^{\diamond}_{*,\frac{\ell+1}{2}}$ and $F^{\diamond}_{*,\frac{\ell+1}{2}}$ could be obtained similarly. We consider the length-$0$ maps first, which are maps within each $E_{s,\frac{\ell-1}{2}}$ and $F_{s,\frac{\ell-1}{2}}$. Recall, by the description of the standard complex $C(b_1,\dots ,b_m)$, there is no differential leaving $\ys_0$ in $C(b_1,\dots ,b_m)$, so there will be no length-$0$ maps in $\bX^{\diamond}(P,K,n)_{\bF[Z]} $ starting at $\ys_0 | \ve{e}^{\free}_{s-\tau(K),\frac{\ell-1}{2}}$ in $E^{\diamond}_{s,\frac{\ell-1}{2}}$ or  $\ys_0| \ve{f}^{\free}_{s-\tau(K),\frac{\ell-1}{2}}$ in $F^{\diamond}_{s,\frac{\ell-1}{2}}$.

For length-$1$ maps, we need to consider both the horizontal maps $\Phi^{\pm \mu}$ and the vertical maps $\Phi^{\pm K}$ this time. For the horizontal maps, $\Phi^{\pm \mu}$, these are given as tensor products of the form $\bI\boxtimes f^{\pm \mu}$. The maps $f^{\pm \mu}$ are $AA$-module maps, but are only non-trivial on one input, i.e. $(f^{\pm \mu})_{i|1|j}=0$ unless $i=j=0$. In particular, the map $\Phi^\mu$ is given by $\bI\otimes L_\sigma$, and similarly for $\Phi^{-\mu}$. See Remark 9.2 in \cite{CZZ} for the details.

By the restriction on the $s$-coordinate in $\mathfrak{C}(P,K,n)_{\bF[Z]}$, we are dealing with 
\[L_{\sigma}: \scE_{s,\frac{\ell-1}{2}}^{\diamond} \to \scF_{s,\frac{\ell-1}{2}}^{\diamond} \text{ and  } L_{\tau}: \scE_{s,\frac{\ell-1}{2}}^{\diamond} \to \scF_{s-1,\frac{\ell-1}{2}}^{\diamond}\] for $s>0,$ which corresponds to 
\[L_{\sigma}:\cC_{\sfrac{\ell}{2}}^{\diamond}\to \cS^{\diamond} \text{ and  }L_{\tau}: \cC_{\sfrac{\ell}{2}}^{\diamond} \to \cS^{\diamond}.\]
By the same argument as above, we see that these horizontal length-$1$ maps between  $\scE^{\diamond}_{s,\frac{\ell-1}{2}}$ and $\scF^{\diamond}_{s,\frac{\ell-1}{2}}$ for $s>0$ will send the top generators to the top generators, as desired.

For the vertical length-$1$ maps, note that in the surgery module $\cX_n(b_1,\dots, b_m)^{\hat{\cK}}$, when $b_1$ is positive, we have 
\[\delta_1(\ys_0) = \ys' \otimes T^{\tau(K)}\sigma.\]   Again, by the restriction on the $s$-coordinate and the description of maps $f^{\pm K}$ in Figure \ref{fig:f-pm K}, we have
\begin{align*}
	&\Phi^{K}(\ys_0| \ve{e}^{\free}_{s-\tau(K),\frac{\ell-1}{2}} ) = \ys' | \ve{j}^{\free}_{s-\tau(K),\frac{\ell}{2}}\otimes 1 &\text{and } \Phi^{-K}(\ys_0| \ve{e}^{\free}_{s-\tau(K),\frac{\ell-1}{2}} )=0 \,\,&\text{ for }s>\tau(K),\\
	&\Phi^{K}(\ys_0| \ve{f}^{\free}_{s-\tau(K),\frac{\ell-1}{2}} ) = \ys'| \ve{m}^{\free}_{s-\tau(K),\frac{\ell-1}{2}}\otimes 1
	&\text{and }	\Phi^{-K}(\ys_0| \ve{f}^{\free}_{s-\tau(K),\frac{\ell-1}{2}} )=0 \,\, &\text{ for }s>\tau(K)-1,
\end{align*}
and these non-trivial arrows are already included in $\mathfrak{C}(P,K,n)_{\bF[Z]}$.

 The complex $\bX^{\diamond}(P,K,n)$ has no length 2 maps by Lemma~\ref{lem:no-diagonal-differential}, so the proof is complete.\end{proof}

Combining Lemmas~\ref{lem:quotient complex of X(P,K,n)} and ~\ref{lem:subcomplex of X(P(K,n))}, we conclude that $\mathfrak{C}(P,K,n)_{\bF[Z]}$ is a direct summand of $\bX^{\diamond}(P,K,n)_{\bF[Z]}$ as desired.

\subsection{Formulas for $\tau(P(K,n))$ when $\veps(K)=1$}

In this section, we compute $\tau(P(K,n))$ when $P$ is an L-space satellite operator and $\veps(K)=1$.

\begin{thm}
		Suppose $P$ is an L-space satellite operator, $K$ is a companion knot with $\veps(K)=1$. Let $\ell\ge0$ be the linking number $\ell =\lk(\mu,P)$ of the two components of $L_P$ (which we can assume is nonnegative by choosing the orientation of $P$ appropriately). Then, we have the following formula for $\tau(P(K,n))$.
	\[\tau(P(K,n)) =
	\begin{cases}
		 R_{\sfrac{\ell}{2}}-\frac{\ell}{2} +\frac{(\ell-1)\ell}{2}n + \ell\tau(K), & \text{if $ n<2\tau(K),$}\\
		g_3(P)+\frac{(\ell-1)\ell}{2}n + \ell\tau(K), & \text{if $n\ge 2\tau(K)$. }
	\end{cases}       \]

	In the above, $g_3(P)$ is the Seifert genus of the pattern $P$ viewed as a knot in $S^3$, and $ R_{\sfrac{\ell}{2}}$ is the second Alexander grading in $\bH(L_P)$ of the top generator $\xs^{\sfrac{\ell}{2}}_0$ in the staircase $\cC_{\frac{\ell}{2}}$. 
	\label{them:tau for epsilon=1}
\end{thm}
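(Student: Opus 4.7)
The plan is to read off $\tau(P(K,n))$ from the generator of the $\bF[Z]$-tower in the homology of $\frC(P,K,n)_{\bF[Z]}$. Since $\bX^{\diamond}(P,K,n)_{\bF[Z]}$ is homotopy equivalent to $\cCFK_{\bF[Z]}(P(K,n))$ as a right $\bF[Z]$-module, and $\frC(P,K,n)_{\bF[Z]}$ is a direct summand by Proposition~\ref{prop:summand-veps=1}, the first step is to show that the complementary summand is $Z$-torsion, so that the entire $\bF[Z]$-tower lives inside $\frC(P,K,n)_{\bF[Z]}$. This should follow from the observation that all generators of $\bX^{\diamond}(P,K,n)_{\bF[Z]}$ outside $\frC(P,K,n)_{\bF[Z]}$ either arise from ${}_{\cK}\cX^{\tors}(L_P)_{\bF[Z]}$ (cf.\ Lemma~\ref{lem:X-tors-submodule}) or from the acyclic-after-inverting-$W$-or-$Z$ summand $A$ of Theorem~\ref{thm:standard-cx-over-K}.

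The main computation is then to simplify $\frC(P,K,n)_{\bF[Z]}$ by canceling all of the identity arrows $\bI$ depicted in Figures~\ref{fig:direct summand n less than 2tau} and \ref{fig:direct summand n greater than 2tau}. Each such arrow pairs a top-row generator (with companion factor $\ys_0$) or a bottom-row generator (with factor $\ys_m$) with a middle-row generator (with factor $\ys'$). After cancellation the complex reduces to a short zigzag connected by the maps $L_\sigma$ and $L_\tau$, which act as multiplication by the nonzero powers $Z^{R_{\sfrac{\ell}{2}}\pm \ell/2 - g_3(P)}$ per Lemma~\ref{lem:quotient bimodule in Lp}.

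For $n<2\tau(K)$ the top-row and bottom-row pairings do not overlap inside the middle row, leaving a nonempty finite strand whose leftmost vertex supplies the tower. Computing its Alexander grading using the grading shifts in Equation~\eqref{eq:grading shifts of left action}, together with the positions of the generators in the $(s,t)$-decomposition of $\bX^{\diamond}(P,K,n)$, produces the formula $R_{\sfrac{\ell}{2}}-\tfrac{\ell}{2}+\ell\tau(K)+\tfrac{(\ell-1)\ell}{2}n$. For $n\ge 2\tau(K)$ the two pairings overlap throughout the middle row, so every middle-row generator is cancelled; the surviving tower generator is instead one of the pattern vertices $\ys_0|\xs'_0$ (or $\ys_m|\xs'_0$) whose Alexander grading involves $g_3(P)=A(\xs'_0)$ rather than $R_{\sfrac{\ell}{2}}$, yielding $g_3(P)+\ell\tau(K)+\tfrac{(\ell-1)\ell}{2}n$.

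The principal obstacle will be a careful bookkeeping of Alexander gradings through the sequence of cancellations, converting between the two-variable Alexander grading $(s,t)$ on the connected-sum link $H\# L_P$ and the single Alexander grading on $\cCFK(P(K,n))$. A secondary subtlety is verifying that no cancellation-induced homotopies create new differentials in or out of the tower generator; this should reduce to an inspection of the $\gr_{\ws}$ and $\gr_{\zs}$ gradings, using the fact that all candidate new arrows would either violate monotonicity of the $H$-function (Lemma~\ref{lem:shape of top generators}) or the grading formulas \eqref{eq:grading shifts of left action}.
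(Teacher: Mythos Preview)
Your overall strategy matches the paper's: reduce to the direct summand $\frC(P,K,n)$, cancel the identity arrows to obtain a finite zigzag, and read off $\tau$ from the Alexander grading of the $\bF[Z]$-tower generator. The case split at $n=2\tau(K)$ and the resulting formulas are also correct.

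There is one genuine error. Your justification that the complementary summand is $Z$-torsion is wrong: it is \emph{not} true that every generator of $\bX^{\diamond}(P,K,n)$ outside $\frC(P,K,n)$ comes from $\cX^{\tors}(L_P)$ or from the acyclic summand $A$. The complement contains many free generators coming from the standard complex $C(b_1,\dots,b_m)$, for example $\ys_i|\xs_0^{t}$ for $0<i<m$, or $\ys_0|\xs_0^{t}$ at values of $(s,t)$ not appearing in Definition~\ref{def:direct summand when epsilon=1}. The correct (and easier) argument is the one the paper uses implicitly: once you check that $H_*(\frC(P,K,n))$ contains an infinite $\bF[Z]$-tower, you are done, because $\cHFK_{\bF[Z]}(P(K,n))\cong \bF[Z]\oplus \Tors$ has exactly one such tower, and $\frC$ is a direct summand.

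Two smaller points. First, the tower generator in the $n<2\tau(K)$ case is not a single ``leftmost vertex'' but a $Z$-weighted sum of all the source generators in the zigzag; the paper computes its Alexander grading from the summand $\ys_0|\ve{f}^{\free}_{-\frac12,\frac{\ell-1}{2}}\otimes Z^{R_{\ell/2}-\ell/2-g_3(P)}$. Second, Equation~\eqref{eq:grading shifts of left action} alone is not enough for the grading computation: you need the grading-shift formula relating the Alexander grading on $P(K,n)$ to the $(s,t)$-coordinates on the hypercube, namely Equation~(9.5) of \cite{CZZ}*{Section~9.5} (via the $\sigma$-normalized Alexander grading $A^\sigma$). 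Your worry about cancellation-induced homotopies is unnecessary here, since the cancelled arrows are genuine identities in a bipartite zigzag.
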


\begin{proof}
	We divide into two cases depending on whether $n<2\tau(K)$ or not.

	When $n<2\tau(K)$, we can truncate $\mathfrak{C}(P,K,n)_{\bF[Z]}$ as drawn in Figure \ref{fig:direct summand n less than 2tau}, such that it is homotopy equivalent to the following complex:
	\[
 \begin{tikzcd}[row sep=10mm]
 	\phantom{F}&\phantom{\cdots}& & & \ys_0|\xs'_0\arrow[d,"\bI"] \\
 	\ys'|\xs'_0 & \ys'|\xs^{\sfrac{\ell}{2}}_0\arrow[l,swap,"L_{\tau}"] \arrow[r,"L_{\sigma}"]& \cdots& \ys'|\xs^{\sfrac{\ell}{2}}_0\arrow[l,swap,"L_{\tau}"] \arrow[r,"L_{\sigma}"]&  \ys'|\xs'_0 \\
 	\ys_m|\xs'_0\arrow[u,"\bI"] & & & &
 \end{tikzcd},\] where the top right element $ \ys_0|\xs'_0=\ys_0| \ve{f}^{\free}_{-\frac{1}{2},\frac{\ell-1}{2}} $ lies in $F^{\free}_{\tau(K)-\frac{1}{2},\frac{l-1}{2}}$, and the bottom left element $\ys_m|\xs'_0 =\ys_m| \ve{f}^{\free}_{-\frac{1}{2},\frac{\ell+1}{2}}  $ lies in $F^{\free}_{-\tau(K)-\frac{1}{2},\frac{l+1}{2}}$.
	Recall from Lemma~\ref{lem:quotient bimodule in Lp} that the horizontal maps $L_{\sigma}$ and $L_{\tau}$ are 
	\[L_{\sigma}(\ys'|\xs^{\sfrac{\ell}{2}}_0) = \ys'|\xs'_0\otimes Z^{ R_{\sfrac{\ell}{2}}-\frac{\ell}{2}-g_3(P)},\,\,\,L_{\tau}(\ys'|\xs^{\sfrac{\ell}{2}}_0)= \ys'|\xs'_0\otimes Z^{ R_{\sfrac{\ell}{2}}+\frac{\ell}{2}-g_3(P)} \] respectively, so the homology of $\mathfrak{C}(P,K,n)^{\bF[Z]}$ contains an infinite $Z$-tower, which is generated by 
	\[
	\begin{split}
		 &\ys_0| \ve{f}^{\free}_{-\frac{1}{2},\frac{\ell-1}{2}} \otimes Z^{ R_{\sfrac{\ell}{2}}-\frac{\ell}{2}-g_3(P)}\\
		 +& \sum_{j=0}^{2\tau(K)-1-n}\ys'|\ve{j}^{\free}_{\tau(K)-\frac{1}{2}-j,\frac{\ell-1}{2}}\otimes Z^{j\ell}\\
		 +&\ys_m|\ve{f}^{\free}_{-\frac{1}{2},\frac{\ell+1}{2}}\otimes Z^{ R_{\sfrac{\ell}{2}}+\frac{\ell}{2}-g_3(P) + \ell(2\tau(K)-1-n)}.
	\end{split}
	\]

	Recall we have assumed that $\ell \ge0$ by picking the appropriate orientation on the pattern $P$. The $\tau$ invariant of $P(K,n)$ is given by the Alexander grading of the the generator of the $Z$-tower, which can be computed from the grading shift formula Equation~$(9.5)$ in \cite{CZZ}*{Section~9.5}.  The Alexander grading of
	\[\ys_0|\ve{f}^{\free}_{-\frac{1}{2},\frac{\ell-1}{2}} \otimes Z^{ R_{\sfrac{\ell}{2}}-\frac{\ell}{2}-g_3(P)}  \in F^{\free}_{\tau(K)-\frac{1}{2},\frac{\ell-1}{2}} \] is given by 
	\begin{align*}
	\tau(P(K,n))=	&A\big(\ys_0|\ve{f}^{\free}_{-\frac{1}{2},\frac{\ell-1}{2}} \otimes Z^{ R_{\sfrac{\ell}{2}}-\frac{\ell}{2}-g_3(P)}\big)\\
		=& A\big(\ys_0|\ve{f}^{\free}_{-\frac{1}{2},\frac{\ell-1}{2}}\big) + R_{\sfrac{\ell}{2}}-\frac{\ell}{2}-g_3(P) \\
		=&  A^{\sigma}(\ve{f}^{\free}_{-\frac{1}{2},\frac{\ell-1}{2}})+ \left(\tau(K)-\frac{1}{2}+\frac{\ell-1}{2}n\right)\ell  + R_{\sfrac{\ell}{2}}-\frac{\ell}{2}-g_3(P)\\
		=& A(\xs'_0)+ \frac{\ell}{2} +\left(\frac{n(\ell-1)-1}{2} + \tau(K)\right)\ell+ R_{\sfrac{\ell}{2}}-\frac{\ell}{2}-g_3(P)\\
		=&g_3(P)+\frac{(\ell-1)\ell}{2}n + \ell\tau(K)+ R_{\sfrac{\ell}{2}}-\frac{\ell}{2}-g_3(P).\\
		=&  R_{\sfrac{\ell}{2}}-\frac{\ell}{2}+\frac{(\ell-1)\ell}{2}n + \ell\tau(K),
	\end{align*}
when $n<2\tau(K)$, as desired.  Here, $A^{\sigma}$ is the \textit{$\sigma$-normalized Alexander grading} defined in \cite{CZZ}*{Section~4.4}, which satisfies $A^{\sigma}(\ve{f}^{\free}_{-\frac{1}{2},\frac{\ell-1}{2}})=A(\xs_0')+\frac{\ell}{2}$ in this case.

When $n\ge 2\tau(K)$, $\mathfrak{C}(P,K,n)_{\bF[Z]}$ is shown in Figure \ref{fig:direct summand n greater than 2tau}. We can truncate the complex to see that it is homotopy equivalent to the following:
\[\begin{tikzcd}		\ys_0|\xs'_0 & \ys_0|\xs^{\sfrac{\ell}{2}}_0  \arrow[l,swap,"L_{\tau}"] \arrow[r,"L_{\sigma}"]& \cdots   & \ys_0|\xs^{\sfrac{\ell}{2}}_0  \arrow[l,swap, "L_{\tau}"] \arrow[r,"L_{\sigma}"]& \ys_0|\xs'_0
\end{tikzcd},\]
where the leftmost copy $\ys_0|\xs'_0=\ys_0|\ve{f}^{\free}_{-\frac{1}{2},\frac{\ell-1}{2}} $ lies in $F^{\free}_{\tau(K)-\frac{1}{2},\frac{\ell-1}{2}}$.
Again,the maps $L_{\sigma}$ and $L_{\tau}$ are the same as before: 
\[L_{\sigma}(\ys_0|\xs^{\sfrac{\ell}{2}}_0) = \ys_0|\xs'_0\otimes Z^{ R_{\sfrac{\ell}{2}}-\frac{\ell}{2}-g_3(P)},\,\,\,L_{\tau}(\ys_0|\xs^{\sfrac{\ell}{2}}_0)= \ys_0|\xs'_0\otimes Z^{ R_{\sfrac{\ell}{2}}+\frac{\ell}{2}-g_3(P)}. \]
This time, the generator of an infinite $Z$-tower is given by the leftmost copy $\ys_0|\xs'_0=\ys_0|\ve{f}^{\free}_{-\frac{1}{2},\frac{\ell-1}{2}}  \in F^{\free}_{\tau(K)-\frac{1}{2},\frac{l-1}{2}}$ so 
	\begin{align*}
	\tau(P(K,n))=	&A\big(\ys_0|\ve{f}^{\free}_{-\frac{1}{2},\frac{\ell-1}{2}}\big)\\
	=& g_3(P)+\frac{(\ell-1)\ell}{2}n + \ell\tau(K),
\end{align*}
when $n\ge2\tau(K)$, as desired.
\end{proof}

\subsection{Formulas for $\tau(P(K,n))$ when $\veps(K)=0$}

By \cite{DHSTmore}*{Remark~3.16,Theorem~6.1,Corollary~6.2},  when $\veps(K)=0$, we have 
\[
\cCFK(K)^{\hat{R}}\simeq \langle \ys_0 \rangle\oplus A,
\]
where $\langle \ys_0 \rangle$ is the trivial type-$D$ module over $\hat{R}$ with a single generator $\ys_0$. Therefore, $P(K,n)$ is locally equivalent to $P(U,n)$, where $U$ is the unknot. Hence, we have
\[\tau(P(K,n)) = \tau(P(U,n)).\]
Thus, when $\veps(K)=0$, it suffices to consider the case when $K$ is the unknot, and compute $\tau(P(U,n))$. 

The arguments in Lemma \ref{lem:quotient complex of X(P,K,n)} and Lemma \ref{lem:subcomplex of X(P(K,n))} do not apply in this case because $\delta^1(\ys_0)$ in $\cX_n(U)^{\hat{\cK}}$ contains both $\sigma$-weighted and $\tau$-weighted arrows.  However, the complex $\bX(P,U,n)^{\bF[W,Z]}$ is not especially complicated, and we can compute $\tau(P(U,n))$ directly.

\begin{lem}[\cite{CZZ}*{Section~11.1}]
\label{lem:satellites-unknot}
 Let $U$ be the unknot, and $P$ be an L-space satellite operator. Then, $\cCFK(P(U,n))$ is homotopy equivalent to one of the following, depending on the sign of $n$.
\begin{enumerate}
	\item When $n=0$, $P(U,0) = P$, viewed as a knot in $S^3$, so $\cCFK(P(U,0)) = \cCFK(P).$
	\item When $n>0$, $\cCFK(P(U,n))$ is homotopy equivalent to the following complex:
				\[\begin{tikzcd}[labels={description}, column sep={.7cm,between origins}]
					 \, 
					 & \cC_{-N+1}
					 	\arrow[dl,"L_{\tau}"]
					 	\arrow[dr,"L_{\sigma}"]
					 	\ar[rrrr,start anchor=north west,end anchor=north east, decorate,decoration={calligraphic brace,amplitude=7pt}, no head,"n"{yshift=11pt},swap]  
					 &
					 \,&
					  \cdots \arrow[dl] \arrow[dr]
					  &\,&
					   \cC_{-N+1}
					   	\arrow[dl,"L_{\tau}"]
					   	\arrow[dr,"L_{\sigma}"] &\,& 
					   	\cC_{-N+2}
					   	\ar[dl, "L_\tau"]
					   	\ar[dr, "L_\sigma"]
					   	 &\,& 
					   	 \cdots
					   	 \ar[dl]
					   	 \ar[dr]
					   	 &\,&
					   	 \cC_{N-2}
					   	 \ar[dl, "L_\tau"]
					   	 \ar[dr, "L_\sigma"]
					   	 &\,&
					   	\cC_{N-1}\ar[rrrr,start anchor=north west,end anchor=north east, decorate,decoration={calligraphic brace,amplitude=7pt}, no head,"n"{yshift=11pt},swap]  
					   	 \ar[dl, "L_\tau"] \ar[dr, "L_\sigma"]&& \cdots \ar[dl] \ar[dr] && 
					   	\cC_{N-1} \ar[dl, "L_\tau"] \ar[dr, "L_\sigma"]&\,		\\
				\cS&& \cS&\cdots& \cS&& \cS && \cS & \cdots& \cS&& \cS && \cS &\cdots& \cS&& \cS
				\end{tikzcd}\]
	where $\cC_t$ and $\cS$ are the staircases appearing in the complex ${}_{\cK} \cX(L_P)^{\bF[W,Z]}$. There are $n$ copies of each $\cC_t$ for $-N<t<N$, where $N=N_{L_P}$ is the width of the pattern $P$.	
	\item When $n<0$,  $\cCFK(P(U,n))$ is homotopy equivalent to the following complex:
		\[\begin{tikzcd}[labels={description}, column sep={.8cm,between origins}]
			  C_{-N+1}
			 	\arrow[dr,"L_{\tau}"]
				\ar[rrrr,start anchor=north west,end anchor=north east, decorate,decoration={calligraphic brace,amplitude=7pt}, no head,"|n|"{yshift=13pt},swap]   
			 &&
			  \cdots \arrow[dl] \arrow[dr]
			  &&
			   \cC_{-N+1}
			   	\arrow[dl,"L_{\sigma}"]
			   	\arrow[dr,"L_{\tau}"] && 
			   	\cC_{-N+2}
			   	\ar[dl, "L_\sigma"]
			   	\ar[dr, "L_\tau"]
			   	 && \cdots \ar[dl] \ar[dr] &&
			   	\cC_{N-1}\ar[rrrr,start anchor=north west,end anchor=north east, decorate,decoration={calligraphic brace,amplitude=7pt}, no head,"|n|"{yshift=13pt},swap]   \ar[dl, "L_\sigma"] \ar[dr, "L_\tau"]&& \cdots \ar[dl] \ar[dr] && 
			   	\cC_{N-1} \ar[dl, "L_\sigma"]		\\
		& \cS&\cdots& \cS&& \cS && \cS & \cdots& \cS&& \cS &\cdots& \cS &
		\end{tikzcd}\]
		where again we have $|n|$ copies of each $\cC_t$ for $-N<t<N$.	
\end{enumerate}
\end{lem}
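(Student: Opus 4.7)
The plan is to analyze the three cases in turn. The case $n=0$ is immediate: by definition of the satellite operation with zero framing, $P(U,0)$ is simply the pattern $P$ realized in $S^3$ via the standard embedding of $S^1\times D^2$ as a neighborhood of the unknot, and hence $\cCFK(P(U,0))=\cCFK(P)$. For $n\neq 0$, the approach is to apply the gluing formula from Equation~\eqref{eq:intro-description-satellite}, namely
\[
\cCFK(P(U,n))^{\bF[W,Z]}\simeq \cX_n(U)^{\cK}\boxtimes {}_{\cK}\cH_-^{\cK}\boxtimes {}_{\cK}\cX(L_P)^{\bF[W,Z]},
\]
and exploit the fact that $\cX_n(U)^{\cK}$ is especially simple since $\cCFK(U)^{\hat R}$ is the trivial standard complex (i.e.\ $m=0$ in Theorem~\ref{thm:standard-cx-over-K}).

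First I would unpack $\cX_n(U)^{\hat\cK}$: it has exactly two generators, $\ys_0$ in idempotent $0$ and $\ys'$ in idempotent $1$, with $\delta^1(\ys_0)=\ys'\otimes \sigma+\ys'\otimes T^n\tau$ and $\delta^1(\ys')=0$. The full $\cK$-version is equally simple since $\cCFK(U)$ carries no $U$-torsion. Next I would tensor on the right with ${}_{\cK}\cH_-^{\cK}\boxtimes {}_{\cK}\cX(L_P)^{\bF[W,Z]}$, whose hypercube decomposition (Equation~\eqref{eq:bimodule-X-P-n-cube}) is built out of the staircases $\cC_t$ and $\cS$ indexed by Alexander gradings in $\bH(P)$. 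The resulting total complex has its idempotent-$0$ part assembled from copies of $\cC_t$ joined to idempotent-$1$ copies of $\cS$ via $L_\sigma$ and $L_\tau$ edges, these being precisely the two components of $\delta^1(\ys_0)$; the factor $T^n$ controls the relative $t$-shift at which the $\tau$-edges land compared with the $\sigma$-edges.

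The main simplification step is to invoke the defining property of the width $N=N_{L_P}$ from Definition~\ref{def:width N}: for $t\ge N$ the map $L_\sigma\colon\cC_t\to\cS$ is a homotopy equivalence, and for $t\le -N$ the map $L_\tau\colon\cC_t\to\cS$ is too. Standard cancellation removes each such paired $(\cC_t,\cS)$, leaving exactly $n$ copies of each $\cC_t$ for $-N<t<N$ together with the surviving $\cS$ vertices they remain connected to. The sign of $n$ determines which of the two layers survives: for $n>0$ the $\sigma$- and $\tau$-edges combine to produce the full two-row chevron pattern depicted in the statement, whereas for $n<0$ the relative $T^n$ shift collapses one of the two layers, yielding the shorter staircase pattern.

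The main obstacle I anticipate is the combinatorial bookkeeping: accurately tracking which bigrading in $\bH(P)$ indexes each surviving copy of $\cC_t$ and $\cS$, confirming that exactly $|n|$ copies of each $\cC_t$ survive for $|t|<N$, and verifying that the remaining boundary maps $L_\sigma$, $L_\tau$ connect neighboring vertices in precisely the chevron or staircase pattern shown. Since this identification is carried out explicitly in \cite{CZZ}*{Section~11.1}, I would recap the cancellation argument at the level of the hypercube and otherwise refer the reader there for the detailed indexing.
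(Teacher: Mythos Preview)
Your overall strategy is reasonable, but it diverges from the route actually taken in \cite{CZZ}*{Section~11.1}, and the paper flags exactly this point. Immediately after stating the lemma, the authors write: ``We warn the reader that the above complexes are not merely a truncation of $\cX_n(U)^{\cK}\boxtimes{}_{\cK} \cH_-^{\cK}\boxtimes {}_{\cK} \cX(L_P)^{\bF[W,Z]}$. Instead they are only homotopy equivalent. (In fact, they are more naturally a truncation of the tensor product $\cD_{\pm 1/n}^{\cK}\boxtimes {}_{\cK} \cX(L_P)^{\bF[W,Z]}$ where $\cD_{\pm 1/n}^{\cK}$ is the $\pm 1/n$ framed solid torus module\ldots).'' So the argument in \cite{CZZ} does \emph{not} start from $\cX_n(U)\boxtimes\cH_-\boxtimes\cX(L_P)$ and cancel; it replaces $\cX_n(U)\boxtimes\cH_-$ by the rational solid torus module $\cD_{\pm 1/n}$, whose tensor product with $\cX(L_P)$ already has the single-layer zigzag shape, and then truncates using the width $N$.

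Your route via $\cH_-$ can be made to work, but it is more than the ``standard cancellation'' you describe. The tensor product $\cX_n(U)\boxtimes\cH_-\boxtimes\cX(L_P)$ is a genuine two-dimensional hypercube with corners $\bE,\bF,\bJ,\bM$ (as in Equation~\eqref{eq:X-P-K-n-top-cube}), not a one-row object; collapsing it to the zigzag in the lemma requires first canceling along the $\Phi^{\pm K}$ direction (using that $\cX_n(U)$ has $\ys_0\mapsto \ys'\otimes(\sigma+T^n\tau)$), and only then performing the width-$N$ truncation via $L_\sigma,L_\tau$. Your sketch conflates these two stages and is vague about why the sign of $n$ produces qualitatively different pictures (it is not that ``one of the two layers collapses'' for $n<0$; rather, the relative orientation of the $L_\sigma$ and $L_\tau$ arrows changes, giving a mapping-cone-style complex instead of a chevron). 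If you pursue your approach, you should make these two reduction steps explicit; otherwise, the cleaner path is to cite the $\cD_{\pm 1/n}$ computation as the paper does.
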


For simplicity, when $K=U$ is an unknot, we write $\bX(P,U,n)^{\bF[W,Z]}$ for the complexes described in Lemma~\ref{lem:satellites-unknot}. We can, of course, consider versions $\bX^{\diamond}(P,U,n)_{\bF[Z]}$ and $\bX^{\free}(P,U,n)_{\bF[Z]}$ as before. We warn the reader that the above complexes are not merely a truncation of $\cX_n(U)^{\cK}\boxtimes{}_{\cK} \cH_-^{\cK}\boxtimes {}_{\cK} \cX(L_P)^{\bF[W,Z]}$. Instead they are only homotopy equivalent. (In fact, they are more naturally a truncation of the tensor product $\cD_{\pm 1/n}^{\cK}\boxtimes {}_{\cK} \cX(L_P)^{\bF[W,Z]}$ where $\cD_{\pm 1/n}^{\cK}$ is the $\pm 1/n$ framed solid torus module described in \cite{ZemBordered}*{Section~18.2}).

As before, we will identify a direct summand of $\bX^\diamond(P,U,n)_{\bF[Z]}$ that supports the infinite $Z$-tower in homology. This direct summand again consists of free generators of $\cC^{\diamond}_{t}$ and $\cS^{\diamond}$. When $n>0$, this works for all L-space satellite operators $P$. When $n<0$, we impose additional restrictions on $P$ for the argument to work. This leads to the extra assumption stated in Equation~\eqref{eq:extra condition when epsilon=0} in Theorem \ref{thm: tau}.

\begin{define}
	\label{def:direct summand when epsilon=0}
	\begin{enumerate}
	\item When $n>0$, we define $\mathfrak{C}'(P,U,n)_{\bF[Z]}\subset \bX^{\diamond}(P,U,n)_{\bF[Z]}$ to be the free chain complex over $\bF[Z]$ spanned by $n$-copies of $\xs_0^{\sfrac{\ell}{2}}$ (one in each copy of $\cC^{\diamond}_{\sfrac{\ell}{2}}$), and $n+1$ copies of $\xs_0'$ (one in each adjacent copy of $\cS^{\diamond}$).
	\item When $n<0$, we define 
	\[
	\mathfrak{C}''(P,U,n)_{\bF[Z]}\subset \bX^{\diamond}(P,U,n)_{\bF[Z]}
	\]
	to be the free chain complex over $\bF[Z]$ spanned by $n$ copies of $\xs_{0}^{\sfrac{\ell}{2}}$ (one in each copy of $\cC_{\sfrac{\ell}{2}}^\diamond$), $n+1$ copies of $\xs_0'$ (one in each adjacent copy of $\cS^{\diamond}$), as well as one copy of the generators $\xs_0^{\sfrac{\ell}{2}-1}$ and $\xs_0^{\sfrac{\ell}{2}+1}$ in the neighboring copies of $\cC^{\diamond}_{\sfrac{\ell}{2}-1}$ and $\cC^{\diamond}_{\sfrac{\ell}{2}+1}$. 
	\end{enumerate}
These complexes are shown in Figure~\ref{fig:direct summand when epsilon=0}.
	\end{define}

\begin{rem} In the cases when the width $N$ of $L_P$ is $\ell/2$ or $\ell/2+1$, we need to reinterpret the complexes $\frC'$ and $\frC''$. In order to avoid considering these cases separately, in these cases we replace $N$ with $N+2$ in the complexes in Lemma~\ref{lem:satellites-unknot} (noting that the results are homotopy equivalent). 
\end{rem}

	\begin{figure}[!h]
\[
			\begin{tikzcd}[labels={description},column sep={1cm,between origins}]
&&& \xs^{\sfrac{\ell}{2}}_0 
	\arrow[dl,"L_{\tau}"]
	\arrow[dr,"L_{\sigma}"]
	\ar[rrrr,start anchor=north west,end anchor=north east, decorate,decoration={calligraphic brace,amplitude=7pt}, no head,"n"{yshift=11pt},swap]  
& &
\cdots
	\arrow[dl,"L_{\tau}"]
	\arrow[dr,"L_{\sigma}"]
& &
\xs^{\sfrac{\ell}{2}}_0
	\arrow[dl,"L_{\tau}"]
	\arrow[dr,"L_{\sigma}"]
&&&\,\\
&&\xs'_0& & \xs'_0& & \xs'_0 & & \xs'_0
\\[-1.4cm]
&&&&&\rotatebox{-90}{$\subset$}&&&&&& \boxed{n> 0}
\\[-.8cm]
\cdots&\cC^{\diamond}_{\sfrac{\ell}{2}-1}
	\arrow[dr,"L_{\sigma}"] 
	\ar[dl, "L_\tau"]
	&&
\cC^{\diamond}_{\sfrac{\ell}{2}}
	\arrow[dl,"L_{\tau}"]
	\arrow[dr,"L_{\sigma}"]
	&&
\cdots
	\arrow[dl,"L_{\tau}"]
	\arrow[dr,"L_{\sigma}"]
&&
\cC^{\diamond}_{\sfrac{\ell}{2}}
	\arrow[dl,"L_{\tau}"]
	\arrow[dr,"L_{\sigma}"]
& & \cC^{\diamond}_{\sfrac{\ell}{2}+1} \arrow[dl,"L_{\tau}"] \ar[dr, "L_\sigma"]&\cdots \\
\cdots&&\cS^{\diamond}& & \cS^{\diamond}& & \cS^{\diamond} & & \cS^{\diamond}&&\cdots
			\end{tikzcd}
			\]
			\vspace{.8cm}
			\[
			\begin{tikzcd}[labels={description}, column sep={1cm,between origins}]
	&\xs^{\sfrac{\ell}{2}-1}_0
		\arrow[dr,"L_{\tau}"]
		&&
	\xs^{\sfrac{\ell}{2}}_0
		\arrow[dl,"L_{\sigma}"]
		\arrow[dr,"L_{\tau}"]
		\ar[rrrr,start anchor=north west,end anchor=north east, decorate,decoration={calligraphic brace,amplitude=7pt}, no head,"n"{yshift=11pt},swap]  
		&&
	\cdots
		\arrow[dl,"L_{\sigma}"]
	\arrow[dr,"L_{\tau}"]
	&&
	\xs^{\sfrac{\ell}{2}}_0
		\arrow[dl,"L_{\sigma}"]
		\arrow[dr,"L_{\tau}"]
	&&
	\xs^{\sfrac{\ell}{2}+1}_0
		\arrow[dl, "L_{\sigma}"]
&&
	\\
	&&\xs'_0
	&&
	\xs'_0
	&&
	\xs'_0
	&&
	\xs'_0
	&
\\[-1.4cm]
&&&&&\rotatebox{-90}{$\subset$}&&&&&& \boxed{n< 0}
\\[-.8cm]
\cdots&\cC^{\diamond}_{\sfrac{\ell}{2}-1}
	 \arrow[dl,"L_{\sigma}"]
	 \arrow[dr,"L_{\tau}"]
&&
\cC^{\diamond}_{\sfrac{\ell}{2}}
	\arrow[dl,"L_{\sigma}"]
	\arrow[dr,"L_{\tau}"]
&&\cdots
	\arrow[dl,"L_{\sigma}"]
	\arrow[dr,"L_{\tau}"]
&&
\cC^{\diamond}_{\sfrac{\ell}{2}}
	\arrow[dl,"L_{\sigma}"]
	\arrow[dr,"L_{\tau}"]
&&
\cC^{\diamond}_{\sfrac{\ell}{2}+1}
	\arrow[dl,"L_{\sigma}"]
	\arrow[dr,"L_{\tau}"]
&\cdots
\\
	\cdots& &\cS^{\diamond}& & \cS^{\diamond}& & \cS^{\diamond} & & \cS^{\diamond}& & \cdots&
			\end{tikzcd}
\]
	\caption{The top two rows illustrate $\mathfrak{C}'(P,U,n)_{\bF[Z]}\subset \cCFK(P(U,n))_{\bF[Z]}$ and when $n>0$, and the bottom two rows illustrate $\mathfrak{C}''(P,U,n)_{\bF[Z]}\subset \cCFK(P(U,n))_{\bF[Z]}$ when $n<0$.}
	\label{fig:direct summand when epsilon=0}
	\end{figure}

\begin{lem}
	\label{lem:restriction on P}
	
	Suppose that $P$ is an L-space satellite operator. Orient $P$ so the linking number $\ell$ of the corresponding two-component link $L_P = \mu \cup P$ is nonnegative. Suppose further that
	\[
	 R_{\sfrac{\ell}{2}-1}\ge g_3(P)+\frac{\ell}{2}-1.
	\]
	Then the actions of ${}_{\cK} \cX^{\diamond}(L_P)_{\bF[Z]}$ satisfy 
	\[
	L_{\sigma}(\xs_0^{\ell/2-1})=0,\quad L_{\tau}(\xs_0^{\ell/2+1})=0.
	\]
\end{lem}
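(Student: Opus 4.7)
The plan is to unpack the definition of the action $L_\sigma$ on $\cX^\diamond$ using Lemma~\ref{lem:X-min-non-trivial-actions}, and reduce the statement to a bigrading computation combined with the $Z$-torsion structure of $\cS^\diamond$. That lemma tells us
\[
L_\sigma(\xs_0^{\ell/2-1})=\Pi\bigl(\delta_2^1(\sigma,\xs_0^{\ell/2-1})|_{W=0}\bigr),
\]
with no higher correction terms, where $\Pi\colon \cS\otimes\bF[Z]\to \cS^\diamond$ is the projection. Writing $\delta_2^1(\sigma,\xs_0^{\ell/2-1})=\sum_j \xs'_j\otimes W^{p_j}Z^{q_j}$ in $\cX(L_P)^{\bF[W,Z]}$, it suffices to show that each summand either has $p_j\ge 1$ (so vanishes after $W\mapsto 0$) or has $\Pi(\xs'_j\otimes Z^{q_j})=0$ in $\cS^\diamond$.

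First I will carry out a bigrading computation. Since $\mu$ is unknotted, $H_L(\ell/2-1, R_{\ell/2-1})=H_\mu(-1)=1$, giving $(\gr_{\ws},\gr_{\zs})(\xs_0^{\ell/2-1})=(-2,-\ell-2R_{\ell/2-1})$. Combined with the grading shift $(0,2\ell-2)$ of $L_\sigma$ at $t=\ell/2-1$ from Equation~\eqref{eq:grading shifts of left action}, the output bigrading must be $(-2,\ell-2-2R_{\ell/2-1})$. Preservation of algebraic grading under $\delta_2^1$ forces $\xs'_j=\xs'_{2i}$ for some $i\ge 0$. Writing the staircase differential on $\cS$ as $\partial(\xs'_{2k+1})=\xs'_{2k}W^{\alpha_k}+\xs'_{2k+2}Z^{\beta_k}$, we have $\gr_{\ws}(\xs'_{2i})=-2(\alpha_0+\cdots+\alpha_{i-1})$. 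Setting $p_j=0$, the $\gr_{\ws}$ equation reads $\sum_{k<i}\alpha_k=1$; since each $\alpha_k\ge 1$, this has the unique solution $i=1$, $\alpha_0=1$. The case $i=0$ is excluded because $\gr_{\ws}(\xs'_0)=0\ne-2$ forces $p_0\ge 1$.

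In the remaining case $i=1$, $\alpha_0=1$, using $\gr_{\zs}(\xs'_2)=-2(g_3(P)-\beta_0)$ (which follows from $g_3(P)=\sum_k\beta_k$), the $\gr_{\zs}$ equation yields $q_j=R_{\ell/2-1}-g_3(P)+\beta_0-\ell/2+1$. Because $\xs'_2$ generates the torsion summand $\bF[Z]/Z^{\beta_0}$ of $\cS^\diamond$, the projection $\Pi(\xs'_2\otimes Z^{q_j})=0$ precisely when $q_j\ge\beta_0$, which rearranges to $R_{\ell/2-1}\ge g_3(P)+\ell/2-1$ --- exactly the hypothesis. This will conclude $L_\sigma(\xs_0^{\ell/2-1})=0$ in $\cX^\diamond$.

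The argument for $L_\tau(\xs_0^{\ell/2+1})=0$ is strictly parallel: now $H_\mu(1)=0$ and $L_\tau$ has grading shift $(-2,0)$ at $t=\ell/2+1$, and the enumeration again reduces to a single potential target $\xs'_2\otimes Z^q$ with $\alpha_0=1$. Here the analogous inequality becomes $R_{\ell/2+1}\ge g_3(P)-\ell/2-1$, which holds unconditionally via Lemma~\ref{lem:shape of top generators} (giving $R_{\ell/2+1}\ge g_3(P)+\ell/2$); so the hypothesis is sufficient though not strictly needed for this half. The main obstacle is the degenerate configuration $\alpha_0=1$ in $\cS=\cCFK(P)$, the only one in which the bigrading alone fails to force a $W$-factor in the $\sigma$- or $\tau$-action; the hypothesis is precisely what is needed so that the $Z$-torsion structure of $\cS^\diamond$ can kill the remaining potential term.
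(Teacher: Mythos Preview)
Your proof is correct, and the strategy differs from the paper's in an instructive way. The paper exploits the freedom in constructing ${}_{\cK}\cX(L_P)^{\bF[W,Z]}$ (recalled in the proof of Lemma~\ref{lem:quotient bimodule in Lp}): it simply \emph{chooses} a model in which $\delta_2^1(\sigma,\xs_0^{\ell/2-1})=\xs'_0\otimes W Z^{R_{\ell/2-1}-\ell/2-g_3(P)+1}$, a single monomial on the top generator $\xs'_0$. The hypothesis $R_{\ell/2-1}\ge g_3(P)+\ell/2-1$ is exactly what makes the $Z$-exponent nonnegative, so this choice is legitimate; the visible factor of $W$ then kills the term upon passing to $\cX^\diamond$. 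Your argument is model-independent: you enumerate all possible targets $\xs'_{2i}\otimes W^{p}Z^{q}$ with the correct bigrading, observe that only $\xs'_0\otimes W Z^{*}$ and (when $\alpha_0=1$) $\xs'_2\otimes Z^{*}$ can occur, and then use the $Z^{\beta_0}$-torsion of the $\xs'_2$-summand in $\cS^\diamond$ to kill the second under $\Pi$. The paper's route is shorter, while yours makes transparent that the hypothesis is sharp precisely for the degenerate staircase shape $\alpha_0=1$ in $\cCFK(P)$; in all other cases the bigrading alone forces a $W$-factor.
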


\begin{proof}
	 By the computation in the proof of Lemma \ref{lem:quotient bimodule in Lp},   we have 
	 \begin{align*} \gr_{\ws}(L_{\sigma}(\xs_0^{\sfrac{\ell}{2}-1})) = -2, \quad &\gr_{\zs}(L_{\sigma}(\xs_0^{\sfrac{\ell}{2}-1})) = -2 - 2 R_{\sfrac{\ell}{2}-1} + \ell
	 	\intertext{ and} 
	 	\gr_{\ws}(\xs'_0) = 0, \quad  &\gr_{\zs}(\xs'_0)= -2g_3(P).
	 \end{align*}
	 Therefore, when $\gr_{\zs}(\xs'_0) \ge  \gr_{\zs}(L_{\sigma}(\xs_0^{\sfrac{\ell}{2}-1})),$ i.e., when  \[R_{\sfrac{\ell}{2}-1}\ge g_3(P)+\frac{\ell}{2}-1,\]  we may choose the action $L_{\sigma}(\xs_0^{\sfrac{\ell}{2}-1})$ on the full module ${}_{\cK} \cX(L_P)^{\bF[W,Z]}$ so that 
	 \[
	 L_{\sigma}(\xs_0^{\sfrac{\ell}{2}-1}) = \xs'_0 \otimes WZ^{R_{\sfrac{\ell}{2}-1}-\sfrac{\ell}{2}-g_3(P)+1} = 0  \text{ in } \cS\otimes \bF[Z].
	 \]
	 This implies that $L_{\sigma}(\xs_0^{\sfrac{\ell}{2}-1})$ will vanish in $\cX^{\diamond}(L_P)$.
	 For $L_{\tau}(\xs^{\sfrac{\ell}{2}+1}_0)$, by similar computation, we may choose the action $L_{\tau}(\xs^{\sfrac{\ell}{2}+1}_0)$ on the full bimodule ${}_{\cK} \cX(L_P)^{\bF[W,Z]}$ to vanish once we set $W=0$ if 
	 \[R_{\sfrac{\ell}{2}+1}\ge g_3(P)-\frac{\ell}{2}-1.\]
	 However, by Lemma \ref{lem:shape of top generators}, we have 
	 \[R_{\sfrac{\ell}{2}+1}\ge g_3(P)+\frac{\ell}{2}\ge g_3(P)-\frac{\ell}{2}>g_3(P)-\frac{\ell}{2}-1,\]
	 assuming $\ell\ge 0$. 
\end{proof}

\begin{lem}
	Let $U$ be the unknot, and suppose $P$ is an L-space satellite operator. Then:
	\begin{enumerate}
		\item If $n>0$, $\mathfrak{C}'(P,U,n)_{\bF[Z]}$ is a direct summand of $\bX^{\diamond}(P,U,n)_{\bF[Z]}$.
		\item If $n<0$ and additionally 
		\[
		\ell \ge 0 \quad\text{and}\quad	R_{\sfrac{\ell}{2}-1}\ge g_3(P)+\frac{\ell}{2}-1,
		\]
		then, $\mathfrak{C}''(P,U,n)_{\bF[Z]}$ is a direct summand of $\bX^{\diamond}(P,U,n)_{\bF[Z]}$.
		 Here $\ell=\lk(\mu,P)$ is the linking number of $L_P$, and $g_3(P)$ is the Seifert genus of $P$ viewed as a knot in $S^3$.
	\end{enumerate}
\label{lem:direct summand when epsilon=0}
\end{lem}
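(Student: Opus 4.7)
The strategy mirrors that of Proposition~\ref{prop:summand-veps=1}: for each of $\frC'$ (when $n>0$) and $\frC''$ (when $n<0$), I will verify simultaneously the subcomplex and quotient-complex properties inside $\bX^{\diamond}(P,U,n)_{\bF[Z]}$. Since this complex has the explicit description from Lemma~\ref{lem:satellites-unknot}, its only differentials are the internal differentials of each $\cC_t^{\diamond}$ and $\cS^{\diamond}$ (which vanish on the top generators $\xs_0^t$ and $\xs'_0$ by construction of the staircases), together with the $L_\sigma$ and $L_\tau$ arrows interconnecting the $\cC_t^{\diamond}$ and $\cS^{\diamond}$ copies. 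The task therefore reduces to tracing these arrows at the boundary of $\frC'$ or $\frC''$.

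For $\frC'$ in the $n>0$ case, both outgoing arrows $L_\sigma(\xs_0^{\sfrac{\ell}{2}})$ and $L_\tau(\xs_0^{\sfrac{\ell}{2}})$ land on the top generator $\xs'_0$ of an adjacent $\cS^{\diamond}$; the claim that they hit only $\xs'_0$ rather than some lower staircase generator $\xs'_{2i}$ with $i>0$ is the same $\gr_{\ws}$-homogeneity argument used in the proof of Lemma~\ref{lem:subcomplex of X(P(K,n))}, since $\gr_{\ws}(\xs_0^{\sfrac{\ell}{2}})=0=\gr_{\ws}(\xs'_0)$, while $\gr_{\ws}(\xs'_{2i})<0$ for $i>0$ and the generators $Z,W$ only decrease or preserve $\gr_{\ws}$. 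The potentially problematic incoming arrows come from $L_\sigma(\xs_0^{\sfrac{\ell}{2}-1})$ and $L_\tau(\xs_0^{\sfrac{\ell}{2}+1})$ in outlying $\cC_{\sfrac{\ell}{2}\pm 1}^{\diamond}$ copies, which might a priori contribute to the boundary $\xs'_0$'s of $\frC'$. However, using the grading shifts from Equation~\eqref{eq:grading shifts of left action} together with $\gr_{\ws}(\xs_0^{\sfrac{\ell}{2}-1})=-2$ and $\gr_{\ws}(\xs_0^{\sfrac{\ell}{2}+1})=0$, one computes that both $\gr_{\ws}(L_\sigma(\xs_0^{\sfrac{\ell}{2}-1}))=-2$ and $\gr_{\ws}(L_\tau(\xs_0^{\sfrac{\ell}{2}+1}))=-2$, strictly below $\gr_{\ws}(\xs'_0)=0$. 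Hence any $\xs'_0$-component would require a factor of $W$, which vanishes in $\cX^{\diamond}(L_P)$. No additional hypothesis on $P$ is needed when $n>0$.

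For $\frC''$ in the $n<0$ case, the orientations of the $L_\sigma$ and $L_\tau$ arrows in Lemma~\ref{lem:satellites-unknot} are reversed relative to the $n>0$ picture. Consequently the arrows $L_\tau(\xs_0^{\sfrac{\ell}{2}-1})$ and $L_\sigma(\xs_0^{\sfrac{\ell}{2}+1})$ genuinely contribute non-zero $\xs'_0$ components to adjacent $\cS^{\diamond}$ copies by Lemma~\ref{lem:quotient bimodule in Lp}; to close up the quotient-complex condition, we must enlarge the candidate summand by including $\xs_0^{\sfrac{\ell}{2}\pm 1}$. After this enlargement, the subcomplex condition reduces to whether the outward-pointing arrows $L_\sigma(\xs_0^{\sfrac{\ell}{2}-1})$ and $L_\tau(\xs_0^{\sfrac{\ell}{2}+1})$ vanish in $\cX^{\diamond}(L_P)$ itself (and not merely in its free quotient, where they already vanish by Lemma~\ref{lem:quotient bimodule in Lp}). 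This is precisely the conclusion of Lemma~\ref{lem:restriction on P}: the hypothesis $R_{\sfrac{\ell}{2}-1}\ge g_3(P)+\ell/2-1$ kills the $L_\sigma(\xs_0^{\sfrac{\ell}{2}-1})$ arrow, while the analogous inequality needed for $L_\tau(\xs_0^{\sfrac{\ell}{2}+1})$ is automatic from Lemma~\ref{lem:shape of top generators} under $\ell\ge 0$. The remaining interior arrows, i.e.\ $L_\sigma, L_\tau$ out of $\xs_0^{\sfrac{\ell}{2}}$ and the inward arrows $L_\tau(\xs_0^{\sfrac{\ell}{2}-1})$ and $L_\sigma(\xs_0^{\sfrac{\ell}{2}+1})$, land on $\xs'_0$ by the same $\gr_{\ws}$-homogeneity check as in the $n>0$ case.

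The main technical task is purely bookkeeping: one must enumerate every arrow entering or leaving the generators of $\frC'$ or $\frC''$ inside $\bX^{\diamond}(P,U,n)_{\bF[Z]}$ and verify, via grading or via Lemma~\ref{lem:restriction on P}, that each such arrow either remains inside the summand or is actually zero. The conceptual obstacle — and the reason for the asymmetric treatment of $n>0$ versus $n<0$ — is that the reversed arrow orientations in the $n<0$ picture force additional boundary generators into the candidate summand, whose outgoing arrows cannot be killed by grading alone and instead require the extra $H$-function hypothesis of Equation~\eqref{eq:extra condition when epsilon=0}.
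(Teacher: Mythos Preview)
Your proposal is correct and follows essentially the same route as the paper. The paper's proof is slightly more streamlined in that it first passes to the quotient $\bX^{\free}(P,U,n)$ via Lemma~\ref{lem:X-tors-submodule} (so that only the free generators $\xs_0^t$ and $\xs'_0$ remain) and then cites Lemma~\ref{lem:quotient bimodule in Lp} directly for the needed vanishing of $\xs'_0$-components, whereas you work directly in $\bX^{\diamond}$ and recover those vanishings from the $\gr_{\ws}$-grading computations that underlie that lemma; both arguments then invoke Lemma~\ref{lem:restriction on P} for the $n<0$ subcomplex step.
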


\begin{proof}
We prove the case $n>0$ first. From the description of $L_{\sigma}(\xs^{\sfrac{\ell}{2}}_0)$, $L_{\tau}(\xs^{\sfrac{\ell}{2}}_0)$ from Lemma \ref{lem:quotient bimodule in Lp} and the shape of $\bX^{\diamond}(P,U,n)$, it is easy to see that $\mathfrak{C}'(P,U,n)_{\bF[Z]}$ is a subcomplex of $\bX^\diamond(P,K,n)_{\bF[Z]}.$ 

To see that it is also a quotient complex, note that by Lemma~\ref{lem:X-tors-submodule}, there is a quotient complex $\bX^{\free}(P,U,n)$ of  $\bX^{\diamond}(P,U,n),$ consisting of the generators $\xs^t_0$ or $\xs'_0$ in each copy of $\cC_t^{\diamond}$ or $\cS^{\diamond}$. The complex $\mathfrak{C}'(P,U,n)_{\bF[Z]}$ is contained in $\bX^{\free}(P,U,n)$, so it suffices to show $\mathfrak{C}'(P,U,n)_{\bF[Z]}$ is a quotient complex of $\bX^{\free}(P,U,n)$. From the shape of the complex, the only generators in $\bX^{\free}(P,U,n)$ for which the differential could have a non-trivial component in $\frC'(P,U,n)$ are the generators $\xs_0^{\sfrac{\ell}{2}-1}$ in the rightmost copy of $\cC_{\sfrac{\ell}{2}-1}^{\diamond}$, and $\xs_0^{\sfrac{\ell}{2}+1}$ in the leftmost copy of $\cC_{\sfrac{\ell}{2}+1}^{\diamond}.$ Therefore it suffices to show the following:
\begin{enumerate}
\item The $\xs_0'$ component of $L_\sigma(\xs_0^{\sfrac{\ell}{2}-1})$ vanishes. 
\item The $\xs_0'$ component of $L_\tau(\xs_0^{\sfrac{\ell}{2}+1})$ vanishes.
\end{enumerate}
These two components vanish by Lemma~\ref{lem:quotient bimodule in Lp}, which finishes the proof when $n>0$.

When $n<0$, it is easy to see $\mathfrak{C}''(P,U,n)_{\bF[Z]}$ is a quotient complex of $\bX^{\free}(P,U,n)$ from the shape of the complex. Therefore it is also a quotient complex of $\bX^{\diamond}(P,U,n).$

To show that $\mathfrak{C}''(P,U,n)_{\bF[Z]}$ is a subcomplex of $\bX^{\diamond}(P,U,n)_{\bF[Z]},$ it suffices to show the following:
\begin{enumerate}
\item $L_\sigma(\xs_0^{\sfrac{\ell}{2}-1})=0$.
\item $L_\tau(\xs_0^{\sfrac{\ell}{2}+1})=0$. 
\end{enumerate}

This holds by Lemma~\ref{lem:restriction on P}.
\end{proof}

\begin{thm}
	 	Suppose $K$ is a knot in $S^3$ with $\veps(K)=0$ and $P$ is an L-space satellite operator. Let $\ell\ge0$ be the non-negative linking number $\ell =\lk(\mu,P)$ of the two components of $L_P$. Then, we have the following formula for $\tau(P(K,n))$:
	 	\begin{enumerate}
	 		\item When $n\ge0$, 
	 		\[\tau(P(K,n)) = g+ \frac{\ell(\ell-1)}{2}n.\]
	 		\item When $n<0$, and if $P$ satisfies \[	R_{\sfrac{\ell}{2}-1}\ge g+\frac{\ell}{2}-1,\]
	 		then, 
	 		\[\tau(P(K,n))=	\max\left\{R_{\sfrac{\ell}{2}-1}+\ell/2 ,R_{\sfrac{\ell}{2}}-\ell/2\right\} +\frac{\ell(\ell-1)}{2}n.\]
	 	\end{enumerate}
	 In the above, $g=g_3(P)$ is the Seifert genus of $P$ viewed as a knot in $S^3$, and $R_t$ is the second Alexander grading in $\bH(L_P)$ of the top generator $\xs^{t}_0$ in the staircase $\cC_{t}$, for $t = \frac{\ell}{2}-1,\frac{\ell}{2}$.
	 \label{thm:tau when epsilon=0}
\end{thm}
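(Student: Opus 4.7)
The strategy is to reduce to the case that $K$ is the unknot, then extract the infinite $Z$-tower from the explicit direct summands produced by Lemma~\ref{lem:direct summand when epsilon=0}. When $\veps(K)=0$, Theorem~\ref{thm:standard-complex} combined with Remark~\ref{rem:epsilon} gives $\cCFK(K)^{\hat{R}} \simeq \langle \ys_0 \rangle \oplus A$, so $K$ is locally equivalent over $\hat{R}$ to the unknot $U$, and hence $\tau(P(K,n)) = \tau(P(U,n))$. This reduces everything to computing $\tau(P(U,n))$. The case $n=0$ is immediate: $P(U,0)$ is the pattern $P$ viewed as a knot in $S^3$, which is a component of the L-space link $L_P$ and is therefore itself an L-space knot, so $\tau(P) = g_3(P)$, matching the formula at $n=0$.

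For $n>0$, I would invoke Lemma~\ref{lem:direct summand when epsilon=0}(1) to obtain the direct summand $\mathfrak{C}'(P,U,n) \subset \bX^{\diamond}(P,U,n)_{\bF[Z]}$. Label its generators $c_1,\dots,c_n$ (the copies of $\xs_0^{\sfrac{\ell}{2}}$) and $y_0,\dots,y_n$ (the copies of $\xs_0'$). Lemma~\ref{lem:quotient bimodule in Lp} gives
\[
d(c_i) = Z^{R_{\sfrac{\ell}{2}} + \sfrac{\ell}{2} - g}\,y_{i-1} + Z^{R_{\sfrac{\ell}{2}} - \sfrac{\ell}{2} - g}\,y_i,
\]
where the two $Z$-exponents differ by $\ell$. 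In the torsion-free quotient of homology this forces $y_i = Z^{i\ell} y_0$, so $y_0$ generates the $Z$-tower. Locating $y_0$ in the appropriate summand of $F^{\free}_{*,(\ell-1)/2}$ and applying the Alexander-grading shift formula from \cite{CZZ}*{Section~9.5} computes its Alexander grading to be $g + \frac{\ell(\ell-1)}{2}n$, which is $\tau(P(U,n))$.

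For $n<0$, the hypothesis $R_{\sfrac{\ell}{2}-1} \ge g+\sfrac{\ell}{2}-1$ makes Lemma~\ref{lem:direct summand when epsilon=0}(2) applicable, providing the direct summand $\mathfrak{C}''(P,U,n)$. In addition to $|n|$ interior copies of $\xs_0^{\sfrac{\ell}{2}}$ and $|n|+1$ copies of $\xs_0'$, this complex carries boundary generators $x^{\pm} = \xs_0^{\sfrac{\ell}{2}\pm 1}$ whose only nonzero differentials are $d(x^-) = Z^{R_{\sfrac{\ell}{2}-1}+\sfrac{\ell}{2}-g}\,y_0$ and $d(x^+) = Z^{R_{\sfrac{\ell}{2}+1}-\sfrac{\ell}{2}-g}\,y_{|n|}$. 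Computing the kernel of $d$ in the top homological degree shows that $H_*(\mathfrak{C}'')$ contains a free rank-one $\bF[Z]$ summand whose generator is a cycle combining $x^-$, the $c_i$, and $x^+$ weighted by appropriate powers of $Z$. Its Alexander grading is controlled by the larger of two competing contributions: the left-boundary contribution $R_{\sfrac{\ell}{2}-1}+\sfrac{\ell}{2}$ coming from $x^-$, and the interior contribution $R_{\sfrac{\ell}{2}}-\sfrac{\ell}{2}$ coming from the internal resonance of the $c_i$'s. The analogous right-boundary contribution $R_{\sfrac{\ell}{2}+1}-\sfrac{\ell}{2}$ is dominated by the interior term since $R_{\sfrac{\ell}{2}+1} \le R_{\sfrac{\ell}{2}}$ by Lemma~\ref{lem:shape of top generators}. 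Taking the maximum and adding the framing contribution $\frac{\ell(\ell-1)}{2}n$ yields the stated formula.

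The main obstacle is careful Alexander-grading bookkeeping across the many copies of $\cC$ and $\cS$ in these direct summands, and tracking the sign convention inversion between $n>0$ and $n<0$ that is visible in Lemma~\ref{lem:satellites-unknot}. In particular, for $n<0$ one must verify that neither of the two candidate contributions automatically dominates the other: either the left-boundary term or the interior term can win depending on $R_{\sfrac{\ell}{2}-1}$ versus $R_{\sfrac{\ell}{2}}$, which is precisely why the formula contains a max and which is consistent with the non-monotone shape of $R_t$ established in Lemma~\ref{lem:shape of top generators}.
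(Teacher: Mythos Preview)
Your proposal is correct and follows essentially the same approach as the paper: reduce to the unknot via local equivalence, invoke Lemma~\ref{lem:direct summand when epsilon=0} to isolate the direct summands $\mathfrak{C}'$ (for $n>0$) or $\mathfrak{C}''$ (for $n<0$), locate the $Z$-tower generator, and read off its Alexander grading via the grading shift formula of \cite{CZZ}*{Section~9.5}. One minor imprecision: your relation ``$y_i = Z^{i\ell} y_0$'' only holds in the torsion-free quotient of the cokernel, not in the cokernel itself (the actual relation is $Z^b y_i = Z^{b+\ell} y_{i-1}$ with $b = R_{\sfrac{\ell}{2}} - \sfrac{\ell}{2} - g$, which need not be zero); the paper instead simply asserts that the leftmost $\xs_0'$ generates the tower, and this is what your relation establishes after passing to $M/\mathrm{tors}$.
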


\begin{proof}
As mentioned at the beginning of the section, when $\veps(K)=0$, $\cCFK(P(K,n))$ is locally equivalent to $\cCFK(P(U,n))$, so it is enough to compute $\tau(P(U,n))$. 

The statement when $n=0$ is trivial, since
\[ \tau(P(U,0)) = \tau(P) = g_3(P),\]
 where the second quality holds because $P$ is an L-space knot by the assumption that $L_P$ is an L-space link.

When $n>0$, recall from Lemma \ref{lem:quotient bimodule in Lp}, that  
\[
L_{\sigma}(\xs^{\sfrac{\ell}{2}}_0 ) = \xs'_0\cdot Z^{R_{\sfrac{\ell}{2}}-\frac{\ell}{2}-g_3(P)},\qquad L_{\tau}(\xs^{\sfrac{\ell}{2}}_0 )= \xs'_0\cdot Z^{R_{\sfrac{\ell}{2}}+\frac{\ell}{2}-g_3(P)}.\]

It is then easy to see that the homology of $\mathfrak{C}'(P(U,n))$ contains an infinite $Z$-tower, which is an infinite $Z$-tower in $\cHFK_{\bF[Z]}(P(U,n))$. Assuming $\ell\ge0$, the generator of the $Z$-tower is the leftmost $\xs'_0$. 

Recall the truncation and homotopy equivalence in \cite{CZZ}*{Section~11.1} to obtain the model of $\cCFK(P(U,n))$ in Lemma~\ref{lem:satellites-unknot} from the larger complex $\cX_n(U)^{\cK}\boxtimes{}_{\cK} \cH_-^{\cK}\boxtimes {}_{\cK} \cX(L_P)^{\bF[W,Z]}$. The leftmost $\xs'_0$ in $\mathfrak{C}'(P(U,n))$ lies in the copy $\cS = F_{-\frac{1}{2},\frac{\ell-1}{2}} \subset \cX_n(U)^{\cK}\boxtimes{}_{\cK} \cH_-^{\cK}\boxtimes {}_{\cK} \cX(L_P)^{\bF[W,Z]}$ by the truncation procedure.  Therefore, by the same computation as in the case $n\ge2\tau(K)$ in Theorem~\ref{them:tau for epsilon=1} (with $\tau(K)=0$), we get 
\[
\tau(P(U,n)) = g_3(P) + \frac{\ell(\ell-1)}{2}n.
\]

When $n<0$, again from Lemma~\ref{lem:quotient bimodule in Lp}, we have:
\[
L_{\sigma}(\xs^{\sfrac{\ell}{2}+1}_0 ) = \xs'_0\cdot Z^{R_{\sfrac{\ell}{2}+1}-\frac{\ell}{2}-g_3(P)},\qquad L_{\tau}(\xs^{\sfrac{\ell}{2}-1}_0 )= \xs'_0\cdot Z^{R_{\sfrac{\ell}{2}-1}+\frac{\ell}{2}-g_3(P)}.
\]
Therefore, the homology of $\mathfrak{C}''(P,U,n)$ again contains an infinite $Z$-tower of $\cHFK_{\bF[Z]}(P(U,n))$.  Assuming $\ell\ge 0$, the generator of the infinite $Z$-tower depends on the comparison between $R_{\sfrac{\ell}{2}-1}+\frac{\ell}{2}$ and $R_{\sfrac{\ell}{2}}-\frac{\ell}{2}$.

\begin{enumerate}
	\item If $R_{\sfrac{\ell}{2}-1}+\frac{\ell}{2}\ge R_{\sfrac{\ell}{2}}-\frac{\ell}{2}$, the generator of the infinite $Z$-tower is an appropriate linear combination of the generators in the top layer of $\mathfrak{C}''(P,U,n)$, which has the same Alexander grading as the leftmost generator $\xs^{\sfrac{\ell}{2}-1}_0$. This copy of $\xs^{\sfrac{\ell}{2}-1}_0$ lies in the copy of $\cC_{\sfrac{\ell}{2}-1} = J_{n+\sfrac{1}{2},\sfrac{\ell}{2}-\sfrac{3}{2}}\subset \cX_n(U)^{\cK}\boxtimes{}_{\cK} \cH_-^{\cK}\boxtimes {}_{\cK} \cX(L_P)^{\bF[W,Z]}$ by the truncation procedure in \cite{CZZ}*{Section~11.1}. By the grading shift formula in \cite{CZZ}*{Section~9.5},  we have 
	\[
	\tau(P(U,n)) = R_{\sfrac{\ell}{2}-1} + \left(n+\frac{1}{2}+\frac{\ell-3}{2}n\right)\ell =  R_{\sfrac{\ell}{2}-1}+ \frac{\ell}{2}+ \frac{(\ell-1)\ell}{2}n.
	\]
	\item If $ R_{\sfrac{\ell}{2}-1}+\frac{\ell}{2}<  R_{\sfrac{\ell}{2}}-\frac{\ell}{2}$, the generator of the infinite $Z$-tower has the same Alexander grading as $\xs^{\sfrac{\ell}{2}-1}_0\cdot Z^{ R_{\sfrac{\ell}{2}}- R_{\sfrac{\ell}{2}-1}-\ell}$, where $\xs^{\sfrac{\ell}{2}-1}$ is again the leftmost generator lying in the copy $\cC_{\sfrac{\ell}{2}-1} = J_{n+\sfrac{1}{2},\sfrac{\ell}{2}-\sfrac{3}{2}}\subset \cX_n(U)^{\cK}\boxtimes{}_{\cK} \cH_-^{\cK}\boxtimes {}_{\cK} \cX(L_P)^{\bF[W,Z]}$. Therefore,
	\[\tau(P(U,n)) = R_{\sfrac{\ell}{2}-1}+ \frac{\ell}{2}+ \frac{(\ell-1)\ell}{2}n+R_{\sfrac{\ell}{2}}-R_{\sfrac{\ell}{2}-1}-\ell =  R_{\sfrac{\ell}{2}}- \frac{\ell}{2}+\frac{(\ell-1)\ell}{2}n.\]
\end{enumerate}
\end{proof}

\begin{rem}
 If $n<0$, and the condition 
	\[
	R_{\sfrac{\ell}{2}-1}\ge g+\frac{\ell}{2}-1,
	\] 
	does not hold, then the generator of the infinite $Z$-tower of $\bX^\diamond(P,U,n)_{\bF[Z]}$ will typically involve generators of the staircases $\cC_t^{\diamond}$ and $\cS^{\diamond}$ other than the free generators. This appears to require a more careful case-by-case analysis depending on the pattern $P$.
\end{rem}

\subsection{Formulas for $\tau(P(K,n))$ when $\veps(K)=-1$}

Now, suppose $\veps(K)=-1$. Then, by \cite{DHSTmore}, in the decomposition
\[
\cCFK_{\hat{R}}(K)\simeq C(b_1,\dots ,b_m)\oplus A,
\]
we have $b_1=-b_m<0$, so $C(b_1,\dots ,b_m)$ takes the form:
\[
\begin{tikzcd}[column sep=1.5cm]
	\ys_0 \ar[r, rightarrow, "W^{-b_1}"] 
	& \ys_1 \ar[r, leftrightarrow, "Z^{|b_2|}"] & \cdots \ar[r, leftrightarrow,"W^{|b_{m-1}|}"]&\ys_{m-1} \ar[r, leftarrow, "Z^{b_m}"]& \ys_m.
\end{tikzcd}
\]

In particular, there are arrows leaving $\ys_0$ and $\ys_m$ in $\cCFK_{\hat{R}}(K)$, which differs from the situation when $\veps(K)=1$. Under the extra assumption 
\[	 R_{\sfrac{\ell}{2}-1} \ge  g+\frac{\ell}{2}-1\] as stated in Equation (\ref{eq:extra condition when epsilon=0}) in Theorem \ref{thm: tau}, we will identify a direct summand of $\bX^{\diamond}(P,K,n)_{\bF[Z]}$ that supports the unique infinite $Z$-tower in the homology. This direct summand is a slight modification of the one defined in Section \ref{sec:direct summand when epsilon =1}, and we will use it to compute $\tau(P(K,n))$.

\begin{define}
	\label{def:direct summand when epsilon=-1}
	 Define $\mathfrak{C}'''(P,K,n)$ to be the free chain complex over $\bF[Z]$ consisting of generators in $\mathfrak{C}(P,K,n)$, as in Definition \ref{def:direct summand when epsilon=1}, together with the following four extra generators:
	 \begin{enumerate}
	 	\item $ \ys_1 | \xs'_0 =\ys_1|\ve{f}^{\free}_{-\frac{1}{2}+b_1,\frac{\ell-1}{2}} \in\langle \ys_1 \rangle \otimes \scF^{\free}_{-\frac{1}{2}+b_1,\frac{\ell-1}{2}} \subset  F^{\free}_{\tau(K)-\frac{1}{2},\frac{\ell-1}{2}};$
	 	\item $ \ys_1 | \xs^{\sfrac{\ell}{2}-1}_0=\ys_1|\ve{e}^{\free}_{\frac{1}{2}+b_1,\frac{\ell-1}{2}}  \in\langle \ys_1\rangle \otimes \scE^{\free}_{\frac{1}{2}+b_1,\frac{\ell-1}{2}} \subset  E^{\free}_{\tau(K)+\frac{1}{2},\frac{\ell-1}{2}};$
	 	\item $\ys_{m-1}|\xs'_0=\ys_{m-1}|\ve{f}^{\free}_{-\frac{1}{2}+b_m,\frac{\ell+1}{2}} \in\langle \ys_{m-1} \rangle \otimes \scF^{\free }_{-\frac{1}{2}+b_m,\frac{\ell+1}{2}} \subset F^{\free}_{-\tau(K)-\frac{1}{2},\frac{\ell+1}{2}};$
	 	\item $ \ys_{m-1}|\xs^{\sfrac{\ell}{2}+1}_0=\ys_{m-1}|\ve{e}^{\free}_{-\frac{1}{2}+b_m,\frac{\ell+1}{2}}  \in\langle \ys_{m-1} \rangle \otimes \scE^{\free }_{-\frac{1}{2}+b_m,\frac{\ell+1}{2}} \subset E^{\free}_{-\tau(K)-\frac{1}{2},\frac{\ell+1}{2}}.$
	 \end{enumerate}
	 The differentials in $\mathfrak{C}'''(P,K,n)$ are the components of the differential on $\bX^{\free}(P,K,n)$ between the generators of $\mathfrak{C}'''(P,K,n)$.
\end{define}
  See Figure \ref{fig:direct summand when epsilon=-1, n<2tau(K)} and \ref{fig:direct summand when epsilon=-1, n>2tau(K)} for a schematic of the complex $\mathfrak{C}'''(P,K,n)$, and its truncation.
\begin{figure}[!h]	
	\adjustbox{scale=0.8}{
		\begin{tikzcd}[row sep=10mm]
			& & & & \ys_1| \xs'_0 & \ys_1| \xs_0^{\sfrac{\ell}{2}-1} \arrow[l,"L_{\tau}"]& \\
			&\phantom{E}& \phantom{F}&\phantom{\cdots}& \ys_0| \xs'_0 \arrow[d,"\bI"] \arrow[u,"\bI"] &\ys_0| \xs_0^{\sfrac{\ell}{2}}\arrow[l,"L_{\tau}"]\arrow[r,"L_{\sigma}"] \arrow[d,"\bI"] \arrow[u,"L_{W}"] &\cdots\arrow[d,"\bI"]\\
			\cdots &\ys'|\xs_0^{\sfrac{\ell}{2}}\arrow[l,"L_{\tau}"]\arrow[r,"L_{\sigma}"]& \ys'|\xs'_0 & \cdots\arrow[l,"L_{\tau}"] \arrow[r,"L_{\sigma}"]& \ys'|\xs'_0 & \ys'|\xs_0^{\sfrac{\ell}{2}} \arrow[l,"L_{\tau}"] \arrow[r,"L_{\sigma}"] & \cdots\\
			\cdots \arrow[u,"\bI"] & \ys_m|\xs^{\sfrac{\ell}{2}}_0 \arrow[l,"L_{\tau}"] \arrow[r,"L_{\sigma}"] \arrow[u,"\bI"] \arrow[d,"L_{Z}"]& \ys_m|\xs'_0\arrow[u,"\bI"] \arrow[d,"\bI"] & & & & \\
			& \ys_{m-1}|\xs^{\sfrac{\ell}{2}+1} \arrow[r,"L_{\sigma}"]& \ys_{m-1}|\xs'_0& & & &
		\end{tikzcd}
	}
\adjustbox{scale=0.8}{
	\begin{tikzcd}[row sep=8mm,column sep = 8mm]
		& & & &\\[-3cm]
		& & & \ys_1|\xs'_0 & \ys_1| \xs^{\sfrac{\ell}{2}-1}_0\arrow[l,"L_{\tau}"]\\
		& & & \ys_0|\xs'_0 \arrow[u,"\bI"] \arrow[d,"\bI", end anchor={[yshift=-0.5ex]}]& \\
		\simeq& \ys'|\xs'_0
		  & \cdots \arrow[l,"L_{\tau}"] \arrow[r,"L_{\sigma}"] &\ys'|\xs'_0 &\\
		& \ys_m|\xs'_0 \arrow[u,"\bI"] \arrow[d,"\bI"]& & &\\
		\ys_{m-1}|\xs^{\sfrac{\ell}{2}+1}_0\arrow[r,"L_{\sigma}"] & \ys_{m-1}|\xs'_0 & & &
	\end{tikzcd}
}
	 \caption{The schematic drawing of the chain complex $\mathfrak{C}'''(P,K,n)$ and its truncation when $n
	 	\le 2\tau(K)$, with $2\tau(K)-n+1$ copies of $\ys'|\xs'_0$ in the middle row of the truncation.  }
	 \label{fig:direct summand when epsilon=-1, n<2tau(K)}
\end{figure}
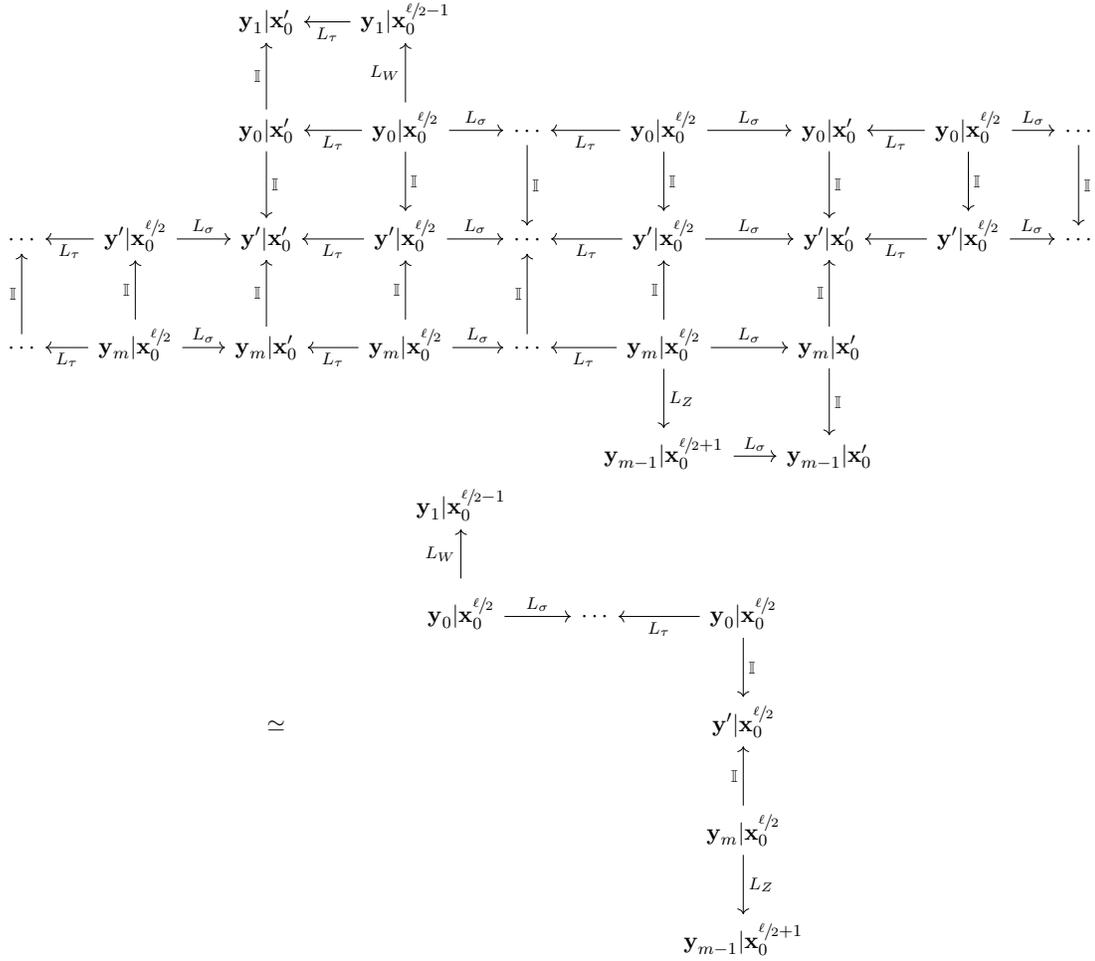
\begin{figure}[h!]
		\adjustbox{scale=0.8}{
		\begin{tikzcd}[row sep=10mm]
			& [-2mm] &[-2mm] \ys_1|\xs'_0 &[-2mm] \ys_1|\xs^{\sfrac{\ell}{2}-1}_0 \arrow[l,"L_{\tau}"] &[-2mm] &[-2mm] & [-2mm]&[-2mm] &[-2mm]\\
			& & \ys_0|\xs'_0 \arrow[u,"\bI"] \arrow[d,"\bI"]& \ys_0|\xs^{\sfrac{\ell}{2}}_0 \arrow[l,"L_{\tau}"] \arrow[u,"L_W"] \arrow[d,"\bI"] \arrow[r,"L_{\sigma}"] &\cdots \arrow[d,"\bI"] & \ys_0|\xs^{\sfrac{\ell}{2}}_0 \arrow[l,"L_{\tau}"] \arrow[r,"L_{\sigma}"] \arrow[d,"\bI"] & \ys_0|\xs'_0 \arrow[d,"\bI"] & \ys_0|\xs^{\sfrac{\ell}{2}}_0 \arrow[l,"L_{\tau}"] \arrow[r,"L_{\sigma}"] \arrow[d, "\bI"] & \cdots \arrow[d,"\bI"] \\
			\cdots & \ys'|\xs^{\sfrac{\ell}{2}}_0 \arrow[l,"L_{\tau}"] \arrow[r,"L_{\sigma}"] & \ys'|\xs'_0 &\ys'|\xs^{\sfrac{\ell}{2}}_0 \arrow[l,"L_{\tau}"] \arrow[r,"L_{\sigma}"] & \cdots & \ys'|\xs^{\sfrac{\ell}{2}}_0 \arrow[l,"L_{\tau}"] \arrow[r,"L_{\sigma}"] & \ys'|\xs'_0 & \ys'|\xs^{\sfrac{\ell}{2}}_0 \arrow[l,"L_{\tau}"] \arrow[r,"L_{\sigma}"] &\cdots\\
			\cdots \arrow[u,"\bI"]& \ys_{m}|\xs^{\sfrac{\ell}{2}}_0 \arrow[l, "L_{\tau}"] \arrow[r, "L_{\sigma}"] \arrow[u, "\bI"] & \ys_{m}|\xs'_0 \arrow[u, "\bI"] &\ys_{m}|\xs^{\sfrac{\ell}{2}}_0 \arrow[l, "L_{\tau}"] \arrow[r, "L_{\sigma}"] \arrow[u, "\bI"]&\cdots \arrow[u,"\bI"]&\ys_{m}|\xs^{\sfrac{\ell}{2}}_0 \arrow[l, "L_{\tau}"] \arrow[r, "L_{\sigma}"] \arrow[u, "\bI"] \arrow[d,"L_Z"]&  \ys_{m}|\xs'_0 \arrow[u,"\bI"] \arrow[d,"\bI"]& &\\
			& & & & & \ys_{m-1}| \xs^{\sfrac{\ell}{2}+1}_0 \arrow[r,"L_{\sigma}"] &\ys_{m-1}| \xs'_0 & & 
		\end{tikzcd}
	}
	\adjustbox{scale=0.8}{
	\begin{tikzcd}[row sep=10mm]
		& & \ys_1|\xs_{0}^{\sfrac{\ell}{2}-1}& &\\
		& & \ys_0|\xs^{\sfrac{\ell}{2}}_0 \arrow[r,"L_{\sigma}"] \arrow[u,"L_W", start anchor={[yshift=1ex]}] 
		 & \cdots & \ys_0|\xs^{\sfrac{\ell}{2}}_0 \arrow[l,"L_{\tau}"] \arrow[d,"\bI"] \\
		\simeq& & & & \ys'|\xs^{\sfrac{\ell}{2}}_0\\
		& & & & \ys_m|\xs^{\sfrac{\ell}{2}}_0\arrow[u,"\bI"]\arrow[d,"L_Z"]\\
		& & & & \ys_{m-1}|\xs^{\sfrac{\ell}{2}+1}_0
	\end{tikzcd}
}
	 \caption{The schematic drawing of the chain complex $\mathfrak{C}'''(P,K,n)$ and its truncation when $n> 2\tau(K)$, with $n-2\tau(K)$ copies of $ \ys_0|\xs^{\sfrac{\ell}{2}}_0$ in the truncation.}
	 \label{fig:direct summand when epsilon=-1, n>2tau(K)}
\end{figure}

\begin{lem}
	Suppose $K$ is a knot in $S^3$ with $\veps(K)=-1$, and $P$ is an L-space satellite operator satisfying 
	\[
		\ell \ge 0, \text{ and }\, R_{\sfrac{\ell}{2}-1} \ge g_3(P)+ \frac{\ell}{2}-1.
	\]
	Then,  $\mathfrak{C}'''(P,K,n)$ is a direct summand of $\bX^{\diamond}(P,K,n)_{\bF[Z]}$.
	\label{lem:direct summand when epsilon=-1}
\end{lem}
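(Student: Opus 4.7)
The plan is to show that $\mathfrak{C}'''(P,K,n)$ is simultaneously a subcomplex and a quotient complex of $\bX^{\diamond}(P,K,n)_{\bF[Z]}$, which together (combined with the absence of length-$2$ diagonal differentials from Lemma~\ref{lem:no-diagonal-differential}) yields the direct summand statement. As in the proofs of Lemmas~\ref{lem:quotient complex of X(P,K,n)} and~\ref{lem:subcomplex of X(P(K,n))}, we may assume $\cCFK(K)^{\hat R} = C(b_1,\dots,b_m)$, since the acyclic summand $A$ of Theorem~\ref{thm:standard-cx-over-K} cannot contribute to the $Z$-tower. The key structural difference from the $\veps(K)=1$ case is that $b_1<0$ and $b_m>0$, so the standard complex has outgoing differentials $\d\ys_0=W^{|b_1|}\ys_1$ and $\d\ys_m=Z^{|b_m|}\ys_{m-1}$, and these produce arrows out of $\ys_0|\xs$ and $\ys_m|\xs$ in $\bX^{\diamond}$ that did not need to be handled before.

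Using the formulas $L_W(\xs_0^{\sfrac{\ell}{2}})=\xs_0^{\sfrac{\ell}{2}-1}\cdot Z^{R_{\sfrac{\ell}{2}}-R_{\sfrac{\ell}{2}-1}}$ and $L_Z(\xs_0^{\sfrac{\ell}{2}})=\xs_0^{\sfrac{\ell}{2}+1}\cdot Z^{R_{\sfrac{\ell}{2}}-R_{\sfrac{\ell}{2}+1}}$ from Lemma~\ref{lem:quotient bimodule in Lp}, the outgoing differentials from $\ys_0|\xs_0^{\sfrac{\ell}{2}}$ and $\ys_m|\xs_0^{\sfrac{\ell}{2}}$ hit (up to a power of $Z$) the generators $\ys_1|\xs_0^{\sfrac{\ell}{2}-1}$ and $\ys_{m-1}|\xs_0^{\sfrac{\ell}{2}+1}$, while the differentials from $\ys_0|\xs_0'$ and $\ys_m|\xs_0'$ produce the identity arrows to $\ys_1|\xs_0'$ and $\ys_{m-1}|\xs_0'$ (since $W$ and $Z$ act by $\bI$ on the free $\bF[Z]$-generator $\xs_0'$ in $\cS^{\diamond}$ after the appropriate identifications). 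This forces precisely the four extra generators added in Definition~\ref{def:direct summand when epsilon=-1}.

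For the subcomplex property, the generators of $\mathfrak{C}(P,K,n)\subset\mathfrak{C}'''$ are treated by reusing the analysis of Lemma~\ref{lem:subcomplex of X(P(K,n))}, together with the observation that the new outgoing arrows from $\ys_0|\cdot$ and $\ys_m|\cdot$ described above land inside the enlarged complex. For each of the four new generators one must check all outgoing arrows. At $\ys_1|\xs_0^{\sfrac{\ell}{2}-1}$, for instance, the internal differentials $\d\ys_1$ (a nonzero $\ys_1\to Z^{|b_2|}\ys_2$ term when $b_2<0$) compose with $L_Z$ acting on $\xs_0^{\sfrac{\ell}{2}-1}$, and $L_Z(\xs_0^{\sfrac{\ell}{2}-1})=0$ by Lemma~\ref{lem:quotient bimodule in Lp} since $\sfrac{\ell}{2}-1<\sfrac{\ell}{2}$; the $L_\sigma$ action on $\xs_0^{\sfrac{\ell}{2}-1}$ vanishes precisely under our hypothesis via Lemma~\ref{lem:restriction on P}; and the $L_\tau$ action produces the included arrow to $\ys_1|\xs_0'$. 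The symmetric analysis at $\ys_{m-1}|\xs_0^{\sfrac{\ell}{2}+1}$ uses that $L_\tau(\xs_0^{\sfrac{\ell}{2}+1})=0$, which holds automatically when $\ell\ge 0$ by the second half of Lemma~\ref{lem:restriction on P}. The generators $\ys_1|\xs_0'$ and $\ys_{m-1}|\xs_0'$ have only the identity vertical maps $\Phi^{\pm K}$ as potentially nonzero outgoing arrows, and these land in $\mathfrak{C}'''$ by inspection of the $J^{\free},M^{\free}$ row.

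For the quotient complex property, the analysis of Lemma~\ref{lem:quotient complex of X(P,K,n)} applies verbatim to the generators originally in $\mathfrak{C}$, with the subtle point that the arrows from $\ys_0|\cdot$ and $\ys_m|\cdot$ which in the $\veps(K)=1$ case had to be shown to be absent are now exactly the arrows to the new generators; since these new generators lie inside $\mathfrak{C}'''$, no contradiction occurs. For incoming arrows to the new generators themselves, one uses that in $\cX_n(b_1,\dots,b_m)^{\hat\cK}$ only $\ys_0$ can map into $\ys_1$ (via the $W^{|b_1|}$ arrow) and only $\ys_m$ can map into $\ys_{m-1}$ (via the $Z^{|b_m|}$ arrow), both of which have already been included; any horizontal $\Phi^{\pm\mu}$ contribution would require $L_\sigma$ or $L_\tau$ on $\xs_0^{\sfrac{\ell}{2}\pm 1}$, and these are exactly the maps killed by the hypothesis and by Lemma~\ref{lem:shape of top generators}. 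The main obstacle is the careful bookkeeping needed to verify that every incoming or outgoing arrow at the four new generators is accounted for, and in particular that the single hypothesis $R_{\sfrac{\ell}{2}-1}\ge g_3(P)+\sfrac{\ell}{2}-1$ suffices to kill the only genuinely problematic action, namely $L_\sigma$ on $\xs_0^{\sfrac{\ell}{2}-1}$; the corresponding bound at $\sfrac{\ell}{2}+1$ is automatic by monotonicity of $R_t$.
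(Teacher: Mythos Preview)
Your overall strategy---show $\mathfrak{C}'''$ is both a subcomplex and a quotient complex of $\bX^{\diamond}$---matches the paper's, and most of your analysis is correct in spirit. However, there is a genuine gap in the subcomplex argument at the generator $\ys_1|\xs_0^{\sfrac{\ell}{2}-1}$.

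You argue that the outgoing length-$0$ differential coming from $\ys_1\to Z^{|b_2|}\ys_2$ (when $b_2<0$) vanishes because ``$L_Z(\xs_0^{\sfrac{\ell}{2}-1})=0$ by Lemma~\ref{lem:quotient bimodule in Lp}.'' But that lemma describes the left $\cK$-action only on the \emph{free quotient} $\cX^{\free}(L_P)$, not on $\cX^{\diamond}(L_P)$. For the subcomplex property in $\bX^{\diamond}$, the differential must land inside $\mathfrak{C}'''$, which consists entirely of free generators; a torsion component in $L_Z(\xs_0^{\sfrac{\ell}{2}-1})\in\cC^{\diamond}_{\sfrac{\ell}{2}}$ would violate this. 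A grading check shows $\gr_{\ws}(L_Z(\xs_0^{\sfrac{\ell}{2}-1}))=-2$, which is precisely the grading of a possible torsion generator $\xs_2^{\sfrac{\ell}{2}}$ when $\cC_{\sfrac{\ell}{2}}$ has more than one step---so there is no general reason for this to vanish in $\cX^{\diamond}$.

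The paper resolves this differently. It first observes (from the structure of $\scE^{\diamond}_{*,t}$ in Figure~\ref{fig:f-pm K}) that this problematic differential can only arise when $b_1=-1$: for $b_1\le -2$ one has $1/2+b_1\le -3/2$, and $\scE^{\diamond}_{s,t}$ has no $Z$-labeled arrow whatsoever for $s\le -3/2$. It then invokes a structural fact from \cite{DHSTmore}*{Section~11}: the standard complex of a knot in $S^3$ cannot have $b_1=-1$ and $b_2<0$. This excludes the one remaining case. The symmetric issue at $\ys_{m-1}|\xs_0^{\sfrac{\ell}{2}+1}$ (involving $L_W$ when $b_m=1$, $b_{m-1}>0$) is handled by the same citation.

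Two smaller inaccuracies: it is not true that ``only $\ys_0$ can map into $\ys_1$''---if $b_2>0$ there is also $\ys_2\to Z^{b_2}\ys_1$, and the paper checks this explicitly (it vanishes because there are no $Z$-arrows into $\scE^{\diamond}_{s,t}$ for $s<0$). Also, $\ys_1|\xs_0'$ and $\ys_{m-1}|\xs_0'$ have no outgoing vertical maps $\Phi^{\pm K}$ at all, since $\ys_1,\ys_{m-1}$ carry no $\sigma,\tau$-weighted terms in $\delta^1$.
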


\begin{proof}
	We will prove $\mathfrak{C}'''(P,K,n)$ is a quotient complex of $\bX^{\free}(P,K,n)_{\bF[Z]}$ (hence a quotient complex of  $\bX^{\diamond}(P,K,n)_{\bF[Z]}$ as well), and a subcomplex of $\bX^{\diamond}(P,K,n)_{\bF[Z]}$. The proof follows the same approach as in Lemma \ref{lem:quotient complex of X(P,K,n)} and Lemma \ref{lem:subcomplex of X(P(K,n))}, so we will be brief, emphasizing only the parts that differ from the previous ones.
	
	First, we prove $\mathfrak{C}'''(P,K,n)$ is a quotient complex of $\bX^{\free}(P,K,n)_{\bF[Z]}$, that is, there are no differentials from other generators in $\bX^{\free}(P,K,n)_{\bF[Z]}$  to $\mathfrak{C}'''(P,K,n)$. The row consisting of $J^{\free}_{*,\frac{\ell-1}{2}}$ and $M_{*,\frac{\ell-1}{2}}^{\free}$ is the same as the one in $\mathfrak{C}(P,K,n)$, and the same proof as before applies. We will focus on the row consisting of $E_{s,\frac{\ell-1}{2}}^{\free}$ and $F_{s-1,\frac{\ell-1}{2}}^{\free}$ for $s>\tau(K)$, and the result for the other row follows by similar argument.
	
	As there is no length-$2$ differential in $\bX^{\free}(P,K,n)_{\bF[Z]}$, it is enough to consider length-$0$ and length-$1$ differentials. 
	
	For length-$0$ differentials, there are no differentials pointing to $\ys_0|\xs'_0$ or $\ys_0|\xs^{\sfrac{\ell}{2}}_0$, since there are no differentials in $\cCFK_{\hat{R}}(K)$ pointing to $\ys_0$ when $\veps(K)=-1$. 
	
	For $\ys_1 | \xs^{\sfrac{\ell}{2}-1}_0=\ys_1|\ve{e}^{\free}_{\frac{1}{2}+b_1,\frac{\ell-1}{2}}$ in $E_{\tau(K)+\frac{1}{2},\frac{\ell-1}{2}},$ there might be a non-trivial arrow 
	\[\begin{tikzcd}
		 \ys_1 \ar[r, leftarrow, "Z^{b_2}"]& \ys_2
	\end{tikzcd}\]
	pointing to $\ys_1$ in $\cCFK_{\hat{R}}(K)$, if $b_2$ is positive. However, as shown in Figure~\ref{fig:f-pm K}, all the $Z$-arrows pointing to $\scE^{\free}_{s,t}$ for $s<0$ are trivial, and $\ve{e}^{\free}_{\frac{1}{2}+b_1,\frac{\ell-1}{2}}$ lies in $\scE^{\free}_{\frac{1}{2}+b_1,\frac{\ell-1}{2}}$, with $\frac{1}{2}+b_1<0$ by the assumption $\veps(K)=-1$. Therefore, there is no length-$0$ differential pointing to $\ys_1 | \xs^{\sfrac{\ell}{2}-1}_0$ in $E^{\free}_{\tau(K)+\frac{1}{2},\frac{\ell-1}{2}}$ either. The same argument applies to the potential length-$0$ differential pointing to $\ys_1 | \xs'_0$ in $F^{\free}_{\tau(K)-\frac{1}{2},\frac{\ell-1}{2}}$ as well. Hence, there are no length-$0$ differentials from other generators in $\bX^{\free}(P,K,n)_{\bF[Z]}$ pointing to $\mathfrak{C}'''(P,K,n)$.

	For length-$1$ differentials, the only difference from the situation in Lemma \ref{lem:quotient complex of X(P,K,n)} is the map
	\[\Phi_{\mu}:E^{\free}_{\tau(K)-\frac{1}{2},\frac{\ell-1}{2}} \to F^{\free}_{\tau(K)-\frac{1}{2},\frac{\ell-1}{2}}.\] The component of $\Phi_{\mu}$ pointing to $\ys_0|\xs'_0$  (resp. $\ys_1|\xs'_0 $) in $F^{\free}_{\tau(K)-\frac{1}{2},\frac{\ell-1}{2}}$ is given by the component of $\ys_0|\xs_0'$ (resp. $\ys_1|\xs_0'$) in $\ys_0|L_\sigma(\xs_0^{\sfrac{\ell}{2}-1})$ (resp. $\ys_1|L_\sigma(\xs_0^{\sfrac{\ell}{2}-1})$).
	However, by Lemma \ref{lem:quotient bimodule in Lp}, the $\xs_0'$ component of $L_\sigma(\xs^{\sfrac{\ell}{2}-1})$ vanishes.  Therefore, there is no length-$1$ differential in $\bX^{\free}(P,K,n)_{\bF[Z]}$ pointing to $\mathfrak{C}'(P(K,n))$ either. This finishes the proof that $\mathfrak{C}'''(P,K,n)$ is a quotient complex of $\bX^{\diamond}(P,K,n)_{\bF[Z]}$.
	
	Next, we show that $\mathfrak{C}'''(P,K,n)$ is also a subcomplex of $\bX^{\diamond}(P,K,n)_{\bF[Z]}$, that is, there is no differential from $\mathfrak{C}'''(P,K,n)$ to other generators in $\bX^{\diamond}(P,K,n)_{\bF[Z]}$.  
	
	For the row consisting of $J^{\diamond}_{*,\frac{\ell-1}{2}}$ and $M^{\diamond}_{*,\frac{\ell-1}{2}}$, the same proof as in Lemma \ref{lem:subcomplex of X(P(K,n))} applies, since this row is identical in $\mathfrak{C}(P,K,n)$ and $\mathfrak{C}'''(P,K,n).$ 
	
	We will prove the statement for the row consisting of $E_{s,\frac{\ell-1}{2}}^{\diamond}$ and $F_{s-1,\frac{\ell-1}{2}}^{\diamond}$ with $s>\tau(K)$, and the result on the row consisting of $E_{s,\frac{\ell+1}{2}}^{\diamond}$ and $F_{s,\frac{\ell+1}{2}}^{\diamond}$ for $s<-\tau(K)$ follows by a similar argument. As before, we consider length-$0$, $1$, and $2$ differentials separately.
	
	For the length-$0$ differentials, consider possible differentials leaving $ \ys_1|\xs^{\sfrac{\ell}{2}-1}_0 =\ys_1|\ve{e}^{\free}_{\frac{1}{2}+b_1,\frac{\ell-1}{2}}$  in $E^{\diamond}_{\tau(K)+\frac{1}{2},\frac{\ell-1}{2}}$ first. By the description of $\scE^{\diamond}_{*,t}$ in Figure \ref{fig:f-pm K}, the only case in which such a length-$0$ differential could exist is when $b_1 = -1$ and $b_2<0$ in the standard complex summand $C(b_1,\dots ,b_m)$ of $\cCFK_{\hat{R}}(K)$. However, for knot $K$ in $S^3$, such a standard complex is not realizable, see the last paragraph of \cite{DHSTmore}*{Section~11}. Other length-$0$ differentials vanish due to the constraint on the $s$-coordinate and the structure of $\scE^{\diamond}_{*,t}$ and $\scF^{\diamond}_{*,t}$ depicted in Figure \ref{fig:f-pm K}.

	For the length-$1$ differentials, consider the vertical length-$1$ differentials $\Phi^{\pm K} $ first. Since $m\ge 2$ in the standard complex summand $C(b_1,\dots ,b_m)$ of $\cCFK_{\hat{R}}(K)$ when $ \veps(K)\neq 0,$ there are no $\sigma $-or $\tau$-weighted terms in $\delta^1_1(\ys_1)$ in $\cX_n(b_1,\dots, b_m)^{\hat{\cK}}$. See Theorem \ref{thm:standard-cx-over-K}. Then, by the description of $f^{\pm K}$ in Figure \ref{fig:f-pm K}, we have
	\[\Phi^{\pm K}(\ys_1|\xs^{\sfrac{\ell}{2}-1}_0) = \Phi^{\pm K}(\ys_1|\xs'_0)=0.\]
	The vanishing of the remaining vertical length-$1$ differentials (i.e., $\Phi^{\pm K}$) follows from the same proof of the corresponding statement in Lemma~\ref{lem:subcomplex of X(P(K,n))}.
	
	Next, we consider horizontal length-$1$ differentials $\Phi^{\pm\mu}$. By Figure \ref{fig:maps-f-pm-mu},  $\Phi^{\mu}$ acts by $\bI\boxtimes L_{\sigma}$, and $\Phi^{-\mu}$ acts by $\bI\boxtimes L_{\tau}$.
We proceed in the same way as in the proof of corresponding part in Lemma~\ref{lem:subcomplex of X(P(K,n))},  except that we also need to show the map $\bI\boxtimes L_{\sigma} $ vanishes on $\ys_1|\xs^{\sfrac{\ell}{2}-1}_{0}\in E_{\tau(K)+\frac{1}{2},\frac{\ell-1}{2}}^{\diamond}$, which is
	\[
	L_{\sigma}\left(\xs^{\sfrac{\ell}{2}-1}_{0}\right)=0\in  \cS^{\diamond}.
	 \] By the assumptions
	\[ \ell \ge 0,  \text{ and } R_{\sfrac{\ell}{2}-1}\ge g_3(P)+\frac{\ell}{2}-1,\]
it follows from Lemma~\ref{lem:restriction on P} that the actions of ${}_{\cK} \cX^{\diamond}(L_P)_{\bF[Z]}$ satisfy 
\[
L_{\sigma}(\xs_0^{\ell/2-1})=0
\] as desired.

 For the row consisting of $E_{s,\frac{\ell+1}{2}}^{\diamond}$ and $F_{s-1,\frac{\ell+1}{2}}^{\diamond}$ with $s<-\tau(K)$, we need $\bI\boxtimes L_{\tau} $ to vanish on $\ys_{m-1}|\xs^{\sfrac{\ell}{2}+1}_{0}\in E_{-\tau(K)-\frac{1}{2},\frac{\ell+1}{2}}^{\diamond}$, which is
 \[L_{\tau}(\xs_0^{\sfrac{\ell}{2}+1})=0\in \cS^{\diamond}.\]
 Again, this follows from the assumptions and Lemma \ref{lem:restriction on P}.

	There are no length 2 differentials by Lemma~\ref{lem:no-diagonal-differential}, so the proof is complete.
	\end{proof}

It is straightforward to see that the homology of $\mathfrak{C}'''(P,K,n)$ contains an infinite $Z$-tower. By the same technique as in Theorem \ref{them:tau for epsilon=1}, we compute $\tau(P(K,n))$ starting from the truncated $\mathfrak{C}'''(P,K,n)$, and  summarize the result in the following theorem.
\begin{thm}
	Suppose $L_P$ is a $2$-component $L$-space link, $K$ is a companion knot with $\veps(K)=-1$. Let $\ell\ge0$ be the non-negative linking number $\ell =\lk(\mu,P)$ of the two components of $L_P$. Suppose the following condition holds for the $H$-function of $L_P$: 
	\begin{equation}
		 R_{\sfrac{\ell}{2}-1} \ge g+\frac{\ell}{2}-1
		\label{eq:extra assumption}
	\end{equation} Then, we have the following formula for $\tau(P(K,n))$:
	\[\tau(P(K,n)) =
	\begin{cases}
		\max\{ R_{\sfrac{\ell}{2}-1} + \frac{\ell}{2}, R_{\sfrac{\ell}{2}} - \frac{\ell}{2}\}+\frac{(\ell-1)\ell}{2}n+\ell\tau(K), & \text{if $ n<2\tau(K),$}\\
		\max\{ R_{\sfrac{\ell}{2}-1} + \frac{\ell}{2}, R_{\sfrac{\ell}{2}+1} - \frac{\ell}{2}\} +\frac{(\ell-1)\ell}{2}n+\ell\tau(K), & \text{if $n=2\tau(K)$. }\\
		\min\{ R_{\sfrac{\ell}{2}-1} + \frac{\ell}{2}, R_{\sfrac{\ell}{2}+1} + \frac{\ell}{2}\} +\frac{(\ell-1)\ell}{2}n+\ell\tau(K), & \text{if $n=2\tau(K)+1$. }\\
		\min\{ R_{\sfrac{\ell}{2}-1} +\frac{\ell}{2},g+\ell\}+\frac{(\ell-1)\ell}{2}n+\ell\tau(K), & \text{if $n>2\tau(K)+1$. }
	\end{cases}       \]	
	In the above, $g=g_3(P)$ is the genus of the pattern $P$ viewed as a knot in $S^3$.
	\label{them:formula of tau when epsilon K =-1, simple case}
\end{thm}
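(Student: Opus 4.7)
The plan is to mimic the strategy used for Theorems~\ref{them:tau for epsilon=1} and \ref{thm:tau when epsilon=0}(2), using the direct summand $\frC'''(P,K,n)$ supplied by Lemma~\ref{lem:direct summand when epsilon=-1}. Under the hypothesis $R_{\sfrac{\ell}{2}-1}\ge g+\ell/2-1$, that lemma shows $\frC'''(P,K,n)$ sits as a direct summand of $\bX^{\diamond}(P,K,n)_{\bF[Z]}\simeq \cCFK_{\bF[Z]}(P(K,n))$. Since a summand of a complex supports the infinite $Z$-tower in homology if and only if it contains a $Z$-nontorsion class of minimal Alexander grading, and since $\frC'''$ is the only summand where $L_\sigma,L_\tau$ connect $\xs_0^{\sfrac{\ell}{2}\pm 1}$ and $\xs_0^{\sfrac{\ell}{2}}$ to $\xs_0'$ via powers of $Z$, computing $\tau(P(K,n))$ reduces to locating the tower generator in $\frC'''(P,K,n)$ and reading off its Alexander grading with the shift formula \cite{CZZ}*{Equation~(9.5)}.

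The next step is to truncate $\frC'''(P,K,n)$ down to the smaller complexes pictured in Figures~\ref{fig:direct summand when epsilon=-1, n<2tau(K)} and \ref{fig:direct summand when epsilon=-1, n>2tau(K)}. The shape of this truncation is qualitatively different for $n\le 2\tau(K)$ versus $n>2\tau(K)$, and within the former regime the length of the central $\ys'|\xs'_0$ row is $2\tau(K)-n+1$, which becomes degenerate at exactly $n=2\tau(K)$ and $n=2\tau(K)+1$. This is what forces the four-case split in the statement. All the horizontal differentials in the truncation are determined by Lemma~\ref{lem:quotient bimodule in Lp}, which gives
\[
L_\sigma(\xs_0^{\sfrac{\ell}{2}})=\xs'_0\cdot Z^{R_{\sfrac{\ell}{2}}-\ell/2-g},\quad
L_\tau(\xs_0^{\sfrac{\ell}{2}})=\xs'_0\cdot Z^{R_{\sfrac{\ell}{2}}+\ell/2-g},
\]
together with the analogous formulas $L_\tau(\xs_0^{\sfrac{\ell}{2}-1})=\xs'_0\cdot Z^{R_{\sfrac{\ell}{2}-1}+\ell/2-g}$ and $L_\sigma(\xs_0^{\sfrac{\ell}{2}+1})=\xs'_0\cdot Z^{R_{\sfrac{\ell}{2}+1}-\ell/2-g}$ coming from the new generators in Definition~\ref{def:direct summand when epsilon=-1}. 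The extra condition \eqref{eq:extra assumption} ensures that the additional $\ys_1$ and $\ys_{m-1}$ arms attach nontrivially to the main body via the $L_W,L_Z$ components in $\cX_n(b_1,\dots,b_m)^{\hat{\cK}}$, without bringing in further unwanted components (this is exactly what was verified in Lemma~\ref{lem:direct summand when epsilon=-1}).

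For each of the four ranges of $n$, the tower generator is then computed by the following case analysis, which parallels the proof of Theorem~\ref{thm:tau when epsilon=0}(2). When $n<2\tau(K)$, the truncation is a ``diamond'' with two arms, one passing through $\ys_1|\xs_0^{\sfrac{\ell}{2}-1}$ at the top and one through $\ys_{m-1}|\xs_0^{\sfrac{\ell}{2}+1}$ at the bottom, connected by the chain $\ys'|\xs'_0 \xleftarrow{L_\tau}\ys'|\xs_0^{\sfrac{\ell}{2}}\xrightarrow{L_\sigma}\cdots$; the tower generator is forced to have Alexander grading $\max\{R_{\sfrac{\ell}{2}-1}+\ell/2,\,R_{\sfrac{\ell}{2}}-\ell/2\}$ relative to the base, according to which of the two parallel routes into the tower needs more powers of $Z$ to close up. At $n=2\tau(K)$, the central $\ys'|\xs'_0$ collapses to a single node, so the middle route is replaced by the chain through $\ys_{m-1}|\xs_0^{\sfrac{\ell}{2}+1}$, giving $\max\{R_{\sfrac{\ell}{2}-1}+\ell/2,\,R_{\sfrac{\ell}{2}+1}-\ell/2\}$. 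At $n=2\tau(K)+1$, the truncation flips (Figure~\ref{fig:direct summand when epsilon=-1, n>2tau(K)}) so that both arms must be traversed in series rather than parallel, and the tower generator's grading is $\min$ rather than $\max$. For $n>2\tau(K)+1$, additional interior copies of $\ys_0|\xs_0^{\sfrac{\ell}{2}}$ appear and the bottom arm contributes a route whose cost is $g+\ell$ (from the identity chain $\ys_0|\xs_0^{\sfrac{\ell}{2}}\to\cdots\to\ys_m|\xs_0^{\sfrac{\ell}{2}}\xrightarrow{L_Z}\ys_{m-1}|\xs_0^{\sfrac{\ell}{2}+1}\xrightarrow{L_\sigma}\ys_{m-1}|\xs'_0$), which competes with $R_{\sfrac{\ell}{2}-1}+\ell/2$ via a $\min$.

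Finally, to each of the four Alexander gradings extracted above, one adds the standard shift $\frac{(\ell-1)\ell}{2}n+\ell\tau(K)$ via the grading shift formula of \cite{CZZ}*{Section~9.5}, just as in the proof of Theorem~\ref{them:tau for epsilon=1}, to land in the Alexander grading of $\cCFK_{\bF[Z]}(P(K,n))$. The main obstacle is the careful bookkeeping at the transition values $n=2\tau(K)$ and $n=2\tau(K)+1$, where the qualitative change in the truncation causes the relevant comparison to jump from $R_{\sfrac{\ell}{2}}-\ell/2$ to $R_{\sfrac{\ell}{2}+1}-\ell/2$, and then from $\max$ to $\min$; one has to check directly that these generators really are the tower generators (and not, say, hit by a previously invisible differential) using once more that \eqref{eq:extra assumption} rules out the troublesome components in the $L_\sigma,L_\tau$ actions.
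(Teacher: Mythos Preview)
Your approach is the same as the paper's: invoke Lemma~\ref{lem:direct summand when epsilon=-1}, truncate the summand $\frC'''(P,K,n)$ according to Figures~\ref{fig:direct summand when epsilon=-1, n<2tau(K)} and~\ref{fig:direct summand when epsilon=-1, n>2tau(K)}, and read off the tower generator in each of the four ranges of $n$ using the formulas for $L_\sigma,L_\tau,L_W,L_Z$ from Lemma~\ref{lem:quotient bimodule in Lp} together with the grading shift from \cite{CZZ}*{Section~9.5}.

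However, your bookkeeping in the case $n>2\tau(K)+1$ contains a concrete error. The term $g+\ell$ is \emph{not} the cost of a route terminating at $\ys_{m-1}|\xs'_0$: in the fully truncated zigzag, $\ys_{m-1}|\xs_0^{\sfrac{\ell}{2}+1}$ is already a bottom-layer generator and no further $L_\sigma$ arrow leaves it. Rather, $g+\ell$ is the Alexander-grading contribution of the \emph{leftmost} $\ys_0|\xs'_0=\ys_0|\ve{f}^{\free}_{1/2,(\ell-1)/2}\in F^{\diamond}_{\tau(K)+1/2,(\ell-1)/2}$. You also omit a step that the paper carries out: one must check, using $R_{\sfrac{\ell}{2}+1}\ge g-\ell/2$ (from Lemma~\ref{lem:shape of top generators} and $\ell\ge 0$), that the Alexander grading of $\ys_{m-1}|\xs_0^{\sfrac{\ell}{2}+1}$ dominates that of every $\ys_0|\xs'_0$, so the competition reduces to $\ys_1|\xs_0^{\sfrac{\ell}{2}-1}$ versus the leftmost $\ys_0|\xs'_0$. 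A minor point: for $n<2\tau(K)$, after canceling the two identity pairs in Figure~\ref{fig:direct summand when epsilon=-1, n<2tau(K)}, both $\ys_1|\xs_0^{\sfrac{\ell}{2}-1}$ and $\ys_{m-1}|\xs_0^{\sfrac{\ell}{2}+1}$ sit in the \emph{top} layer of the resulting zigzag, not one at top and one at bottom as you describe.
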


\begin{proof}
	As in the case when $\veps(K)=1$, the truncation of $\mathfrak{C}'''(P,K,n)_{\bF[Z]}$ takes slightly different form depending on the relative values of $n$ and $2\tau(K)$, as illustrated in Figures~\ref{fig:direct summand when epsilon=-1, n<2tau(K)} and \ref{fig:direct summand when epsilon=-1, n>2tau(K)}.

	The answer divides into the following four cases:

	\begin{enumerate}
		\item When $n<2\tau(K)$, the truncated $\mathfrak{C}'''(P,K,n)$ is homotopy equivalent to the following complex (canceling two pairs of identity maps in Figure \ref{fig:direct summand when epsilon=-1, n<2tau(K)}),
\[
			 \begin{tikzcd}[column sep={1.2cm,between origins}]
			 	\ys_{m-1}|\xs_0^{\sfrac{\ell}{2}+1} \arrow[dr, swap,"L_{\sigma}"] & & \ys'|\xs_0^{\sfrac{\ell}{2}}  \arrow[dl,"L_{\tau}"] \arrow[dr, swap,"L_{\sigma}"] 	\ar[rrrr,start anchor=north west,end anchor=north east, decorate,decoration={calligraphic brace,amplitude=7pt}, no head,"2\tau(K)-n"{yshift=19pt},swap]  & & \cdots \arrow[dl,"L_{\tau}"] \arrow[dr, swap,"L_{\sigma}"]& & \ys'|\xs_0^{\sfrac{\ell}{2}} \arrow[dl,"L_{\tau}"] \arrow[dr, swap,"L_{\sigma}"]& & \ys_1|\xs_0^{\sfrac{\ell}{2}-1}  \arrow[dl,"L_{\tau}"] \\
			 	&\ys'|\xs'_0 & & \ys'|\xs'_0 &\cdots &\ys'|\xs'_0 & & \ys'|\xs'_0 &
			 \end{tikzcd}
		\]
	where \[
	L_{\sigma}(\xs^{t}_0) =\xs'_0 \cdot Z^{R_{t}-\frac{\ell}{2}-g} \quad \text{ for }t\in \left\{\tfrac{\ell}{2}, \tfrac{\ell}{2}+1\right\},
	\]
	and 
	 \[L_{\tau}(\xs^{t}_0) =\xs'_0 \cdot Z^{R_{t}+\frac{\ell}{2}-g} \quad \text{ for }t\in \left\{\tfrac{\ell}{2}-1,\tfrac{\ell}{2}\right\}.\]
	There are $2\tau(K)-n$ copies of  $\ys'|\xs_0^{\sfrac{\ell}{2}}$ in the top layer of the zigzag. The homology contains an infinite $Z$-tower, with the generator given by an appropriate linear combination of the top layer generators.  Assuming $\ell\ge0$,  the generator of the tower has the same Alexander grading as $\ys_1|\xs^{\sfrac{\ell}{2}-1}_0$ if $ R_{\sfrac{\ell}{2}-1}+\frac{\ell}{2}\ge  R_{\sfrac{\ell}{2}}-\frac{\ell}{2},$ or $\ys_1|\xs^{\sfrac{\ell}{2}-1}_0 \cdot Z^{ R_{\sfrac{\ell}{2}}- R_{\sfrac{\ell}{2}-1}-\ell}$ otherwise. This generator $\ys_1 | \xs^{\sfrac{\ell}{2}-1}_0 =\ys_1|\ve{e}^{\free}_{\frac{1}{2}+b_1,\frac{\ell-1}{2}}$ lies in $   E^{\diamond}_{\tau(K)+\frac{1}{2},\frac{\ell-1}{2}}$. By the grading shift formula Equation~$(9.5)$ in \cite{CZZ}*{Section~9.5}, we have 
	\begin{equation}
		 A\left(\ys_1|\ve{e}^{\free}_{\frac{1}{2}+b_1,\frac{\ell-1}{2}}\right)= R_{\sfrac{\ell}{2}-1} + \frac{\ell}{2}+\frac{(\ell-1)\ell}{2}n+\ell\tau(K).
		 \label{eq:grading of the end of zigzag}
	\end{equation}

	Therefore, in this case,
	\[\tau(P(K,n))=\max\{ R_{\sfrac{\ell}{2}-1} + \tfrac{\ell}{2}, R_{\sfrac{\ell}{2}} - \tfrac{\ell}{2}\}+\frac{(\ell-1)\ell}{2}n+\ell\tau(K)\]
		
	\item If $n=2\tau(K)$, then the truncated $\mathfrak{C}'''(P(K,n))$ is homotopy equivalent to 
	\[ 
			\begin{tikzcd}[column sep={1.2cm,between origins}]
				\ys_{m-1}|\xs_{0}^{\sfrac{\ell}{2}+1} \arrow[dr,swap, "L_{\sigma}"]& & 	\ys_1|\xs_{0}^{\sfrac{\ell}{2}-1}\arrow[dl,"L_{\tau}"]\\
				&\ys'|\xs'_0 &
			\end{tikzcd},
\]	
	with $\ys_1|\xs^{\sfrac{\ell}{2}-1}_0=\ys_1|\ve{e}^{\free}_{\frac{1}{2}+b_1,\frac{\ell-1}{2}} \in E^{\diamond}_{\tau(K)+\frac{1}{2},\frac{\ell-1}{2}}$. In a similar manner to the previous case, we compute
		\[
		\tau(P(K,n)) = \max\left\{ R_{\sfrac{\ell}{2}-1} + \tfrac{\ell}{2}, R_{\sfrac{\ell}{2}+1} - \tfrac{\ell}{2}\right\} +\frac{(\ell-1)\ell}{2}n+\ell\tau(K).
		\]
		\item If $n=2\tau(K)+1$, then after truncating $\mathfrak{C}'''(P(K,n))^{\bF[Z]}$ as in Figure~\ref{fig:direct summand when epsilon=-1, n>2tau(K)}, we see that it is homotopy equivalent to
\[
\begin{tikzcd}[column sep={1.2cm,between origins}]
			 		& 	\ys_{0}|\xs_{0}^{\sfrac{\ell}{2}} \arrow[dl,swap, "L_W"]\arrow[dr,"L_Z"] &\\
			 		\ys_1|\xs_{0}^{\sfrac{\ell}{2}-1} & & 	\ys_{m-1}|\xs_{0}^{\sfrac{\ell}{2}+1}
			 	\end{tikzcd}, 
			 	\]
		where \[
		L_W(\xs^{\sfrac{\ell}{2}}_0) = \xs^{\sfrac{\ell}{2}-1}_0\cdot Z^{ R_{\sfrac{\ell}{2}}- R_{\sfrac{\ell}{2}-1}}, \quad L_Z(\xs^{\sfrac{\ell}{2}}_0) = \xs^{\sfrac{\ell}{2}+1}_0\cdot Z^{ R_{\sfrac{\ell}{2}}- R_{\sfrac{\ell}{2}+1}}. \]
		The bottom left element $\ys_1|\xs^{\sfrac{\ell}{2}-1}_0$ lies in $ E_{\tau(K)+\frac{1}{2},\frac{\ell-1}{2}}^{\diamond}$, whose Alexander grading we have computed before in Equation \eqref{eq:grading of the end of zigzag}, and the bottom right element  $\ys_{m-1}|\xs_{0}^{\sfrac{\ell}{2}+1}=\ys_{m-1}|\ve{e}^{\free}_{-\frac{1}{2}+b_m,\frac{\ell+1}{2}}$ lies in $E_{-\tau(K)-\frac{1}{2}, \frac{\ell+1}{2}}^{\diamond}$, and has Alexander grading
		\[A(\ys_{m-1}|\ve{e}^{\free}_{-\frac{1}{2}+b_m,\frac{\ell+1}{2}})= R_{\sfrac{\ell}{2}+1} +\frac{\ell^2}{2}+\tau(K)\ell^2 = R_{\sfrac{\ell}{2}+1} + \frac{\ell}{2}+\frac{(\ell-1)\ell}{2}n+\ell\tau(K).\]
      Therefore, in this case, 
		\[
		\tau(P(K,n)) = \min\left\{ R_{\sfrac{\ell}{2}-1} + \tfrac{\ell}{2}, R_{\sfrac{\ell}{2}+1} + \tfrac{\ell}{2}\right\} +\frac{(\ell-1)\ell}{2}n+\ell\tau(K).
		\]
		
		\item If $n>2\tau(K)+1$, then the truncated $\mathfrak{C}'''(P,K,n)_{\bF[Z]}$ in Figure \ref{fig:direct summand when epsilon=-1, n>2tau(K)} is homotopy equivalent to
\[
\begin{tikzcd}[column sep={1.2cm,between origins}]
	& 
	\ys_0|\xs_{0}^{\sfrac{\ell}{2}}
		\arrow[dl,swap, "L_W"]
		\arrow[dr,"L_{\sigma}"] 
			\ar[rrrrrr,start anchor=north west,end anchor=north east, decorate,decoration={calligraphic brace,amplitude=7pt}, no head,"n-2\tau(K)"{yshift=19pt},swap]  
	&&
	\ys_0|\xs_{0}^{\sfrac{\ell}{2}}
		\arrow[dl,swap,"L_{\tau}"]
		\arrow[dr, "L_{\sigma}"]
	&&
	 \ys_0|\xs_{0}^{\sfrac{\ell}{2}}
	 	\arrow[dl,swap,"L_{\tau}"]
	 	\arrow[dr, "L_{\sigma}"]
	 &&
	 \ys_0|\xs_{0}^{\sfrac{\ell}{2}}
	 	\arrow[dr, "L_Z"]
	 	\arrow[dl,swap, "L_{\tau}"]
	 	&
	\\
	\ys_1|\xs_{0}^{\sfrac{\ell}{2}-1} 
	&&
	\ys_0|\xs'_0
	&&
	\cdots 
	&&
	\ys_0|\xs'_0 &&
	\ys_{m-1}|\xs_{0}^{\sfrac{\ell}{2}+1}
\end{tikzcd},
\]
		with maps described as before.  In particular, 
		\[L_Z(\xs^{\sfrac{\ell}{2}}_0)= \xs^{\sfrac{\ell}{2}+1}_0\cdot Z^{ R_{\sfrac{\ell}{2}}- R_{\sfrac{\ell}{2}+1}}, \quad L_{\tau}(\xs^{\sfrac{\ell}{2}}_0) =\xs'_0 \cdot Z^{ R_{\sfrac{\ell}{2}}+\frac{\ell}{2}-g}, \] 
		and by Lemma \ref{lem:shape of top generators}, 
		\[ R_{\sfrac{\ell}{2}+1}\ge g-\frac{\ell}{2}.\]
		There are $n-2\tau(K)$ copies of $ \ys_0|\xs^{\sfrac{\ell}{2}}_0$ in the top layer of the zigzag. Recall we have also assumed $\ell\ge 0.$ Then, it is easy to show that the Alexander grading of $\ys_{m-1}|\xs^{\sfrac{\ell}{2}+1}_0$ is greater than or equal to each of the $\ys_0|\xs'_0$ generators in the bottom layer. 
		
		Therefore, the generator of an infinite $Z$-tower in the homology is either $\ys_1|\xs_{0}^{\sfrac{\ell}{2}-1}$ in $E^{\diamond}_{\tau(K)+\frac{1}{2},\frac{\ell-1}{2}}$, or the leftmost copy of $\ys_0|\xs'_0$, which is $\ys_0|\ve{f}^{\free}_{\frac{1}{2},\frac{\ell-1}{2}}$ in $ F^{\diamond}_{\tau(K)+\frac{1}{2},\frac{\ell-1}{2}}.$ 
		The Alexander grading of $\ys_1|\xs_{0}^{\sfrac{\ell}{2}-1}$ has been computed before in Equation \eqref{eq:grading of the end of zigzag},  and the Alexander grading of this copy of $\ys_0|\xs'_0$ is 
		\[A(\ys_0|\ve{f}^{\free}_{\frac{1}{2},\frac{\ell-1}{2}}) = g+\ell+\frac{(\ell-1)\ell}{2}n+\ell\tau(K).\]
		Therefore, in this case,
		\[\tau(P(K,n)) = \min\left\{ R_{\sfrac{\ell}{2}-1} +\tfrac{\ell}{2},g+\ell\right\}+\frac{(\ell-1)\ell}{2}n+\ell\tau(K).\]
	\end{enumerate}

\end{proof}

\begin{rem}
	\label{rem:extra assumption}
	Here are two special families of L-space satellite operator that the extra assumption in Equation (\ref{eq:extra assumption})  is satisfied:
	\begin{enumerate}
		\item If the linking number $|\ell|\le 1,$ then by Lemma \ref{lem:shape of top generators}, we have
		\[ R_{\sfrac{\ell}{2}-1} \ge g_3(P)-\frac{\ell}{2} \ge g_3(P)+\frac{\ell}{2}-1.\]
		\item If the pattern knot $P$ is unknotted when viewed as a knot in $S^3$, then $g=g_3(P)=0$. By Equation~\eqref{eq:width}, we have 
		\[
		H_{L_P}(N_{L_P}, r) = H_{L_P}(\infty,r) = H_U\left(r-\tfrac{\ell}{2}\right)=\begin{cases}
			\frac{\ell}{2}-r & \text{if $r< \frac{\ell}{2},$}\\
			0, & \text{if $r\ge \frac{\ell}{2}$,}
		\end{cases}
		\]
	while \[
	H_{L_P}\left(\tfrac{\ell}{2}-1, R_{\sfrac{\ell}{2}-1}\right) =H_{L_P}\left(\tfrac{\ell}{2}-1,\infty\right) =1.
	\]
	Since \[\frac{\ell}{2}-1 <\frac{\ell}{2} \le N_{L_P},\]
	by the monotonicity of the $H$-function in the horizontal direction, we have 
	\[ R_{\sfrac{\ell}{2}-1} \ge \frac{\ell}{2}-1 = g + \frac{\ell}{2}-1.\]
	\end{enumerate}
\end{rem}

\section{The $\veps$-invariant and non-surjectivity}

\label{sec:epsilon}
In this section, we give an inequality for the $\veps$-invariant of satellite knots under L-space satellite operations. More explicitly, we have the following.

\begin{thm}
	\label{thm:epsilon}
	Let $P$ be an L-space satellite operator, and $g_3(P)$ denote the Seifert genus of the pattern knot $P$. Orient the corresponding two-component link $L_P$ such that the linking number $\ell \ge 0$. Then, we have the following inequality of $\veps(P(K,n))$:
	\begin{enumerate}
		\item If $\veps(K)=1$, or $\veps(K)=0$ and  $n\ge0$, then 
		\[\veps(P(K,n)) \neq -1.\]
		\item Suppose further that the H-function of the two-component link $L_P$ satisfies 
		\begin{equation}
			 R_{\sfrac{\ell}{2}-1} \ge g_3(P) + \frac{\ell}{2}.
			\label{eq:condition for epsilon}
		\end{equation}
		Then, if $\veps(K)=0$ and $ n<0$, or $\veps(K)=-1$, we have
		\[\veps(P(K,n)) \neq -1.\]
	\end{enumerate}
	
	In particular, if the L-space satellite operator $P$ satisfies the condition in Equation~ \eqref{eq:condition for epsilon}, then for any $n\in \mathbb{Z}$, the map on the smooth concordance group 
	\begin{align*}
		P: \scC &\longrightarrow \scC\\
		K &\longmapsto P(K,n)
	\end{align*} is not surjective.
\end{thm}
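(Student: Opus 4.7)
The plan is to characterize $\veps(P(K,n)) \ne -1$ via the existence of an $\hat{R}$-local map $\hat{R} \to \cCFK(P(K,n))^{\hat{R}}$ (where $\hat{R} = \bF[W,Z]/(WZ)$), and to construct this map by lifting the tower-generating cycles already identified in the proofs of the $\tau$ formulas. Recall that $\veps(J) \ne -1$ is equivalent to the existence of a cycle $\eta \in \cCFK(J)^{\hat{R}}$ in bigrading $(\gr_{\ws}, A) = (0, \tau(J))$ whose reduction modulo $W$ generates the $\bF[Z]$-tower in $\cHFK_{\bF[Z]}(J)$. In the proofs of Theorems~\ref{them:tau for epsilon=1}, \ref{thm:tau when epsilon=0}, and~\ref{them:formula of tau when epsilon K =-1, simple case} we identified explicit cycles $\bar\eta$ generating the tower in the $W \mapsto 0$ reduction $\bX^{\diamond}(P,K,n)_{\bF[Z]}$. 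Letting $\eta$ denote the canonical lift of $\bar\eta$ to the full complex $\bX(P,K,n)$, we have $\d\eta \in W \cdot \bX(P,K,n)$; the task is to upgrade this to $\d\eta \in WZ \cdot \bX(P,K,n)$, so that $\eta$ descends to a cycle in $\bX(P,K,n)^{\hat{R}}$ and furnishes the required local map.

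For part~$(1)$ with $\veps(K) = 1$, the cycle $\bar\eta$ from Theorem~\ref{them:tau for epsilon=1} is supported on the generators $\ys_0 | \xs_0'$, $\ys_m | \xs_0'$, $\ys' | \xs_0^{\sfrac{\ell}{2}}$, and $\ys' | \xs_0'$ with various $Z$-powers. Since $\veps(K) = 1$ forces $b_1 > 0 > b_m$, the generators $\ys_0, \ys_m$ have no outgoing arrows in the standard complex $C(b_1,\ldots,b_m)$, and $\xs_0', \xs_0^{\sfrac{\ell}{2}}$ are cycles in the staircases $\cS$ and $\cC_{\sfrac{\ell}{2}}$. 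Therefore the only contributions to $\d\eta$ come from bimodule actions activated by the $\sigma, \tau$-weighted terms of $\delta^1(\ys_0), \delta^1(\ys_m)$ in the surgery module, and inspecting the proof of Lemma~\ref{lem:quotient bimodule in Lp}, the lifts of $L_\sigma, L_\tau, L_W, L_Z$ on $\xs_0^{\sfrac{\ell}{2}}$ to the full bimodule ${}_{\cK}\cX(L_P)^{\bF[W,Z]}$ produce outputs of the form $\xs_0' \cdot Z^{\ge 0}$ with no pure $W$-factor, so each residual $W$ in $\d\eta$ is accompanied by a factor of $Z$. The subcase $\veps(K) = 0$, $n \ge 0$ reduces via local equivalence to $K = U$, and the same argument applies to the cycle from Theorem~\ref{thm:tau when epsilon=0}(1).

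For part~$(2)$ the cycle $\bar\eta$ additionally involves the neighbouring generators $\xs_0^{\sfrac{\ell}{2} \pm 1}$ from $\cC_{\sfrac{\ell}{2} \mp 1}$, and the critical new input is the grading computation in the proof of Lemma~\ref{lem:restriction on P}: under the hypothesis $R_{\sfrac{\ell}{2}-1} \ge g_3(P) + \sfrac{\ell}{2} - 1$, the action $L_\sigma(\xs_0^{\sfrac{\ell}{2}-1})$ on the full bimodule equals $\xs_0' \otimes W \cdot Z^{R_{\sfrac{\ell}{2}-1} - \sfrac{\ell}{2} - g_3(P) + 1}$. This vanishes modulo $W$ (which was enough for the $\tau$ computation), but vanishes modulo $WZ$ only when the exponent of $Z$ is at least $1$, and this is precisely the strengthened hypothesis Eq.~\eqref{eq:condition for epsilon}. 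The analogous check for $L_\tau(\xs_0^{\sfrac{\ell}{2}+1})$ is automatic from the monotonicity in Lemma~\ref{lem:shape of top generators}. With these two refinements the argument from part~$(1)$ applies.

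For the non-surjectivity statement: assuming Eq.~\eqref{eq:condition for epsilon} holds, parts~$(1)$ and~$(2)$ together give $\veps(P(K,n)) \ne -1$ for every $K \in \scC$ and every $n \in \Z$. Since the concordance class of the mirror of any nontrivial L-space knot has $\veps = -1$, such classes cannot lie in the image of $P$, whence $P : \scC \to \scC$ is not surjective. The main obstacle in carrying out this plan is the bookkeeping in part~$(2)$: one must track every $W$-contribution to $\d\eta$ through several rows of the hypercube decomposition of $\bX(P,K,n)$ and verify that each carries the extra factor of $Z$; the improvement from Eq.~\eqref{eq:extra assumption} to Eq.~\eqref{eq:condition for epsilon} is exactly the one extra power of $Z$ reflecting the passage from $\bF[Z] = \bF[W,Z]/(W)$ to $\hat{R} = \bF[W,Z]/(WZ)$.
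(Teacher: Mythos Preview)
Your plan is conceptually the same as the paper's proof: identify the tower-generating cycle $\bar\eta$ from the $\tau$ computation, lift it to the $\hat{R}$-level, and verify it remains a cycle there; and you correctly pinpoint that the strengthening from $R_{\sfrac{\ell}{2}-1} \ge g_3(P) + \tfrac{\ell}{2} - 1$ to $R_{\sfrac{\ell}{2}-1} \ge g_3(P) + \tfrac{\ell}{2}$ is exactly what forces $L_\sigma(\xs_0^{\sfrac{\ell}{2}-1}) = \xs_0' \otimes W\, Z^{R_{\sfrac{\ell}{2}-1} - \sfrac{\ell}{2} - g_3(P) + 1}$ to vanish modulo $WZ$ rather than merely modulo $W$.

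The technical framework differs, however, and this is where your sketch has a gap. You propose to work in the \emph{full} complex $\bX(P,K,n)$ over $\bF[W,Z]$ and show $\d\eta \in WZ \cdot \bX$. But in the full complex the bimodule ${}_{\cK}\cX(L_P)^{\bF[W,Z]}$ carries nontrivial $\delta_3^1$ actions (the homotopies $h_{W,Z}$, $h_{\sigma,W}$, etc.), and the hypercube for $\bX$ has nonzero diagonal maps $\Phi^{\pm K,\pm\mu}$. These send top generators $\xs_0^t$ to algebraic-grading-$1$ elements of the staircases, and there is no a priori reason their outputs lie in $WZ \cdot \bX$; your analysis of $L_\sigma, L_\tau, L_W, L_Z$ covers only the length-$1$ hypercube maps. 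The paper sidesteps this by building a minimal model $\bX^{\ddagger}(P,K,n)$ over $\hat{R}$ via homological perturbation (Lemma~\ref{lem:R-hat-modules}), exactly parallel to the construction of $\bX^{\diamond}$ over $\bF[Z]$. In $\bX^{\ddagger}$ only $m_{1|1|0}$ and $m_{0|1|1}$ survive and the diagonal maps vanish, so checking that $\xs_\tau$ is a cycle reduces precisely to the verification you outline: that no $W$-weighted $L_a$-arrows leave the generators of $\frC^{\ddagger}$, $(\frC')^{\ddagger}$, $(\frC'')^{\ddagger}$, $(\frC''')^{\ddagger}$. Once that framework is in place, your argument is the paper's argument.
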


\begin{rem}
The condition in Equation~\eqref{eq:condition for epsilon} is slightly stronger (by 1) than the condition required for our computation of $\tau(P(K,n))$ in Theorem~\ref{thm: tau}. It is not satisfied for cabling operations, but it is satisfied by all two-bridge links $L(rq-1,q)$ with $r\ge q\ge3$, where $r$ and $q$ are odd integers. This family includes the case of Whitehead double and Mazur pattern. It is also satisfied when $L_P$ has linking number 0. It seems complicated to obtain an explicit formula of $\veps(P(K,n))$.  However, recall that the $\tau$-invariant of a knot must be $0$ if the $\veps$-invariant is $0$, so if we have $\tau(P(K,n))\neq 0$, then we can conclude that $\veps(P(K,n))=1$ under the assumption of the above theorem. 
\end{rem}

To study the $\veps$-invariant, we need to consider the version of knot Floer homology over the ring $\hat{R}$.  Recall the discussion after Definition \ref{def:local map}, the $\veps$-invariant of a knot $K$ is in $\{0,1\}$ if there is a $\hat{R}$-local map from $\hat{R}$ to $\cCFK_{\hat{R}}(K)$, and it is in $\{-1,0\}$ if there is a $\hat{R}$-local map in the opposite direction. 

For our purposes, there is another description of the $\veps$ invariant which is more convenient. There is a natural map
\begin{equation}
Q\colon \cCFK_{\hat{R}}(K)\to \cCFK_{\bF[Z]}(K)
\label{eq:map-Q}
\end{equation}
which sets $W$ to $0$. Then $\veps(K)\in \{0,1\}$ if and only if the induced map on homology
\[
Q_*\colon \cHFK_{\hat{R}}(K)\to \cHFK_{\bF[Z]}(K) 
\]
has a $\bF[Z]$-tower generator of $\cHFK_{\bF[Z]}(K)$  in its image.

Additionally, one defines $\veps(K)\in \{-1,0\}$ if and only if $\veps(-K)\in \{0,1\}$. 

It is helpful to rephrase the above condition into the language of bimodules. We will view the knot Floer complex over $\hat{R}$ as a type-$D$ module over $\hat{R}$, denoted $\cCFK(K)^{\hat{R}}$. We view $\hat{R}$ as a bimodule ${}_{\hat{R}} \hat{R}_{\bF[Z]}$, where $Z$ acts on the right by inclusion of $\bF[Z]$ into $\hat{R}$. There is a bimodule morphism 
\[
q\colon {}_{\hat{R}} \hat{R}_{\bF[Z]}\to  {}_{\hat{R}} \bF[Z]{}_{\bF[Z]}
\]
gotten by setting $W$ to be equal to zero. The morphism $q$ has $q_{i|1|j}=0$ unless $i=j=0$. Then the map $Q$ can be described as
\begin{equation}
Q=\bI_{\cCFK(K)}\boxtimes q.\label{eq:quotient-map-Q}
\end{equation}

To study the $\veps$-invariant, we need to study the $DA$-bimodule ${}_{\cK} \cX(L_P)^{\hat{R}}$. For computational purposes, it is helpful to study ${}_{\cK} \cX(L_P)^{\hat{R}}\boxtimes {}_{\hat{R}} \hat{R}_{\hat{R}}$. Analogously to Lemma~\ref{lem:X-min-non-trivial-actions}, this bimodule admits a relatively simple minimal model:

\begin{lem} 
\label{lem:R-hat-modules}
\item
\begin{enumerate}
\item  The bimodule ${}_{\cK} \cX(L_P)^{\hat{R}}\boxtimes {}_{\hat{R}} \hat{R}_{\hat{R}}$ is homotopy equivalent to an $AA$-bimodule ${}_{\cK} \cX^{\ddagger}(L_P)_{\hat{R}}$ which has only $m_{1|1|0}$ and $m_{0|1|1}$ non-trivial. The bimodule ${}_{\cK} \cX(L_P)^{\hat{R}}\boxtimes {}_{\hat{R}} \hat{R}_{\bF[Z]}$ is homotopy equivalent to the same module, with scalars restricted on the right.
\item There is an isomorphism of $(\cK,\bF[Z])$-bimodules: 
\[
\cX^{\diamond}(L_P)\iso \cX^{\ddagger}(L_P)/ (\cX^{\ddagger}(L_P)\cdot W).
\]
\item With respect to the homotopy equivalences
\[
\cCFK(P(K,n))_{\bF[Z]}\simeq \bX^{\diamond}(P,K,n)_{\bF[Z]} \quad \text{and} \quad 
\cCFK(P(K,n))_{\hat{R}}\simeq \bX^{\ddagger}(P,K,n)_{\hat{R}},
\]
the map 
\[
Q\colon \cCFK_{\hat{R}}(P(K,n))\to \cCFK_{\bF[Z]}(P(K,n))
\]
from Equation~\eqref{eq:quotient-map-Q} is chain homotopic to
\[
\bI_{\cX(K)\boxtimes \cH_-}\boxtimes Q_{L_P}
\]
where $Q_{L_P}$ is the quotient map
\[
Q_{L_P}\colon\cX^{\ddagger}(L_P)\to \cX^{\ddagger}(L_P)/(\cX^{\ddagger}(L_P)\cdot W)\iso \cX^{\diamond}(L_P).
\]
\end{enumerate}
\end{lem}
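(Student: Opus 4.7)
The plan is to parallel Section~\ref{sec:diamond-module}, replacing the role of $\bF[Z]$ with $\hat{R}$ throughout. Recall that ${}_{\cK}\cX^{\diamond}(L_P)_{\bF[Z]}$ was constructed in two steps: for each staircase $\cC$ appearing in ${}_{\cK}\cX(L_P)^{\bF[W,Z]}$, one exhibits a strong deformation retraction from $\cC\boxtimes \bF[Z]$ onto a smaller chain complex $\cC^{\diamond}$; one then invokes the homological perturbation lemma for $AA$-bimodules (Lemma~\ref{lem:HPL-AA}) to lift this to the bimodule structure. I would prove part~(1) by carrying out the same construction over $\hat{R}$. The key observation is that $\hat{R}=\bF[W,Z]/(WZ)$ splits additively as $\bF\oplus W\bF[W]\oplus Z\bF[Z]$, so on each staircase the differential $\d(\xs_{2i+1})=\xs_{2i}W^{\alpha_i}+\xs_{2i+2}Z^{\beta_i}$ decouples into independent cancelations on the $W$- and $Z$-towers, yielding a small $\hat{R}$-module $\cC^{\ddagger}$ together with an explicit strong deformation retraction from $\cC\boxtimes \hat{R}$ onto it. Performing this cancelation pointwise on all staircases in ${}_{\cK}\cX(L_P)^{\bF[W,Z]}\boxtimes \hat{R}$ and invoking Lemma~\ref{lem:HPL-AA} produces the desired $AA$-bimodule ${}_{\cK}\cX^{\ddagger}(L_P)_{\hat{R}}$. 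The vanishing of $m_{i|1|j}$ for $(i,j)\neq (1,0),(0,1)$ follows from the same algebraic-grading argument as in Lemma~\ref{lem:X-min-non-trivial-actions}: the higher terms appearing in~\eqref{eq:HPL_structure_maps} would raise algebraic grading past what the grading-zero module $\cX^{\ddagger}(L_P)$ supports. The second sentence of part~(1) is then restriction of scalars along $\bF[Z]\hookrightarrow \hat{R}$.

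For part~(2), I would check that the $\hat{R}$-deformation retraction constructed in part~(1) specializes, under the further quotient $\hat{R}\twoheadrightarrow \bF[Z]$ setting $W=0$, to the $\bF[Z]$-retraction used to build $\cX^{\diamond}(L_P)$: the $W$-tower cancelation collapses entirely while the $Z$-tower cancelation is preserved. The resulting identification of underlying $(\cK,\bF[Z])$-modules upgrades to an $AA$-bimodule isomorphism by Remark~\ref{rem:X-min-uniquely-determined}, since $\cX^{\diamond}(L_P)$ has vanishing $m_{0|1|0}$ and at most one generator in each Maslov-Alexander bigrading, so the $AA$-structure is rigidly determined.

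Part~(3) is then a formal consequence of parts~(1) and~(2) together with associativity of the box tensor product. By~\eqref{eq:quotient-map-Q}, the map $Q$ is $\bI\boxtimes q$ for the bimodule morphism $q\colon {}_{\hat{R}}\hat{R}_{\hat{R}}\to {}_{\hat{R}}\bF[Z]_{\bF[Z]}$ setting $W=0$. Applying the gluing identification $\cCFK(P(K,n))\simeq \cX_n(K)^{\cK}\boxtimes {}_{\cK}\cH_-^{\cK}\boxtimes {}_{\cK}\cX(L_P)^{\hat{R}}$ and associativity, the factor $q$ can be absorbed into the rightmost tensor factor; under the homotopy equivalences from parts~(1) and~(2), the absorbed map is precisely $Q_{L_P}\colon \cX^{\ddagger}(L_P)\to \cX^{\diamond}(L_P)$, giving the asserted chain homotopy.

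The main technical obstacle is the explicit staircase cancelation in part~(1). Unlike the $\bF[Z]$-case where setting $W=0$ immediately kills half of each staircase differential, cancelation over $\hat{R}$ must be carried out simultaneously in the $W$- and $Z$-towers, and one must verify that the resulting homotopy $H$ interacts correctly with the higher $\delta_k^1$ actions on ${}_{\cK}\cX(L_P)^{\bF[W,Z]}$ when computing the $AA$-structure via~\eqref{eq:HPL_structure_maps}. Once this cancelation is set up carefully, parts~(2) and~(3) follow immediately by functoriality.
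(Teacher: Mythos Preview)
Your proposal is correct and follows essentially the same approach as the paper: both construct $\cX^{\ddagger}(L_P)$ by applying the homological perturbation lemma staircase-by-staircase over $\hat{R}$ and then invoke the algebraic-grading argument of Lemma~\ref{lem:X-min-non-trivial-actions} to kill the higher actions; part~(3) is in both cases a formal diagram chase.

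The one place the paper proceeds differently is part~(2). Rather than checking that your explicit $\hat{R}$-retraction specializes under $W\mapsto 0$ to the $\bF[Z]$-retraction of Section~\ref{sec:diamond-module}, the paper first records a general homological fact: if $a\in S$ acts injectively on a free $S$-complex $C$ and on $H_*(C)$, then $H_*(C/a)\cong H_*(C)/a$ (proved by the chain $C/a\simeq\Cone(a)\simeq H_*(C)\otimes\Cone(a)\simeq H_*(C)/a$). Applying this twice gives $\cC^{\diamond}=H_*(\cC/W)\cong H_*(\cC)/W\cong (H_*(\cC)/U)/W\cong \cC^{\ddagger}/W$ directly, without tracking the retraction maps. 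Your hands-on route works equally well and is arguably more concrete; the paper's route is slightly slicker and sidesteps any bookkeeping about how $\Pi$, $I$, $H$ behave under the quotient.
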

\begin{proof}The proof is similar to the proof of Lemma~\ref{lem:X-min-non-trivial-actions}. Write $\cX^{\ddagger}(L_P)$ for the right $\hat{R}$-module constructed by replacing each staircase $\cC_s$ or $\cS$ with $H_*(\cC_s\otimes_R \hat{R})$.

Before proving the main claims, we describe some basic facts about staircase complexes. If $\cC$ is a staircase, write $\cC^\ddagger$ for $H_*(\cC\otimes_R \hat{R})$. 

It is straightforward to verify that if $\cC$ is a staircase, then $\cC^\ddagger$ is in fact isomorphic to $H_*(\cC)/U$.  This follows from a more general fact. Suppose $S$ is a ring, $a\in S$ and $C$ is a free chain complex over $S$ such that there is a quasi-isomorphism from $C$ to its homology $H_*(C)$. Suppose further that the action of $a$ is an injection on $C$ and $H_*(C)$. Then we claim that $C/a$ is quasi-isomorphic to $H_*(C)/a$. This follows from the following sequence of quasi-isomorphisms:
\begin{equation}
\begin{split}
C/a\simeq  &(C\xrightarrow{a} C)\\
= &C\otimes (R\xrightarrow{a} R)\\
\simeq &H_*(C)\otimes (R\xrightarrow{a} R)\\
=&( H_*(C)\xrightarrow{a} H_*(C))\\
\simeq &H_*(C)/a.
\end{split}
\label{eq:he-C/a}
\end{equation}
The first quasi-isomorphism follows from the general fact that if $f\colon A\to B$ is an injective chain map, then $\Cone(f)$ is quasi-isomorphic to $B/f(A)$.  The second quasi-isomorphism (second to third line) follows from the fact that there is a quasi-isomorphism $C\to H_*(C)$ and the fact that $R\xrightarrow{a} R$ is a free chain complex. The final quasi-isomorphism follows from the fact that $a$ acts injectively on $H_*(C)$.

We now address the claims in the statement. The proof is essentially the same as the proof of Lemma~\ref{lem:X-min-non-trivial-actions}. For each staircase $\cC_t$, we pick a strong deformation retraction of chain complexes over $\bF$  
\[
\begin{tikzcd}
\ar[loop left, "H"]\cC_t\otimes_R \hat{R} \ar[r,"\Pi",shift left] & \cC_t^\ddagger \ar[l, "I",shift left]
\end{tikzcd}
\]
such that $H$ increases the algebraic grading by 1. This can be achieved by writing $\cC_t$ as a mapping cone $\cC_t=\Cone(d\colon\cC_t^1\to \cC_t^0)$. Then $\Pi$ is projection of $\cC_t^0$ onto $\cC_t^0/\im d$. The short exact sequence
\[
0\to \cC^1_t\xrightarrow{d} \cC^0_t\xrightarrow{\Pi} \cC^\ddagger\to 0
\]
splits over $\bF$. The map $I$ is the corresponding map from $\cC^\ddagger$ to $\cC^0_t$ in this splitting, while $H$ is the map from $\cC^0_t$ to $\cC^1_t$ in this splitting. The homological perturbation lemma equips $\cX^\ddagger(L_P)$ with the structure of an $AA$-bimodule, and the same argument as in Lemma~\ref{lem:X-min-non-trivial-actions} implies that only $m_{0|1|1}$ and $m_{1|1|0}$ are non-trivial.

For the second claim, it suffices to show the analogous claim for a single staircase $\cC$. In this case, we repeatedly apply Equation~\eqref{eq:he-C/a} to see that 
\[
\cC^\diamond:= H_*(\cC/W)\iso H_*(\cC)/W\iso \left( H_*(\cC)/U \right)/W\iso \cC^\ddagger/W.
\]

To prove the final claim, one verifies that the following diagram commutes:
\[
\begin{tikzcd}{}_{\cK} \cX(L_P)^{\hat{R}} \boxtimes {}_{\hat{R}} \hat{R}_{\hat{R}} \boxtimes {}^{\hat{R}} [\iota]_{\bF[Z]} \ar[d, "\bI\boxtimes q"] \ar[r, "\Pi\boxtimes \bI"] & {}_{\cK} \cX^{\ddagger}(L_P)_{\hat{R}}\boxtimes {}^{\hat{R}}[\iota]_{\bF[Z]} \ar[d, "Q_{L_P}"]\\
{}_{\cK} \cX(L_P)^{\bF[Z]} \boxtimes {}_{\bF[Z]}\bF[Z]_{\bF[Z]} \ar[r, "\Pi\boxtimes \bI"] & {}_{\cK} \cX^{\diamond}(L_P)_{\bF[Z]}.
\end{tikzcd}
\]
We leave the verification to the reader, as it is straightforward. In the above, $\iota\colon \bF[Z]\to \hat{R}$ is the canonical inclusion, and $[\iota]$ is the induced bimodules. Here we write $\Pi$ for both the homotopy equivalence from ${}_{\cK} \cX(L_P)^{\hat{R}}\boxtimes {}_{\hat{R}} \hat{R}_{\hat{R}}$  to ${}_{\cK} \cX^{\ddagger}(L_P)_{\hat{R}}$ , as well as the homotopy equivalence of the $\diamond$-labeled modules. These are homotopy equivalences of $AA$-bimodules, and their only non-trivial term is $\Pi_{0|1|0}$. 
\end{proof}

In Figure~\ref{fig:2}, we illustrate the modules $H_*(\cC)$, $\cC^{\ddagger}$ and $\cC^\diamond$ when $\cC=\cCFK(T_{3,4})$, shown below:
\begin{equation}
\begin{tikzcd}
\xs_0 & \xs_1 \ar[l, "W",swap] \ar[r, "Z^2"] & \xs_2& \xs_3 \ar[l, "W^2",swap] \ar[r, "Z"] & \xs_4.
\end{tikzcd}
\label{eq:CFK-T34}
\end{equation}

\begin{figure}[h]
\begingroup%
  \makeatletter%
  \providecommand\color[2][]{%
    \errmessage{(Inkscape) Color is used for the text in Inkscape, but the package 'color.sty' is not loaded}%
    \renewcommand\color[2][]{}%
  }%
  \providecommand\transparent[1]{%
    \errmessage{(Inkscape) Transparency is used (non-zero) for the text in Inkscape, but the package 'transparent.sty' is not loaded}%
    \renewcommand\transparent[1]{}%
  }%
  \providecommand\rotatebox[2]{#2}%
  \newcommand*\fsize{\dimexpr\f@size pt\relax}%
  \newcommand*\lineheight[1]{\fontsize{\fsize}{#1\fsize}\selectfont}%
  \ifx\svgwidth\undefined%
    \setlength{\unitlength}{244.38129873bp}%
    \ifx\svgscale\undefined%
      \relax%
    \else%
      \setlength{\unitlength}{\unitlength * \real{\svgscale}}%
    \fi%
  \else%
    \setlength{\unitlength}{\svgwidth}%
  \fi%
  \global\let\svgwidth\undefined%
  \global\let\svgscale\undefined%
  \makeatother%
  \begin{picture}(1,0.33918147)%
    \lineheight{1}%
    \setlength\tabcolsep{0pt}%
    \put(0,0){\includegraphics[width=\unitlength,page=1]{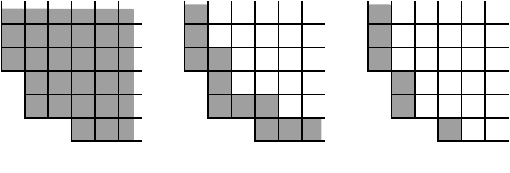}}%
    \put(0.13017386,0.00445268){\makebox(0,0)[t]{\lineheight{1.25}\smash{\begin{tabular}[t]{c}$H_*(\cC)$\end{tabular}}}}%
    \put(0.49496144,0.00702064){\makebox(0,0)[t]{\lineheight{1.25}\smash{\begin{tabular}[t]{c}$\cC^\ddagger$\end{tabular}}}}%
    \put(0.85974911,0.00702064){\makebox(0,0)[t]{\lineheight{1.25}\smash{\begin{tabular}[t]{c}$\cC^\diamond$\end{tabular}}}}%
  \end{picture}%
\endgroup%

\caption{The modules $H_*(\cC)$, $\cC^\ddagger$ and $\cC^\diamond$, when $\cC$ is the staircase complex $\cC=\cCFK(T_{3,4})$, shown above in Equation~\eqref{eq:CFK-T34}. In these diagrams, each shaded square denotes a generator over $\bF$. The action of $W$ moves left, while the action of $Z$ moves upwards.}
\label{fig:2}
\end{figure}

\begin{proof}[Proof of Theorem~9.1]
We will use the description of $\veps$ in terms of the map
\[
Q\colon \cCFL_{\hat{R}}(K)\to \cCFL_{\bF[Z]}(K)
\] 
from Equation~\eqref{eq:map-Q}.

By Lemma~\ref{lem:R-hat-modules}, it suffices to show that the map
\[
\bI_{\cX(K)\boxtimes \cH_-}\boxtimes Q_{L_P}\colon \cHFK_{\hat{R}}(P(K,n))\to \cHFK_{\bF[Z]}(P(K,n))
\]
has in its image (on the level of homology groups) a generator of the $\bF[Z]$-tower of $\cHFK_{\bF[Z]}(P(K,n))$. 

 In our analysis of $\tau(P(K,n))$, we identified a generator of the $\bF[Z]$-tower of $\bX(P,K,n)$. We will call this $\xs_{\tau}$. The $\hat{R}$-module generators of $\bX^{\ddagger}(P,K,n)_{\hat{R}}$ are the same as the $\bF[Z]$-module generators of $\bX^{\diamond}(P,K,n)_{\bF[Z]}$. Namely, these consist of the generators $\xs^t_{2i}$ in idempotent 0 for $i\in \Z$ and $t\in \Z+\ell/2$, as well as the generators $T^j\otimes \xs'_{2i}$ in idempotent 1. In particular, we may view $\xs_{\tau}$ as an element also in $\bX^{\ddagger}(P,K,n)$. To show that $\veps(P(K,n))\ge 0$, it suffices to show that $\xs_\tau$ is a cycle in $\bX^{\ddagger}(P,K,n)$. As with $\bX^{\diamond}(P,K,n)$ in Equation~\eqref{eq:X-P-K-n-top-cube}, the complex $\bX^{\ddagger}(P,K,n)$ can be decomposed into a hypercube
\[
\bX^{\ddagger}(P,K,n)_{\hat{R}}=\begin{tikzcd}[column sep=1.1cm, row sep=1.1cm]
 \bE^{\ddagger} \ar[r, "\Phi^{\mu}+\Phi^{-\mu}"] \ar[d, "\Phi^{K}+\Phi^{-K}",swap] & \bF^{\ddagger}\ar[d, "\Phi^{K}+\Phi^{-K}"]\\
\bJ^{\ddagger} \ar[r, swap,"\Phi^{\mu}+\Phi^{-\mu}"]& \bM^{\ddagger},
\end{tikzcd}
\]
where $\bE^{\ddagger}$, $\bF^{\ddagger}$, $\bJ^{\ddagger}$ and $\bM^{\ddagger}$ are defined via the obvious modification of $\bE^{\diamond}$, $\bJ^{\diamond}$, and so forth. Similar to Lemma~\ref{lem:no-diagonal-differential}, there is no length 2 map to the hypercube.  The maps $\Phi^{\pm K}$ are obtained by composing the $\sigma$ and $\tau$ outputs of $\delta^1$ of $\cX_n(K)^{\cK}$ into the $\delta_2^1$ of $\cH_-$. The maps $\Phi^{\pm \mu}$ are obtained by composing the $\sigma$ and $\tau$-weighted components of $\delta_1^1$ of $\cH_-$ with the map $\delta_2^1$ of ${}_{\cK} \cX^{\ddagger}(L_P)_{\hat{R}}$. Similar to the proof of Lemma~\ref{lem:X-min-non-trivial-actions}, the structure map $m_{1|1|0}(a,-)$ of ${}_{\cK} \cX^{\ddagger}(L_P)_{\hat{R}}$ is obtained by first including a module generator into the full module ${}_{\cK}\cX(L_P)^{R}\boxtimes R$, applying $\delta_2^1(a,-)$ of this module, and then projecting $\cX(L_P)\otimes R$ to $\cX^{\ddagger}(L_P)$. The relevant terms of the differential are of ${}_{\cK} \cX(L_P)^R$ are therefore those which are weighted by either $Z^i$ or $W^i$, for some $i\ge 0$.

 We can interpret our proof that $\xs_\tau$ was a cycle in Section~\ref{sec: tau computation} amounted to showing that there are no $Z^i$-weighted differentials leaving $\xs_\tau$ (under the respective hypotheses of these theorems).
 
 In this lemma, it suffices to show that there are no $W^i$ weighted differentials leaving $\xs_\tau$ which survive when projected to $\bX^{\ddagger}(P,K,n)$.

We will in fact show that there are no $W$-weighted differentials from any generators in the subspaces $\frC^{\ddagger}$, $(\frC')^{\ddagger}$, $(\frC'')^{\ddagger}$ or $(\frC''')^{\ddagger}$ to other generators of $\bX^{\ddagger}(P,K,n)$.

	We showed in Section \ref{sec: tau computation} shows that there are no differentials weighted by $Z^i$ from $\mathfrak{C}$ to other generators of $\bX^{\diamond}(P,K,n)$. 
Our work shows the same result for $\mathfrak{C}'$ when $\veps(K)=0$ and $n\ge 0$, and $\mathfrak{C}''$ when $n<0$ as long as
		 \[
		R_{\ell/2-1} \ge g_3(P)+\frac{\ell}{2}-1
		\] holds. 
	A similar statement holds for $\mathfrak{C}'''$ when $\veps(K)=-1$ and the above condition holds. 
	
	To disambiguate notation, we write  $\frC^{\ddagger},$ $(\frC')^{\ddagger}$ (and so forth) for the $\hat{R}$-span of the generators that we used to define $\frC$, $\frC'$ (and so forth).

 Therefore, to conclude $\veps(P(K,n))\neq -1$, it suffices to show there are no $W$-weighted differentials from $\mathfrak{C}^\ddagger$ (respectively  $(\mathfrak{C}')^{\ddagger},(\mathfrak{C}'')^{\ddagger},(\mathfrak{C}''')^{\ddagger}$) to other generators in $\bX^{\ddagger}(P,K,n)$. This will imply that $\xs_\tau$ is also a cycle in $\bX^{\ddagger}(P,K,n)$ so $\veps(P(K,n))\ge 0$.

	 Suppose first that  $\veps(K)=1$, and consider the complex $\mathfrak{C}^\ddagger(P,K,n)$ defined in Definition \ref{def:direct summand when epsilon=1}. 
	 
	 We start with length-$0$ maps first. For elements of the form $\ys'|\xs^{\sfrac{\ell}{2}}_{0}\in J^{\ddagger}_{s,\frac{\ell-1}{2}}$ and $\ys'|\xs'_0\in M^{\ddagger}_{s,\frac{\ell-1}{2}}$ , it is obvious, as each $J^{\ddagger}_{s,\frac{\ell-1}{2}}$ (resp. $M^{\ddagger}_{s,\frac{\ell-1}{2}}$) is a copy of $\cC^{\ddagger}_{\sfrac{\ell}{2}}$ (resp. $\cS^{\ddagger}$), and there are no $W$-arrows leaving $\xs^{\sfrac{\ell}{2}}_0$ (resp. $\xs'_0$) in $\cC^{\ddagger}_{\sfrac{\ell}{2}}$ (resp. $\cS^{\ddagger}$). For   $\ys_0| \xs^{\sfrac{\ell}{2}}_{0} \in E_{s,\frac{\ell-1}{2}}$ for $s>\tau(K),$ since $\delta^1(\ys_0)=0$ in the standard complex of the companion knot $K$, and there are no $W$-arrows leaving $\xs^{\sfrac{\ell}{2}}_0$ in $\cC^{\ddagger}_{\sfrac{\ell}{2}}$, there are no $W$-arrows leaving $\ys_0| \xs^{\sfrac{\ell}{2}}_{0}$ as well. The situation is similar for $\ys_0| \xs'_{0} \in F^{\ddagger}_{s,\frac{\ell-1}{2}}$ for $s>-1+\tau(K),$ $\ys_m|\xs^{\sfrac{\ell}{2}}_0 \in E^{\ddagger}_{s,\frac{\ell+1}{2}}$  for $s<-\tau(K)$ and $\ys_m| \xs'_{0} \in F^{\ddagger}_{s,\frac{\ell+1}{2}}$ for $s<-\tau(K).$
	 
	 For length-$1$ maps, we start with horizontal ones. In the row of $J^{\ddagger}_{s,\frac{\ell-1}{2}},M^{\ddagger}_{s,\frac{\ell-1}{2}}$, we observe that 
	 \[L_{\sigma}(\xs^{\sfrac{\ell}{2}}_0) = \xs'_0\otimes Z^{ R_{\sfrac{\ell}{2}}-g_3(P)-\sfrac{\ell}{2}},\quad L_{\tau}(\xs^{\sfrac{\ell}{2}}_0) = \xs'_0\otimes Z^{ R_{\sfrac{\ell}{2}}-g_3(P)+\sfrac{\ell}{2}},\] 
	 by the computation in Lemma \ref{lem:quotient bimodule in Lp}. Therefore, there are no $W$-arrows leaving them.	  In the rows containing $E_{s,\frac{\ell\pm 1}{2}}^\ddagger$ and $F^\ddagger_{s,\frac{\ell\pm 1}{2}}$, the argument is identical.
	
	   We now consider the maps $\Phi^K$ and $\Phi^{-K}$ (i.e. the vertical differentials). The maps $\Phi^K$ or $\Phi^{-K}$ vanish on $J_{s,\frac{\ell-1}{2}}^\ddagger,M_{s,\frac{\ell-1}{2}}^\ddagger $, so we only need to consider the rows of $E_{s,\frac{\ell\pm 1}{2}}^\ddagger$ and $F_{s,\frac{\ell\pm 1}{2}}^\ddagger$. For the row of $E_{s,\frac{\ell-1}{2}}^\ddagger,F_{s,\frac{\ell-1}{2}}^{\ddagger},$ we have already included all the downward maps $f^{K}$ in the definition of $\mathfrak{C}(P,K,n)$. For the upward maps $\Phi^{-K},$ since there is no $\tau$-term in $\delta^1(\ys_0)$, there are no upward maps leaving $\ys_0|\xs^{\sfrac{\ell}{2}}_0$ or $ \ys_0| \xs'_0$. The situation for the row of $E_{s,\frac{\ell+1}{2}}^\ddagger,F_{s,\frac{\ell+1}{2}}^\ddagger$ is similar. This finishes the discussion when $\veps(K)=1$.

	 When $\veps(K)=0$ and $n \ge 0$, consider the complex $\mathfrak{C}'(P,K,n)$ in Definition \ref{def:direct summand when epsilon=0}. There are no $W$-weighted arrows leaving $(\mathfrak{C}')^\ddagger$, because it is easily verified that there are no internal differentials of any of the $\cC_t^\ddagger$ or $\cS^{\ddagger}$ leaving any of the generators of $(\mathfrak{C}')^\ddagger$, and there are no possible $W$-weighted length differentials leaving $(\mathfrak{C}')^\ddagger$ since all $L_\tau$ or $L_\sigma$ arrows leaving a generator in this complex are already contained in the complex due to the shape of the complex, and they are all weighted by powers of $Z$. This finishes the discussion when $\veps(K)=0$ and $n > 0$.
	 
	 When $\veps(K)=0$ and $n< 0$, consider the complex $(\mathfrak{C}'')^{\ddagger}$, whose $\hat{R}$-generators are the same as the ones listed in Definition~\ref{def:direct summand when epsilon=0}. In this case, there are possibly non-trivial differentials from the copies of $\xs_0^{\sfrac{\ell}{2}-1}$ and $\xs_0^{\sfrac{\ell}{2}+1}$. These are the left-most and right-most generators in Figure~\ref{fig:direct summand when epsilon=0}. The possible differentials are
	 \[
	 L_\sigma(\xs^{\sfrac{\ell}{2}-1})\in \cS^{\ddagger}\quad \text{and} \qquad L_\tau(\xs^{\sfrac{\ell}{2}+1})\in \cS^{\ddagger}.
	 \]
	 Using computations similar to those in Lemma \ref{lem:restriction on P} and the assumption in Equation~\eqref{eq:condition for epsilon}, we can choose a model of $DA$-bimodule ${}_{\cK} \cX(L_P)^{\bF[W,Z]}$, such that two maps to be
\[\delta_2^1(\tau,\xs^{\sfrac{\ell}{2}+1}_0) = \xs'_0 \otimes UZ^{ R_{\sfrac{\ell}{2}+1}-g_3(P)+\frac{\ell}{2}},\quad \delta_2^1(\sigma,\xs^{\sfrac{\ell}{2}-1}) = \xs'_0\otimes UZ^{ R_{\sfrac{\ell}{2}-1}-g_3(P)-\frac{\ell}{2}}.\]
Observe that $\delta_2^1(\tau,\xs^{\sfrac{\ell}{2}+1})$ vanishes once we set $U=0$ by our assumption in Equation~\eqref{eq:condition for epsilon}. Also, we can see that $\delta_2^1(\sigma,\xs^{\sfrac{\ell}{2}-1})$ vanishes once we set $U=0$ by  observing that
\[ R_{\sfrac{\ell}{2}+1} \ge g_3(P)+\frac{\ell}{2}\ge g_3(P)-\frac{\ell}{2}\]
by Lemma \ref{lem:shape of top generators} and the assumption $\ell \ge 0$. So these maps vanish as well when working over the ring $\bF[W,Z]/(U)$. This finishes the case $\veps(K)=0, n<0$.

When $\veps(K)=-1$, consider the complex $(\mathfrak{C}''')^{\ddagger}$ which is generated over $\hat{R}$ by the generators appearing in Definition~\ref{def:direct summand when epsilon=-1}. Our argument is similar to the argument when $\veps(K)=1$, in the previous paragraphs, though we must pay extra attention to the four extra generators.

To see that there are no length 0 differentials leaving the generator $\xs_\tau$, we argue similarly to our proof in Lemma~\ref{lem:direct summand when epsilon=-1} that $\mathfrak{C}'''$ is a subcomplex. There are no changes in the row containing $J_{s,\frac{\ell-1}{2}}^{\ddagger}$ and $M_{s,\frac{\ell-1}{2}}^{\ddagger}$ between $\frC^\ddagger$ and $(\frC''')^\ddagger$. For the complexes $E_{s,\frac{\ell-1}{2}}^{\ddagger}$ and $F_{s,\frac{\ell-1}{2}}^{\ddagger}$, the argument in Lemma~\ref{lem:direct summand when epsilon=-1} extends without significant change, as we now describe. We note that the complexes $\scE_{*,t}^{\ddagger}$, $\scF_{*,t}^{\ddagger}$, $\scJ_{*,t}^{\ddagger}$, $\scM_{*,t}^{\ddagger}$ and maps $f^{\pm \mu}$ and $f^{\pm K}$ take the same form as the corresponding $\diamond$ labeled maps in Figures~\ref{fig:f-pm K} and ~\ref{fig:maps-f-pm-mu}. That is, we add no new arrows to these diagrams, and only replace the complexes and elementary maps ($L_W$, $L_Z$, $L_\sigma$ and $L_\tau$) with the $\ddagger$ versions instead of $\diamond$ version. The situation is the same as in the proof of Lemma~\ref{lem:direct summand when epsilon=-1}: the only case in  which a length-$0$ differential could leave $(\frC''')^{\ddagger}$ from generators in $E_{s,\frac{\ell-1}{2}}^{\ddagger}$ and $F_{s,\frac{\ell-1}{2}}^{\ddagger}$ would be if  
   $b_1 = -1$ and $b_2<0$ in the standard complex summand $C(b_1,\dots,b_m)$ of $\cCFK_{\hat{R}}(K)$. However, for knot $K$ in $S^3$, such a standard complex is not realizable, see the last paragraph of \cite{DHSTmore}*{Section~11}. The situation for the row of $E_{s,\frac{\ell+1}{2}}^{\ddagger}$ and $F_{s,\frac{\ell+1}{2}}^{\ddagger}$ is the same.

For length-$1$ arrows, we only need to consider those starting from the four extra generators $\ys_1|\xs_0'$, $\ys_1|\xs_0^{\sfrac{\ell}{2}-1}$, $\ys_{m-1}|\xs_0^{\sfrac{\ell}{2}+1}$ and $\ys_{m-1}|\xs_0^{\sfrac{\ell}{2}+1}$. Since there are no $\sigma$ or $\tau$-terms in $\delta^1_1(\ys_1)$ or $\delta^1_1(\ys_{m-1})$ in $ \cX_n(b_1,\dots, b_m)^{\hat{\cK}}$, we do not need to consider the differentials $\Phi^{K}$ or $\Phi^{-K}$. For the differentials $\Phi^{\mu}$ and $\Phi^{-\mu}$, the possible arrows leaving $(\mathfrak{C}''')^{\ddagger}$ are the leftward one from $\ys_{m-1}|\xs^{\sfrac{\ell}{2}+1}_0\in E^\ddagger_{-\tau(K)-\frac{1}{2},\frac{\ell+1}{2}}$,
 represented by 
\[ L_{\tau}\left(\xs^{\sfrac{\ell}{2}+1}_0\right)\in  \cS^{\ddagger}, \]
and rightward one from $\ys_1 | \xs^{\sfrac{\ell}{2}-1}_0\in  E_{\tau(K)+\frac{1}{2},\frac{\ell-1}{2}}^{\ddagger}$, represented by
\[L_{\sigma}\left(\xs^{\sfrac{\ell}{2}-1}_0\right)\in \cS^{\ddagger}.\]
Under the assumption that $\ell\ge 0$ and that Equation~\eqref{eq:condition for epsilon} holds, these two elements vanish by a small modification to the argument in Lemma~\ref{lem:restriction on P}.
\end{proof}

\section{Thurston norm and slice genus bounds}
\label{sec: Thurston norm}

In this section, we investigate the relationship between the Thurston norm of L-space links and the 4-genus of satellites by L-space patterns.  We introduce some notations. We write $g_4(K)$ for the 4-ball genus of a knot $K\subset S^3$. If $P$ is a winding number $\ell$ satellite, then we write $g_3^{\rel}(P)$ for the minimum genus of a smoothly embedded surface in $S^1\times D^2$ which has boundary equal to $P$ and $|\ell|$ parallel translates of $S^1\times \{pt\}$. 

In this section, we prove the following, which is Theorem~\ref{thm:4-ball-genus-intro} in the Introduction: 

\begin{thm}
	\label{thm:4-ball-genus} Let $K\subset S^3$ be a knot such that $g_4(K)=\tau(K)>0$ and let $P$ be an L-space satellite operator with winding number $\ell$. Then
	\[
	g_4(P(K,0))=\tau(P(K,0))=|\ell|g_4(K)+g_3^{\rel}(P).
	\]
\end{thm}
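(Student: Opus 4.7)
The plan is to establish matching upper and lower bounds on $g_4(P(K,0))$ and $\tau(P(K,0))$, then invoke the universal inequality $\tau\le g_4$. Since reversing the orientation of $P$ replaces $\ell$ by $-\ell$ but preserves $\tau(P(K,0))$, $g_4(P(K,0))$, $g_3^{\rel}(P)$, and the property of being an L-space satellite operator, I would first reduce to the case $\ell\ge 0$.

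For the topological upper bound $g_4(P(K,0))\le g_3^{\rel}(P)+\ell\, g_4(K)$, I would construct an explicit slice surface. Start from a properly embedded surface $\Sigma_P\subset S^1\times D^2$ of genus $g_3^{\rel}(P)$ whose boundary consists of $P$ together with $\ell$ parallel longitudes. Embed $S^1\times D^2$ as the $0$-framed tubular neighborhood of $K\subset S^3$, push the image of $\Sigma_P$ slightly into $B^4$, and cap off each longitudinal boundary circle with a pushed-in minimum-genus slice surface for $K$. A routine Euler characteristic bookkeeping confirms that the resulting surface $\Sigma\subset B^4$ bounds $P(K,0)$ and has genus $g_3^{\rel}(P)+\ell\, g_4(K)$.

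For the lower bound, I would appeal to Theorem~\ref{thm: tau}. The hypothesis $\tau(K)=g_4(K)>0$ forces $\veps(K)=1$: the case $\veps(K)=0$ is excluded because it implies $\tau(K)=0$, while $\veps(K)=-1$ combined with $\tau(K)=g_4(K)>0$ can be ruled out using standard properties of Hom's $\veps$ invariant (see Remark~\ref{rem:epsilon} together with the structure of Theorem~\ref{thm:standard-complex}). Since $n=0<2\tau(K)$, case~(1) of Theorem~\ref{thm: tau} gives
\[
\tau(P(K,0)) = R_{\sfrac{\ell}{2}} - \tfrac{\ell}{2} + \ell\,\tau(K),
\]
and Proposition~\ref{prop:R_ell/2-intro} identifies $R_{\sfrac{\ell}{2}}-\tfrac{\ell}{2}$ with $g_3^{\rel}(P)$. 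Substituting $\tau(K)=g_4(K)$, this matches the topological upper bound, so the chain $\tau(P(K,0))\le g_4(P(K,0))\le g_3^{\rel}(P)+\ell\,g_4(K)=\tau(P(K,0))$ forces all three quantities to coincide.

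The main obstacle I anticipate is the implication $\tau(K)=g_4(K)>0\Rightarrow \veps(K)=1$; once that is granted, the remainder of the argument is a routine synthesis of Theorem~\ref{thm: tau}, Proposition~\ref{prop:R_ell/2-intro}, and the topological upper bound. The other two cases of Theorem~\ref{thm: tau} are either unavailable (Case~(2) forces $\tau(K)=0$) or subject to the extra condition in Equation~\eqref{eq:extra condition when epsilon=0} (Case~(3)), so the argument genuinely relies on this $\veps=1$ reduction rather than a uniform treatment of all three cases.
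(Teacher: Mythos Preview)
Your proposal is correct and follows essentially the same route as the paper: reduce to $\ell\ge 0$, build the obvious slice surface for the upper bound, use $\veps(K)=1$ and Theorem~\ref{thm: tau} (case $n=0<2\tau(K)$) for the lower bound, and identify $R_{\sfrac{\ell}{2}}-\ell/2$ with $g_3^{\rel}(P)$ via Proposition~\ref{prop:R_ell/2-intro}. The one place where your justification is slightly off is the implication $\tau(K)=g_4(K)>0\Rightarrow\veps(K)=1$: this is precisely Hom's result \cite{HomTauCables}*{Corollary~4}, which is what the paper cites, and it does not follow from Remark~\ref{rem:epsilon} and Theorem~\ref{thm:standard-complex} alone (those describe $\veps$ in terms of the standard complex but say nothing about $g_4$).
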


We note that the inequality 
\[
g_4(P(K,0))\le |\ell|g_4(K)+g_3^{\rel}(P)
\]
is easy, since we can construct a smoothly embedded surface in $B^4$ which bounds $P(K,0)$ by taking a genus $g_3^{\rel}(P)$ surface in $S^1\times D^2$ which bounds $P$ and $|\ell|$ copies of a 0-framed longitude of $S^1\times D^2$, and then capping each longitude in $B^4$ with a parallel copy of a genus $g_4(K)$ slice genus for $K$.

Since $\tau(P(K,0))\le g_4(P(K,0))$ by \cite{OS4ballgenus}, it suffices to show that 
\begin{equation}
\tau(P(K,0))=|\ell|g_4(K)+g_3^{\rel}(P). \label{eq:sufficient-condition-Thurston-norm}
\end{equation}

The key additional result that we need to prove is the following:

\begin{prop}\label{prop:g3-P=Rll/2} Suppose that $P$ is an L-space satellite operator with winding number $\ell$. Then 
	\[
	R_{\sfrac{\ell}{2}}=g_3^{\rel}(P)+|\ell|/2.
	\]
\end{prop}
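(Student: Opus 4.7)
The plan is to establish both inequalities $R_{\sfrac{\ell}{2}} - |\ell|/2 \le g_3^{\rel}(P)$ and $g_3^{\rel}(P) \le R_{\sfrac{\ell}{2}} - |\ell|/2$. By reversing the orientation of $P$ if necessary, we may assume $\ell \ge 0$ throughout, since neither side changes under this operation.

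For the first direction $R_{\sfrac{\ell}{2}} - \ell/2 \le g_3^{\rel}(P)$, I would apply Theorem~\ref{thm: tau} to a companion knot $K$ chosen to be an L-space knot with $\tau(K)>0$, for example the right-handed trefoil. Such a $K$ satisfies $\veps(K)=1$ and $g_4(K)=\tau(K)$. Taking $n=0<2\tau(K)$, the first case of the theorem gives
\[
\tau(P(K,0)) = R_{\sfrac{\ell}{2}} - \ell/2 + \ell\,\tau(K).
\]
On the other hand, there is a standard geometric construction that produces a smoothly embedded surface in $B^4$ bounding $P(K,0)$ of genus $g_3^{\rel}(P) + \ell g_4(K)$: push a minimum relative Seifert surface for $P$ into $B^4$, and glue $\ell$ parallel copies of a minimum slice surface for $K$ along the $\ell$ longitudinal boundary components (this uses the $0$-framing, since $n=0$). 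Combining the Ozsv\'ath--Szab\'o inequality $\tau \le g_4$ with $g_4(K)=\tau(K)$ yields the desired bound.

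For the second direction $g_3^{\rel}(P) \le R_{\sfrac{\ell}{2}} - \ell/2$, I would invoke Ozsv\'ath--Szab\'o's theorem that link Floer homology detects the Thurston polytope of the link exterior \cite{OSLinks}, refined for L-space links by Liu \cite{BLiuCFK-LSpace}. By the definition of $R_{\sfrac{\ell}{2}}$ and the L-space property, the Alexander grading $(\ell/2,\,R_{\sfrac{\ell}{2}})$ is an extremal vertex of the support of $\widehat{\HFL}(L_P)$, and equivalently of the Newton polytope of the normalized multi-variable Alexander polynomial $\tilde{\Delta}_{L_P}$ (using the formulas from Proposition~\ref{prop:GNH-function} to pass from the $H$-function to the Alexander polynomial). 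Liu's formula then identifies
\[
x(\alpha) = 2R_{\sfrac{\ell}{2}} - 1,
\]
where $\alpha \in H_2(X_{L_P}, \partial X_{L_P})$ is the class whose boundary consists of one longitude of $P$ together with $\ell$ meridians of $\mu$. On the other hand, any connected surface representing $\alpha$ has exactly $1+\ell$ boundary components, hence satisfies $-\chi = 2g - 1 + \ell$. A minimum-genus relative Seifert surface in $S^1\times D^2$ of genus $g_3^{\rel}(P)$ yields such a representative, after connecting each longitude boundary component to a meridian of $\mu$ by an annulus in $S^3\setminus L_P$ (this preserves genus since $[S^1\times\{\theta\}] = [m_\mu]$ in $H_1(S^3\setminus\nu\mu)$). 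Thus $x(\alpha) \le 2g_3^{\rel}(P) - 1 + \ell$, and combining with the formula $x(\alpha) = 2R_{\sfrac{\ell}{2}} - 1$ gives the desired inequality.

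The main obstacle is the bookkeeping required in the second direction to identify $R_{\sfrac{\ell}{2}}$ as precisely half of $x(\alpha)+1$. This requires tracking the half-integer shifts in the Alexander grading set $\bH(L_P)$, the linking-number correction $(x_1x_2)^{1/2}$ built into the normalization of $\tilde{\Delta}_{L_P}$ (Equation~\eqref{eq:normalized Alex poly}), and the boundary-component correction terms in the general Ozsv\'ath--Szab\'o Thurston norm formula, so that everything conspires to the stated shift $-\ell/2$. One must also justify taking a Thurston-norm-minimizing representative to be connected, which is a standard but non-trivial step using that the class $\alpha$ has non-trivial boundary.
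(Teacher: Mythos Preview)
Your first-direction argument via the $\tau$-formula (Theorem~\ref{them:tau for epsilon=1}) and the slice-genus bound is correct and is a pleasant alternative to the paper's approach, which handles both directions simultaneously through Thurston-norm detection. However, your second-direction argument has a genuine gap: combining $x(\alpha)=2R_{\sfrac{\ell}{2}}-1$ with the bound $x(\alpha)\le 2g_3^{\rel}(P)-1+\ell$ (obtained by exhibiting a minimal relative Seifert surface as a representative of $\alpha$) yields $R_{\sfrac{\ell}{2}}-\ell/2\le g_3^{\rel}(P)$, which is the first inequality again, not $g_3^{\rel}(P)\le R_{\sfrac{\ell}{2}}-\ell/2$. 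You have proved the same direction twice.

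The fix --- and this is what the paper does in Lemma~\ref{lem:g3-rel=support-HFL} and Corollary~\ref{cor:Thurston-norm-N-R} --- is to run the Thurston-norm argument in the other direction: start from a Thurston-norm \emph{minimizing} surface $F$ for the class $\alpha$, tube together oppositely-oriented meridians and discard closed components so that $F$ becomes connected with exactly $1+\ell$ boundary components, and conclude $2g(F)+\ell-1=-\chi(F)=x(\alpha)=2R_{\sfrac{\ell}{2}}-1$. Then $F$ is itself a relative Seifert surface in $S^1\times D^2$ of genus $R_{\sfrac{\ell}{2}}-\ell/2$, giving $g_3^{\rel}(P)\le R_{\sfrac{\ell}{2}}-\ell/2$. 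Your equality $x(\alpha)=2R_{\sfrac{\ell}{2}}-1$ is correct (modulo the bookkeeping you flag, together with the fact $R_{\sfrac{\ell}{2}}=\max_t R_t$ from Lemma~\ref{lem:shape of top generators}, which you need to identify $R_{\sfrac{\ell}{2}}$ with the $s_2$-extent of the support of $\widehat{\HFL}$); what you are missing is the inequality coming from the \emph{definition} of $x(\alpha)$ as a minimum, rather than from exhibiting a particular representative.
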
 

Our proof of Proposition~\ref{prop:g3-P=Rll/2} uses two key results. The first is Ozsv\'{a}th and Szab\'{o}'s proof that link Floer homology detects the Thurston norm of the link complement \cite{OSThurstonNorm}, and the second is Liu's computation \cite{BLiuCFK-LSpace} of $\widehat{\HFL}(L)$ for a 2-component L-space link in terms of the $H$-function of $L$.

We additionally consider the slice genus of knots of the form $P(K,n)$ when $P$ is an L-space operator. We will write $g_3^{\rel}(P,n)$ for the minimum genus of a smoothly embedded surface in $S^1\times D^2$ with boundary equal to $P$ and $|\ell|$ parallel copies of a curve with homology class $-\lambda-n \mu$, where $\lambda$ is a 0-framed longitude (oriented so that $P$ is homologous to $\ell \lambda$) and $\mu$ is an oriented meridian. Therefore $g_3^{\rel}(P,0)=g_3^{\rel}(P)$. To understand  the slice genus of knots of the form $P(K,n)$, we will restrict to patterns which have width $N_{L_P}$ equal to $|\ell|/2$.

We will prove the following:
\begin{prop} Suppose that $P$ is an L-space satellite operator with winding number $\ell$.
	\begin{enumerate}
		\item The width $N_{L_P}$ is equal to $|\ell|/2$ if and only if $P$ has wrapping number $|\ell|$ (i.e. $P$ intersects a meridianal disk of $S^1\times D^2$ in exactly $|\ell|$ points).
		\item If $N_{L_P}=\ell/2\ge 0$ and $n\ge 0$, then
		\[
		g_3^{\rel}(P,-n)=g_3(P)+\frac{\ell(\ell-1)}{2} n.
		\]
	\end{enumerate}
\end{prop}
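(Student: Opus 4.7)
The plan is to establish parts (1) and (2) in sequence, with (1) reduced to a Thurston-norm computation and (2) handled by a sandwich argument between a $\tau$-lower bound and an explicit twist construction.

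For part (1), both $w(P) \ge |\ell|$ and $N_{L_P} \ge |\ell|/2$ hold automatically: the first from the definition of wrapping number, the second from Lemma~\ref{lem:bound on N} applied with $L_1 = \mu$ an unknot. To show the two equalities are equivalent, I would express each as a Thurston-norm statement in $S^3 \setminus L_P$. Consider the class in $H_2(S^3, L_P)$ dual to the meridian of $\mu$. Geometrically, its minimizers are Seifert disks for $\mu$ punctured by $P$, with minimum puncture count equal to $w(P)$; so its Thurston norm is $w(P) - 1$ (when $w(P) \ge 2$; the cases $w(P) \le 1$ are handled directly via Remark~\ref{rem:extra assumption}). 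For the L-space link $L_P$, Liu's computation \cite{BLiuCFK-LSpace} of $\widehat{\HFL}(L_P)$ in terms of the $H$-function, combined with Ozsv\'ath--Szab\'o's detection of the Thurston norm \cite{OSThurstonNorm}, expresses this same quantity as $2N_{L_P} - 1$, using the identification of $N_{L_P}$ with the top $x_1$-degree of the normalized Alexander polynomial (Remark~\ref{rem:highest-power}). Equating gives $w(P) = 2 N_{L_P}$, hence the equivalence.

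For part (2), I would first observe that $g_3^{\rel}(P, -n) = g_3(P(U, n))$, where $U$ is the unknot and the satellite is formed with $n$-framing. Indeed, a short homology calculation using $\mu_T \mapsto \mu_U$ and $\lambda_T \mapsto \lambda_U + n\mu_U$ shows that the boundary curve class $-\lambda_T + n\mu_T$ pushes forward to $-\lambda_U$ in $S^3$, which bounds $\ell$ disjoint parallel disks in the complementary solid torus of $U$; this is exactly the slope required of any family of intersection circles of a Seifert surface for $P(U,n)$ with $\partial(S^1 \times D^2)$, so one passes freely in both directions between a relative surface in $S^1 \times D^2$ and a Seifert surface in $S^3$ with no change in genus. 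The lower bound then reads
\[
g_3^{\rel}(P, -n) = g_3(P(U, n)) \ge g_4(P(U, n)) \ge \tau(P(U, n)) = g_3(P) + \frac{\ell(\ell-1)}{2}\, n,
\]
where the final equality is Theorem~\ref{thm:tau when epsilon=0}(1) applied to the companion $K = U$ (so that $\veps(U) = 0$) in the range $n \ge 0$.

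The upper bound is an explicit construction. By part (1), the hypothesis $N_{L_P} = \ell/2$ means $w(P) = \ell$, so after isotopy $P$ sits in closed $\ell$-braid position in $S^1 \times D^2$, meeting a fixed meridianal disk $D$ in $\ell$ coherently-oriented points. Seifert's algorithm applied to this braided presentation produces a surface $\Sigma_0 \subset S^1 \times D^2$ of genus $g_3(P)$ with boundary $P$ together with $\ell$ copies of the $0$-framed longitude of the solid torus, which realizes the case $n = 0$. To obtain framing $-n$ for $n > 0$, splice into $\Sigma_0$ near $D$ exactly $n$ copies of a Seifert surface for the positive full twist on $\ell$ coherently oriented strands; each splicing contributes $\ell(\ell-1)/2$ to the genus, yielding a surface of genus $g_3(P) + n\ell(\ell-1)/2$ with the required boundary. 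The main obstacle is expected to be part (1): while both inequalities are standard, equating the two sides of the Thurston norm computation requires careful alignment of the Alexander grading conventions on $\bH(L_P)$, the $H$-function normalization, and the formula of \cite{OSThurstonNorm}. In part (2), the minimally-wrapped hypothesis is used essentially, since the genus contribution $\ell(\ell-1)/2$ per twist insertion is valid precisely because the $\ell$ strands are coherently oriented, a feature guaranteed only when $w(P) = \ell$.
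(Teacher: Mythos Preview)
Your approach to part (1) is essentially the same as the paper's: both reduce to Corollary~\ref{cor:Thurston-norm-N-R}(1), which identifies $N_{L_P}-|\ell|/2$ with the minimal genus of a surface bounding $\mu$ and meeting $P$ in $|\ell|$ points. That minimal genus is zero exactly when a meridianal disk realizes the wrapping number $|\ell|$. (Your reference to Remark~\ref{rem:extra assumption} for small $w(P)$ is misplaced; the degenerate cases are handled directly by the same Thurston-norm computation or by the split-link $H$-function.)

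For part (2) you take a genuinely different route from the paper. The paper proves Lemma~\ref{lem:3-genus-n-framed} in one stroke via the Ozsv\'ath--Szab\'o Thurston norm formula: it evaluates $y(h)=\max(s_1 n\ell + s_2)$ over the support of $\widehat{\HFL}(L_P)$, uses $N=\ell/2$ to locate the maximizing lattice point at $(\ell/2,\,g_3(P)+\ell/2)$, and reads off $g_3^{\rel}(P,-n)$ directly. Your sandwich argument---lower bound via $g_3^{\rel}(P,-n)=g_3(P(U,n))\ge \tau(P(U,n))$ and Theorem~\ref{thm:tau when epsilon=0}(1), upper bound by explicit surface construction---is more hands-on and has the virtue of making the genus-$g_3(P)+\tfrac{\ell(\ell-1)}{2}n$ surface visible. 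The lower bound is correct, and the identification $g_3^{\rel}(P,-n)=g_3(P(U,n))$ is fine after the usual innermost-circle and tubing arguments.

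The upper bound, however, has a real gap. You assert that Seifert's algorithm applied to the $\ell$-braid presentation of $P$ produces a surface $\Sigma_0$ of genus $g_3(P)$. For an $\ell$-braid word with $c$ crossings, the Bennequin surface has genus $(c-\ell+1)/2$, and there is no reason the particular braid word arising from the minimal-wrapping isotopy (determined only up to conjugacy in $B_\ell$) achieves $c=2g_3(P)+\ell-1$. L-space knots are fibered and strongly quasipositive, but the quasipositive braid realizing the fiber need not have $\ell$ strands.

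The fix is simple and stays within your framework: do not invoke Seifert's algorithm at all. Your own identification at $n=0$ gives $g_3^{\rel}(P,0)=g_3(P(U,0))=g_3(P)$, so take $\Sigma_0$ to be any genus-$g_3(P)$ surface in $S^1\times D^2$ with $\partial\Sigma_0=P\cup \ell(-\lambda)$. Since $w(P)=\ell$, all points of $P\cap D$ carry the same sign, and the $\ell$ longitudinal boundary circles meet $\partial D$ once each with the opposite sign; hence after removing closed curves by innermost-disk surgery, $\Sigma_0\cap D$ consists of exactly $\ell$ arcs, each joining a $P$-point to a longitude-point. Now your twist-insertion works: replacing the $\ell$ product rectangles near $D$ by the standard banded surface for a full twist adds $\ell(\ell-1)$ bands, lowering $\chi$ by $\ell(\ell-1)$ and raising the genus by $\ell(\ell-1)/2$ per twist, exactly as you claim.
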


One immediate corollary of the above result, together with our formula for $\tau(P(K,n))$ in Theorem~\ref{thm: tau}, is the following:

\begin{cor} If $K$ is a knot in $S^3$ with $\tau(K)=g_4(K)>0$, and $n\ge 0$, then
	\[
	g_4(P(K,n))=g_3^{\rel}(P,-n)+|\ell| g_4(K).
	\]
\end{cor}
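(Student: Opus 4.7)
My plan is to prove this by squeezing $g_4(P(K,n))$ between the lower bound $\tau(P(K,n))\le g_4(P(K,n))$ of Ozsv\'{a}th--Szab\'{o} and an explicit geometric upper bound, with the $\tau$ value supplied by Theorem~\ref{thm: tau}. First, after orienting $P$ so that $\ell\ge 0$ (an operation which affects neither $\tau(P(K,n))$, $g_4(P(K,n))$, $|\ell|$, nor the L-space link condition on $L_P$), I would observe that $\tau(K)=g_4(K)>0$ forces $\veps(K)=1$: the case $\veps(K)=0$ would give $\tau(K)=0$, and $\veps(K)=-1$ would give $\veps(-K)=1$ and hence $\tau(-K)\ge 0$, contradicting $\tau(K)>0$.

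Next, I would apply Theorem~\ref{thm: tau} in the $\veps(K)=1$ regime. The two-case formula there splits at $n=2\tau(K)$, but the decisive simplification comes from Proposition~\ref{prop:minimally-wrapped-seifert-genus-intro}: in the minimal wrapping setting (which is inherited from the proposition immediately preceding the corollary) this gives the identity $R_{\sfrac{\ell}{2}}-\ell/2=g_3(P)$. Substituting this into the $n<2\tau(K)$ branch collapses it onto the $n\ge 2\tau(K)$ branch, yielding the uniform formula
\[
\tau(P(K,n))=g_3(P)+\frac{\ell(\ell-1)}{2}n+\ell\tau(K)
\]
for all $n\ge 0$. Rewriting via the preceding proposition $g_3^{\rel}(P,-n)=g_3(P)+\frac{\ell(\ell-1)}{2}n$ and using $\tau(K)=g_4(K)$ then gives $\tau(P(K,n))=g_3^{\rel}(P,-n)+|\ell|g_4(K)$.

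For the matching upper bound on $g_4(P(K,n))$, I would build an explicit slice surface. Take a genus-minimizing surface $\Sigma\subset S^1\times D^2$ realizing $g_3^{\rel}(P,-n)$, with $\partial\Sigma$ consisting of $P$ together with $|\ell|$ parallel curves in the class $-\lambda+n\mu$. Identify $S^1\times D^2$ with the $n$-framed tubular neighborhood of $K$ inside a collar of $S^3\subset\partial B^4$, so that the class $\lambda-n\mu$ on $\partial(S^1\times D^2)$ is sent to the Seifert-framed longitude $\lambda_0$ of $K\subset S^3$. Under this identification the $|\ell|$ longitudinal boundary components of $\Sigma$ become $|\ell|$ parallel copies of $-\lambda_0$; cap each off inside $B^4$ with a parallel copy of a minimum-genus slice surface for $-K$ (of genus $g_4(-K)=g_4(K)$), adding tubes inside $B^4$ if necessary to make the result connected. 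An Euler characteristic bookkeeping shows that the resulting connected orientable surface in $B^4$ with boundary $P(K,n)$ has genus exactly $g_3^{\rel}(P,-n)+|\ell|g_4(K)$. Combining this with $\tau(P(K,n))\le g_4(P(K,n))$ and the computation of the previous paragraph yields the claimed equality. The one technical point I expect to require the most care is verifying that the case distinction in Theorem~\ref{thm: tau} genuinely disappears in the minimal wrapping regime; everything else is direct, since Proposition~\ref{prop:minimally-wrapped-seifert-genus-intro} is precisely the ingredient that makes the two branches coincide.
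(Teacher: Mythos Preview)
Your overall strategy is correct and is exactly the route the paper takes: compute $\tau(P(K,n))$ via Theorem~\ref{thm: tau}, collapse the two branches using $R_{\sfrac{\ell}{2}}-\ell/2=g_3(P)$ from Proposition~\ref{prop:minimally-wrapped-seifert-genus-intro}, rewrite using the preceding proposition as $g_3^{\rel}(P,-n)+|\ell|g_4(K)$, and match this against the obvious geometric upper bound.

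There is one step you should revisit. Your argument that $\veps(K)=-1$ is impossible runs ``$\veps(-K)=1$ and hence $\tau(-K)\ge 0$'', but the implication $\veps(J)=1\Rightarrow\tau(J)\ge 0$ is not a standard fact and is not true in general: in the standard-complex language of Section~\ref{sec:hat-modules}, $\veps(J)=1$ only says $b_1>0$, whereas $\tau(J)=A(\ys_0)$ depends on the full sequence $(b_1,\dots,b_m)$, and symmetric sequences with $b_1>0$ and $\tau<0$ exist. The paper handles this by citing \cite{HomTauCables}*{Corollary~4}, which uses the full hypothesis $\tau(K)=g_4(K)>0$ (not just $\tau(K)>0$) to conclude $\veps(K)=1$; this is also how the same step is dealt with in the proof of Theorem~\ref{thm:4-ball-genus}. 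Replace your sentence with that citation and the argument is complete.
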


\subsection{Thurston norm and the $H$-function}

Liu \cite{BLiuCFK-LSpace} proved that the $H$-function of a 2-component link detects the Thurston norm of the link complement. In this section, we use this result to prove several results which relate the $H$-function of L-space satellite operators to certain minimal genus problems.

Our most general result is the following:

\begin{prop}
	\label{prop:relative-3-genus-H-function} Suppose that $L=L_1\cup L_2$ is a 2-component L-space link in $S^3$, with linking number $\ell$. Let 
	\[
	M=\min\{s_1: H_L(s,s_2)=H_L(s+1,s_2) \quad \forall s\ge s_1 \textrm{ and } s_2\in \Z+\ell/2\}.
	\]
	Then $M-|\ell|/2$ is equal to the minimum genus of a embedded surface $F$ in $S^3$ which has $F\cap L_1=\d F\cap L_1=L_1$, and such that $F\cap L_2$ consists of $|\ell|$ transverse intersection points.
\end{prop}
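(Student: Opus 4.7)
The plan is to reformulate the minimum genus as a Thurston norm in the link exterior $X_L:=S^3\setminus \nu(L)$ and then invoke Liu's \cite{BLiuCFK-LSpace} computation of this Thurston norm in terms of the $H$-function.

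Given a surface $F$ as in the statement, removing a tubular neighborhood of $L$ produces a properly embedded surface $F^\circ\subset X_L$ of the same genus as $F$, whose boundary consists of a single longitude on $\partial \nu(L_1)$ and $|\ell|$ meridians on $\partial \nu(L_2)$. Its homology class $h\in H_2(X_L, \partial X_L)$ is pinned down by this boundary pattern. Conversely, any properly embedded representative of $h$ caps off to a surface in $S^3$ of the type considered in the proposition by gluing in $|\ell|$ meridional disks in $\nu(L_2)$, an operation preserving the genus. Hence the minimum of $g(F)$ over surfaces as in the statement equals the minimum of $g(G)$ over properly embedded representatives of $h$, and so
\[
\|h\|_T \;=\; \max\bigl(0,\; 2 g_{\min} + |\ell| - 1\bigr),
\]
where $g_{\min}$ is the quantity to be computed.

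For a $2$-component L-space link, Liu \cite{BLiuCFK-LSpace}---combining Ozsv\'{a}th--Szab\'{o}'s Thurston norm detection theorem \cite{OSThurstonNorm} with his own explicit description of $\widehat{\HFL}(L)$ in terms of $H_L$---computes the Thurston norm of any class in $H_2(X_L,\partial X_L)$ combinatorially from the $H$-function. Applied to the class $h$ above, the result is
\[
\|h\|_T \;=\; \max\bigl(0,\; 2M - 1\bigr),
\]
where $M$ is as defined in the statement. The role of $M$ is transparent from the Ozsv\'{a}th--Szab\'{o} framework: the class $h$ pairs with the first Alexander coordinate, and $M$ is precisely the largest first-coordinate grading carrying non-zero $\widehat{\HFL}(L)$ under Liu's description, which in turn is the threshold at which $H_L$ becomes independent of its first variable.

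Equating the two expressions for $\|h\|_T$ yields $2 g_{\min} + |\ell| - 1 = 2M - 1$, whence $g_{\min} = M - |\ell|/2$ whenever this quantity is positive. For the borderline regime $2M-1\le 0$, the bound $M\ge g_3(L_1) + |\ell|/2$ provided by Lemma~\ref{lem:bound on N} (using $M=N_L$) ensures $M-|\ell|/2\ge 0$, and the remaining small cases (in which $F^\circ$ is a disk or annulus) are verified directly. The main obstacle I anticipate is the careful bookkeeping of grading conventions: the Alexander grading takes values in $\Z + \ell/2$ while the integer pairings in the Ozsv\'{a}th--Szab\'{o} Thurston norm formula require a shift by $\ell/2$, and the boundary-correction terms arising from $\partial \nu(L_1)$ and $\partial \nu(L_2)$ contribute with different weights; once these conventions are aligned, the proposition follows from the direct comparison above.
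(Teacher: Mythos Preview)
Your approach is essentially the same as the paper's: both identify the minimal genus with a Thurston norm computation via Ozsv\'{a}th--Szab\'{o}, then relate the support of $\widehat{\HFL}(L)$ to the $H$-function. The paper splits this into two explicit lemmas (Lemma~\ref{lem:g3-rel=support-HFL} and Lemma~\ref{lem:H-function-v-HFL-support}), while you package the second step as a citation to Liu.

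The one place where you are quicker than the paper is the assertion that ``$M$ is precisely the largest first-coordinate grading carrying non-zero $\widehat{\HFL}(L)$ under Liu's description.'' This is the content of Lemma~\ref{lem:H-function-v-HFL-support}, and the paper actually proves it independently using the formality of $\cCFL(L)$ for L-space links (from \cite{CZZ}) together with a Koszul resolution argument, rather than citing Liu directly. The paper does remark that Liu's spectral sequence would also yield this, so your shortcut is legitimate, but it is not quite as transparent as you suggest: one must still check that no cancellation in Liu's spectral sequence kills the top-grading generator, which is precisely the work encapsulated in the paper's Equation~\eqref{eq:H-function-special-values-support}. Your identification $M=N_L$ and use of Lemma~\ref{lem:bound on N} for the borderline regime are correct. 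The ``bookkeeping'' you flag at the end is handled in the paper by the computation $\sum_i |\langle h,\mu_i\rangle|=1$ (the meridional boundary components on $\partial\nu(L_2)$ contribute zero to this pairing), after which the formula $x(h)=2g_{\min}+|\ell|-1$ matches your $\|h\|_T$ directly.
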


To simplify the notation, we will write $g_3^{\rel}(L_1; L_2)$ for the minimal genus in the statement.

There are two steps in our proof of the above. The first is to translate Ozsv\'{a}th and Szab\'{o}'s result about the Thurston norm and link Floer homology \cite{OSThurstonNorm} to obtain a statement about the minimal genus $g_3^{\rel}(L_1; L_2)$ and the support of $\widehat{\HFL}(L)$. As a second step, we translate this condition on $\widehat{\HFL}(L)$ to a statement about the $H$-function of $L$.

\begin{lem}\label{lem:g3-rel=support-HFL}  If $L=L_1\cup L_2$ is a two-component link in $S^3$, then
	\[
	g_3^{\rel}(L_1;L_2)=\max_{(s_1,s_2)\in \bH(L)| \widehat{\HFL}(L,s_1,s_2)\neq 0} s_1-|\ell|/2. 
	\]
\end{lem}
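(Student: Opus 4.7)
The plan is to reduce Lemma~\ref{lem:g3-rel=support-HFL} to Ozsv\'{a}th and Szab\'{o}'s theorem \cite{OSThurstonNorm} that link Floer homology detects the Thurston norm of a link exterior. Write $X = S^3 \setminus \nu(L)$ for the exterior of $L$.

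First I would reformulate $g_3^{\rel}(L_1; L_2)$ as a Thurston norm problem in $X$. Given a surface $F \subset S^3$ of the type appearing in the definition of $g_3^{\rel}(L_1;L_2)$, intersecting $F$ with $X$ yields a properly embedded oriented surface $F' \subset X$ whose boundary consists of one longitude of $L_1$ on $\partial \nu(L_1)$ together with $|\ell|$ meridians of $L_2$ on $\partial \nu(L_2)$. Conversely, capping off the $|\ell|$ meridianal boundary components of any such $F'$ with disks in $\nu(L_2)$ recovers a surface of the desired type and preserves the genus. Standard Thurston-norm arguments allow us to take a norm-minimizing representative $F'$ to be connected, incompressible, and free of sphere or disk components, in which case
\[
-\chi(F') \;=\; 2 g_3^{\rel}(L_1;L_2) + |\ell| - 1.
\]
The class $[F'] \in H_2(X, \partial X)$ is Poincar\'{e}--Lefschetz dual to the cohomology class $\xi \in H^1(X; \Z)$ with $\xi(\mu_1) = 1$ and $\xi(\mu_2) = 0$, where $\mu_1, \mu_2$ are meridians of $L_1, L_2$; this follows because $F'$ algebraically intersects a pushoff of $\mu_1$ once and is disjoint from a pushoff of $\mu_2$.

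Next I would apply the link Floer homology Thurston norm formula of \cite{OSThurstonNorm}, which for $\xi \in H^1(X;\Z)$ takes the form
\[
x(\PD(\xi)) + \sum_{i=1}^{2} |\xi(\mu_i)| \;=\; 2 \max_{\ve{s} \in \bH(L),\ \widehat{\HFL}(L,\ve{s}) \neq 0}\; \lvert \langle \xi, \ve{s} \rangle \rvert,
\]
where $\ve{s}=(s_1,s_2)$ is regarded as a real cohomology class on $X$ via the natural identification. For $\xi = (1,0)$ this specializes to $x(\PD(\xi)) + 1 = 2 \max |s_1|$. By the conjugation symmetry $\widehat{\HFL}(L,\ve{s}) \iso \widehat{\HFL}(L,-\ve{s})$, we have $\max |s_1| = \max s_1$ over the support of $\widehat{\HFL}$. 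Combining with the formula for $-\chi(F')$ and rearranging yields the claimed identity.

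The main obstacle is the usual technicality of reducing to a connected, disk-free Thurston-norm minimizer: in degenerate configurations (for instance when $L_1$ bounds a disk disjoint from $L_2$) a norm-minimizing representative may consist of disk components, and the lemma must be verified directly in such cases. Care is also needed to align the half-integer conventions on $\bH(L)$ with the cohomological pairing $\langle \xi, \ve{s} \rangle$ appearing in the theorem of \cite{OSThurstonNorm}, in particular the precise location of the shift by $|\ell|/2$.
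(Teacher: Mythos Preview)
Your proposal is correct and follows essentially the same route as the paper: both arguments reduce to Ozsv\'{a}th--Szab\'{o}'s Thurston norm formula applied to the class dual to $\xi=(1,0)$, compute $-\chi$ of the connected representative as $2g_3^{\rel}(L_1;L_2)+|\ell|-1$, and invoke the conjugation symmetry $\max|s_1|=\max s_1$. The paper likewise brackets off the degenerate case (a trivial component of $L$) as straightforward, matching the obstacle you flagged.
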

\begin{proof} The result will follow from Ozsv\'{a}th and Szab\'{o}'s result \cite{OSThurstonNorm} that link Floer homology detects the Thurston norm, which we now recall.
	
	If $L\subset S^3$ is a link, and $h\in H_2(S^3,L)$ is a homology class, then $h$ may always be represented by a smoothly embedded surface $F$ in $S^3\setminus \nu(L)$ with boundary on $\d \nu(L)$, such that $[F]=h$. Given $h\in H_2(S^3,L; \Z)$, we define the \textit{Thurston norm} of $h$ to be
	\[
	x(h):=\min_{F\hookrightarrow S^3\setminus \nu(L)| [F]=h} \chi_-(F)
	\]
	where $\chi_-(F)$ is obtained by writing $F$ as a disjoint union of connected surfaces $F_1\cup \cdots \cup F_n$ and setting
	\[
	\chi_-(F):=\sum_{F_i|\chi(F_i)\le 0} -\chi(F_i).
	\]

	Ozsv\'{a}th and Szab\'{o} encode the Thurston norm as follows. We can naturally view $\bH(L)$  as being a subset of $\R^n$, where $n=|L|$. Furthermore, $H_1(S^3\setminus L;\R)$ can be identified with $\R^n$, by identifying the $i$-th meridian $\mu_i$ of $L$ with the standard $i$-th unit vector in $\R^n$. Therefore, we view
	\[
	\bH(L)\subset H_1(S^3\setminus L; \R).
	\]
	
	We set
	\[
	y(h):= \max_{\ve{s}\in \bH(L)| \widehat{\HFL}(L,\ve{s})\neq 0} |\langle \ve{s}, h\rangle |. 
	\]

	Ozsv\'{a}th and Szab\'{o} \cite{OSThurstonNorm} prove that if $L$ is an $n$-component link with no trivial components, then
	\begin{equation}
		x(h)+\sum_{i=1}^n |\langle h, \mu_i \rangle|=2 y(h).\label{eq:Thurston-norm}
	\end{equation}
	In the above, we identify $h\in H_2(S^3,L)$ with its Poincar\'{e} dual, which lies in $H^1(S^3\setminus \nu(L))$. 
	
	Note that if $L$ contains a trivial component, then the claim is straightforward, so we assume that $L$ does not contain a trivial component.

	We now return to the setting of two-component links. We consider the homology class $h\in H_2(S^3,L)$ of surfaces which have boundary equal to $L_1$, and which intersect $L_2$ algebraically in $\ell$-points. Equivalently, we can view these as embedded surfaces $F$ in $S^3\setminus \nu(L)$ which have boundary equal to a longitude of $L_1$ and some number of meridians of $L_2$. If two meridians along $L_2$ have different orientations, then we may tube them together without changing $\chi_-(F)$. Furthermore, any closed component may be deleting without increasing $\chi_-(F)$. Therefore, we may assume that $F$ is a connected surface with boundary equal to a longitude of $L_1$ and exactly $|\ell|$ meridians of $L_2$. The quantity $\chi_-(F)$ is minimized when $g(F)$ is minimized. We compute
	\[
	\chi_-(F)=-\chi(F)=2g(F)-1+|\ell|. 
	\]
	On the other hand, we have
	\[
	\sum_{i=1}^2 |\langle h, \mu_i\rangle|=1 \quad \text{and} \quad 2y(h)=\max_{(s_1,s_2)\in \bH(L)| \widehat{\HFL}(L,s_1,s_2)\neq0} 2|s_1|. 
	\]
	Applying Equation~\eqref{eq:Thurston-norm}, we obtain that
	\[
	2g(F)-1+|\ell|+1=\max_{(s_1,s_2)\in \bH(L)| \widehat{\HFL}(L,s_1,s_2)\neq 0} 2|s_1|.
	\]
	Rearranging, and noting that 
	\[
	\max_{(s_1,s_2)\in \bH(L)| \widehat{\HFL}(L,s_1,s_2) \neq 0} s_1=\max_{(s_1,s_2)\in \bH(L)| \widehat{\HFL}(L,s_1,s_2) \neq 0} |s_1|
	\] by the symmetry of link Floer homology, we obtain
	\[
	g(F)=\max_{(s_1,s_2)\in \bH(L)| \widehat{\HFL}(L,s_1,s_2)\neq 0} s_1- |\ell|/2,
	\]
	which was the claim.
\end{proof}

Next, we relate the support of $\widehat{\HFL}(L)$ to the $H$-function of $L$, when $L$ is an L-space link.

\begin{lem}\label{lem:H-function-v-HFL-support} Suppose that $L$ is a 2-component L-space link with linking number $\ell$. Let 
	\[
	N=\min\{s_1: H_L(s,s_2)=H_L(s+1,s_2) \quad \forall s\ge s_1 \textrm{ and } s_2\in \Z+\ell/2\}.
	\]
	(This coincides with our definition in Definition~\ref{def:width N} of $N_{L_P}$ when $L=L_P$). 
	Then 
	\[
	N=\max_{(s_1,s_2)\in \bH(L)| \widehat{\HFL}(L,s_1,s_2)\neq 0} s_1.
	\]
\end{lem}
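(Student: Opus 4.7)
The strategy is to invoke Liu's description \cite{BLiuCFK-LSpace} of $\widehat{\HFL}(L, s_1, s_2)$ for a two-component L-space link $L$, which expresses the non-vanishing of $\widehat{\HFL}(L, s_1, s_2)$ purely in terms of the $H$-function evaluated at $(s_1, s_2)$ and its immediate neighbors in $\bH(L)$. Once such a combinatorial criterion is in hand, the lemma reduces to a direct comparison of two descriptions of the ``rightmost'' corner of $H_L$.

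I would argue the two inequalities separately. For the bound $\max\{s_1 : \widehat{\HFL}(L, s_1, s_2) \neq 0\} \leq N$, suppose $s_1 > N$. Then by the definition of $N$ recorded in Equation~\eqref{eq:width}, we have $H_L(s_1 - 1, s_2) = H_L(s_1, s_2)$ for every $s_2 \in \Z + \ell/2$. Substituting these equalities into Liu's formula causes all discrete differences in the $s_1$-direction to vanish, forcing $\widehat{\HFL}(L, s_1, s_2) = 0$ for every $s_2$.

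For the reverse inequality $\max\{s_1 : \widehat{\HFL}(L, s_1, s_2) \neq 0\} \geq N$, I would exhibit a specific $s_2$ at which $\widehat{\HFL}(L, N, s_2) \neq 0$. By minimality in the definition of $N$, together with the bounded gap property of Lemma~\ref{lem:properties-H-function}(3), there is some $s_2^{*}$ with $H_L(N-1, s_2^{*}) = H_L(N, s_2^{*}) + 1$. Using monotonicity of $H_L$ in the $s_2$-variable and its stabilization $H_L(s_1, s_2) \to H_{L_1}(s_1 - \ell/2)$ as $s_2 \to \infty$ from Lemma~\ref{lem:properties-H-function}(4), I can locate a corner point $(N, s_2)$ at which Liu's formula returns a nonzero dimension, producing the required generator.

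The main obstacle is pinning down Liu's precise combinatorial expression and then executing the corner analysis at step two to identify a concrete $s_2$; the indexing is delicate because one must juggle the monotonicity, bounded gap, and stabilization properties simultaneously. An alternative (and likely cleaner) route is to bypass Liu's explicit formula and instead extract $\widehat{\HFL}$ directly from the minimal model of $\cCFL(L)$ described in \cite{CZZ}, where the staircases $\cC_t$ and $\cS$ make visible which bigradings support generators; since these staircases stabilize exactly at $t = N$, this description would prove the lemma in a self-contained manner.
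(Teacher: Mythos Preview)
Your proposal is correct, and the paper explicitly acknowledges your route via Liu's formula as a valid alternative in its opening sentence. The paper itself takes a slightly different path: rather than invoking Liu's closed-form description of $\widehat{\HFL}$, it uses the formality theorem for L-space links from \cite{CZZ}*{Theorem~1.2} to model $\widehat{\CFL}(L)$ as a Koszul complex tensored with $\cHFL(L)$. For the vanishing direction ($s_1>N$), the paper's Koszul argument reduces to the observation that $Z_1$ acts surjectively onto high $A_1$-gradings, which is perhaps marginally slicker than substituting into Liu's second-difference formula but amounts to the same thing. For the nonvanishing at $s_1=N$, both routes require exactly the corner analysis you flag as delicate, and the paper carries it out explicitly: it sets $n_2$ to be the minimum $s_2$ (among those with $H_L(N-1,s_2)=H_L(N,s_2)+1$) at which $H_L(N,s_2)$ achieves its minimum, and then verifies the four neighbor values in Equation~\eqref{eq:H-function-special-values-support} using monotonicity and bounded gap. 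Your suggested alternative via the staircase model from \cite{CZZ} is closer in spirit to the formality argument the paper actually runs.
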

\begin{proof} To align our exposition with our previous results about L-space links, we describe our proof in terms of the formality of $\widehat{\HFL}(L)$, though the result may also be derived from Liu's argument in \cite{BLiuCFK-LSpace} using a spectral sequence which converges to $\widehat{\HFL}(L)$, which is defined using the $H$-function of $L$.

	Consider the set $S=\{s_2: H_L(N-1,s_2)=H_L(N,s_2)+1\}$. The set $S$ is non-empty by definition of $N$. Let $h$ be the minimum value of $H_L(N,s_2)$ for $s_2\in S$, and let $n_2$ be the minimum value of $s_2\in S$ so that $H_L(N,s_2)=h$. 
	
	We claim that
	\begin{equation}
		H_L(N+1,n_2)=h,\quad H_L(N,n_2+1)=h, \quad H_L(N,n_2-1)=h+1, \quad H_L(N-1,n_2)=h+1.
		\label{eq:H-function-special-values-support}
	\end{equation}
	To establish the above equation, write
	\[
	H_L(N,n_2+1)=\scN,\quad 	H_L(N+1,n_2)=\scE,\quad H_L(N,n_2-1)=\scS, \quad H_L(N-1,n_2)=\scW.
	\]
	The fact that $\scE=h$ follows from the definition of $N$. Similarly the fact that $\scW=h+1$ follows from the definition of $n_2$. We observe that $\scN\in \{h,h-1\}$ by monotonicity and the bounded gap property of the $H$-function. If $\scN=h-1$, then the monotonicity and bounded gap properties of the $H$-function imply that $H_L(N-1,n_2+1)=h$, which contradicts the definition of $h$. Hence $\scN=h$. Similarly, we know that $\scS\in \{h,h+1\}$. If $\scS=h$, then $H_{L}(N-1,n_2-1)=h+1$ by the aforementioned properties, which contradicts the definition of $n_2$. Therefore Equation~\eqref{eq:H-function-special-values-support} follows. A schematic is shown in Equation~\ref{eq:h-function-N}. 
	\begin{equation}
		\begin{array}{|c|cccccc|}
			\hline
			\hbox{\diagbox{$s_2$}{$s_1$}}&\cdots&N-1&N&N+1&\cdots &\\
			\hline n_2+1&\cdots&* &h&*&\cdots &  \\
			n_2&\cdots&h+1 &h&h&\cdots &\\
			n_2-1&\cdots&* &h+1&*&\cdots& \\
			\hline
		\end{array}
		\label{eq:h-function-N}  	
	\end{equation} 
	
	We now use the above results to conclude 
	\[
	\begin{cases} \bigoplus_{s_2\in \Z+\ell/2}\widehat{\HFL}(L,s_1,s_2)\neq 0 & \text{ if } s_1=N,\\ 
	\bigoplus_{s_2\in \Z+\ell/2}\widehat{\HFL}(L,s_1,s_2)=0 &  \text{ if } s_1>N.\end{cases}
	\]
	Since $L$ is a 2-component L-space link, the complex $\cCFL(L)$ is formal by \cite{CZZ}*{Theorem~1.2}. Therefore, $\cCFL(L)$, viewed as a type-$A$ module over $R_2:=\bF[W_1,Z_1,W_2,Z_2]$, is homotopy equivalent to ${}_{R_2} \cHFL(L)$, viewed as a type-$A$ module with $m_j=0$ if $j\neq 2$. In particular, since we can realize $\widehat{\CFL}(L)$ as the tensor product of $\cCFL(L)$ with $\bF$ (viewed as an $R_2$-module with trivial $W_1$, $Z_1$, $W_2$ and $Z_2$ actions), we can write $\widehat{\CFL}(L)$ as the tensor product over $R_2$ of $\cHFL(L)$ with a free resolution of $\bF$ over $\bF[W_1,Z_1,W_2,Z_2]$. A free resolution of $\bF$ is given by the Koszul complex obtained by tensoring the four complexes of the form
	\[
	\begin{tikzcd} a^* \ar[r, "a"] &1^*\end{tikzcd}
	\] where $a$ is one of $W_1,Z_1,W_2,Z_2$. Here, the element $a^*$ is put in Alexander grading $A(a)$. We will write $\scK_{W_1,Z_1,W_2,Z_2}$ for the tensor product of these Koszul complexes. We can view $\scK_{W_1,Z_1,W_2,Z_2}$ as a type-$D$ module over $R_2:= \bF[W_1,Z_1,W_2,Z_2]$. Therefore we can write
	\begin{equation}
		\widehat{\CFL}(L)\simeq \scK^{R_2}_{W_1,Z_1,W_2,Z_2}\boxtimes {}_{R_2} \cHFL(L).
		\label{eq:Koszul-complex-model-for-CFL-hat}
	\end{equation} 
	Compare to \cite{BLZLattice}*{Equation~6.6}.

	If $\ve{s}\in \bH(L)$, let $\xs_{\ve{s}}$ denote $\bF[U]$-tower generator of $\cHFL(L_P,\ve{s})$. Note that $1^*\otimes \xs_{(N,n_2)}$ is a cycle. We claim that it is not a boundary. This follows from the fact that $\xs_{(N,n_2)}$ is not in the image of any of $W_1$, $Z_1$, $W_2$, $Z_2$ by the form of the $H$-function shown in Equation~\eqref{eq:h-function-N}. In particular, $\bigoplus_{s_2\in \Z+\ell/2}\widehat{\HFL}(L,N,s_2)\neq 0$.
	
	We now claim that $\widehat{\HFL}(L,s_1,s_2)=0$ for all $s_2$ and $s_1>N$. We can rewrite the model of $\widehat{\CFL}(L_P)$ from Equation~\eqref{eq:Koszul-complex-model-for-CFL-hat} as
	\[\Cone\left(\begin{tikzcd}\scK_{W_1,W_2,Z_2}\boxtimes \cHFL(L) \{0,1\}\ar[r, "\id\otimes Z_1"] &\scK_{W_1,W_2,Z_2}\boxtimes \cHFL(L) \end{tikzcd}\right).
	\] 
	Here $\{0,1\}$ denotes an upward shift in the multivariate Alexander grading by $(0,1)$.
	
	We recall an elementary fact that if $A$ and $B$ are chain complexes and $f\colon A\to B$ is an injective chain map, then $\Cone(A\xrightarrow{f} B)$ is quasi-isomorphic to $B/f(A)$. Since $Z_1$ acts injectively on $\cHFL(L)$, the above complex is quasi-isomorphic to
	\begin{equation}
\scK_{W_1,W_2,Z_2}\boxtimes (\cHFL(L)/Z_1 \cHFL(L)). \label{eq:quotient-of-chain-complexes-Koszul}
	\end{equation}
	However, the image of the action of $Z_1$ on $\cHFL(L)$ contains the entire subspace of elements which lie in $A_1$-Alexander grading above $N$. This is because the $H$-function satisfies $H_L(s_1+1,s_2)=H_L(s_1,s_2)$ for any $s_1\ge N$.  Since $\scK_{W_1,W_2,Z_2}$ is supported only in non-positive $A_1$-Alexander gradings, it follows that Equation~\eqref{eq:quotient-of-chain-complexes-Koszul} has no elements in $A_1$-Alexander gradings above $N$, completing the proof. 
\end{proof}

Note that Proposition~\ref{prop:relative-3-genus-H-function} follows immediate from Lemmas~\ref{lem:g3-rel=support-HFL} and ~\ref{lem:H-function-v-HFL-support}. 

We now apply Proposition~\ref{prop:relative-3-genus-H-function} to a link of the form $L_P$, for  an L-space pattern $P$. There are two ways to apply it, depending on which component we label as $L_1$ or $L_2$. We obtain the following:

\begin{cor}
	\label{cor:Thurston-norm-N-R}
	Suppose that $P$ is an L-space satellite operator.
	\begin{enumerate}
		\item The quantity $N_{L_P}-|\ell|/2$ is equal to the minimum genus of an embedded, oriented surface in $S^3$ with boundary $\mu$ and which intersects $P$ in exactly $|\ell|$ points, transversely.
		\item The quantity $R_{\sfrac{\ell}{2}}-|\ell|/2$ is equal to the minimum genus of an embedded, oriented surface in $S^1\times D^2$ which has boundary equal to $P$ and to $|\ell|$ parallel copies of a 0-framed longitude on the boundary of $S^1\times D^2$. (Previously, we wrote $g_3^{\rel}(P)$ for this quantity). 
	\end{enumerate}
\end{cor}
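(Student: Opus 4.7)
The plan is as follows. Part (1) will follow immediately from Proposition~\ref{prop:relative-3-genus-H-function}, applied to $L_P$ with $L_1 = \mu$ and $L_2 = P$, since the quantity $M$ in the statement of that proposition coincides with $N_{L_P}$ by the first description of the width in Equation~\eqref{eq:width}. The surface in the conclusion has boundary $L_1 = \mu$ and meets $L_2 = P$ in $|\ell|$ transverse points, which is exactly the geometric quantity being compared to $N_{L_P} - |\ell|/2$.

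For part (2), I would first translate the statement into a claim about surfaces in $S^3$. Identifying $S^1 \times D^2$ with $S^3 \setminus \nu(\mu)$, a $0$-framed longitude $S^1 \times \{\theta\}$ on the boundary torus is a meridian of $\nu(\mu)$, hence bounds a meridian disk of $\nu(\mu)$ lying in the complementary solid torus. Given any properly embedded surface $F \subset S^1 \times D^2$ with $\partial F = P \sqcup \bigsqcup_{i=1}^{|\ell|} \lambda_i$, I can cap off each $\lambda_i$ with the corresponding meridian disk of $\nu(\mu)$ to obtain a surface $\widetilde F \subset S^3$ with $\partial \widetilde F = P$ which meets $\mu$ transversely in $|\ell|$ points. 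This preserves the genus and is reversed by excising a small open disk of $\widetilde F$ around each intersection point and pushing the complement into $S^1 \times D^2$. So the minimum genus in question equals the minimum genus $g_3^{\rel}(P;\mu)$ of a surface in $S^3$ bounded by $P$ and meeting $\mu$ in $|\ell|$ transverse points.

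Next, I would invoke Proposition~\ref{prop:relative-3-genus-H-function} with the roles of the components swapped, taking $L_1 = P$ and $L_2 = \mu$. The proof of the proposition (via Lemmas~\ref{lem:g3-rel=support-HFL} and~\ref{lem:H-function-v-HFL-support}) is symmetric in the two components, so it yields $g_3^{\rel}(P;\mu) = M' - |\ell|/2$, where $M'$ is the smallest $r^{*} \in \Z + \ell/2$ such that $H_L(t,r) = H_L(t,r+1)$ for every $t \in \Z + \ell/2$ and every $r \ge r^{*}$. Finally, I would identify $M' = R_{\sfrac{\ell}{2}}$: by the monotonicity and bounded-gap properties of $H_L$ (Lemma~\ref{lem:properties-H-function}) together with the definition of $R_t$, for each fixed $t$ the column $r \mapsto H_L(t,r)$ strictly drops by $1$ at $r = R_t$ and is constant for $r \ge R_t$; so $M' = \max_{t} R_t$, and by Lemma~\ref{lem:shape of top generators} this maximum is attained at $t = \ell/2$.

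The one nontrivial step is the surface-correspondence argument in the second paragraph, which is purely topological and standard but deserves to be written out carefully. After that, everything reduces to bookkeeping against the structural results on $H_{L_P}$ already established in Section~\ref{sec:H function} and Section~\ref{sec: DA bimodule}.
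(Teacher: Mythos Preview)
Your proposal is correct and follows exactly the approach the paper intends: the paper's proof is the single remark that one applies Proposition~\ref{prop:relative-3-genus-H-function} twice, once with $(L_1,L_2)=(\mu,P)$ and once with the roles swapped, and you have correctly supplied the two details the paper leaves implicit---the capping/uncapping correspondence between surfaces in $S^1\times D^2$ and surfaces in $S^3$ meeting $\mu$ transversely, and the identification $M'=\max_t R_t=R_{\ell/2}$ via Lemma~\ref{lem:shape of top generators}.
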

The second part of the above Corollary is stated as Proposition~\ref{prop:R_ell/2-intro} in the introduction and Proposition \ref{prop:g3-P=Rll/2} in this section.

\subsection{Slice genus bounds}

We now prove the main theorem of this section:

\begin{proof}[Proof of Theorem~\ref{thm:4-ball-genus}]
	Let $K$ be a knot with $\tau(K)=g_4(K)>0$, and $P$ be an L-space satellite operator. We may assume, by possibly reversing the string orientation of $P$, that $\ell=w(P)\ge 0$.
	
	As observed in Equation~\eqref{eq:sufficient-condition-Thurston-norm}, it is sufficient to show that $\tau(P(K,0))$ is equal to $g_3^{\rel}(P)+\ell g_4(K)$.
	
	By \cite{HomTauCables}*{Corollary~4}, we know that $\veps(K)=1$. Therefore our Theorem~\ref{thm:main-intro} implies that
	\[
	\tau(P(K,0))= R_{\sfrac{\ell}{2}}-\ell/2+\ell\tau(K).
	\]
	By Corollary~\ref{cor:Thurston-norm-N-R} the above is equal to
	\[
	g_3^{\rel}(P)+\ell\tau(K)= g_3^{\rel}(P)+\ell g_4(K),
	\]
	completing the proof.
\end{proof} 

We also have the following statement when $\ell=0$, which is stated as Proposition~\ref{prop:slice-genus-bounds-winding-number-0-intro} in the introduction:
\begin{prop}
 Suppose that $K\subset S^3$ is a knot with $g_4(K)=\tau(K)>0$, and $P$ is an L-space satellite operator with winding number 0. Then
\[
g_4(P(K,n))=g_3^{\rel}(P)
\]
for all $n<2\tau(K)$. 
\end{prop}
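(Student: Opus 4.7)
\medskip

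The plan is to prove the claim by establishing matching upper and lower bounds on $g_4(P(K,n))$, both of which reduce directly to results already in hand.

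For the upper bound, I would use the hypothesis $\ell = 0$ crucially. When the winding number vanishes, a minimal genus surface realizing $g_3^{\rel}(P)$ in $S^1 \times D^2$ has boundary equal to $P$ and zero copies of the longitude, so it is a genuine Seifert surface for $P$ inside the solid torus. Embedding $S^1 \times D^2$ as an $n$-framed neighborhood of $K$, this surface pushes forward to a Seifert surface for $P(K,n)$ in $S^3$ of genus $g_3^{\rel}(P)$. Hence
\[
g_4(P(K,n)) \;\le\; g_3(P(K,n)) \;\le\; g_3^{\rel}(P).
\]
Note that the framing $n$ and the topology of $K$ play no role in this step.

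For the lower bound, I would use the inequality $g_4 \ge \tau$ together with the formula in Theorem~\ref{thm:main-intro}. Since $\tau(K)=g_4(K)>0$, Hom's theorem (\cite{HomTauCables}*{Corollary~4}) gives $\veps(K)=1$, so case~(1) of Theorem~\ref{thm:main-intro} applies. Taking $\ell = 0$ and $n < 2\tau(K)$, all but one term in the formula collapses to zero and we obtain
\[
\tau(P(K,n)) \;=\; R_{0} - 0 + 0 + 0 \;=\; R_{0}.
\]
By Proposition~\ref{prop:g3-P=Rll/2} (equivalently, the second part of Corollary~\ref{cor:Thurston-norm-N-R}), $R_{0} = R_{\sfrac{\ell}{2}} = g_3^{\rel}(P) + |\ell|/2 = g_3^{\rel}(P)$. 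Therefore
\[
g_4(P(K,n)) \;\ge\; \tau(P(K,n)) \;=\; g_3^{\rel}(P).
\]

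Combining the two bounds gives the desired equality. There is no real obstacle here: once one verifies that case~(1) of Theorem~\ref{thm:main-intro} applies (which requires only $\veps(K)=1$ and $n<2\tau(K)$) and that $R_0$ has the Thurston-norm interpretation from Proposition~\ref{prop:g3-P=Rll/2}, both inequalities are immediate. The only subtlety worth flagging is that the upper bound argument relies on $\ell = 0$ so that the relative Seifert surface for $P$ closes up to a Seifert surface for $P(K,n)$ without needing to cap off longitudinal boundary components using surfaces for $K$ (which is why, unlike in Theorem~\ref{thm:4-ball-genus-intro}, no $|\ell|\,g_4(K)$ term appears).
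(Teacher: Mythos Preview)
Your proof is correct and follows essentially the same approach as the paper: an upper bound from the relative Seifert surface in the solid torus (using $\ell=0$), and a lower bound from $\tau(P(K,n))=R_{\sfrac{\ell}{2}}=g_3^{\rel}(P)$ via the $\veps(K)=1$ case of Theorem~\ref{thm:main-intro} together with the Thurston-norm identification in Proposition~\ref{prop:g3-P=Rll/2}. Your write-up is slightly more explicit than the paper's (you spell out the citation to Hom for $\veps(K)=1$ and the vanishing of each term in the formula), but the argument is the same.
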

\begin{proof} Since the winding number is zero, $g_3^{\rel}(P)$ is the minimum genus of a surface in $S^1\times D^2$ which has boundary equal to $P\subset S^1\times D^2$. Therefore $g_4(P(K,n))\le g_3^{\rel}(P)$ for any $n$. Furthermore, since $\tau(K)=g_4(K)>0$,  we have that $\veps(K)=1$, as before. When $n<2\tau(K)$, the  Theorem~\ref{them:tau for epsilon=1} and Corollary~\ref{cor:Thurston-norm-N-R} imply that
\[
\tau(P(K,n))=g_3^{\rel}(P).
\]
Therefore $g_4(P(K,n))=\tau(P(K,n))$, completing the proof.
\end{proof}

\subsection{Patterns $P$ with minimal wrapping number}

If $P$ is a satellite pattern, the \emph{wrapping number} of $P$ is defined to be the minimal geometric intersection number of $P$ with a meridianal disk of $S^1\times D^2$. In this section, we study L-space patterns $P$ with wrapping number $|\ell|/2$. By Corollary~\ref{cor:Thurston-norm-N-R}, these are exactly the patterns with width $N_{L_P}$ equal to $|\ell|/2$. In particular, any pattern where $P$ is braided about $\mu$ satisfies this condition (in particular cables and 1-bridge braids satisfy this condition). 

Recall that $g_3^{\rel}(P,n)$ is the minimum genus of a smoothly embedded surface in $S^1\times D^2$ with boundary equal to $P$ and $|\ell|$ parallel copies of a curve with homology class $-\lambda-n \mu$, where $\lambda$ is a 0-framed longitude (oriented so that $P$ is homologous to $\ell \lambda$) and $\mu$ is an oriented meridian. In this section, we prove the following:

\begin{lem}\label{lem:3-genus-n-framed} Suppose that $P$ is an L-space pattern with $\ell\ge 0$ and with width $N=\ell/2$. Then, for any $n\ge 0$, we have
	\[
	g_3^{\rel}(P,-n)=g_3(P)+\frac{\ell(\ell-1)}{2}n.
	\]
\end{lem}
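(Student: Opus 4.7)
The plan is to prove matching bounds: a lower bound via the $\tau$-invariant and an upper bound via an explicit surface construction.

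For the lower bound, given any surface $F \subset S^1 \times D^2$ realizing $g_3^{\rel}(P, -n)$ (so that $\partial F = P \cup \ell \cdot (-\lambda + n\mu)$), I would embed $S^1 \times D^2$ as the tubular neighborhood $\nu(U)$ of an unknot $U \subset S^3$ using the $n$-framing, under which $\lambda \mapsto \lambda_U + n\mu_U$ and $\mu \mapsto \mu_U$. Each boundary component of $F$ of class $-\lambda + n\mu$ then maps to a copy of $-\lambda_U$; since $\lambda_U$ is the meridian of the complementary solid torus $S^3 \setminus \nu(U)$, these $\ell$ curves bound disjoint meridional disks there. Capping $F$ off with these disks produces a Seifert surface for the satellite knot $P(U, n) \subset S^3$ of genus $g(F)$, so
\[
g_3^{\rel}(P, -n) \ge g_3(P(U, n)) \ge g_4(P(U, n)) \ge \tau(P(U, n)).
\]
Theorem~\ref{thm:tau when epsilon=0}(1), applied with $K = U$ (so $\veps(U) = 0$ and $n \ge 0$), evaluates $\tau(P(U, n)) = g_3(P) + \binom{\ell}{2} n$.

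For the upper bound, I plan to glue two pieces: a relative Seifert surface $\Sigma_0 \subset S^1 \times D^2$ of genus $g_3(P)$ with boundary $P \cup \ell \cdot (-\lambda)$ (whose existence is the $n = 0$ case of the lemma, equivalent to Proposition~\ref{prop:minimally-wrapped-seifert-genus-intro} under the hypothesis $N_{L_P} = \ell/2$), and a ``twisted band'' surface $\Sigma_1$ sitting in a boundary collar $T^2 \times [0, 1] \subset S^1 \times D^2$ with $\partial \Sigma_1 = \ell \cdot \lambda \sqcup \ell \cdot (-\lambda + n\mu)$ (the first set of curves being glued to the $\ell \cdot (-\lambda)$ on $\partial \Sigma_0$) and genus $\binom{\ell}{2} n$. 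To build $\Sigma_1$, I would lift to the universal cover $\R^2 \times [0, 1]$ and connect the $\ell$ horizontal lifts at height $0$ to the $\ell$ slope-$(-n)$ lifts at height $1$ by $\ell$ parallel strips; projecting back to $T^2 \times [0, 1]$, each pair of strips crosses in exactly $n$ circles, since the top ends wind $n$ times in the $\mu$-direction relative to the bottom ends, and resolving each of the $\binom{\ell}{2} n$ intersection circles by oriented Seifert tubing adds genus $1$ per circle, producing an embedded oriented $\Sigma_1$ of the claimed genus.

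The main technical obstacle is the precise construction of $\Sigma_1$ and the verification of the intersection count; a cleaner equivalent strategy is to use the $n$-fold twist diffeomorphism $\phi_n \colon S^1 \times D^2 \to S^1 \times D^2$ (acting on $T^2$ by $\lambda \mapsto \lambda + n\mu$, $\mu \mapsto \mu$, and sending $-\lambda + n\mu \mapsto -\lambda$) to obtain the identity $g_3^{\rel}(P, -n) = g_3^{\rel}(\phi_n(P))$, then realize $\phi_n(P)$ as a closed $\ell$-braid obtained from a braid presentation of $P$ by appending $n$ copies of the full twist $\Delta_\ell^2$, and build its Bennequin-style relative Seifert surface in $S^1 \times D^2$: this has the correct boundary $\phi_n(P) \cup \ell \cdot (-\lambda)$ and picks up exactly $\binom{\ell}{2}$ bands per full twist, giving total genus $g_3(P) + \binom{\ell}{2} n$ provided one chooses a braid presentation of $P$ that realizes $g_3^{\rel}(P) = g_3(P)$.
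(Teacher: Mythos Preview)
Your lower bound via capping off in $S^3$ to a Seifert surface for $P(U,n)$ and then invoking Theorem~\ref{thm:tau when epsilon=0}(1) is correct and is a nice alternative to the paper's approach for that direction.

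The upper bound, however, has a genuine gap. The surface $\Sigma_1$ you propose in the boundary collar $T^2\times[0,1]$ cannot exist: its purported oriented boundary $\ell\cdot\lambda \sqcup \ell\cdot(-\lambda+n\mu)$ has homology class $\ell n[\mu]$ in $H_1(T^2\times[0,1])\cong H_1(T^2)$, which is nonzero whenever $\ell,n>0$, whereas the boundary of any properly embedded oriented surface in $T^2\times[0,1]$ must be null-homologous there. Concretely, your lifted ``strips'' do not descend to immersed annuli in $T^2\times[0,1]$: at a generic height $t\in(0,1)$ each strip sits over a line of slope $-nt$ in $\R^2$, which for irrational $nt$ projects to a dense leaf in $T^2$, not a closed curve. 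Your alternative braid-based construction assumes that $P$ admits a closed $\ell$-braid presentation whose Bennequin surface already has genus $g_3(P)$; but the hypothesis $N_{L_P}=\ell/2$ is only equivalent (via Corollary~\ref{cor:Thurston-norm-N-R}) to $P$ having wrapping number $\ell$, and minimal wrapping does not imply that $P$ is braided, let alone positively braided with a genus-minimizing Bennequin surface.

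The paper's proof sidesteps these issues by not separating the two inequalities at all. It applies Ozsv\'ath--Szab\'o's Thurston norm formula $x(h)+\sum_i|\langle h,\mu_i\rangle|=2y(h)$ directly to the class $h\in H_2(S^3,L_P)$ with $\langle \ve{s},h\rangle = n\ell s_1+s_2$, and then uses $N_{L_P}=\ell/2$ together with Lemma~\ref{lem:H-function-v-HFL-support} to pin down the support of $\widehat{\HFL}(L_P)$ and evaluate $y(h)$ exactly, obtaining both bounds simultaneously from link Floer homology rather than from any explicit surface.
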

\begin{proof}
	We first claim that $g_3^{\rel}(P,-n)$ is equal to the maximum value of $s_2+n\ell s_1$ for $(s_1,s_2)$ in the support of $\widehat{\HFL}(L_P)$.
	
	The quantity $g_3^{\rel}(P,-n)$ measures embedded surfaces $F$ in $S^1\times D^2$ with boundary equal to the union of $P$ and $\ell$ parallel curves which have homology class $-\ell \lambda+n \mu$, where $-\lambda$ is a 0-framed longitude (which we can think of as a meridian of the component $\mu\subset L_P$). If $h$ is the class of this surface, we observe, using similar reasoning as in Lemma~\ref{lem:g3-rel=support-HFL}, that a smoothly embedded surface $F$ representing $h$ minimizes $\chi_-(h)$ if and only if it minimizes $g(F)$. Therefore we have
	\[
	x(h)=2g(F)+\ell-1.
	\]
	We also compute that
	\[
	\sum_{i=1}^2 |\langle h,\mu_i\rangle |=1+n\ell \quad \text{and} \quad \langle \ve{s},h\rangle=s_1 n\ell+s_2. 
	\]
	Therefore $y(h)$ is the maximum of $s_1n\ell+s_2$ ranging over $(s_1,s_2)$ which are supported by $\widehat{\HFL}(L_P)$. 
	
	Note that since $N=\ell/2$, we must have $R_{\sfrac{\ell}{2}}=g_3(P)+\ell/2$ by the stabilization property of the $H$-function, stated in Lemma~\ref{lem: properties of H function}.
	
	It follows from Lemma~\ref{lem:H-function-v-HFL-support} (applied once with $L_1=\mu$, $L_2=P$ and once with these roles reversed) that if $L_P$ has width $N=|\ell|/2$, there are no points $(s_1,s_2)$ in the support of $\widehat{\HFL}(L_P)$ with $s_1>\ell/2$ or $s_2>g_3(P)+\ell/2$. Since $n,\ell\ge 0$, we conclude that if $(s_1,s_2)$ is in the support of $\widehat{\HFL}(L_P)$, then
	\begin{equation}
	s_1 n\ell+s_2\le (\ell/2) n \ell+ g_3(P)+\ell/2. \label{eq:inequality-support}
	\end{equation} 
	On the other hand, $(\ell/2,g_3(P)+\ell/2)$ is in the support of $\widehat{\HFL}(L_P)$ by the proof of Lemma~\ref{lem:H-function-v-HFL-support}, (because the point $(N,n_2)$ in the proof of the aforementioned lemma is clearly equal in this case to $(\ell/2,g_3(P)+\ell/2)$), and $(\ell/2,g_3(P)+\ell/2)$ achieves equality in Equation~\eqref{eq:inequality-support}, so we conclude
	\[
	y(h)=\ell^2 n/2+g_3(P)+\ell/2.
	\]
	Using Ozsv\'{a}th and Szab\'{o}'s Thurston norm detection result, stated above in Equation~\eqref{eq:Thurston-norm}, we have
	\[
	2g(F)=\ell^2 n+2g_3(P)+\ell-1-n\ell-\ell+1=\ell(\ell-1)n+g_3(P),
	\]
	as claimed.
\end{proof}

\begin{rem}\label{rem:n=0-3-genus-and-R_l} The case that $n=0$ is of particular importance. Combining Corollaries~\ref{cor:Thurston-norm-N-R} and Lemma~\ref{lem:3-genus-n-framed}, we obtain
	\[
	R_{\sfrac{\ell}{2}}-\ell/2=g_3^{\rel}(P)=g_3(P).
	\]This is stated as Proposition~\ref{prop:minimally-wrapped-seifert-genus-intro} in the introduction. 
\end{rem}

\section{Examples}

\label{sec:examples}

In this section, we illustrate the formulas for $\tau(P(K,n))$ using three families of examples: the cabling operation, $1$-bridge braid, and the generalized Mazur patterns.

\subsection{Cables}
If $p,q\in \Z$ are non-zero integers with $\gcd(p,q)=1$ and $p>0$, then we write $K_{p,q}$ for the $(p,q)$-cable of $K$. We take the convention that $K_{p,q}$ winds $p$ times longitudinally and $q$ times meridianally in the boundary of a tubular neighborhood of $K$. We write $q=np+r$ where $0<r<p$. We let $P$ be the cabling operator where $L_P$ is obtained by taking the $(p,r)$ cable of one component of the Hopf link. Then
\[
K_{p,q}=P(K,n).
\]
Since the cabling operator is braided in $S^1\times D^2$, we know by Corollary~\ref{cor:Thurston-norm-N-R} that
\[
N=\ell/2=p/2, \quad R_{\ell/2}-\ell/2=g_3(P)=\frac{(p-1)(r-1)}{2}.
\] 
If $\veps(K)>0$, Theorem~\ref{thm: tau} implies
\[
\tau(P(K,n))=g_3(P)+\frac{(p-1)p}{2} n+ p \tau(K)=\frac{(p-1)(q-1)}{2}+p\tau(K).
\]
When $\veps(K)<0$, Theorem~\ref{thm: tau} does not apply directly, since the condition $ R_{\sfrac{\ell}{2}-1}\ge g_3(P)+\ell/2-1$ is typically not satisfied. Instead, we can use duality by observing that
\[
-K_{p,q}=(-K)_{p,-q}
\]
and that $\veps(-K)>0$. Using the result in this case, we obtain
\[
\tau(K_{p,q})=-\tau(-K_{p,q})=-\tau( (-K)_{p,-q})=\frac{(p-1)(q+1)}{2}+p\tau(K),
\]
recovering Hom's formula from \cite{HomTauCables}. Note that the case that $\veps(K)=0$ is straightforward because the knot Floer complex of $K$ is locally equivalent to the knot Floer complex of the unknot, so we see that
\[
\tau(K_{p,q})=\tau(U_{p,q})=\tau(T_{p,q})=\begin{cases}
\frac{(p-1)(q-1)}{2} & \text{ if } q>0 \\
\frac{(p-1)(q+1)}{2} & \text{ if } q<0.
\end{cases}
\]
in this case.

\subsection{1-bridge braids}

	Let $p,q,b$ be integers such that $p\ge2$, $q\neq np$ or $-1+np$ for some $n$, (otherwise we are guaranteed to get a link for the following braid closure), and $0< b<p-1$. (If $b=0$ or $b=p-1$, then $B(p,q,b)$ is a torus knot). Let $B(p,q,b)$ be the braid closure of $(\sigma_b\sigma_{b-1}\cdots \sigma_1) (\sigma_{p-1}\sigma_{p-2}\cdots\sigma_{1})^q$.  Greene, Lewallen and Vafaee proved in \cite{GLVLspace} that when the braid closure is a knot, then $B(p,q,b)$ is an L-space knot when $q\ge1$. From now on, we only consider the case when the braid closure $B(p,q,b)$ gives a knot. These knots are called \emph{1-bridge braids}. This family (together with torus knots) includes the \emph{Berge-Gabai knots} \cite{Gabai_1-bridge_braids}, which are the knots in the solid torus which admit a solid torus surgery.

	Let $L(p,q,b) = B(p,q,b) \cup \mu$ be the corresponding $2$-component link by viewing  $B(p,q,b)$ as lying in the solid torus, and $\mu$ denotes the meridian of the solid torus. Note that adding a positive full twist along $\mu$ changes $B(p,q,b)$ to $B(p,q+p,b)$. Therefore, by \cite{CZZ}*{Lemma~6.2} (see also \cite{LiuLSpaceLinks}*{Lemma~2.6}), $L(p,q,b)$ is an L-space link when $q\ge p+1$. See Figure \ref{fig:1bridgebraid} for an example.
	\begin{figure}
\scalebox{.5}{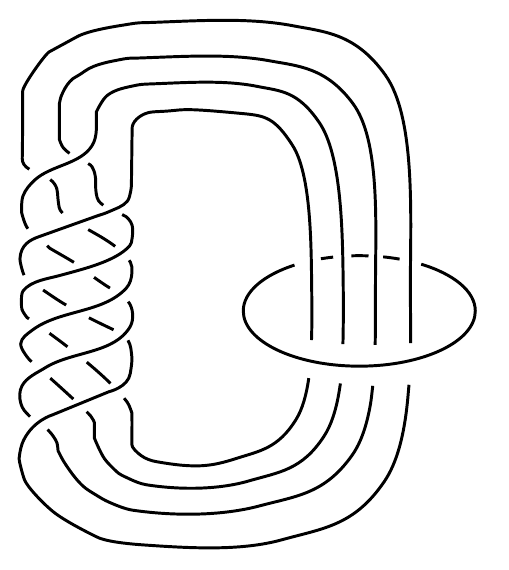}
\caption{The link $L(4,5,2)$.}\label{fig:1bridgebraid}
\end{figure}

	Let $P_{p,q,b}$ be the satellite operation with the pattern $B_{p,q,b}$. Note that 
	\[P_{p,q,b}(K,n) =P_{p,q+pn,b}(K,0), \]
	so it suffices to consider the case when $ p<q<2p$, and other values of $q$ could be deduced by adjusting the framing $n$ appropriately. Write $ q= r+pn$, with $p< r<2p$.  
	
	For the simplicity of notation, define
	\[K_{p,q,b} := P_{p,q,b}(K,0) = P_{p,r,b}(K,n). \] 
	In this example, we compute the formula for $\tau(K_{p,q,b})$ using Theorem \ref{thm: tau}, which recovers the result in \cite{ChenHanselmanSatellites}*{Theorem~6.8}.
	
	By definition, $B(p,q,b)$ is braided in $S^1\times D^2$, so by Corollary~\ref{cor:Thurston-norm-N-R}, we have 
	\[N=\ell/2 = p/2,\quad R_{\sfrac{\ell}{2}}-\ell/2=g_3(B(p,r,b))=\frac{(p-1)(r-1)+b}{2}, \] where the Seifert genus $g_3(B(p,r,b))$ is easy to obtain as it is a positive knot.
	
	If $\veps(K)>0$, Theorem~\ref{thm: tau} implies
	\[
	\tau(K_{p,q,b})=g_3(B(p,r,b))+\frac{(p-1)p}{2} n+ p \tau(K)=\frac{(p-1)(q-1)+b}{2}+p\tau(K).
	\]
	
	If $\veps(K)<0$, again we can't apply Theorem~\ref{thm: tau} directly for the same reason as in the cabling case. But we can use the same mirroring trick. Note that the mirror of $B(p,q,b)$ is $B(p,-q-1,p-b-1),$ so 
	\[-(K_{p,q,b}) = (-K)_{p,-q-1,p-b-1},\] where $-K$ denote the mirror of a knot $K$.  Therefore, when $\veps(K)<0$, we have 
	\[\tau(K_{p,q,b}) = -\tau(-(K_{p,q,b})) = -\tau((-K)_{p,-q-1,p-b-1}) =\frac{(p-1)(q+1)+b}{2}+p\tau(K). \]
	
	If $\veps(K)=0$, we have
	\[\tau(K_{p,q,b}) = \tau(U_{p,q,b}) =\tau(B(p,q,b))=\begin{cases}
		\frac{(p-1)(q-1)+b}{2} & \text{ if } q>0, \\
		\frac{(p-1)(q+1)+b}{2} & \text{ if } q<-1,
	\end{cases}\] using the fact that $B(p,q,b)$ is an L-space knot when $q>0$, and the same mirror trick for the $q<-1$ case. (When $q=0$ or $-1$, the braid closure $B(p,q,b)$ is a link instead of a knot.) 

\subsection{Satellites by a family of $2$-bridge links}\label{ss:2bridgelinks}

Consider $2$-bridge links of the form $L(rq-1,q)$, where $r$ and $q$ are positive odd integers (excluding the case $r=q=1$). Here we follow the convention in \cite{rationalknotskl} for the slope of rational tangles, so that $L(rq-1,q)$ corresponds to $K(\frac{rq-1}{q})$ in their notation. By \cite{SchubertTwobridgeknots}, the (unoriented) link $L(rq-1,q)$ is isotopic to $L(rq-1,r)$, so we may and will assume $r\ge q$ by switching $r$ and $q$ if necessary in the rest of this example. See Figure \ref{fig:twobridgefamily}.
	
We denote by $P_{r,q}$ the satellite operation such that the corresponding $2$-component link $L_{P_{r,q}}$ is $L(rq-1,q)$. For example, the identity operator (for which the corresponding $2$-component link is the positive Hopf link) correspond to the case $r=3,q=1$, the (positive clapsed) Whitehead double corresponds to the case $r=q=3$, the Mazur pattern corresponds to $r=5,q=3$.  
\begin{figure}
\begin{tikzpicture}
    \node at (0,0){\includegraphics[scale=0.5]{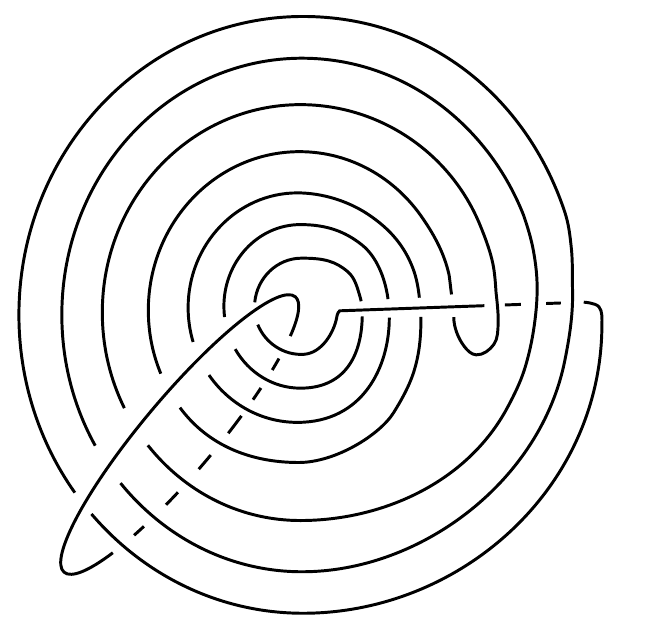}};
     \draw [decorate,decoration={brace,amplitude=4pt},xshift=0.5cm,yshift=0pt]
        (-0.7,-0.4) -- (-0.7,-1.2);
\node at (2.8,-0.75) {$\frac{r-1}{2}$};
     \draw [decorate,decoration={brace,amplitude=4pt},xshift=0.5cm,yshift=0pt]
        (-0.7,-1.8) -- (-0.7,-2.5);
\node at (2,-2.1) {$\frac{q-1}{2}$};
\draw [<- ,   gray!60] (0,-0.75) -- (2.45,-0.75);
\draw [<- ,   gray!60] (0,-2.1) -- (1.65,-2.1);
\end{tikzpicture}
\caption{The link $L(rq-1,q)$ for odd integers $r \geq q>1$. The picture depicts the case when $r=9$ and $q=7$.}\label{fig:twobridgefamily}
\end{figure}

 In \cite{LiuLSpaceLinks}*{Theorem~3.8}, Liu proved that each $L(rq-1,q)$, where $q$ and $r$ are positive odd integers, is a $2$-component L-space link; in his notation, $L(rq-1,q)$ is denoted by $b(rq-1,-q)$. Since each component of $L(rq-1,q)$ is an unknot, the extra condition in Equation (\ref{eq:extra condition when epsilon=0}) in Theorem \ref{thm: tau} is satisfied by Remark \ref{rem:extra assumption}. In this example, we will compute the formula for $\tau(P_{q,r}(K,n))$ using Theorem \ref{thm: tau}. The main effort lies in writing down the multivariable Alexander polynomial of $L(rq-1,q)$, which we can fortunately use an algorithm developed by Hoste in \cite{HosteAlexander}.
 
 In \cite{TulerLinking}, Tuler gives a formula to the compute the linking number of $2$-bridge links. In particular, the linking number of $L(rq-1,q)$, with a specific orientation specified by the bridge presentation, is given by 
 \[\ell = \frac{r-q}{2}, \]
 which is non-negative since we have assumed $r\ge q$.

 In \cite{HosteAlexander}, Hoste gives an algorithm to compute the multivariable Alexander polynomials of $2$-bridge links $L(p,q)$ with the same orientations specified by the bridge presentation as above. Note that $L(p,q)$ corresponds to $K_{q/p}$ in his convention. (The only parity of $p$ and $q$ such that $L(p,q)$ is a two-component link instead of a knot is when $p$ is even and $q$ is odd, so there should be no danger of confusion with the convention used in \cite{HosteAlexander}). The closed-form formula is \[\Delta_{L(p,q)}(x_1,x_2) \doteq \sum_{i=1}^{p}\eta_{2i-1}x_1^{\sum_{j=1}^{i-1}\eta_{2j}}x_2^{\frac{\eta_{2i-1}-1}{2}+\sum_{k=1}^{i-1}\eta_{2k-1}},\]
where $\eta_{i} = (-1)^{\lfloor \frac{iq}{p}\rfloor}.$ 

More interestingly, he gives a combinatorial description of the algorithm in terms of walks on a $2$-dimensional grid. See \cite{HosteAlexander}*{Algorithm~3}. In the special case when $p=rq-1$, as considered here, the sequence of signs $\eta_i= (-1)^{\lfloor \frac{iq}{rq-1}\rfloor} $, $i=1,\dots,rq-2$, is given by $r-1$ many $+$'s, followed by an alternating sequence of $r$ many $-$'s and $r$ many $+$'s, repeated $q - 2$ times, and ending with $r - 1$ many $+$'s. (When $q=1$, this sequence is just $r-1$ many $+$'s.) For example, when $r=5,q=3$, this sequence $\left\{\eta_i\right\}_{i=1}^{13}$ is given by 
\[++++-----++++.\] 
(The linking number is obtained by summing up $\eta_{2k+1}$ for $k=0,\dots,(rq-3)/2$.) The corresponding walk on the $2$-dimensional grid is as follows:
\begin{enumerate}
	\item Start at (0,0), then move $(r-3)/2$ steps of $(1,1)$ (one unit right and one unit up).
	\item Move one step of $(1,0)$ (one unit right), followed by $(r-1)/2$ steps of $(-1,-1)$ (one unit left and one unit down), then one step of $(1,0)$, followed by $(r-3)/2$ many steps of $(1,1)$.
	\item Repeat step (2) for $(q-3)/2$ more times. (If $q=1$, then stop after step (1).)
\end{enumerate}
Then, if $k$ is the number of times the walk visits the point $(i,j)$, the (unsymmetrized) Alexander polynomial will contain the term $(-1)^{i+j}kx_1^ix_2^j$, up to a global sign. In the case $p=rq-1$, each point is visited either $0$ or $1$ time, so every monomial in $\Delta_{L(rq-1,q)}(x_1,x_2)$ has coefficient $\pm 1$, as it should be. Below is the example of the walk when $r=5,q=3$, which is the Mazur link. 
	\begin{figure*}[h!]
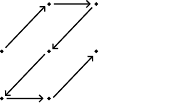
\end{figure*}

The (unsymmetrized) Alexander polynomial of this example is then given by 
\[\Delta_{L(14,3)}(x_1,x_2)\doteq 1+x_1x_2-x_1^2x_2-x_1\\ -x_2^{-1}+x_1x_2^{-1}+x_1^2.\]

In general, the support of the multivariable Alexander polynomial takes the following form in Figure \ref{fig:support of the Alexander polynomial of 2-bridge link}, assuming $r\geq q \geq 3$. When $r>3,q=1$, the walk simply consists of $(r-3)/2$ steps of $(1,1)$, so the support of the multivariable Alexander polynomial is a single line segment of slope $1$.  When $r=3$ and $q=1$, the walk is the empty one, the Alexander polynomial consists of a single monomial $1$, and the link is the positive Hopf link.	\begin{figure}[h!]
\begingroup%
  \makeatletter%
  \providecommand\color[2][]{%
    \errmessage{(Inkscape) Color is used for the text in Inkscape, but the package 'color.sty' is not loaded}%
    \renewcommand\color[2][]{}%
  }%
  \providecommand\transparent[1]{%
    \errmessage{(Inkscape) Transparency is used (non-zero) for the text in Inkscape, but the package 'transparent.sty' is not loaded}%
    \renewcommand\transparent[1]{}%
  }%
  \providecommand\rotatebox[2]{#2}%
  \newcommand*\fsize{\dimexpr\f@size pt\relax}%
  \newcommand*\lineheight[1]{\fontsize{\fsize}{#1\fsize}\selectfont}%
  \ifx\svgwidth\undefined%
    \setlength{\unitlength}{208.92950876bp}%
    \ifx\svgscale\undefined%
      \relax%
    \else%
      \setlength{\unitlength}{\unitlength * \real{\svgscale}}%
    \fi%
  \else%
    \setlength{\unitlength}{\svgwidth}%
  \fi%
  \global\let\svgwidth\undefined%
  \global\let\svgscale\undefined%
  \makeatother%
  \begin{picture}(1,0.85045864)%
    \lineheight{1}%
    \setlength\tabcolsep{0pt}%
    \put(0,0){\includegraphics[width=\unitlength,page=1]{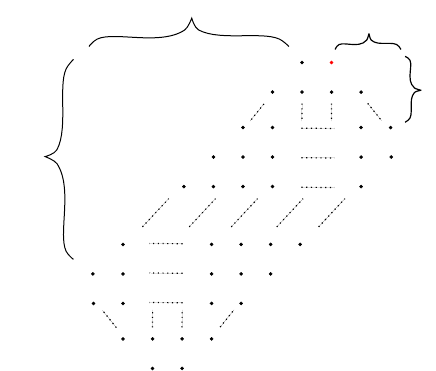}}%
    \put(0.40,0.85){\color[rgb]{0,0,0}\makebox(0,0)[lt]{\lineheight{1.25}\smash{\begin{tabular}[t]{l}$ \frac{r-3}{2}$\end{tabular}}}}%
    \put(0.81,0.83253418){\color[rgb]{0,0,0}\makebox(0,0)[lt]{\lineheight{1.25}\smash{\begin{tabular}[t]{l}$\frac{q-3}{2} $\end{tabular}}}}%
    \put(0.98375654,0.63344528){\color[rgb]{0,0,0}\makebox(0,0)[lt]{\lineheight{1.25}\smash{\begin{tabular}[t]{l}$\frac{q-3}{2} $\end{tabular}}}}%
    \put(-0.00174781,0.48045196){\color[rgb]{0,0,0}\makebox(0,0)[lt]{\lineheight{1.25}\smash{\begin{tabular}[t]{l} $ \frac{r-3}{2}$\end{tabular}}}}%
  \end{picture}%
\endgroup%

	\caption{Support of the Alexander polynomial of $2$-bridge links $L(rq-1,q)$}
	\label{fig:support of the Alexander polynomial of 2-bridge link}
\end{figure}

After symmetrizing and multiplying by $x_1^{\sfrac{1}{2}}x_2^{\sfrac{1}{2}}$, the monomial represented by the red dot in Figure \ref{fig:support of the Alexander polynomial of 2-bridge link}, which has the highest power of $x_2$ in $\tilde{\Delta}_{L(rq-1,q)}(x_1,x_2)$, corresponds to the monomial \[-x_1^{(r-q+4)/4}x_2^{(r+q-2)/4}.\]
(The sign is chosen such that the $H$-function obtained below takes non-negative values.) 

Recall the algorithm to obtain the $H$-function from $\tilde{\Delta}$ from Proposition \ref{prop:GNH-function}. We start with the stabilized state 
\[
J(t,r)=H_{L_1}(t-\ell/2)+H_{L_2}(t-\ell/2),
\]
where in this case, both $L_1$ and $L_2$ are unknot. To form $H_L(t,r)$, for each non-trivial summand $a_{j,k}x_1^jx_2^k$ in $\tilde{\Delta}_L(x_1,x_2)$, we add $-a_{j,k}$ to each of $J(t,r)$ for $t< j,$ and $r< k$. From the support of the nomalized alexander polynomial $\tilde{\Delta}_{L(rq-1,q)}$, it is easy to see that 
\[\begin{split}
	&H_{L(rq-1,q)}\left(\tfrac{\ell}{2}-1, \infty \right)= H_{L(rq-1,q)}\left(\tfrac{\ell}{2}-1, \tfrac{r+q-6}{4}\right)=1,
	\\ 
	 &H_{L(rq-1,q)}\left(\tfrac{\ell}{2}-1, \tfrac{r+q-10}{4}\right)=2,  \\
	  &H_{L(rq-1,q)}\left(\tfrac{\ell}{2}, \infty\right)= H_{L(rq-1,q)}\left(\tfrac{\ell}{2}, \tfrac{r+q-2}{4} \right)=0,\\ &H_{L(rq-1,q)}\left(\tfrac{\ell}{2}, \tfrac{r+q-6}{4}\right)=1, \\	 
	  &H_{L(rq-1,q)}\left(\tfrac{\ell}{2}+1,\infty \right)=H_{L(rq-1,q)}\left(\tfrac{\ell}{2}+1, \tfrac{r+q-6}{4} \right)=0,\\
	  &H_{L(rq-1,q)}\left(\tfrac{\ell}{2}+1, \tfrac{r+q-10}{4}\right)=1,
\end{split}
\]
where $\ell = (r-q)/2$. Therefore, we have 
\[R_{\sfrac{\ell}{2}-1} =R_{\sfrac{\ell}{2}+1}= \frac{r+q-6}{4},\quad  R_{\sfrac{\ell}{2}}=\frac{r+q-2}{4}.\]

By Theorem \ref{thm: tau}, we have the following formula for $\tau(P_{r,q}(K,n))$ (recall we have assumed $r\ge q$; otherwise, simply switch $r$ and $q$):
\begin{enumerate}
	\item If $\veps(K) =1$, then 
	\[\tau(P_{r,q}(K,n) ) = \begin{cases}
		\frac{q-1}{2}+ \frac{(\ell-1)\ell}{2}n+\ell\tau(K) & \text{if $n<2\tau(K)$,}\\
		\frac{(\ell-1)\ell}{2}n+\ell\tau(K) & \text{if $n\ge 2\tau(K)$. }
	\end{cases} 
	\] 
	\item If $\veps(K) =0$, then 
	\[\tau(P_{r,q}(K,n) ) = \begin{cases}
		\max\{\frac{q-1}{2}, \frac{r-3}{2}\}+ \frac{(\ell-1)\ell}{2}n+\ell\tau(K) & \text{if $n<0$,}\\
		\frac{(\ell-1)\ell}{2}n+\ell\tau(K) & \text{if $n\ge 0$.}
	\end{cases} 
	\] 
	\item If $\veps(K) =-1$, then 
	\[\tau(P_{r,q}(K,n) ) = \begin{cases}
		\max\{\frac{q-1}{2}, \frac{r-3}{2}\}+ \frac{(\ell-1)\ell}{2}n+\ell\tau(K) & \text{if $n<2\tau(K)$,}\\
		\frac{r-3}{2}+\frac{(\ell-1)\ell}{2}n+\ell\tau(K) & \text{if $n=2\tau(K)$ or $2\tau(K)+1$,}\\
		\min\{ \frac{r-3}{2},\frac{r-q}{2}\}+ \frac{(\ell-1)\ell}{2}n+\ell\tau(K) & \text{if $n>2\tau(K)+1$.}\\
	\end{cases} 	
	\] 
\end{enumerate}

The family of two-bridge links we consider in this example is the same as the one studied in \cite{PX-twobridge}, where $P_{r,q}(K,0)$ corresponds to the satellite knot $Q_{\frac{r-1}{2},\frac{q-1}{2}}(K)$ in their notation. See \cite{PX-twobridge}*{Proposition~4.3} for their parameterization of this family of two-bridge links. The difference is that we also take into account the effect of different framings of the companion knot, while they focus on the case when the framing $n=0$. In particular, when $r=q=3$, we recover the formula of $\tau$ under Whitehead double in \cite{HeddenWhiteheadDoubles}. When $r=5,q=3$, we recover the formula of $\tau$ for Mazur pattern in \cite{LevineMazur}. For $r= 2j+3,q=3$, we recover the formula for $\tau$ under the family $Q^{0,j}$ in \cite{Bodish-Satellites}.

\section{Homomorphism obstructions for satellite operators}

\label{sec: non homomorphism}
In this section, we use our formulas for $\tau(P(K,n))$ in Theorem~\ref{thm: tau} to partially confirm a conjecture by Hedden and Pinz\'on-Caicedo on when satellite operators act as a group homomorphism on the smooth concordance group for knots in $S^3$, under the extra assumption that $P$ is an $L$-space satellite operation. We prove the following, which is stated as Theorem \ref{thm:concordance-homomorphism-intro} in the introduction:

\begin{thm}
	Let $\scC$ denote the smooth concordance group of knots in $S^3$. If $L_P = \mu \cup P$ is an $L$-space link, and the map 
\begin{align*}
		P: \scC &\longrightarrow \scC\\
		K &\longmapsto P(K,n)
	\end{align*}
is a group homomorphism for some $n\ge 0$, then 
$L_P$ is the $2$-component unlink $U_2$, the positive Hopf link $\cH_+$, or the negative Hopf link $\cH_-$. Therefore, $P(-,n)$ induces the trivial map, the identity map, or the orientation-reversing map on the concordance group.
\end{thm}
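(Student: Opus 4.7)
The plan is to combine the formulas for $\tau(P(K,n))$ from Theorem~\ref{thm: tau} with the local inequality from Proposition~\ref{prop:inequality-intro} to force $L_P$ into the given list. By possibly reversing the orientation of $P$ I will assume $\ell = \lk(\mu,P) \ge 0$; this does not affect the hypothesis that $P(-,n)$ is a homomorphism, and replacing $\cH_+$ with $\cH_-$ in the conclusion accounts for the case when the original $P$ has $\ell < 0$.

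First I would evaluate the homomorphism hypothesis at the unknot. Since $U$ represents the identity of $\scC$, the knot $P(U,n)$ must be smoothly slice, so $\tau(P(U,n)) = 0$. Feeding $\veps(U) = 0$ and $n \ge 0$ into Theorem~\ref{thm: tau}(2)(a) gives $0 = g_3(P) + \tfrac{(\ell-1)\ell}{2}n$, and nonnegativity of both summands forces $g_3(P) = 0$, together with $\ell \in \{0,1\}$ whenever $n \ge 1$. Next I would apply the homomorphism property to $K \# K$ for an L-space knot $K$ with $\tau(K) > n$; then $\veps(K) = \veps(K \# K) = 1$ and the first branch of Theorem~\ref{thm: tau}(1) applies to both sides of the equation $\tau(P(K \# K, n)) = 2\tau(P(K,n))$. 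Comparing the two expressions yields $R_{\sfrac{\ell}{2}} - \ell/2 = 0$, which by Proposition~\ref{prop:g3-P=Rll/2} is exactly $g_3^{\rel}(P) = 0$.

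I expect the hard part will be ruling out $\ell \ge 2$ in the case $n = 0$, where the preceding identity places no restriction on $\ell$. Here I would feed the mirror $-K$ of a positive L-space knot $K$ with $\tau(K) = g$ large into Proposition~\ref{prop:inequality-intro}. The homomorphism property combined with $-K \# K \sim U$ yields $\tau(P(-K,n)) = -\tau(P(K,n))$, while Proposition~\ref{prop:inequality-intro}(1) provides $\tau(P(-K,n)) \ge \tau((-K)_{\ell,\ell n + 1})$. Using the previous step to evaluate $\tau(P(K,n)) = \tfrac{(\ell-1)\ell}{2}n + \ell g$, and substituting Hom's cable formula $\tau((-K)_{\ell,\ell n+1}) = \tfrac{(\ell-1)(\ell n + 2)}{2} - \ell g$ for $\veps(-K) = -1$, a short manipulation reduces the resulting inequality to $0 \ge (\ell - 1)(1 + \ell n)$. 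Since $1 + \ell n \ge 1$ for $\ell \ge 0$ and $n \ge 0$, this forces $\ell \le 1$.

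It remains to identify $L_P$ from the data $\ell \in \{0,1\}$ and $g_3^{\rel}(P) = 0$. By Proposition~\ref{prop:g3-P=Rll/2}, $g_3^{\rel}(P)$ is the minimum genus of an embedded surface in $S^1 \times D^2$ cobounding $P$ and $|\ell|$ parallel longitudes, so its vanishing has direct geometric content. When $\ell = 0$, $P$ bounds a disk in $S^1 \times D^2$ disjoint from $\mu$, so $P$ and $\mu$ are split in $S^3$ and $L_P = U_2$. When $\ell = 1$, $P$ cobounds an embedded annulus in $S^1\times D^2$ with a $0$-framed longitude, so $P$ is isotopic in $S^1 \times D^2$ to that longitude and $L_P = \cH_+$. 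The three induced maps on $\scC$ are then, respectively, the zero map, the identity, and the orientation-reversing map.
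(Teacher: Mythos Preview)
Your argument is correct and reaches the conclusion by a genuinely different route than the paper. The paper establishes $g_3(P)=0$ and $R_{\frac{\ell}{2}}=\ell/2$ much as you do, but then proceeds by analyzing the $H$-function directly: it shows $N=\ell/2$, verifies the extra hypothesis in Equation~\eqref{eq:extra condition when epsilon=0} so that the $\veps(K)=-1$ case of Theorem~\ref{thm: tau} becomes available, and uses that formula on knots with $\veps<0$ to force $R_{\sfrac{\ell}{2}-1}=-\ell/2$, which combined with $R_{\sfrac{\ell}{2}-1}\ge \ell/2-1$ yields $\ell\le 1$. It then pins down the entire $H$-function and invokes Liu's computation of $\widehat{\HFL}$ of an L-space link together with the fact that $\widehat{\HFL}$ detects $U_2$, $\cH_+$, and $\cH_-$. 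Your proof replaces both of these steps with more elementary inputs: you obtain $\ell\le 1$ from Proposition~\ref{prop:inequality-intro} together with Hom's cable formula, bypassing the $\veps=-1$ branch of Theorem~\ref{thm: tau} and its side condition entirely; and you identify $L_P$ geometrically from $g_3^{\rel}(P)=0$ via Proposition~\ref{prop:R_ell/2-intro}, rather than through $H$-function detection. The tradeoff is that the paper's endgame stays inside the Floer-theoretic framework and would generalize to situations where the geometric interpretation of $R_{\sfrac{\ell}{2}}$ is unavailable, while yours is shorter and avoids the appeal to link detection results. One small point of presentation: your Step~3 as written gives $R_{\sfrac{\ell}{2}}-\ell/2+\tfrac{(\ell-1)\ell}{2}n=0$, and you need the vanishing of $\tfrac{(\ell-1)\ell}{2}n$ from Step~2 to conclude $R_{\sfrac{\ell}{2}}=\ell/2$; this is implicit in your exposition but worth making explicit.
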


\begin{proof}
	
 By \cite{BLiuCFK-LSpace}, we can compute $\widehat{\HFL}$ of a two-component L-space link from its $H$-function, and $\widehat{\HFL}$ detects the $2$-component unlink, the positive Hopf link $\cH_+$, and the negative Hopf link $\cH_-$. Therefore, it is enough to show the $H$-function $H_{L_P}$ agrees with one of the $H$-functions $H_{U_2}, H_{\cH_+},H_{\cH_-}.$ See Figure \ref{fig:h-function-examples} for the $H$-functions of the unlink $U_2$ and positive/negative Hopf links $\cH_{\pm}$ respectively.
	
    By reversing the orientation of $P$ if necessary, we can assume the linking number $\ell = \lk (\mu,P)$ to be non-negative, and we will prove $H_L = H_{U_2}$ or $H_{\cH_+}$ under the assumptions in the theorem. 
	
	We will prove the case when $n=0$ first. The proof for $n>0$ follows from the same strategy, and we will indicate the necessary changes at the end of the proof.

	Suppose the operator $P(-,0)$ is a group homomorphism. We break the proof into a sequence of subclaims.

	 \begin{claim}
	 $g_3(P)=0$ and $R_{\sfrac{\ell}{2}}=\frac{\ell}{2}$. In particular, when viewed as a knot in $S^3$, $P$ is an unknot.
	 \label{claim:1}
	 \end{claim}
	 
	 The above claim is proven by applying Theorem~\ref{thm: tau} to knots of the form $\#^j K$ which have $\veps(K)=1$ and $\tau(K)<0$ as well as on knots which have $\veps(K)=1$ and $\tau(K)>0$. This implies that the constant term in the formula for $\tau(P(K,n))$ in Theorem~\ref{them:tau for epsilon=1} must vanish, when applied to knots with $\veps(K)=1$. The proves Claim~\ref{claim:1}.

\begin{claim}
\label{claim:2}
The width $N$ satisfies $N=\ell/2$.
\end{claim}
To prove the above claim, we consider  $\frac{\ell}{2}\le t$. From $ R_{\sfrac{\ell}{2}}=\frac{\ell}{2}$ (Claim~\ref{claim:1}), together with the monotonicity result
\[
 R_{\sfrac{\ell}{2}}\ge R_{\sfrac{\ell}{2}+1}\ge \cdots \ge R_{N-1}\ge R_{N} = g_3(P)+\frac{\ell}{2}=\frac{\ell}{2} \]
from Lemma \ref{lem:shape of top generators}, we conclude
\[
 R_{\sfrac{\ell}{2}}= R_{\sfrac{\ell}{2}+1}=\cdots=R_{N-1}= R_{N} = \frac{\ell}{2}.
 \]
 Here $N$ denotes the width of $L_P$ (Definition~\ref{def:width N}). 
Using the definition of $R_t$, this implies that 
\begin{equation}
H_{L_P}(t,r)=H_{L_P}(t,\infty) = H_\mu(t-\tfrac{\ell}{2})=0  \,\text{ for } \, t\ge \ell/2 \,\text{ and }\, r\ge \ell/2. \label{eq:values-H-function-1}
\end{equation}

Now, we consider $H_{L_P}(t,r)$ for $ \frac{\ell}{2}\le t$, and $ r<R_t = \frac{\ell}{2}. $ By the bounded gap property of $H$-function in the $r$-coordinate (the vertical direction), we get 
\begin{equation}
H_{L_P}(t,r) \le H_{L_P}(t,\ell/2) + \frac{\ell}{2}-r = \frac{\ell}{2}-r, \text{ for }\frac{\ell}{2}\le t  \, \text{ and } \, r<\frac{\ell}{2}.
\label{eq:claim:2-comparison-1}
\end{equation}
By the monotonicity property of $H$-function in the $t$-coordinate (the horizontal direction), we get 
\begin{equation}
H_{L_P} (t,r)\ge H_{L_P}(N,r) =\frac{\ell}{2}-r, \text{ for }\frac{\ell}{2}\le t,\,\text{ and }\,  r<\frac{\ell}{2}.
\label{eq:claim:2-comparison-2}
\end{equation}
Combining Equations~\eqref{eq:claim:2-comparison-1} and~\eqref{eq:claim:2-comparison-2}, we conclude
\begin{equation} 
H_{L_P} (t,r)= \frac{\ell}{2}-r,\, \text{ for }\,\frac{\ell}{2}\le t ,\,\text{ and }\, r<\frac{\ell}{2}.
 \label{eq:values-H-function-2}
\end{equation}

From the definition of the width $N$ (see Equation~\eqref{eq:width}), we see that Equations~\eqref{eq:values-H-function-1} and~\eqref{eq:values-H-function-2} imply that  $N=\ell/2$, completing the proof of the claim.

\begin{claim}
\label{claim:3}
 $R_{\sfrac{\ell}{2}-1}\ge \frac{\ell}{2}-1$. 
\end{claim}

By the definition of $R_t$, we have 
\begin{equation} H_{L_P}(\tfrac{\ell}{2}-1, R_{\sfrac{\ell}{2}-1}) = H_{L_P}(\tfrac{\ell}{2}-1,\infty) = H_{\mu}(-1) = 1.
\label{eq:R-ell/2-1-inequality}
\end{equation}
By monotonicity of the $H$-function in the $t$-direction, we have 
\[1= H_{L_P}(\tfrac{\ell}{2}-1, R_{\sfrac{\ell}{2}-1}) \ge H_{L_P}(\tfrac{\ell}{2}, R_{\sfrac{\ell}{2}-1}) = \frac{\ell}{2} - R_{\sfrac{\ell}{2}-1}.\]
Note that in the right-most equality in the above equation, we are using the fact that $N=\ell/2$, that $P$ is an unknot, and the fact that $ R_{\sfrac{\ell}{2}-1}\le  R_{\sfrac{\ell}{2}}$, which follows from Lemma \ref{lem:shape of top generators}. Rearranging Equation~\eqref{eq:R-ell/2-1-inequality} yields $R_{\sfrac{\ell}{2}-1} \ge \ell/2-1$, which was the claim.

\begin{claim}
\label{claim:4} $R_{\sfrac{\ell}{2}-1}=R_{\sfrac{\ell}{2}-2}=\cdots =R_{-N}=-\tfrac{\ell}{2}$. 
\end{claim}

Claims~\ref{claim:1} and~\ref{claim:3} imply that
\[
R_{\sfrac{\ell}{2}-1}\ge g_3(P)+\frac{\ell}{2}-1.
\]
Therefore, the condition in Equation~\eqref{eq:extra condition when epsilon=0} in the $\veps(K)<0$ case of Theorem~\ref{thm: tau} is satisfied, so we can apply Theorem~\ref{thm: tau} to knots $K$ with $\veps(K)<0$. We apply Theorem~\ref{thm: tau} to knots of the form $\#^j K$ where $\tau(K)>0$ and $\veps(K)<0$, and we conclude that 
\[\max\{ R_{\sfrac{\ell}{2}-1} + \tfrac{\ell}{2}, R_{\sfrac{\ell}{2}} - \tfrac{\ell}{2}\} =\max\{ R_{\sfrac{\ell}{2}-1} + \frac{\ell}{2},0\}=0.\] 
Therefore
\begin{equation}
R_{\sfrac{\ell}{2}-1}\le -\frac{\ell}{2}. 
\label{eq:R_l/2-1-inequality-1}
\end{equation}
On the other hand, we apply Lemma~\ref{lem:shape of top generators} and use the fact that $g_3(P)=0$ to see that
\begin{equation}
 R_{\sfrac{\ell}{2}-1} \ge R_{\frac{\ell}{2}-2}\ge \cdots \ge R_{-N+1}\ge R_{-N} = g_3(P)-\frac{\ell}{2} = -\frac{\ell}{2}.
 \label{eq:R_l/2-1-inequality-2}
 \end{equation}
Combining Equations~\eqref{eq:R_l/2-1-inequality-1} and ~\eqref{eq:R_l/2-1-inequality-2}
we conclude
\[
 R_{\sfrac{\ell}{2}-1} = R_{\frac{\ell}{2}-2}= \cdots = R_{-N+1}= R_{-N} = -\frac{\ell}{2},\]
 as claimed.

\begin{claim}
\label{claim:5} The linking number $\ell$ is either 0 or 1.
\end{claim}

The above claim follows immediately from Claims~\ref{claim:3} and \ref{claim:4} and the fact that $R_{\sfrac{\ell}{2}-1}\le R_{\sfrac{\ell}{2}}$ by Lemma~\ref{lem:shape of top generators}.

\begin{claim} \label{claim:6} If $\ell=0$, then $H_{P_L}=H_{U_2}$. If $\ell=1$, then $H_{P_L}=H_{\cH_+}$.
\end{claim}

In both cases, Claim~\ref{claim:1} implies that the width $N$ is $\ell/2$, which is either $0$ or $1/2$. If $N=\ell=0$, then Equations~\eqref{eq:values-H-function-1} and~\eqref{eq:values-H-function-2} imply that $H_{L_P}(r,t)=H_{U_2}(r,t)$ for $r\ge 0$. By symmetry of the $H$-function (see Lemma~\ref{lem: properties of H function}) this implies that $H_{L_P}(r,t)=H_{U_2}(r,t)$ for all $(r,t)\in \Z\times \Z$. If $N=\ell/2=1/2$, the same argument works, since $H_{L_P}(r,t)=H_{\cH_+}(r,t)$ for all $r>0$ by Equations~\eqref{eq:values-H-function-1} and~\eqref{eq:values-H-function-2}, and symmetry implies equality for all $(r,t)\in (\Z+\tfrac{1}{2})\times (\Z+\tfrac{1}{2}).$

This concludes the proof of the main theorem when $n=0$.

\begin{claim} The main theorem holds also when $n>0$.
\end{claim}
We suppose that $P(-,n)$ is a homomorphism for some $n>0$. By considering companion knots of the form $\#^j K$ with $\veps(K)=1$ and $\tau(K)>0$, as well as knots $K$ with $\veps(K)=1$ and $\tau(K)<0$, we conclude that \[	 R_{\sfrac{\ell}{2}}-\frac{\ell}{2} +\frac{(\ell-1)\ell}{2}n=0 \quad \text{and}\quad g_3(P) +\frac{(\ell-1)\ell}{2}n=0.\]
	Recall we have assumed that the linking number $\ell$ was non-negative by choosing the orientation on $P$ appropriately, and because 
	\[ R_{\sfrac{\ell}{2}}-\frac{\ell}{2} \ge g_3(P) \ge 0,\]
	the only possibility is 
	\[ R_{\sfrac{\ell}{2}}-\frac{\ell}{2} = g_3(P) =0, \text{ and } \ell =0 \text{ or }1.\]
	Then, the contribution of $n$ in the formula for $\tau(P(K,n))$, which is $\frac{(\ell-1)\ell}{2}n$, becomes trivial, and we return to the same situation as when $n=0$. 
\end{proof}

\section{A local inequality}
\label{sec:inequality}

In this section, we prove a result about the full knot Floer complex of L-space satellites of a knot $K$. We recall that if $K_1$ and $K_2$ are knots, a \emph{local map} from $\cCFK(K_1)$ to $\cCFK(K_2)$ consists of a grading preserving chain map
\[
F\colon \cCFK(K_1)\to \cCFK(K_2)
\]
which becomes a homotopy equivalence after inverting $U=WZ$. 

\begin{thm} 
\label{thm:local-map}
Suppose that $P$ an L-space satellite operation with linking number $\ell\ge 0$, and that $n$ is an integer. Then for any knot $K\subset S^3$, there exists a $(\gr_{\ws},\gr_{\zs})$-grading preserving local map
\[
F\colon \cCFK(P(K,n))\to \cCFK(K_{\ell,\ell n+1}).
\]
\end{thm}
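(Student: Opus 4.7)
The plan is to produce $F$ as $\bI_{\cX_n(K)\boxtimes \cH_-}\boxtimes \phi$, where $\phi$ is a morphism of $DA$-bimodules
\[
\phi\colon {}_{\cK}\cX(L_P)^{\bF[W,Z]}\longrightarrow {}_{\cK}\cX(T(2,2\ell))^{\bF[W,Z]}.
\]
The starting point is to recognize that the cable knot $K_{\ell,\ell n+1}$ is a satellite of $K$: writing $\ell n+1 = n\cdot \ell + 1$, it equals $P_{\mathrm{cab}}(K,n)$, where $P_{\mathrm{cab}}$ is the $(\ell,1)$-cable pattern, with associated two-component L-space link $L_{P_{\mathrm{cab}}}=T(2,2\ell)$. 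Both components of $T(2,2\ell)$ are unknots with linking number $\ell$. By the gluing formula \eqref{eq:intro-description-satellite},
\[
\cCFK(P(K,n))\simeq \cX_n(K)\boxtimes \cH_-\boxtimes \cX(L_P),\qquad \cCFK(K_{\ell,\ell n+1})\simeq \cX_n(K)\boxtimes \cH_-\boxtimes \cX(T(2,2\ell)),
\]
so it suffices to construct $\phi$.

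First I would use the explicit structure of the bimodules from Section~\ref{sec: DA bimodule}. Each is built from staircases $\cC_t$ (idempotent $0$) indexed by $t\in\Z+\ell/2$ together with the knot Floer complex $\cS$ of the pattern viewed as a knot in $S^3$ (idempotent $1$), with structure maps determined by the $H$-function. For $T(2,2\ell)$ the pattern is an unknot, so $\cS'=\bF[W,Z]$, and the staircases $\cC_t'$ are as small as possible for a two-component L-space link with linking number $\ell$ and both components unknotted. Define $\phi$ on generators by sending the top generator of each $\cC_t$ in $\cX(L_P)$ to the top generator of $\cC_t'$ multiplied by $Z^{R_t^{L_P}-R_t^{T(2,2\ell)}}$ (the exponent is non-negative by Lemma~\ref{lem:shape of top generators} and minimality of $H_{T(2,2\ell)}$) and by sending the top generator of $\cS=\cCFK(P)$ to the generator of $\bF[W,Z]$ times $Z^{g_3(P)}$; non-top generators go to zero. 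These $Z$-shifts are precisely what is needed to preserve $(\gr_{\ws},\gr_{\zs})$, and the resulting map on each idempotent summand is the unique (up to homotopy) grading-preserving local map of chain complexes.

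Next I would verify that $\phi$ extends to a $DA$-bimodule morphism. The leading-order check is compatibility with $\delta_2^1(W,-)$, $\delta_2^1(Z,-)$, $\delta_2^1(\sigma,-)$, $\delta_2^1(\tau,-)$; using the explicit formulas in Lemma~\ref{lem:quotient bimodule in Lp} one sees that the excess powers of $Z$ in these structure maps for $L_P$ relative to $T(2,2\ell)$ are exactly absorbed by the $Z$-powers built into the definition of $\phi$. Any remaining discrepancies in the $\delta_3^1$ terms (those with inputs $(\sigma,W)$, $(\sigma,Z)$, $(\tau,W)$, $(\tau,Z)$, $(W,Z)$, $(Z,W)$) are corrected by introducing higher $\phi_{j+1}^1$ terms. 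Their existence follows from the formality of the link Floer complex of a two-component L-space link (established in~\cite{CZZ}), together with the fact that the obstruction cohomology to extending a morphism of formal $DA$-bimodules in a fixed grading vanishes because the target $\cX(T(2,2\ell))$ is itself formal and has minimal grading support.

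Tensoring with $\bI_{\cX_n(K)\boxtimes \cH_-}$ on the left produces $F$, and grading preservation is immediate. To see $F$ is local, note that $\phi$ sends the $U$-non-torsion generator in each Alexander-grading summand of $\cHFL(L_P,\ve{s})$ to the $U$-non-torsion generator in $\cHFL(T(2,2\ell),\ve{s})$ (the $Z$-powers in $\phi$ are chosen so that this holds); this property is preserved by box-tensoring with $\cX_n(K)^{\cK}\boxtimes{}_{\cK}\cH_-^{\cK}$, and after inverting $U$ both complexes become $\bF[W,Z,U^{-1}]$ on which $F$ acts as the identity.

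The main obstacle will be the systematic construction of the higher $\phi_{j+1}^1$ terms. Although the definition on generators and the first-order compatibility are clean, producing a single coherent choice of higher homotopies that realizes $\phi$ as a genuine $DA$-bimodule morphism requires careful bookkeeping of $Z$- and $U$-powers and delicate use of the formality of~\cite{CZZ}. The alternative approach — producing $\phi$ via a geometric link cobordism from $L_P$ to $T(2,2\ell)$ and appealing to the naturality of the bordered construction under cobordisms — may turn out to be cleaner, but likewise requires nontrivial functoriality input.
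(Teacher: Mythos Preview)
Your overall strategy --- build a morphism $\phi\colon {}_{\cK}\cX(L_P)^{\bF[W,Z]}\to {}_{\cK}\cX(T(2,2\ell))^{\bF[W,Z]}$ of $DA$-bimodules and then tensor with $\bI_{\cX_n(K)\boxtimes\cH_-}$ --- is exactly the approach in the paper. The gap is in your construction of $\phi_1^1$. The map you write down (top generator of $\cC_t$ to top generator of $\cC_t'$ times a $Z$-power, all other staircase generators to zero) is \emph{not a chain map}: applying it to $\delta_1^1(\xs_1^t)=\xs_0^t\otimes W^{\alpha_0}+\xs_2^t\otimes Z^{\beta_0}$ yields $\xs_0'^t\otimes W^{\alpha_0}Z^k\neq 0$, whereas $\delta_1^1(\phi(\xs_1^t))=\delta_1^1(0)=0$. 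You seem to be conflating the full staircases $\cC_t$ with the quotients $\cC_t^{\free}$ of Section~\ref{sec:diamond-module}; correspondingly, Lemma~\ref{lem:quotient bimodule in Lp}, which you invoke for first-order compatibility with $\delta_2^1$, describes the structure maps on ${}_{\cK}\cX^{\free}(L_P)_{\bF[Z]}$ rather than on the full bimodule ${}_{\cK}\cX(L_P)^{\bF[W,Z]}$, so that check is not what is needed here.

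The paper does not attempt an explicit formula for $\phi_1^1$. It instead observes that each staircase is a free resolution of its homology, so a grading-preserving local map $\cC_t\to\cC_t'$ exists if and only if there is a graded $\bF[W,Z]$-module inclusion $H_*(\cC_t)\hookrightarrow H_*(\cC_t')$, and this is equivalent to the pointwise inequality $H_{L_P}(t,r)\ge H_{T(2,2\ell)}(t,r)$. The paper then proves that inequality directly from the closed formula $H_{T(2,2\ell)}(t,r)=\max\{H_U(t-\ell/2),\,-t-r+H_U(-t-\ell/2)\}$ together with the stabilization, monotonicity, symmetry, and bounded-gap properties of Lemma~\ref{lem:properties-H-function}; your phrase ``minimality of $H_{T(2,2\ell)}$'' is the correct intuition but needs precisely this argument. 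With a correct $\phi_1^1$ in hand, the higher $\phi_j^1$ are produced much as you sketch: for each algebra input $a$, the two compositions $\phi_1^1\circ\delta_2^1(a,-)$ and $\delta_2^1(a,\phi_1^1(-))$ are graded local maps between staircases and hence homotopic by \cite{CZZ}*{Lemma~5.6}; one takes $\phi_2^1(a,-)$ to be such a homotopy, and the remaining relations vanish by algebraic-grading reasons (\cite{CZZ}*{Lemma~5.8}). No general obstruction-theory statement is required.
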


As immediate corollaries, we obtain the following:
\begin{cor}\label{cor:inequality} The above implies inequalities for most concordance invariants. For example, we obtain that
\begin{enumerate}
\item $\tau(P(K,n))\ge \tau(K_{\ell,\ell n+1})$.
\item $V_k(P(K,n))\ge V_k(K_{\ell,\ell n+1})$ for all $k\in \Z$.
\item $\Upsilon_{P(K,n)}(t)\le \Upsilon_{K_{\ell,\ell n+1}}(t)$ for all $t\in [0,2]$. 
\end{enumerate}
\end{cor}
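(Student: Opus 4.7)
The plan is to deduce all three inequalities as formal consequences of Theorem~\ref{thm:local-map}, which supplies a $(\gr_{\ws},\gr_{\zs})$-grading preserving local map
\[
F\colon \cCFK(P(K,n))\longrightarrow \cCFK(K_{\ell,\ell n+1}).
\]
All three statements are standard monotonicity properties of concordance invariants under grading-preserving local maps, and none of them require any further input from L-space theory; once Theorem~\ref{thm:local-map} is in hand, this corollary is essentially automatic. The only care required is in tracking the direction of each inequality against the conventions used elsewhere in the paper.

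First I would handle $\tau$. Reduce $F$ modulo $W$ to obtain a chain map $\overline{F}\colon \cCFK_{\bF[Z]}(P(K,n))\to \cCFK_{\bF[Z]}(K_{\ell,\ell n+1})$; because $F$ becomes a homotopy equivalence after inverting $U=WZ$, the induced map on homology is an isomorphism modulo $\bF[Z]$-torsion, and in particular it sends the $\bF[Z]$-tower of the domain isomorphically onto the tower of the codomain. Let $\xs$ be a tower generator in $\cHFK_{\bF[Z]}(P(K,n))$, so $A(\xs)=\tau(P(K,n))$ by the definition recalled in Section~\ref{sec:concordance invariants}. Since $\overline{F}$ preserves the Alexander grading and $\overline{F}_*(\xs)$ is a $Z$-non-torsion element of $\cHFK_{\bF[Z]}(K_{\ell,\ell n+1})$, the characterization of $\tau$ as the minimum Alexander grading of a $Z$-non-torsion class yields $\tau(K_{\ell,\ell n+1})\le A(\overline{F}_*(\xs))=\tau(P(K,n))$, which is the desired inequality.

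Next I would prove the $V_k$ and $\Upsilon$ inequalities by applying the same principle to the appropriate derived complexes. For $V_k$, the map $F$ restricts to a grading preserving chain map between the subquotient complexes $A_k^-(P(K,n))\to A_k^-(K_{\ell,\ell n+1})$ that computes $V_k$; since $F$ is a quasi-isomorphism after inverting $U$, so is its restriction, and hence the $U$-tower generator of the domain maps to the tower generator of the codomain with the same $\gr_{\ws}$-grading. The defining formula $V_k=-\tfrac{1}{2}\max\{\gr_{\ws}(\xs):\xs\in H_*(A_k^-),\,\xs\text{ is $U$-non-torsion}\}$ then gives $V_k(P(K,n))\ge V_k(K_{\ell,\ell n+1})$. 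For $\Upsilon$, one tensors $F$ with the one-parameter family of modules used to define $\Upsilon$ (the $t$-deformed reductions of $\bF[W,Z]$), obtaining for each $t\in[0,2]$ a grading preserving local map between the $t$-parametrized complexes of Ozsv\'{a}th--Stipsicz--Szab\'{o}; running the same tower argument, but with $\Upsilon$'s opposite sign convention, flips the inequality and yields $\Upsilon_{P(K,n)}(t)\le \Upsilon_{K_{\ell,\ell n+1}}(t)$.

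The substantive work is all in Theorem~\ref{thm:local-map}, not in this corollary; the only thing to be careful about here is the sign/direction conventions for each invariant so that the inequalities come out as stated. I would therefore spend essentially no effort on the corollary itself beyond citing the relevant monotonicity lemma for each invariant (e.g.\ \cite{DHSTmore}*{Proposition~3.6} for $\tau$ and $\Upsilon$, and the analogous statement for $V_k$) and verifying that the local map produced by Theorem~\ref{thm:local-map} is of the correct form to feed into these standard lemmas.
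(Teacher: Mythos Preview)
Your proposal is correct and matches the paper's approach: the paper gives no explicit proof of this corollary at all, simply stating ``As immediate corollaries, we obtain the following,'' which is precisely your point that once Theorem~\ref{thm:local-map} provides the grading-preserving local map, the inequalities for $\tau$, $V_k$, and $\Upsilon$ follow from standard monotonicity lemmas. Your write-up is in fact more detailed than the paper's treatment; the only minor caveat is that the step ``$F$ becomes a homotopy equivalence after inverting $U$, hence $\overline{F}$ is an isomorphism modulo $\bF[Z]$-torsion'' uses the additional fact that $Z^{-1}\cCFK(K)\simeq \bF[W,Z,Z^{-1}]$ for knots in $S^3$, but you already indicate that in practice one simply cites the relevant lemma (e.g.\ \cite{DHSTmore}) rather than reproving it.
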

This is stated as Proposition~\ref{prop:inequality-intro} in the introduction. In the above, $V_k$ denotes Rasmussen's local $h$-invariant \cite{RasmussenGodaTeragaito}, and $\Upsilon_K(t)$ denotes the concordance invariant of Ozsv\'{a}th, Stipsicz and Szab\'{o} \cite{OSSUpsilon}.

The following is stated as Corollary~\ref{cor:linking-number-0-intro} in the introduction:
\begin{cor} If $P$ is a winding number 0 L-space satellite operator, then
\[
\tau(P(K,n))\ge 0,\quad \text{and}\quad 
 \Upsilon_{P(K,n)}(t)\le 0. 
\]
\end{cor}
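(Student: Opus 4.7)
The plan is to derive this corollary as a direct specialization of Proposition~\ref{prop:inequality-intro} (the local inequality established in Section~\ref{sec:inequality}) to the case of linking number $\ell=0$, together with the identification of the cable $K_{0,1}$ as the unknot.

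First, because the winding number of $P$ equals $\ell = \lk(\mu,P) = 0$, the orientation hypothesis $\ell \ge 0$ in Proposition~\ref{prop:inequality-intro} is automatically satisfied (independent of the choice of orientation on $P$). Thus the three inequalities of that proposition apply without modification: for every knot $K\subset S^3$ and every $n\in \Z$, we have $\tau(P(K,n))\ge \tau(K_{0,\, 0\cdot n+1})$ and $\Upsilon_{P(K,n)}(t)\le \Upsilon_{K_{0,\,1}}(t)$ for $t\in[0,2]$.

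Next, I would identify the cable $K_{0,1}$ as the unknot. In the satellite-operator language used in Section~\ref{sec:examples}, the $(p,q)$-cabling operator is the satellite pattern whose image in $S^1\times D^2$ is a $(p,q)$-torus curve on $\partial(S^1\times D^2)$; when $(p,q)=(0,1)$, this curve is isotopic to a meridian of $S^1\times D^2$, which bounds a meridianal disk. Pushing this pattern into a tubular neighborhood of $K\subset S^3$ produces a curve bounding an embedded disk in that neighborhood, hence $K_{0,1}$ is the unknot $U$ for every companion $K$.

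Combining these two observations with the standard computations $\tau(U)=0$ and $\Upsilon_U(t)\equiv 0$ yields immediately
\[
\tau(P(K,n)) \;\ge\; \tau(U) \;=\; 0, \qquad \Upsilon_{P(K,n)}(t) \;\le\; \Upsilon_U(t) \;=\; 0,
\]
which is the claimed corollary. There is no genuine obstacle in this argument; essentially all the work has already been done in establishing Proposition~\ref{prop:inequality-intro} via the bimodule map ${}_{\cK}\cX(L_P)^{\bF[W,Z]}\to {}_{\cK}\cX(T(2,2\ell))^{\bF[W,Z]}$ in Section~\ref{sec:inequality}. The only minor care point is to confirm that the cabling convention $K_{p,q}$ specializes to the unknot at $(p,q)=(0,1)$, which is essentially a matter of unpacking the satellite definition.
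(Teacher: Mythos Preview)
Your proof is correct and follows essentially the same approach as the paper: specialize the local inequality (Proposition~\ref{prop:inequality-intro}/Corollary~\ref{cor:inequality}) to $\ell=0$, identify $K_{0,1}$ with the unknot, and plug in $\tau(U)=0$, $\Upsilon_U\equiv 0$. The paper's proof is the one-line version of exactly this.
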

\begin{proof} When $\ell=0$, the knot $K_{\ell,\ell n+1}$ is the unknot, so the result follows from Corollary~\ref{cor:inequality}. 
\end{proof}

We now prove Theorem~\ref{thm:local-map}:

\begin{proof}[Proof of Theorem~\ref{thm:local-map}]
The key step in our proof is to construct a morphism of $DA$-bimodules
\[
f_*^1 \colon {}_{\cK} \cX(L_P)^{\bF[W,Z]}\to {}_{\cK} \cX(T(2,2\ell))^{\bF[W,Z]}.
\]
This map will be analogous to a local map between knot like complexes in that it becomes a homotopy equivalence after we invert $U$ on the right (i.e. tensor with the rank 1 $DA$ bimodule corresponding to the inclusion of $\bF[W,Z]$ into $\bF[W,Z,W^{-1},Z^{-1}]$). In particular, the map $F$ will be 
\[
F=\bI\boxtimes f_*^1\colon \cX_n(K)^{\cK}\boxtimes {}_{\cK} \cH_-^{\cK}\boxtimes {}_{\cK}\cX(L_P)^{\bF[W,Z]}\to \cX_n(K)^{\cK}\boxtimes {}_{\cK} \cH_-^{\cK}\boxtimes {}_{\cK}\cX(T_{2,2\ell})^{\bF[W,Z]}.
\]

To construct $f_*^1$, we have to recall several important facts about staircase complexes. Let $\cA$ and $\cB$ be staircase complexes. Assume that on $\cA$ and $\cB$, the $(\gr_{\ws},\gr_{\zs})$-gradings take values in $\Z\times \Z$, and that on $H_*(\cA)$ and $H_*(\cB)$ they take values in $2\Z\times 2 \Z$ (though we allow $\cA$ and $\cB$ to take non-standard gradings, in the sense of Definition~\ref{def:standard-gradings}).  Since $\cA$ and $\cB$ are free-resolutions of their homology, there exists a grading preserving local map from $\cA$ to $\cB$ if and only there is a grading preserving inclusion of $\bF[W,Z]$-modules $H_*(\cA)\hookrightarrow H_*(\cB)$.

 We can restate the condition that $H_*(\cA)\subset H_*(\cB)$ in terms of an $H$-function of $\cA$ and $\cB$, as follows. We define $H_{\cA}\colon \Z\to \Z$ by setting $H_{\cA}(s)$ to be the $-\frac{1}{2}$ times the maximum grading of a $\bF[U]$-non-torsion element in $H_*(\cA)$ which lies in Alexander grading $s$. We define $H_{\cB}$ similarly. We observe that there is a grading preserving inclusion of $H_*(\cA)$ into $H_*(\cB)$ if and only if
 \begin{equation}
 H_{\cA}(s)\ge H_{\cB}(s)\label{eq:inclusion-homologies-H-function}
 \end{equation}
 for all $s\in \Z$.
 
 We now define the component $f_1^1$ of $f_*^1$. Write $\cC_{s_1}$ for the idempotent 0 complexes of ${}_{\cK} \cX(L_P)^{\bF[W,Z]}$, and write $\cC_{s_1}'$ for the idempotent 0 complexes of ${}_{\cK} \cX(T(2,2\ell))^{\bF[W,Z]}$. By our above observations about staircase complexes, in order to construct $f_1^1$ on left idempotent 0, it suffices to show that there is a grading preserving local map from each $\cC_{s_1}$ into $\cC_{s_1}'$. By the observation in Equation~\eqref{eq:inclusion-homologies-H-function}, it suffices to show that
 \begin{equation}
 H_{L_p}(s_1,s_2)\ge H_{T(2,2\ell)}(s_1,s_2)\label{eq:inequality-H-function-local-map}
 \end{equation}
 for all $(s_1,s_2)\in \bH(L_P)=\bH(T(2,2\ell)).$ It is not hard to see that this also allows for the construction of the map $f_1^1$ on left idempotent 1, because $\ve{I}_1\cdot {}_{\cK} \cX(L_P)^{\bF[W,Z]}$ can be identified with $\bF[T,T^{-1}] \otimes \cC_{s_1}$ for $s_1\gg 0$, and similarly for $\ve{I}_1 \cdot {}_{\cK} \cX(T(2,2\ell))^{\bF[W,Z]}$.

 To prove Equation~\eqref{eq:inequality-H-function-local-map}, we use Lemma~\ref{lem:properties-H-function} and a direct computation of $H_{T(2,2\ell)}(s_1,s_2)$. We recall that 
 \begin{equation}
 H_{T(2,2\ell)}(s_1,s_2)=\max\big\{H_U(s_1-\ell/2), -s_1-s_2+H_U(-s_1-\ell/2) \big\}.
\label{eq:T22q-computation}
 \end{equation}
 See \cite{CZZ}*{Section~6.1} for a description of $H_{T(2,2\ell)}$ and the module ${}_{\cK} \cX(T(2,2\ell))^{\bF[W,Z]}$. Lemma~\ref{lem:properties-H-function} implies that
 \[
 H_{L_P}(s_1,s_2)=H_U(s_1-\ell/2)
 \] 
 for $s_2\gg 0$. By the monotonicity property in Lemma~\ref{lem:properties-H-function}, we conclude that
 \[
 H_{L_P}(s_1,s_2)\ge H_U(s_1-\ell/2)
 \]
 for all $(s_1,s_2)$.
 Using the symmetry property, we conclude that if $s_2\ll 0$, then
 \[
 H_{L_P}(s_1,s_2)=-s_1-s_2+H_{L_P}(-s_1,-s_2)=-s_1-s_2+H_U(-s_1-\ell/2).
 \]
 Using the bounded gap property, we conclude that
 \[
 H_{L_P}(s_1,s_2)\ge -s_1-s_2+H_U(-s_1-\ell/2)
 \]
 for all $(s_1,s_2)$. Using Equation~\eqref{eq:T22q-computation}, the inequality in Equation~\eqref{eq:inequality-H-function-local-map} follows.

We now describe how to construct the map $f_j^1$ for $j>1$. The construction is essentially the same as the construction of the module ${}_{\cK} \cX(L_P)^{\bF[W,Z]}$ from \cite{CZZ}*{Section~5}. If $a$ is a monomial in the algebra, we want $f_2^1(a,-)$ to satisfy
\[
\delta_1^1 \circ f_2^1(a,-)+f_2^1(a, \delta_1^1(-))=f_1^1 \circ \delta_2^1(a,-)+\delta_2^1(a,f_1^1(-)).
\]
  The right hand side of the above equation is the sum of two chain maps from one staircase to another. These two chain maps have the same $(\gr_{\ws},\gr_{\zs})$-grading, and are both local maps (i.e. become homotopy equivalences after we localize at $U$). By \cite{CZZ}*{Lemma~5.6}, the sum $f_1^1\circ \delta_2^1(a,-)+\delta_2^1(a,f_1^1(-))$ is null-homotopic, so we pick $f_2^1(a,-)$ to be any suitably $(\gr_{\ws},\gr_{\zs})$-graded null-homotopy which increases the algebraic grading by 1. We now consider the $A_\infty$ relations with two algebra inputs. Let $a$ and $b$ be monomials in $\cK$. Then
  \[
  \delta_3^1(a,b,f_1^1(-))+f_1^1(\delta_3^1(a,b,-))+f_2^1(a, \delta_2^1(b,-))+\delta_2^1(a,f_2^1(b,-))
  \]
  is a chain map between two staircase complexes. Furthermore, this map increases algebraic grading by 1. By \cite{CZZ}*{Lemma~5.8} this map is zero. Therefore the length 2 relations are satisfied. The $A_\infty$ relations with $j>2$ algebra inputs are similarly satisfied because the structure relation vanishes by algebraic grading considerations.
  
  Finally, we note that the map $f_*^1$ becomes a homotopy equivalence after inverting $U$, because the maps $f_1^1$ do.
\end{proof} 

\begin{rem} More generally, it is straightforward to adapt the above proof to show the following. If $P$ and $Q$ are two L-space satellite operators with the same linking number $\ell$, and $H_{L_P}(t,r)\le H_{L_Q}(t,r)$ for all $t,r$, then
\[
\tau(P(K,n))\ge \tau(Q(K,n))
\]
for all knots $K$ and integers $n$. Similar remarks hold for the $V_k$ and $\Upsilon_{K}(t)$ invariants. 
\end{rem}

\bibliographystyle{custom}
\def\MR#1{}
\bibliography{biblio}

\end{document}